\newtheorem{theorem}{Theorem}
\newtheorem{corollary}[theorem]{Corollary}
\newtheorem{lemma}[theorem]{Lemma}
\newtheorem{proposition}[theorem]{Proposition}
\newtheorem{remark}[theorem]{Remark}
\newenvironment{proof}[1][Proof]{\textbf{#1.} }{\ \rule{0.5em}{0.5em}}
\numberwithin{equation}{section}
\begin{document}

\textbf{On some weighted H\"{o}lder spaces  as a possible functional
framework for the thin film equation and other parabolic equations
with a degeneration at the boundary of a domain.}

\bigskip

\bigskip

S.P.Degtyarev

\bigskip

\textbf{Institute for applied mathematics and mechanics of Ukrainian
National academy of sciences, Donetsk, Ukraine, }

\textbf{State Institute for applied mathematics and mechanics,
Donetsk, Ukraine, }

R.Luxemburg str., 74, Donetsk, Ukraine, 83114

\bigskip

E-mail: degtyar@i.ua

\bigskip

\bigskip

\bigskip

\begin{abstract}
 The present paper is devoted to studying of some weighted
H\"{o}lder spaces. These spaces are designed in the way to serve as
a framework for studying different statements for the thin film
equations in weighted classes of smooth functions in the
multidimensional setting. These spaces can serve also for
considering of other equations with the degeneration on the boundary
of the domain of definition.
\end{abstract}

\bigskip

\bigskip

Key words:  \ weighted H\"{o}lder spaces, interpolation
inequalities, degenerate parabolic equations

MSC: \ 26B35, 26D20, 35K65

\bigskip

\bigskip

\bigskip

\section{Introduction.} \label{ss1}

 The present paper is devoted to studying of some weighted
H\"{o}lder spaces
$C_{n,\omega\gamma}^{m+\gamma,\frac{m+\gamma}{m}}$. These spaces are
designed in the way to serve as a framework for consideration
 of different statements for the thin film equations in weighted classes
of smooth functions in the multidimensional setting. These spaces
can serve also for considering of other equations with the
degeneration on the boundary of the domain of definition, for
example, in the spirit of \cite{D1}.

The literature on the subject of the thin film equations  is very
numerous but almost all results with sufficient regularity are
devoted to the case of one spatial variable. As a possible target
for
an application of the spaces $C_{n,\omega\gamma}^{m+\gamma,\frac{m+\gamma}{m}%
}$ we only mention the papers \cite{11}- \cite{J1}.

The spaces $C_{n,\omega\gamma}^{m+\gamma,\frac{m+\gamma}{m}}$ arise
at the considering linearised version of the thin film equations.
Let us explain this on the example for the thin film equation in the
case of partial wetting (see, for example, \cite{11} for the
accurate statement). Consider the thin film equation of fourth order
for an unknown function $h(x,t)$ (compare \cite{sg4})

\begin{equation}
\frac{\partial h}{\partial t}+\nabla\left(  h^{n}\nabla\Delta h-\beta\nabla
h\right)  =f(x,t)\ \ \text{in}\ \ \Omega, \label{B1.1}%
\end{equation}
 where $n>0$ is fixed, $\Omega$ is a half space $\Omega
=\{(x,t):x=(x^{\prime},x_{N})\in R^{N},x_{N}>0,t>0\}.$ Consider also
partial wetting conditions at $x_{N}=0$
\begin{equation}
h(x^{\prime},0,t)=0,\ \ \frac{\partial h}{\partial x_{N}}(x^{\prime},0,t)=1
\label{B1.2}%
\end{equation}
and an initial condition
\begin{equation}
h(x,0)=w(x). \label{B1.3}%
\end{equation}
From \eqref{B1.2} it follows that we must have for $w(x)$
\begin{equation}
w(x^{\prime},0)=0,\ \ \frac{\partial w}{\partial x_{N}}(x^{\prime},0)=1.
\label{B1.4}%
\end{equation}
Consequently, we have
\begin{equation}
w(x)\sim x_{N},\ \ x_{N}\rightarrow0. \label{B1.5}%
\end{equation}
The linearization of equation \eqref{B1.1} at the initial datum
$w(x)$ means that we denote in \eqref{B1.1} $h=w+u$ and extract
linear with respect to $u$ part (we also drop lower order terms).
Formally, one can just replace $h^{n}$ by $w^{n}$ in \eqref{B1.1}
and replace $h$ by $u$ in other places of this equation. Taking into
account \eqref{B1.5} and replacing $w$ by just $x_{N}$, we arrive at

\begin{equation}
\frac{\partial u}{\partial t}+\nabla(x_{N}^{n}\nabla\Delta u-\beta\nabla
u)=f(x,t)\ \ \text{in}\ \ \Omega. \label{000001}%
\end{equation}
For second order equations this procedure is described in details
in, for example, \cite{D15}, \cite{D16}, \cite{D1}, and for fourth
order see \cite{J1}, \cite{11}, \cite{G11} formula (13), \cite{G12}
formula (7).

If we are going to consider equations \eqref{000001} (and
correspondingly \eqref{B1.1}) in classes of H\"{o}lder functions we
have to consider $f(x,t)$ in \eqref{000001} from some (may be
weighted) H\"{o}lder class. This leads to the consideration of
$\nabla(x_{N}^{n}\nabla\Delta u)$ from the same weighted H\"{o}lder
class. In our definition below this will be the class
$C_{n,(n/4)\gamma}^{m+\gamma ,\frac{m+\gamma}{m}}$.  In the case of
second order equations such classes were used in fact in
\cite{Dask1}- \cite{BazKrasn}, \cite{D1}, where the papers
\cite{Dask1}- \cite{BazKrasn} are based on the
Carnot-Carath\'{e}odory metric and the paper \cite{D1} is based on
classes $C_{n,\omega\gamma}^{m+\gamma,\frac{m+\gamma }{m}}$. Note
that we consider the framework of classes
$C_{n,\omega\gamma}^{m+\gamma,\frac{m+\gamma}{m}}$ as an alternative
for considering the Carnot-Carath\'{e}odory metric for studying
degenerate equations in classes of smooth functions - \cite{Dask1}-
\cite{BazKrasn}, \cite{J1}.

Note that in the case of elliptic equations more simple weighted
H\"{o}lder classes with unweighted  H\"{o}lder constants   can be
used - \cite{Shima}, \cite{BazDeg}. The reason is that in the
elliptic case no agreement between smoothness in $x$-variables and
$t$- variable is needed.

Let us turn now to exact definitions and to the main results.

Denote $H=\{x=(x^{\prime},x_{N})\in R^{N}:\,x_{N}>0\}$ ,
$Q=\{(x,t):x\in H,-\infty<t<\infty\}$. And we note at once that all
the reasoning and statement below are valid in evident way also for
$Q^{+}=\{(x,t):x\in H,t\geq0\}$ instead of $Q$. Let $m$ be a
positive integer and let $n$ be a positive number, $n<m$. Denote
\[
\omega=n/m<1.
\]
Let $C_{\omega\gamma}^{\gamma}(\overline{H})$, $\gamma\in(0,1)$, be
the weighted H\"{o}lder \ space of continuous functions $u(x)$ with the finite norm%

\begin{equation}
|u|_{\omega\gamma,\overline{H}}^{(\gamma)}\equiv\left\Vert u\right\Vert
_{C_{\omega\gamma}^{\gamma}(\overline{H})}\equiv|u|_{\overline{H}}%
^{(0)}+\left\langle u\right\rangle _{\omega\gamma,\overline{H}}^{(\gamma)},
\label{s1.1}%
\end{equation}
where%

\begin{equation}
|u|_{\overline{H}}^{(0)}=\max_{x\in\overline{H}}|u(x)|,\, \left\langle
u\right\rangle _{\omega\gamma,\overline{H}}^{(\gamma)}=\sup_{x,\overline{x}%
\in\overline{H}}\left(  x_{N}^{\ast}\right)  ^{\omega\gamma}\frac
{|u(x)-u(\overline{x})|}{|x-\overline{x}|^{\gamma}},\,x_{N}^{\ast}=\max
\{x_{N},\overline{x}_{N}\}. \label{s1.2}%
\end{equation}
Thus $\left\langle u\right\rangle
_{\omega\gamma,\overline{H}}^{(\gamma)}$ represents a weighted
H\"{o}lder constant of the function $u(x)$.
We suppose that%
\begin{equation}
n<m,\quad,\text{ if }n\text{ is a noninteger}\quad(1-\omega)\gamma
=\gamma\left(  1-\frac{n}{m}\right)  <\min(\{n\},1-\{n\}), \label{s1.2.1}%
\end{equation}
where for a real number $a$ ,$\{a\}$ is the fractional part of $a$,
$[a]$ is the integer part of $a$. This assumption is technical and
it allows us, for example, to consider the functions $x_{N}^{n-j}$
as elements of $C^{\gamma}_{\omega \gamma}(\overline{H})$ for all
integer $j<n$.

%

\begin{remark}
Note that in terms of the Carnot-Carath\'{e}odory metric seminorm
\eqref{s1.2} is equivalent to

\[
\left\langle u\right\rangle
_{\omega\gamma,\overline{H}}^{(\gamma)}\simeq \sup_{x,\overline{x}%
\in\overline{H}}\frac{|u(x)-u(\overline{x})|}{s(x,\overline{x})^{\gamma}},
\]
where the Carnot-Carath\'{e}odory distance is defined as

\[
s(x,\overline{x})=\frac{|x-\overline{x}|}{|x-\overline{x}|^{\omega
}+x_{N}^{\omega }+\overline{x}_{N}^{\omega }}.
\]
In the case of $m=2$, $n\in (0,1)$ this was proved in \cite{D1} and
the general case is quite similar but one should also take into
account Proposition \ref{Ps1.01} below.
\end{remark}

In the similar way we define the H\"{o}lder seminorms with respect
to each variable separately%

\begin{equation}
\left\langle u\right\rangle _{\omega\gamma,x_{i},\overline{H}}^{(\gamma)}%
=\sup_{x,\overline{x}\in\overline{H}}\left(  x_{N}^{\ast}\right)
^{\omega\gamma}\frac{|u(x)-u(\overline{x})|}{h^{\gamma}},\,x_{N}^{\ast}%
=\max\{x_{N},\overline{x}_{N}\},i=\overline{1,N}, \label{s1.2.03}%
\end{equation}
where $x=(x_{1},...x_{i},...,x_{N})$, $\overline{x}=(x_{1},...x_{i}%
+h,...,x_{N})$, $h>0$. 

%
In the standard way we denote by $\left\langle u\right\rangle _{x_{i}%
,\overline{H}}^{(\gamma)}$, $\left\langle u\right\rangle _{x^{\prime
},\overline{H}}^{(\gamma)}$, and $\left\langle u\right\rangle _{x,\overline
{H}}^{(\gamma)}$ usual unweighted H\"{o}lder seminorms
with respect to each variable separately, with respect to $x^{\prime}%
=(x_{1},...,x_{N-1})$ or with respect to all $x$-variables.

Define a weighted H\"{o}lder space
$C_{n,\omega\gamma}^{m+\gamma}(\overline
{H})$ as the space of continuous functions $u(x)$ with the finite norm%
\[
|u|_{n,\omega\gamma,\overline{H}}^{(m+\gamma)}\equiv\left\Vert
u\right\Vert
_{C_{n,\omega\gamma}^{m+\gamma}(\overline{H})}=
\]

\begin{equation}
=|u|_{\overline{H}}^{(0)}+
\sum_{0<|\alpha|<m-n}|D^{\alpha}_{x}u|^{\gamma}_{\overline{H}}
+{\displaystyle\sum\limits_{j=0}^{j\leq n}}
{\displaystyle\sum\limits_{\substack{|\alpha|=m-j,\\\alpha_{N}\neq
m-n}}}
|x_{N}^{n-j}D^{\alpha}u|_{\omega\gamma,\overline{H}}^{(\gamma)}.\label{s1.3}%
\end{equation}
Here $\alpha=(\alpha_{1},...,\alpha_{N})$ is a multiindex, $|\alpha
|=\alpha_{1}+...+\alpha_{N}$, $D^{\alpha}u=D_{x_{1}}^{\alpha_{1}}...D_{x_{N}%
}^{\alpha_{N}}u$.  Note that we do not include in the definition of
the norm the term
$|D_{x_{N}}^{m-n}u|_{\omega\gamma,\overline{H}}^{(\gamma)}$ in the
case of an integer $n$. The reason is that this term is finite only
in the case of the special behaviour of
$x_{N}^{n}D_{x_{N}}^{m}u\rightarrow0$ at $x_{N}\rightarrow0$. This
issue will be explained below. For the spaces with the finite term
$|D_{x_{N}}^{m-n}u|_{\omega\gamma,\overline{H}}^{(\gamma)}$ in the
case of an integer $n$ we use the notation with cap. That is the
space $\widehat {C}_{n,\omega\gamma}^{m+\gamma}(\overline{H})$ is
the space with the finite norm%
\[
\widehat{|u|}_{n,\omega\gamma,\overline{H}}^{(m+\gamma)}\equiv\left\Vert
u\right\Vert _{\widehat{C}_{n,\omega\gamma}^{m+\gamma}(\overline{H}%
)}=
\]

\begin{equation}
=|u|_{\overline{H}}^{(0)}+
\sum_{0<|\alpha|<m-n}|D^{\alpha}_{x}u|^{\gamma}_{\overline{H}}+
{\displaystyle\sum\limits_{j=0}^{j\leq n}}
{\displaystyle\sum\limits_{|\alpha|=m-j}}
|x_{N}^{n-j}D^{\alpha}u|_{\omega\gamma,\overline{H}}^{(\gamma)}.\label{s1.3.1}%
\end{equation}
We will show below that the norm \eqref{s1.3} is equivalent to the
norm
\begin{equation}
\widetilde{|u|}_{n,\omega\gamma,\overline{H}}^{(m+\gamma)}=|u|_{\overline{H}%
}^{(0)}+%
{\displaystyle\sum\limits_{i=1}^{N}} \left\langle
x_{N}^{n}D_{x_{i}}^{m}u\right\rangle _{\omega\gamma
,x_{i},\overline{H}}^{(\gamma)}\label{s1.3.2}%
\end{equation}
and the norm \eqref{s1.3.1} in the case of an integer $n$ is
equivalent to the norm
\begin{equation}
\widehat{\widetilde{|u|}}_{n,\omega\gamma,\overline{H}}^{(m+\gamma
)}=|u|_{\overline{H}}^{(0)}+\left\langle D_{x_{N}}^{m-n}u\right\rangle
_{\omega\gamma,x_{N},\overline{H}}^{(\gamma)}+%
{\displaystyle\sum\limits_{i=1}^{N}} \left\langle
x_{N}^{n}D_{x_{i}}^{m}u\right\rangle _{\omega\gamma
,x_{i},\overline{H}}^{(\gamma)}.\label{s1.3.3}%
\end{equation}

We also consider a space $C_{\omega\gamma}^{\gamma,\frac{\gamma}{m}}%
(\overline{Q})$ of functions $u(x,t)$ with the finite norm%

\begin{equation}
|u|_{\omega\gamma,\overline{Q}}^{(\gamma)}\equiv\left\Vert u\right\Vert
_{C_{\omega\gamma}^{\gamma,\gamma/m}(\overline{Q})}\equiv|u|_{\overline{Q}%
}^{(0)}+\left\langle u\right\rangle _{\omega\gamma,\overline{Q}}%
^{(\gamma,\gamma/m)}, \label{s1.4}%
\end{equation}
where%
\[
\left\langle u\right\rangle _{\omega\gamma,\overline{Q}}^{(\gamma,\gamma
/m)}\equiv\left\langle u\right\rangle _{\omega\gamma,x,\overline{Q}}%
^{(\gamma)}+\left\langle u\right\rangle _{t,\overline{Q}}^{(\gamma/m)},
\]

\begin{equation}
\left\langle u\right\rangle _{\omega\gamma,x,\overline{Q}}^{(\gamma)}%
\equiv\sup_{x,\overline{x}\in\overline{Q}}\left(  x_{N}^{\ast}\right)
^{\omega\gamma}\frac{|u(x,t)-u(\overline{x},t)|}{|x-\overline{x}|^{\gamma}%
},\,x_{N}^{\ast}=\max\{x_{N},\overline{x}_{N}\}, \label{s1.5}%
\end{equation}
and $\left\langle u\right\rangle _{t,\overline{Q}}^{(\gamma/m)}$ is
the usual H\"{o}lder constant of $u$ over $\overline{Q}$ with
respect to $t$ with the exponent $\gamma/m$.
%
%
%
%
Analogously to \eqref{s1.3}, \eqref{s1.3.1}  we consider the space
$C_{n,\omega\gamma}^{m+\gamma
,\frac{m+\gamma}{m}}(\overline{Q})$ with the finite norm%

\[
|u|_{n,\omega\gamma,\overline{Q}}^{(m+\gamma)}\equiv\left\Vert u\right\Vert
_{C_{n,\omega\gamma}^{m+\gamma,\frac{m+\gamma}{m}}(\overline{Q})}=
\]

\begin{equation}
=|u|_{\overline{Q}}^{(0)}+
\sum_{0<|\alpha|<m-n}|D^{\alpha}_{x}u|^{\gamma}_{\overline{Q}}+
{\displaystyle\sum\limits_{j=0}^{j\leq n}}
{\displaystyle\sum\limits_{\substack{|\alpha|=m-j,\\\alpha_{N}\neq
m-n}}}
|x_{N}^{n-j}D_{x}^{\alpha}u|_{\omega\gamma,\overline{Q}}^{(\gamma)}%
+|D_{t}u|_{\omega\gamma,\overline{Q}}^{(\gamma)},\label{s1.6}%
\end{equation}
and the space $\widehat{C}_{n,\omega\gamma}^{m+\gamma,\frac{m+\gamma}{m}%
}(\overline{Q})$ with the finite norm%
\[
\widehat{|u|}_{n,\omega\gamma,\overline{Q}}^{(m+\gamma)}\equiv\left\Vert
u\right\Vert _{\widehat{C}_{n,\omega\gamma}^{m+\gamma,\frac{m+\gamma}{m}%
}(\overline{Q})}=
\]

\begin{equation}
=|u|_{\overline{Q}}^{(0)}+
\sum_{0<|\alpha|<m-n}|D^{\alpha}_{x}u|^{\gamma}_{\overline{Q}}+
{\displaystyle\sum\limits_{j=0}^{j\leq n}}
{\displaystyle\sum\limits_{|\alpha|=m-j}}
|x_{N}^{n-j}D_{x}^{\alpha}u|_{\omega\gamma,\overline{Q}}^{(\gamma)}%
+|D_{t}u|_{\omega\gamma,\overline{Q}}^{(\gamma)}.\label{s1.6.1a}%
\end{equation}
And again we will show that the norm \eqref{s1.6} is equivalent to
the norm

\begin{equation}
\widetilde{|u|}_{n,\omega\gamma,\overline{Q}}^{(m+\gamma)}=|u|_{\overline{Q}%
}^{(0)}+%
{\displaystyle\sum\limits_{i=1}^{N}} \left\langle
x_{N}^{n}D_{x_{i}}^{m}u\right\rangle _{\omega\gamma
,x_{i},\overline{Q}}^{(\gamma)}+\left\langle D_{t}u\right\rangle
_{t,\overline{Q}}^{(\gamma/m)}\label{s1.6.2.n}%
\end{equation}
and the norm
\eqref{s1.6.1a} in the case of an integer $n$ is equivalent to the norm%

\begin{equation}
\widetilde{\widehat{|u|}}_{n,\omega\gamma,\overline{Q}}^{(m+\gamma
)}=|u|_{\overline{Q}}^{(0)}+\left\langle D_{x_{N}}^{m-n}u\right\rangle
_{\omega\gamma,x_{N},\overline{Q}}^{(\gamma)}+%
{\displaystyle\sum\limits_{i=1}^{N}} \left\langle
x_{N}^{n}D_{x_{i}}^{m}u\right\rangle _{\omega\gamma
,x_{i},\overline{Q}}^{(\gamma)}+\left\langle D_{t}u\right\rangle
_{t,\overline{Q}}^{(\gamma/m)}.\label{s1.6.3.0001}%
\end{equation}

Namely, we have the
following estimate which is one of the main results of the present
paper.

Recall that $\left\langle D_{t}u\right\rangle
_{t,\overline{Q}}^{(\gamma/m)}$ is the usual H\"{o}lder constant of
$D_{t}u$ over $\overline{Q}$ with respect
only to $t$ with the exponent $\gamma/m$ and $\left\langle x_{N}%
^{n}D_{x_{i}}^{m}u\right\rangle _{\omega\gamma,x_{i},\overline{Q}}^{(\gamma
)}$ is  the weighted H\"{o}lder constants of the "pure" derivatives $x_{N}%
^{n}D_{x_{i}}^{m}u$ with respect only to the corresponding variables $x_{i}$
with the same index $i$, $i=\overline{1,N}$. That is%

\[
\left\langle x_{N}^{n}D_{x_{i}}^{m}u\right\rangle _{\omega\gamma
,x_{i},\overline{Q}}^{(\gamma)}\equiv\sup_{(x,t),(\overline{x},t)\in
\overline{Q}}\left(  x_{N}^{\ast}\right)  ^{\omega\gamma}\frac
{|u(x,t)-u(\overline{x},t)|}{|x_{i}-\overline{x}_{i}|^{\gamma}},\,x_{N}^{\ast
}=\max\{x_{N},\overline{x}_{N}\},
\]
where $\sup$ is taken over $x=(x_{1},...,x_{i},...x_{N})$,
$\overline {x}=(x_{1},...,\overline{x}_{i},...x_{N})$.

\begin{theorem}\label{Ts1.1}
Let $u(x,t)$ be continuous in $\overline{Q}$ and the right hand side in 
\eqref{s1.7} below is finite.  
Then for some $C=C(N,\gamma
,m,n)$%

\[
\left\langle u\right\rangle _{n,\omega\gamma,\overline{Q}}^{(m+\gamma
,\frac{m+\gamma}{m})}\equiv%
{\displaystyle\sum\limits_{j=0}^{j\leq n}}
{\displaystyle\sum\limits_{|\alpha|=m-j}} \left\langle
x_{N}^{n-j}D_{x}^{\alpha}u\right\rangle _{\omega\gamma
,\overline{Q}}^{(\gamma,\gamma/m)}+%
{\displaystyle\sum\limits_{j=0}^{j\leq n}}
{\displaystyle\sum\limits_{|\alpha|=m-j}}
\left\langle x_{N}^{n-j\omega}D_{x}^{\alpha}u\right\rangle _{t,\overline{Q}%
}^{(\frac{\gamma+j}{m})}+
\]

\[
+\left\langle D_{t}u\right\rangle _{\omega
\gamma,\overline{Q}}^{(\gamma,\gamma/m)}+%
{\displaystyle\sum\limits_{j=0}^{j\leq m-n}}
{\displaystyle\sum\limits_{|\alpha|=[m-n+(1-\omega)\gamma]-j}}
\left\langle D_{x^{\prime}}^{\alpha}D_{x_{N}}^{j}u\right\rangle
_{x^{\prime },\overline{Q}}^{(\{m-n+(1-\omega)\gamma\})}+
\]

\[
+{\displaystyle\sum\limits_{j=0}^{j\leq m-n}}
{\displaystyle\sum\limits_{|\alpha|=[m-n+\gamma]-j}} \left\langle
D_{x^{\prime}}^{\alpha}D_{x_{N}}^{j}u\right\rangle _{\omega
\gamma,x^{\prime},\overline{Q}}^{(\{m-n+\gamma\})}+
\]

\begin{equation}
+%
{\displaystyle\sum\limits_{j=1}^{j\leq m-n}}
{\displaystyle\sum\limits_{|\alpha|=j}}
\left\langle D_{x}^{\alpha}u\right\rangle _{t,\overline{Q}}^{(1-\frac{j}%
{m-n}+\frac{\gamma}{m})}\leq C\left(
{\displaystyle\sum\limits_{i=1}^{N}} \left\langle
x_{N}^{n}D_{x_{i}}^{m}u\right\rangle _{\omega\gamma
,x_{i},\overline{Q}}^{(\gamma)}+\left\langle D_{t}u\right\rangle
_{t,\overline{Q}}^{(\gamma/m)}\right)  , \label{s1.7}%
\end{equation}
where, $[a]$ and $\{a\}$ are the integer and the fractional parts of
a real number $a$ correspondingly and in the left hand side of
\eqref{s1.7} included only those terms that are finite.

Moreover,
\begin{equation}
x_{N}^{n-j}D_{x}^{\alpha}u(x,t)\rightarrow0,x_{N}\rightarrow0,\quad0\leq
j<n,\alpha=(\alpha_{1},...,\alpha_{N}),|\alpha|=m-j,\alpha_{N}<m-j.
\label{s1.7.1}%
\end{equation}

If $u(x)$ is continuous in $\overline{H}$ and the right hand side in
\eqref{s1.8} below is finite 
then for some $C=C(N,\gamma,m,n)$%

\[
\left\langle u\right\rangle _{n,\omega\gamma,\overline{H}}^{(m+\gamma)}\equiv%
{\displaystyle\sum\limits_{j=0}^{j\leq n}}
{\displaystyle\sum\limits_{|\alpha|=m-j}} \left\langle
x_{N}^{n-j}D_{x}^{\alpha}u\right\rangle _{\omega\gamma
,\overline{H}}^{(\gamma)}+%
{\displaystyle\sum\limits_{j=0}^{j\leq m-n}}
{\displaystyle\sum\limits_{|\alpha|=[m-n+\gamma]-j}} \left\langle
D_{x^{\prime}}^{\alpha}D_{x_{N}}^{j}u\right\rangle _{\omega
\gamma,x^{\prime},\overline{Q}}^{(\{m-n+\gamma\})}+
\]

\begin{equation}
+%
{\displaystyle\sum\limits_{j=0}^{j\leq m-n}}
{\displaystyle\sum\limits_{|\alpha|=[m-n+(1-\omega)\gamma]-j}}
\left\langle D_{x^{\prime}}^{\alpha}D_{x_{N}}^{j}u\right\rangle
_{x^{\prime
},\overline{H}}^{(\{m-n+(1-\omega)\gamma\})}\leq C%
{\displaystyle\sum\limits_{i=1}^{N}} \left\langle
x_{N}^{n}D_{x_{i}}^{m}u\right\rangle _{\omega\gamma
,x_{i},\overline{H}}^{(\gamma)}.\label{s1.8}%
\end{equation}
and in the left hand side of \eqref{s1.8} included only those terms
that are finite.
Moreover,%

\begin{equation}
x_{N}^{n-j}D_{x}^{\alpha}u(x)\rightarrow0,x_{N}\rightarrow0,\quad0\leq
j<n,\alpha=(\alpha_{1},...,\alpha_{N}),|\alpha|=m-j,\alpha_{N}<m-j.
\label{s1.8.1}%
\end{equation}

\end{theorem}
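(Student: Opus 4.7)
My plan is to prove the stationary estimate \eqref{s1.8} first and then obtain \eqref{s1.7} from it by treating the $t$-variable separately via interpolation with the datum $\langle D_{t}u\rangle_{t,\overline{Q}}^{(\gamma/m)}$. For \eqref{s1.8} the scheme is an iterative descent in the order of differentiation that simultaneously establishes the boundary decay \eqref{s1.8.1}; the latter is not a by-product but a working hypothesis at each step of the induction, needed in order to justify the integrations that lower the order.

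First I would convert the directional data $\langle x_{N}^{n}D_{x_{i}}^{m}u\rangle_{\omega\gamma,x_{i},\overline{H}}^{(\gamma)}$ into the joint weighted seminorm by connecting $x,\bar x$ with $N$ axis-parallel segments: along $x_{i}$-segments with $i\neq N$ the weight $(x_{N}^{\ast})^{\omega\gamma}$ is unchanged, while the $x_{N}$-segment is governed by the $i=N$ datum. A polarization argument (the $m$-th order analogue of $2\partial_{i}\partial_{j}=(\partial_{i}+\partial_{j})^{2}-\partial_{i}^{2}-\partial_{j}^{2}$) then gives control of $\langle x_{N}^{n}D^{\alpha}u\rangle_{\omega\gamma,\overline{H}}^{(\gamma)}$ for every mixed $|\alpha|=m$. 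With top-order weighted control in hand I would descend in order and weight by integration in $x_{N}$: when $\alpha_{N}<m-j$ the presence of at least one $x'$-derivative lets a Hardy-type argument applied to the pure-derivative estimate yield $x_{N}^{n-j}D^{\alpha}u\to 0$ as $x_{N}\to 0$, which is exactly \eqref{s1.8.1}, whereupon the representation
\[
x_{N}^{n-j-1}D^{\alpha'}u(x)=x_{N}^{n-j-1}\int_{0}^{x_{N}}D_{x_{N}}D^{\alpha'}u(x',z)\,dz
\]
upgrades to the next level, the factor of $x_{N}$ gained per integration matching the drop in the weight exponent from $n-j$ to $n-j-1$. Once the order falls below $m-n$ no more $x_{N}$-weight remains and the leftover excess $x_{N}^{\eta}$ is traded for unweighted (or $\omega\gamma$-weighted) H\"older smoothness in $x'$ in the spirit of the Campanato--Morrey dictionary; assumption \eqref{s1.2.1} on $\gamma(1-\omega)$ is exactly what keeps both resulting exponents $\{m-n+\gamma\}$ and $\{m-n+(1-\omega)\gamma\}$ strictly inside $(0,1)$.

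For the parabolic estimate \eqref{s1.7}, freezing $t$ and applying \eqref{s1.8} yields the $x$-H\"older parts uniformly in $t$. The time-H\"older parts then follow from $\langle D_{t}u\rangle_{t,\overline{Q}}^{(\gamma/m)}$ together with the $x$-integration representations of the previous step: splitting an $(x,t)$-increment into pure $x$- and pure $t$-parts in the standard way produces both the joint $(\gamma,\gamma/m)$-seminorms of $x_{N}^{n-j}D^{\alpha}u$ and the intermediate time-H\"older exponents $1-j/(m-n)+\gamma/m$ for lower-order $D_{x}^{\alpha}u$. The principal obstacle I expect is the descent step: simultaneously establishing \eqref{s1.8.1} at a given level and the weighted H\"older bound for the just-integrated derivative requires the Hardy-type integrations to respect the delicate balance between the weight $x_{N}^{n-j}$ and the Hölder exponent $\gamma$ encoded in \eqref{s1.2.1}, and a substantial secondary difficulty is the exponent bookkeeping at the transition from weighted to unweighted seminorms at order $m-n$, where one must verify that every term appearing on the left of \eqref{s1.8} and \eqref{s1.7} is accounted for with the correct weight and Hölder exponent.
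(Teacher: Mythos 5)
There is a genuine gap at the heart of your proposal, in the step that converts the directional pure data $\left\langle x_{N}^{n}D_{x_{i}}^{m}u\right\rangle_{\omega\gamma,x_{i}}^{(\gamma)}$ into control of the weighted H\"older seminorms of mixed $m$-th order derivatives by a ``polarization argument.'' H\"older seminorms are not quadratic forms, so the algebraic identity $2\partial_{i}\partial_{j}=(\partial_{i}+\partial_{j})^{2}-\partial_{i}^{2}-\partial_{j}^{2}$ does not transfer; even in the unweighted anisotropic case the passage from pure directional seminorms to mixed-derivative seminorms is the nontrivial Solonnikov-type estimate \eqref{s1.12+2}, not an algebraic manipulation, and in the weighted case the factor $x_{N}^{n}$ breaks whatever residual symmetry polarization would exploit. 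Worse, the claim would contradict the paper's own counterexample stated just after the theorem: for $u(x)=x_{1}^{2}x_{2}^{2-n}$ on $\{x_{2}\geq 0\}$ with $m=2$, $n\in(0,1)$, both pure-derivative seminorms on the right of \eqref{s1.8} vanish, yet $x_{2}^{n}D_{x_{1}x_{2}}^{2}u=c\,x_{1}x_{2}$ has an infinite $\omega\gamma$-weighted H\"older seminorm over the unbounded half-plane. This is exactly why \eqref{s1.7} and \eqref{s1.8} carry the caveat ``included only those terms that are finite''; no unconditional polarization bound can exist, and any argument must be built so that it only produces those bounds that are actually true.

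A second gap is the proposed reduction of the parabolic estimate \eqref{s1.7} to the elliptic one \eqref{s1.8} by ``freezing $t$ and interpolating.'' The intermediate $t$-H\"older exponents for lower-order derivatives, such as $1-\frac{j}{m-n}+\frac{\gamma}{m}$ and $\frac{\gamma+j}{m}$, encode the parabolic scaling $t\sim x_{N}^{m-n}$ and cannot be recovered by splitting increments into pure $x$- and pure $t$-parts from the frozen-time estimate plus $\left\langle D_{t}u\right\rangle_{t}^{(\gamma/m)}$ alone; that split gives the joint $(\gamma,\gamma/m)$ exponents but not these genuinely mixed ones. Relatedly, the decay \eqref{s1.8.1} is more delicate than a per-step Hardy integration: it is not immediate from the data, and the paper first establishes it for the tangentially mollified functions $u_{\varepsilon}$ (Lemma \ref{Ls1.01}) and only transfers it to $u$ after the main estimate is proved, via the limit \eqref{s1.101.1}. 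The paper's actual route is essentially the reverse of yours: it proves \eqref{s1.7} directly by the Simon-type scaling and contradiction argument (splitting each seminorm into a far part $|h|\geq\varepsilon x_{N}$, handled by rescaled blow-up limits, Taylor/``power-function'' subtraction, and the impossibility of a nonconstant polynomial-type limit having finite weighted seminorms over the unbounded half-space, together with a near part $|h|\leq\varepsilon x_{N}$ handled by localization and absorption), and then obtains \eqref{s1.8} as the $t$-independent special case.
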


Note that Theorem \ref{Ts1.1} is an analog for weighted H\"{o}lder
spaces of well known properties of standard H\"{o}lder spaces. We
are going to use these known properties so we formulate them in the
next section.

Let us stress that the assumption that the terms in the left hand side of 
\eqref{s1.7}, \eqref{s1.8} are finite is essential. Consider in $\{(x_{1},x_{2}):x_{2}\geq 0\}$ for $m=2$ the 
function $u(x)=x_{1}^{2}x_{2}^{2-n}$, where $n\in [0,1)$. For this function the right hand side 
of \eqref{s1.8} is zero but the H\"{o}lder seminorms of the mixed derivative $x_{2}^{n}D^{2}_{x_{1}x_{2}}u$ 
in the left hand side are infinite.

The further content of the paper is as follows. In section
\ref{ss2}, we formulate some known results about classical
H\"{o}lder spaces and prove some useful statements about weighted
H\"{o}lder spaces for further using. Section \ref{ss1.3} is devoted
to the proof of Theorem \ref{Ts1.1}. In section \ref{ss1.4}, we
consider properties of mixed and lower order derivatives of
functions from the space
$C_{n,\omega\gamma}^{m+\gamma,\frac{m+\gamma}{m}}(\overline{Q})$. In
section \ref{ss1.5} we study traces of functions from
$C_{n,\omega\gamma}^{m+\gamma,\frac {m+\gamma}{m}}(\overline{Q})$ at
$\{x_{N}=0\}$. Section \ref{ss1.6} contains some interpolations
inequalities for functions from
$C_{n,\omega\gamma}^{m+\gamma,\frac{m+\gamma}{m}}(\overline{Q})$,
$C_{n,\omega\gamma}^{m+\gamma}(\overline{H})$. In section
\ref{ss1.8} we consider the spaces $C_{n,\omega\gamma}^{m+\gamma,\frac{m+\gamma}{m}%
}(\overline{\Omega}_{T})$,
$C_{n,\omega\gamma}^{m+\gamma}(\overline{\Omega})$ in the case of
arbitrary smooth domain. At last, section \ref{ss1.9} devoted to
some properties of functions from $C_{n,\omega\gamma,0}^{m+\gamma,\frac{m+\gamma}{m}%
}(\overline{\Omega}_{T})$, where the last is the closed subspace of $C_{n,\omega\gamma}%
^{m+\gamma,\frac{m+\gamma}{m}}(\overline{\Omega}_{T})$ consisting of
functions $u(x,t)$ with the property $u(x,0)\equiv
u_{t}(x,0)\equiv0$ in $\overline{\Omega}$.

\section{Auxiliary assertions.}
\label{ss2}

Let $M$ be a positive integer. In the space $R^{M}$ we use standard
\bigskip H\"{o}lder spaces $C^{\overline{l}}(R^{M})$, where
$\overline{l}=(l_{1},l_{2},...,l_{M})$, $l_{i}$ are arbitrary
positive non-integers. The norm in such spaces is defined by%

\begin{equation}
\left\Vert u\right\Vert _{C^{\overline{l}}(R^{M})}\equiv\left\vert
u\right\vert _{R^{M}}^{(\overline{l})}=\left\vert u\right\vert _{R^{M}}%
^{(0)}+\sum_{i=1}^{M}\left\langle u\right\rangle _{x_{i},R^{M}}^{(l_{i})},
\label{s1.9}%
\end{equation}

\begin{equation}
\left\langle u\right\rangle _{x_{i},R^{M}}^{(l_{i})}=\sup_{x\in R^{M}%
,h>0}\frac{\left\vert D_{x_{i}}^{[l_{i}]}u(x_{1},...,x_{i}+h,...,x_{M}%
)-D_{x_{i}}^{[l_{i}]}u(x)\right\vert }{h^{l_{i}-[l_{i}]}}, \label{s1.10}%
\end{equation}
where $[l_{i}]$ is the integer part of the number $l_{i}$, $D_{x_{i}}%
^{[l_{i}]}u$ is the derivative of order $[l_{i}]$ with respect to
the variable $x_{i}$ of a function $u$.

\begin{proposition}
\label{Ps1.1}

Seminorm \eqref{s1.10} can be equivalently defined by
(\cite{Triebel},\cite{Sol15},
\cite{Gol18} )%

\begin{equation}
\left\langle u\right\rangle _{x_{i},R^{M}}^{(l_{i})}\simeq\sup_{x\in
R^{M},h>0}\frac{\left\vert \Delta_{h,x_{i}}^{k}u(x)\right\vert }{h^{l_{i}}%
},\quad k>l_{i}, \label{s1.11}%
\end{equation}
where $\Delta_{h,x_{i}}u(x)=u(x_{1},...,x_{i}+h,...,x_{N})-u(x)$ is
the difference from a function $u(x)$ with respect to the variable
$x_{i}$  with a step $h$, $\Delta_{h,x_{i}}^{k}u(x)=$
$\Delta_{h,x_{i}}\left( \Delta_{h,x_{i}}^{k-1}u(x)\right)  =\left(
\Delta_{h,x_{i}}\right)  ^{k}u(x)$ is the difference of power $k$.
\end{proposition}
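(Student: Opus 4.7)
The plan is to establish the equivalence in both directions; only the direction from finite iterated-difference seminorm to existence and H\"older continuity of $D^{[l_i]}_{x_i}u$ is non-trivial.

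For the easy direction (derivative form $\Rightarrow$ $k$-difference form), I would set $p=[l_i]$, $\alpha=l_i-p\in(0,1)$, and use the one-dimensional identity
\[
\Delta_{h,x_i}^{p}u(x)=\int_0^h\!\cdots\!\int_0^h D_{x_i}^{p}u\bigl(x+(s_1+\cdots+s_p)e_i\bigr)\,ds_1\cdots ds_p,
\]
which is valid once the derivative of order $p$ exists and is continuous in $x_i$. Writing $\Delta_{h,x_i}^{k}u=\Delta_{h,x_i}^{k-p}(\Delta_{h,x_i}^{p}u)$ and pulling $\Delta_{h,x_i}^{k-p}$ under the integral, I reduce to estimating a $(k-p)$-fold difference of $D^{p}_{x_i}u$. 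Since $k-p\ge 1$ and $\sum_{j=0}^{k-p}(-1)^j\binom{k-p}{j}=0$, I may subtract an arbitrary constant value of $D^{p}_{x_i}u$ from each term in the expansion of the difference; each term then differs from the reference by a translation of size at most $(k-p)h$ in the $x_i$ direction, and the H\"older condition produces a bound $C_{k,p}\langle u\rangle^{(l_i)}_{x_i,R^M}h^{\alpha}$. Integrating over $[0,h]^p$ gives the total bound $C\langle u\rangle^{(l_i)}_{x_i,R^M}h^{p+\alpha}=C\langle u\rangle^{(l_i)}_{x_i,R^M}h^{l_i}$, which is what we need.

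The hard direction (bound on $k$-th differences $\Rightarrow$ existence and H\"older continuity of $D^{p}_{x_i}u$) is the main obstacle and is the reason the authors invoke the references \cite{Triebel}, \cite{Sol15}, \cite{Gol18}. I would carry it out by mollification in the $x_i$ variable: let $\phi\in C_0^{\infty}(\mathbb R)$ with $\int\phi=1$ and $\int s^{j}\phi(s)\,ds=0$ for $1\le j\le k-1$ (so $\phi$ has $k$ vanishing moments after the zeroth), set $\phi_{\varepsilon}(s)=\varepsilon^{-1}\phi(s/\varepsilon)$, and let $u_{\varepsilon}=u*_{x_i}\phi_{\varepsilon}$. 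A direct computation rewriting $D^{p+1}_{x_i}u_{\varepsilon}(x)$ as $\varepsilon^{-p-1}\int u(x-\varepsilon s\,e_i)\psi(s)\,ds$, where $\psi$ is a fixed test function with $k$ vanishing moments, lets one replace $u$ by its $k$-th finite difference of step proportional to $\varepsilon$ without changing the value of the integral; the hypothesis then yields the Bernstein-type bound $|D^{p+1}_{x_i}u_{\varepsilon}(x)|\le C\,M\,\varepsilon^{\alpha-1}$, where $M=\sup|\Delta_{h,x_i}^{k}u|/h^{l_i}$. Writing $u_{\varepsilon}-u_{\varepsilon'}=\int_{\varepsilon'}^{\varepsilon}\partial_{\tau}u_{\tau}\,d\tau$ and integrating this derivative estimate, one shows $\{D^{p}_{x_i}u_{\varepsilon}\}$ is Cauchy in the uniform norm as $\varepsilon\to 0$; its limit is identified as $D^{p}_{x_i}u$ in the classical sense, and the same telescoping gives $|D^{p}_{x_i}u(x+he_i)-D^{p}_{x_i}u(x)|\le C M h^{\alpha}$.

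The combinatorial computation in the easy direction is routine; the technical burden is in the mollification argument, particularly in choosing $\phi$ with enough vanishing moments so that the $k$-th difference condition is exactly what is needed to bound $D^{p+1}_{x_i}u_{\varepsilon}$, and in justifying that the limit obtained coincides with the pointwise derivative rather than some weaker object. Since the result is stated in the cited literature in precisely the form needed, I would ultimately refer to \cite{Triebel}, \cite{Sol15}, \cite{Gol18} for the full hard-direction proof and only include the easy direction explicitly if the paper requires it.
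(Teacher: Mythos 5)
The paper offers no proof of this proposition: it is stated as a known equivalence and attributed to \cite{Triebel}, \cite{Sol15}, \cite{Gol18}, and that citation is the entirety of the paper's treatment. So your decision to defer the hard direction to those same references is exactly what the paper does, and your sketch of the easy direction is correct: writing $\Delta_{h,x_i}^{k}=\Delta_{h,x_i}^{k-p}\Delta_{h,x_i}^{p}$ with $p=[l_i]$, representing $\Delta_{h,x_i}^{p}u$ as a $p$-fold integral of $D_{x_i}^{p}u$, and using that the coefficients of $\Delta^{k-p}$ sum to zero so that the H\"older condition on $D_{x_i}^{p}u$ yields a factor $h^{l_i-p}$, gives the bound $Ch^{l_i}$ as claimed.

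In the hard direction, however, one concrete step of your sketch is wrong as stated. With $u_{\varepsilon}=u*_{x_i}\phi_{\varepsilon}$ the kernel $\psi$ in the identity $D_{x_i}^{p+1}u_{\varepsilon}(x)=\varepsilon^{-p-1}\int u(x-\varepsilon s e_i)\psi(s)\,ds$ is a rescaling of $\phi^{(p+1)}$, and integration by parts gives $\int s^{j}\phi^{(p+1)}(s)\,ds=0$ automatically for $j\le p$ while $\int s^{p+1}\phi^{(p+1)}(s)\,ds=(-1)^{p+1}(p+1)!\int\phi\ne 0$ --- no matter how many moments of $\phi$ itself you arrange to vanish. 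So $\psi$ has exactly $p+1$ vanishing moments, not $k$, and your reduction of $\int u(x-\varepsilon s e_i)\psi(s)\,ds$ to a $k$-th difference only closes for the minimal admissible $k=[l_i]+1$; relation \eqref{s1.11} is asserted for every integer $k>l_i$. To cover general $k$ one must either first treat $k=[l_i]+1$ and then prove comparability of the $k$-th and $([l_i]+1)$-th difference seminorms (a Marchaud-type inequality, which is itself the nontrivial content), or regularize differently, e.g.\ via $u_{\varepsilon}(x)=u(x)-(-1)^{k}\int\Delta_{\varepsilon s,x_i}^{k}u(x)\,\phi(s)\,ds$, which is smooth in $x_i$ and satisfies $\|u_{\varepsilon}-u\|_{\infty}\le CM\varepsilon^{l_i}$ by construction, or invoke a Whitney-type inequality to pass from vanishing moments to the $k$-th modulus of smoothness. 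None of these is the ``direct computation'' you describe. Since you, like the paper, ultimately lean on the cited literature for this direction, the flaw does not derail your overall plan, but the moment-counting claim should not be presented as a proof.
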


The same is also valid not only for the whole space $R^{M}$ but also
for it's subsets of the form
$R^{M}\cap\{x_{i_{1}},x_{i_{2}},...,x_{i_{K}}\geq0\}$ with $K\leq
M$. Note that below we prove an analogous statement for weighted
spaces.

It is known that functions from the space $C^{\overline{l}}(R^{M})$ have also
mixed derivatives up to definite orders and all derivatives are H\"{o}lder
continuous with respect to all variables with some exponents in accordance
with ratios between the exponents $l_{i}$. Namely, if $\overline{k}%
=(k_{1},...,k_{M})$ with nonnegative integers $k_{i}$, $k_{i}\leq\lbrack
l_{i}]$, and
\begin{equation}
\omega=1-\sum_{i=1}^{N}\frac{k_{i}}{l_{i}}>0, \label{s1.11.1}%
\end{equation}
then (see for example \cite{Sol15} )
\begin{equation}
D_{x}^{\overline{k}}u(x)\in C^{\overline{d}}(R^{M}),\ \ \ \ \Vert
D_{x}^{\overline{k}}u\Vert_{C^{\overline{d}}(R^{M})}\leq C\Vert u\Vert
_{C^{\overline{l}}(R^{M})}, \label{s1.12}%
\end{equation}
where
\begin{equation}
\overline{d}=(d_{1},...,d_{M}),\, \,d_{i}=\omega l_{i}. \label{s1.12.1}%
\end{equation}
Moreover, relation \eqref{s1.12} is valid not only for $R^{M}$ but
for any domain $\Omega\subset R^{M}$ with sufficiently smooth
boundary and we have

\begin{equation}
\Vert D_{x}^{\overline{k}}u\Vert_{C^{\overline{d}}(\overline{\Omega})}\leq
C\Vert u\Vert_{C^{\overline{l}}(\overline{\Omega})}. \label{s1.12+1}%
\end{equation}
For special domains of the form $\Omega_{+}=$ $R^{M}\cap\{x_{i_{1}},x_{i_{2}%
},...,x_{i_{K}}\geq0\}$ we have even more strong inequality just for seminorms%

\begin{equation}%
{\displaystyle\sum\limits_{\overline{k}}}
{\displaystyle\sum\limits_{i=1}^{M}}
\left\langle D_{x}^{\overline{k}}u\right\rangle _{x_{i},\overline{\Omega}_{+}%
}^{(d_{i})}\leq C\sum_{i=1}^{M}\left\langle u\right\rangle _{x_{i}%
,\overline{\Omega}_{+}}^{(l_{i})}. \label{s1.12+2}%
\end{equation}
Here the sum is taken over all $\overline{k}$ with the property
\eqref{s1.11.1} and $d_{i}$ are defined in \eqref{s1.12.1}.

The analog of this estimate for an arbitrary smooth domain $\Omega$
(including bounded domains) is

\begin{equation}%
{\displaystyle\sum\limits_{\overline{k}}}
{\displaystyle\sum\limits_{i=1}^{M}}
\left\langle D_{x}^{\overline{k}}u\right\rangle _{x_{i},\overline{\Omega}%
}^{(\widehat{d}_{i})}\leq C\left(  \sum_{i=1}^{M}\left\langle u\right\rangle
_{x_{i},\overline{\Omega}}^{(l_{i})}+|u|_{\overline{\Omega}}^{(0)}\right)
\label{s1.12+2.1}%
\end{equation}
with arbitrary $\widehat{d}_{i}\leq d_{i}$. Note that inequalities
\eqref{s1.7} and \eqref{s1.8} are in fact a particular cases of
\eqref{s1.12+2} for weighted spaces.

It turns out that the weighted space $C_{\omega\gamma}^{\gamma}(\overline{H})$
is embedded into the usual space $C^{\gamma-\omega\gamma}(\overline{H})$.
Namely, we have the following assertion.

\begin{proposition}
\label{Ps1.01}
Let a function $u(x)\in
C_{\omega\gamma}^{\gamma}(\overline{H})$. Then $u(x)$
is continuous in $\overline{H}$ and%

\begin{equation}
\left\langle u\right\rangle _{x,\overline{H}}^{(\gamma-\omega\gamma)}\leq
C\left\langle u\right\rangle _{\omega\gamma,x,\overline{H}}^{(\gamma)}.
\label{s1.01}%
\end{equation}
\end{proposition}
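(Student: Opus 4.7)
The plan is to estimate $|u(x) - u(\bar{x})|$ in terms of $r^{\gamma(1-\omega)}$, where $r = |x-\bar{x}|$, by splitting into two cases according to the relative size of $r$ and $x_N^{\ast} = \max\{x_N, \bar{x}_N\}$. The main idea is that the weighted seminorm is effective when the two points are ``far from the boundary'' relative to their separation; when they are ``close to the boundary,'' one detours through lifted auxiliary points to escape the region where the weight $(x_N^{\ast})^{\omega\gamma}$ degenerates.

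First, for the short-range case $r \leq x_N^{\ast}$, I would apply the definition of the weighted seminorm directly and rewrite
\[
(x_N^{\ast})^{-\omega\gamma}\, r^{\gamma} \;=\; r^{\gamma(1-\omega)}\,\bigl(r/x_N^{\ast}\bigr)^{\omega\gamma} \;\leq\; r^{\gamma(1-\omega)},
\]
which immediately yields $|u(x)-u(\bar{x})| \leq \langle u\rangle_{\omega\gamma,x,\overline{H}}^{(\gamma)}\, r^{\gamma(1-\omega)}$.

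Second, for the long-range case $r > x_N^{\ast}$, I would introduce the lifted points $y = x + r\, e_N$ and $\bar{y} = \bar{x} + r\, e_N$, both lying in $\overline{H}$ and both satisfying $y_N, \bar{y}_N \geq r$. For the three pairs $(x,y)$, $(y,\bar{y})$, and $(\bar{y},\bar{x})$, the separation is of order $r$ and the maximum of the $N$-th coordinates in the pair is at least $r$. Applying the weighted seminorm to each pair bounds each difference by a multiple of $\langle u\rangle_{\omega\gamma,x,\overline{H}}^{(\gamma)}\, r^{-\omega\gamma}\, r^{\gamma} = \langle u\rangle_{\omega\gamma,x,\overline{H}}^{(\gamma)}\, r^{\gamma(1-\omega)}$, and a triangle inequality yields the conclusion with an absolute constant (essentially $C=3$).

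The continuity of $u$ in $\overline{H}$ follows as a by-product, since the estimate just obtained is a genuine H\"older estimate in $\overline{H}$ with positive exponent $\gamma(1-\omega)$. There is no serious obstacle in this proof; the only mildly delicate point is choosing the vertical lift distance to be exactly $r$, which simultaneously ensures that the horizontal separations after lifting remain of order $r$ and that the new $x_N$-coordinates dominate the weight factor.
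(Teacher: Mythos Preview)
Your proof is correct and uses essentially the same idea as the paper: when the separation $r$ is small compared to the height above $\{x_N=0\}$ the weighted estimate applies directly, and when $r$ is large one lifts the two points vertically by an amount comparable to $r$ so that the weight becomes harmless, then uses the triangle inequality along the three-segment detour.

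The only organizational difference is that the paper treats the $x_N$-direction and the tangential $x'$-direction separately --- for $x_N$ the direct estimate $h^{\omega\gamma}\le (x_N+h)^{\omega\gamma}$ already suffices with no case split, and the lifting trick is invoked only for the $x'$-variables (with lift height $2|\overline{h}|$). Your version handles arbitrary pairs $x,\bar{x}$ in one stroke with lift height exactly $r=|x-\bar{x}|$, which is slightly cleaner and gives the explicit constant $C=3$. Both arguments are equally short; the paper's coordinate-wise split has the minor advantage of isolating the $x_N$-estimate as a stand-alone inequality, which it reuses later.
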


\begin{proof}

We consider the H\"{o}lder property with the exponent
$\gamma-\omega\gamma$ of the function $u(x)$ with respect to the
variable $x_{N}$ and with respect to the variables $x^{\prime}$
separately.

Consider the ratio with $h>0$%

\[
A_{h}\equiv\frac{|u(x^{\prime},x_{N}+h)-u(x^{\prime},x_{N})|}{h^{\gamma
-\omega\gamma}}=h^{\omega\gamma}\frac{|u(x^{\prime},x_{N}+h)-u(x^{\prime
},x_{N})|}{h^{\gamma}}\leq
\]

\[
\leq(x_{N}+h)^{\omega\gamma}\frac{|u(x^{\prime},x_{N}+h)-u(x^{\prime},x_{N}%
)|}{h^{\gamma}}\leq\left\langle u\right\rangle _{\omega\gamma,x,\overline{H}%
}^{(\gamma)}.
\]
Thus it is proved that at least on open set $H$%

\begin{equation}
\left\langle u\right\rangle _{x_{N},H}^{(\gamma-\omega\gamma)}\leq\left\langle
u\right\rangle _{\omega\gamma,x_{N},\overline{H}}^{(\gamma)}. \label{s1.02}%
\end{equation}
Let now $\overline{h}=(h_{1},...,h_{N-1})$. Consider the expression%

\[
A_{h}\equiv\frac{|u(x^{\prime}+\overline{h},x_{N})-u(x^{\prime},x_{N}%
)|}{|\overline{h}|^{\gamma-\omega\gamma}}.
\]
If $|\overline{h}|\leq x_{N}/2$ we can write%

\[
A_{h}=|\overline{h}|^{\omega\gamma}\frac{|u(x^{\prime}+\overline{h}%
,x_{N})-u(x^{\prime},x_{N})|}{|\overline{h}|^{\gamma}}\leq
\]

\begin{equation}
\leq Cx_{N}^{\omega\gamma}\frac{|u(x^{\prime}+\overline{h},x_{N})-u(x^{\prime
},x_{N})|}{|\overline{h}|^{\gamma}}\leq C\left\langle u\right\rangle
_{\omega\gamma,x^{\prime},\overline{H}}^{(\gamma)}. \label{s1.03}%
\end{equation}
If now $|\overline{h}|>x_{N}/2$, then we estimate $A_{h}$ as%

\[
A_{h}\leq\frac{|u(x^{\prime}+\overline{h},x_{N})-u(x^{\prime}+\overline
{h},x_{N}+2|\overline{h}|)|}{|\overline{h}|^{\gamma}}+
\]

\[
+\frac{|u(x^{\prime}+\overline{h},x_{N}+2|\overline{h}|)-u(x^{\prime}%
,x_{N}+2|\overline{h}|)|}{|\overline{h}|^{\gamma-\omega\gamma}}+\frac
{|u(x^{\prime},x_{N}+2|\overline{h}|)-u(x^{\prime},x_{N})|}{|\overline
{h}|^{\gamma-\omega\gamma}}\equiv%
{\displaystyle\sum\limits_{i=1}^{3}} I_{i}.
\]
The estimates for $I_{1}$ and $I_{3}$ follow from \eqref{s1.02} and
the estimate for $I_{2}$ follows from \eqref{s1.03} because in this
case $|\overline{h}|\leq(x_{N}+2|\overline{h}|)/2$. Thus in
this case%

\[
A_{h}\leq C(\left\langle u\right\rangle _{\omega\gamma,x_{N},\overline{H}%
}^{(\gamma)}+\left\langle u\right\rangle _{\omega\gamma,x^{\prime}%
,\overline{H}}^{(\gamma)})\leq C\left\langle u\right\rangle _{\omega
\gamma,x,\overline{H}}^{(\gamma)}.
\]
Consequently, it is proved that on open set $H$%

\begin{equation}
\left\langle u\right\rangle _{x^{\prime},H}^{(\gamma-\omega\gamma)}\leq
C\left\langle u\right\rangle _{\omega\gamma,x,\overline{H}}^{(\gamma)}.
\label{s1.04}%
\end{equation}
From \eqref{s1.02} and
\eqref{s1.04} it follows that%

\[
\left\langle u\right\rangle _{x,H}^{(\gamma-\omega\gamma)}\leq C\left\langle
u\right\rangle _{\omega\gamma,x,\overline{H}}^{(\gamma)}.
\]
This means that $u(x)$ has a finite limit as $x_{N}\rightarrow0$ and
consequently can be defined at $x_{N}=0$ as a continuous function
with \eqref{s1.01}. Thus the proposition follows.
\end{proof}

We need also the analog of relation \eqref{s1.1} for weighted
seminorm.

\begin{proposition}
\label{Ps1.1}
Let $l=m+\gamma>0$ be noninteger, $m=[l]$, ,
$\gamma\in(0,1)$, and let a function $u(y)\in
C_{\omega\gamma}^{l}([0,\infty))$, $\omega\in(0,1)$, in the
sense that%

\begin{equation}
\left\langle D_{y}^{m}u\right\rangle _{\omega\gamma,y}^{(\gamma)}=\sup
_{y,h>0}(y+h)^{\omega\gamma}\frac{|D_{y}^{m}u(y+h)-D_{y}^{m}u(y)|}{h^{\gamma}%
}<\infty. \label{s1.15}%
\end{equation}

Then for any integer $k>l$%

\begin{equation}
\left\langle D_{y}^{m}u\right\rangle _{\omega\gamma,y}^{(\gamma)}\leq
C_{k}\sup_{y,h>0}y^{\omega\gamma}\frac{|\Delta_{h}^{k}u(y)|}{h^{l}}\equiv
C_{k}\left\langle \left\langle u\right\rangle \right\rangle _{\omega\gamma
,y}^{(l)(k)}, \label{s1.16}%
\end{equation}
where $\Delta_{h}^{k}u(y)$ is the $k$-th difference with the step
$h$, $\Delta_{h}^{1}u(y)=\Delta_{h}u(y)=u(y+h)-u(y)$, $\Delta_{h}%
^{k}u(y)=\Delta_{h}(\Delta_{h}^{k-1}u(y))$. Note that the inverse
inequality to \eqref{s1.16} is evident because of the mean value
theorem.
\end{proposition}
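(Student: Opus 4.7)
The plan is to adapt the standard Marchaud-type argument behind the classical (unweighted) equivalence (Proposition \ref{Ps1.1}) to the weighted half-line, absorbing the weight $y^{\omega\gamma}$ via an interior-localization trick. The inverse inequality to \eqref{s1.16} is routine: expand $\Delta_h^{k}u$ as $\Delta_h^{k-m}[\Delta_h^{m}u]$, represent $\Delta_h^{m}u$ by iterated integration of $D_y^{m}u$, and invoke the weighted H\"older bound on $D_y^{m}u$. I focus on the forward direction and fix $y, h > 0$.

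\textbf{Interior regime, $h \le y/(8k)$.} On the interval $I_y = [y/2, 2y]$ the weight $z^{\omega\gamma}$ is equivalent to $y^{\omega\gamma}$ with constants depending only on $\omega\gamma$, and $I_y$ is long enough to admit $k$-th differences of step $\le y/(8k)$. The classical unweighted equivalence applied to $u|_{I_y}$ yields
\[
\frac{|D_y^{m}u(y+h)-D_y^{m}u(y)|}{h^{\gamma}} \le C_k \sup_{[z,z+k\delta]\subset I_y}\frac{|\Delta_{\delta}^{k}u(z)|}{\delta^{l}} \le C_k\, y^{-\omega\gamma}\,\langle\langle u\rangle\rangle_{\omega\gamma,y}^{(l)(k)},
\]
since $z \ge y/2$ on $I_y$. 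Multiplying by $(y+h)^{\omega\gamma}\simeq y^{\omega\gamma}$ closes this regime.

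\textbf{Boundary-adjacent regime, $h > y/(8k)$.} Here $(y+h)^{\omega\gamma} \le C h^{\omega\gamma}$, so it suffices to show $|D_y^{m}u(y+h) - D_y^{m}u(y)| \le C\, h^{(1-\omega)\gamma}\langle\langle u\rangle\rangle_{\omega\gamma,y}^{(l)(k)}$. I telescope via $z_j = y + (1-2^{-j})h$, $j \ge 0$, so $z_0 = y$, $z_j \to y+h$, and $z_{j+1}-z_j = 2^{-j-1}h$. Setting $j_0 = \lceil \log_2(4k+1)\rceil$, one checks that for every $j \ge j_0$ and every $y \ge 0$ the pair $(z_j, z_{j+1})$ falls in the interior regime of $z_j$, so the previous step gives $|D_y^{m}u(z_{j+1}) - D_y^{m}u(z_j)| \le C_k z_j^{-\omega\gamma}(2^{-j-1}h)^{\gamma}\langle\langle u\rangle\rangle$; since $z_j \ge h/2$ for $j\ge 1$, summing over $j \ge j_0$ produces a convergent geometric series of order $h^{(1-\omega)\gamma}\langle\langle u\rangle\rangle$. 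The intermediate terms $1 \le j < j_0$ are finitely many and handled by the same interior estimate (on slightly larger reference intervals), since $z_j$ and $z_{j+1}-z_j$ are then both comparable to $h$.

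\textbf{Main obstacle.} The only truly problematic term is $j=0$: the pair $(y, y+h/2)$, where $y$ may be arbitrarily small relative to $h$, so interior localization fails. I plan to handle it by a Marchaud-style inversion of the $k$-th difference operator, expressing $D_y^{m}u(y)$ through $\Delta_h^{k}u(y)$ together with the already-controlled values of $D_y^{m}u$ at the shifted points $y+ih$ ($i=1,\dots,k$), and trading the singular weight $y^{-\omega\gamma}$ in $|\Delta_h^{k}u(y)|\le y^{-\omega\gamma}h^{l}\langle\langle u\rangle\rangle$ against the gain $h^{(1-\omega)\gamma}$ afforded by $(y+h)^{\omega\gamma}/h^{\omega\gamma}\le C$. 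This balancing of the degenerating weight against the small-scale gain is the analytically delicate point where all the weighted features of the proposition come into play.
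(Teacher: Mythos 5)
Your overall split into an interior regime $h\lesssim y$ and a boundary-adjacent one $h\gtrsim y$ is the same split the paper makes (the $\varepsilon-$ and $\varepsilon+$ pieces in \eqref{s1.16.1}), but both halves of your argument diverge from the paper's and both currently have gaps. In the interior regime, the displayed inequality
\[
\frac{|D_y^{m}u(y+h)-D_y^{m}u(y)|}{h^{\gamma}}\le C_k\sup_{[z,z+k\delta]\subset I_y}\frac{|\Delta_{\delta}^{k}u(z)|}{\delta^{l}}
\]
is simply false for $u$ restricted to the bounded interval $I_y=[y/2,2y]$: take $u(y)=(y-y_0)^{k-1}$ with $y_0\in I_y$; the right-hand side vanishes while the left-hand side does not. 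The $k$-th-difference characterization of $\langle D^m u\rangle^{(\gamma)}$ holds on the full line or half-line precisely because intermediate-degree polynomials are excluded by global finiteness of the seminorms, and on a bounded interval it necessarily carries lower-order terms. The paper handles exactly this point in its treatment of $h<\varepsilon y_0$: it subtracts the degree-$m$ Taylor polynomial of $u$ at $y_0$, multiplies by a cutoff $\eta$ so as to have a compactly supported function on the half-line, applies the unweighted equivalence to $u\eta$, and then estimates the commutator terms in $\Delta_\delta^k(u\eta)$. Those commutators contribute a term $C_k\varepsilon^{1-\gamma}\langle D^m u\rangle^{(\gamma)}$ which is \emph{not} negligible and has to be absorbed at the end by choosing $\varepsilon$ small; your version, which claims no such extra term and has no absorption step, cannot close.

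In the boundary-adjacent regime your dyadic telescope is a reasonable Marchaud-type idea, but every step of it invokes the interior estimate (so inherits the gap above), and, more importantly, you explicitly leave the $j=0$ increment $|D_y^m u(y+h/2)-D_y^m u(y)|$, with $y$ possibly as small as $0$, as an unresolved ``plan''. That increment is precisely where the weighted character of the proposition bites, and you do not prove it. The paper avoids the issue entirely by treating the whole $h\ge\varepsilon y$ piece with a compactness/blow-up argument: if \eqref{s1.16.2} failed, one normalizes, rescales by $h_p$, subtracts the Taylor polynomial at the interior point $z=1$, extracts a limit $r$ by the Arzela theorem and Proposition~\ref{Ps1.01}, observes $\Delta_h^k r\equiv 0$ so $r$ is a polynomial of degree $\le k-1$, and then the surviving bound \eqref{s1.29} forces $D^m r$ to be constant, contradicting \eqref{s1.26}. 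This Simon-style contradiction sidesteps the delicate balancing you describe. As it stands your proposal leaves the crucial step unproved; to close it you would have to either carry out the Marchaud inversion you sketch (tracking the weight carefully as $y\to0$) or switch to a compactness argument as the paper does.
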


\begin{proof}
The idea of the proof is taken from \cite{LS} and demonstrates also
the main idea of the proof of Theorem \ref{Ts1.1}. Let
$\varepsilon\in(0,1)$ be fixed and will be chosen later. To prove
\eqref{s1.16} we represent $\left\langle D_{y}^{m}u\right\rangle
_{\omega\gamma,y}^{(\gamma)}$ as%

\[
\left\langle D_{y}^{m}u\right\rangle _{\omega\gamma,y}^{(\gamma)}\leq
\sup_{y,h\geq\varepsilon y}(y+h)^{\omega\gamma}\frac{|D_{y}^{m}u(y+h)-D_{y}%
^{m}u(y)|}{h^{\gamma}}+
\]

\begin{equation}
+\sup_{y,0<h<\varepsilon y}(y+h)^{\omega\gamma}\frac{|D_{y}^{m}u(y+h)-D_{y}%
^{m}u(y)|}{h^{\gamma}}\equiv\left\langle D_{y}^{m}u\right\rangle
_{\omega\gamma,y}^{(\gamma)(\varepsilon+)}+\left\langle D_{y}^{m}%
u\right\rangle _{\omega\gamma,y}^{(\gamma)(\varepsilon-)}. \label{s1.16.1}%
\end{equation}

We are going to consider the two cases for the relation between $\left\langle
D_{y}^{m}u\right\rangle _{\omega\gamma,y}^{(\gamma)(\varepsilon+)}$ and
$\left\langle D_{y}^{m}u\right\rangle _{\omega\gamma,y}^{(\gamma
)(\varepsilon-)}$.

Suppose first that%

\begin{equation}
\left\langle D_{y}^{m}u\right\rangle _{\omega\gamma,y}^{(\gamma)(\varepsilon
-)}\leq\left\langle D_{y}^{m}u\right\rangle _{\omega\gamma,y}^{(\gamma
)(\varepsilon+)}, \label{s1.16.3}%
\end{equation}
and consequently

\begin{equation}
\left\langle D_{y}^{m}u\right\rangle _{\omega\gamma,y}^{(\gamma)(\varepsilon
+)}\leq\left\langle D_{y}^{m}u\right\rangle _{\omega\gamma,y}^{(\gamma)}%
\leq2\left\langle D_{y}^{m}u\right\rangle _{\omega\gamma,y}^{(\gamma
)(\varepsilon+)}. \label{s1.16.4}%
\end{equation}
We prove that in this case%

\begin{equation}
\left\langle D_{y}^{m}u\right\rangle _{\omega\gamma,y}^{(\gamma)(\varepsilon
+)}\leq C_{\varepsilon,k}\sup_{y,h>0}y^{\omega\gamma}\frac{|\Delta_{h}%
^{k}u(y)|}{h^{l}}. \label{s1.16.2}%
\end{equation}
The proof is by contradiction. Suppose that \eqref{s1.16.2} is not
valid. Then for any positive integer $p$ there exists a function
$u_{p}(y)\in C_{\omega\gamma}^{l}([0,\infty))$ with

\begin{equation}
\sup_{y,h\geq\varepsilon y}(y+h)^{\omega\gamma}\frac{|D_{y}^{m}u_{p}%
(y+h)-D_{y}^{m}u_{p}(y)|}{h^{\gamma}}\geq p\sup_{y,h>0}y^{\omega\gamma}%
\frac{|\Delta_{h}^{k}u_{p}(y)|}{h^{l}}. \label{s1.17}%
\end{equation}
Consider the functions

\begin{equation}
w_{p}(y)=\frac{u_{p}(y)}{\left\langle D_{y}^{m}u_{p}\right\rangle
_{\omega\gamma,y}^{(\gamma)}}. \label{s1.18}%
\end{equation}
For such functions we have by the definition and by \eqref{s1.17},
\eqref{s1.16.4}

\begin{equation}
\left\langle D_{y}^{m}w_{p}\right\rangle _{\omega\gamma,y}^{(\gamma)}%
=1,\quad\sup_{y,h\geq\varepsilon y}(y+h)^{\omega\gamma}\frac{|D_{y}^{m}%
w_{p}(y+h)-D_{y}^{m}w_{p}(y)|}{h^{\gamma}}\geq\frac{1}{2}, \label{s1.19}%
\end{equation}

\begin{equation}
\sup_{y,h>0}y^{\omega\gamma}\frac{|\Delta_{h}^{k}w_{p}(y)|}{h^{l}}\leq\frac
{1}{p}. \label{s1.20}%
\end{equation}
It follows from the second relation in \eqref{s1.19} that there
exist sequences
$\{y_{p}\} \subset\lbrack0,\infty)$ and $\{h_{p}\} \subset(0,\infty)$ with%

\begin{equation}
(y_{p}+h_{p})^{\omega\gamma}\frac{|D_{y}^{m}w_{p}(y_{p}+h_{p})-D_{y}^{m}%
w_{p}(y_{p})|}{h_{p}^{\gamma}}\geq\frac{1}{4}. \label{s1.21}%
\end{equation}
Now we apply the scaling arguments. Define the sequence of scaled
functions $\{v_{p}$\bigskip$(z)\}$, $z\in\lbrack0,\infty)$,%

\begin{equation}
v_{p}\bigskip(z)\equiv h_{p}^{-m-(1-\omega)\gamma}w_{p}(zh_{p}). \label{s1.22}%
\end{equation}
It follows from this definition and from \eqref{s1.19}-
\eqref{s1.21} that

\begin{equation}
\left\langle D_{z}^{m}v_{p}\right\rangle _{\omega\gamma,z}^{(\gamma)}%
=1,\quad\sup_{z,h>0}z^{\omega\gamma}\frac{|\Delta_{h}^{k}v_{p}(z)|}{h^{l}}%
\leq\frac{1}{p}, \label{s1.23}%
\end{equation}

\begin{equation}
(z_{n}+1)^{\omega\gamma}|D_{z}^{m}v_{p}(z_{p}+1)-D_{z}^{m}v_{p}(z_{p}%
)|\geq\frac{1}{4}, \label{s1.24}%
\end{equation}
where $z_{p}=y_{p}/h_{p}$. \ Let now $P_{m}^{(p)}(z)$ be the Taylor
polynomial of the degree $m$ for the function $v_{p}(z)$ at the
point, for example, $z=1$. Since $D_{z}^{m}P_{m}^{(p)}(z)=const$ and
$k>m$ in \eqref{s1.23}, we have for the functions
$r_{p}(z)=v_{p}(z)-P_{m}^{(p)}(z)$%

\begin{equation}
\left\langle D_{z}^{m}r_{p}\right\rangle _{\omega\gamma,z}^{(\gamma)}%
=1,\quad\sup_{z,h>0}z^{\omega\gamma}\frac{|\Delta_{h}^{k}r_{p}(z)|}{h^{l}}%
\leq\frac{1}{p}, \label{s1.25}%
\end{equation}

\begin{equation}
(z_{p}+1)^{\omega\gamma}|D_{z}^{m}r_{p}(z_{p}+1)-D_{z}^{m}r_{p}(z_{p}%
)|\geq\frac{1}{4}. \label{s1.26}%
\end{equation}

From Proposition \ref{Ps1.01}, the first relation in \eqref{s1.23},
and from the fact that $D^{i}r_{p}(1)=0$, $i=\overline{0,m}$
it follows that%

\begin{equation}
\left\Vert r_{p}\right\Vert _{C^{m+(1-\omega)\gamma}(K)}\leq
C(K)=CR^{m},
\label{s1.27}%
\end{equation}
where $K$ is a compact set in $[0,\infty)$, $K\subseteq\lbrack0,R]$,
$R>0$. From this and the Arzela theorem we conclude that (at least
for a subsequence) $D^{i}r_{p}(z)$, $i=\overline{0,m}$,
uniformly converge on compact sets $K$ to some function $r(z)$ and it's derivatives%

\begin{equation}
D^{i}r_{p}(z)\rightrightarrows_{K}D^{i}r(z),\quad i=\overline{0,m}.
\label{s1.28}%
\end{equation}
This, together with the first relation in
\eqref{s1.25}, in particular, gives%

\begin{equation}
\left\langle D_{z}^{m}r\right\rangle _{\omega\gamma,z}^{(\gamma)}+\left\langle
D_{z}^{m}r\right\rangle _{z}^{((1-\omega)\gamma)}\leq1. \label{s1.29}%
\end{equation}
 Let now $z,h>0$ be fixed. From
\eqref{s1.25} it follows that%

\[
z^{\omega\gamma}|\Delta_{h}^{k}r_{p}(z)|\leq\frac{1}{p}h^{l}%
\]
and letting $p \rightarrow\infty$ we obtain $\Delta_{h}^{k}r(z)=0$.
As $z$ and $h$ are arbitrary we conclude that%

\[
\Delta_{h}^{k}r(z)\equiv0,\quad z,h>0,
\]
and consequently $r(z)$ is a polynomial of degree not greater than
$k-1$. Moreover $D^{m}r(z)$ is not a constant because of
\eqref{s1.26}. Indeed, consider the sequence $\{z_{p}\}$. Since we
are considering $A_{1}(\varepsilon)$ with the condition
$h\geq\varepsilon y$, we have $0\leq
z_{p}=y_{p}/h_{p}\leq1/\varepsilon$. Therefore for a subsequence
$z_{p}\rightarrow z_{0}$, $n\rightarrow\infty$. Then it follows from
\eqref{s1.26} and \eqref{s1.28} that%

\[
(z_{0}+1)^{\omega\gamma}|D_{z}^{m}r(z_{0}+1)-D_{z}^{m}r(z_{0})|\geq\frac{1}{4}%
\]
that is $D^{m}r(z)$ is not a constant polynomial. But this fact
contradicts to \eqref{s1.29} since a non constant polynomial can not
have finite seminorms as those in \eqref{s1.29}. This contradiction
shows that \eqref{s1.16.2} is valid with some constant
$C_{\varepsilon,k}$ and in this case we have also \eqref{s1.16} with
such $C_{\varepsilon,k}$ by virtue of \eqref{s1.16.4}.

Suppose now that $\left\langle D_{y}^{m}u\right\rangle
_{\omega\gamma ,y}^{(\gamma)(\varepsilon+)}\leq\left\langle
D_{y}^{m}u\right\rangle _{\omega\gamma,y}^{(\gamma)(\varepsilon-)}$.
In this case we have instead of \eqref{s1.16.4}%

\begin{equation}
\left\langle D_{y}^{m}u\right\rangle _{\omega\gamma,y}^{(\gamma)(\varepsilon
-)}\leq\left\langle D_{y}^{m}u\right\rangle _{\omega\gamma,y}^{(\gamma)}%
\leq2\left\langle D_{y}^{m}u\right\rangle _{\omega\gamma,y}^{(\gamma
)(\varepsilon-)}. \label{s1.30}%
\end{equation}
We prove in this case the estimate%

\begin{equation}
\left\langle D_{y}^{m}u\right\rangle _{\omega\gamma,y}^{(\gamma)(\varepsilon
-)}\leq C_{\varepsilon,k}\sup_{y,h>0}y^{\omega\gamma}\frac{|\Delta_{h}%
^{k}u(y)|}{h^{l}}+C_{k}\varepsilon^{1-\gamma}\left\langle D_{y}^{m}%
u\right\rangle _{\omega\gamma,y}^{(\gamma)}, \label{s1.30.1}%
\end{equation}
where $C_{k}$ does not depend on $\varepsilon\in(0,1/(8k))$. We
apply some local considerations around arbitrary point in
$[0,\infty)$ . Let $y_{0}>0$ and $0<h<\varepsilon y_{0}$ be fixed.
Let $B=[y_{0}/4,7y_{0}/4]$ be a ball with center in $y_{0}$ and of
radius $3y_{0}/4.$ Denote by $\eta(y)\in
C^{\infty}([0,\infty))$ a smooth function with the properties%

\begin{equation}
\eta(y)\equiv1,|y-y_{0}|\leq\frac{1}{4}y_{0},\quad\eta(y)\equiv0,|y-y_{0}%
|\geq\frac{1}{2}y_{0},\quad|D_{y}^{s}\eta(y)|\leq C_{s}y_{0}^{-s}. \label{s1.31}%
\end{equation}
Without loss of generality we can assume that%

\begin{equation}
D_{y}^{i}u(y_{0})=0,\quad i=0,1,...,m. \label{s1.32}%
\end{equation}
If it is not the case we can consider $\overline{u}(y)=u(y)-P_{y_{0}}%
^{(m)}(y)$ instead of $u(y)$, where $P_{y_{0}}^{(m)}(y)$ is the
Taylor polynomial of $u(y)$ of power $m$ at the point $y_{0}$. It is
possible because
$\Delta_{h}D_{y}^{m}u(y)\equiv\Delta_{h}D_{y}^{m}\overline{u}(y)$
and $\Delta_{h}^{k}u(y)\equiv\Delta_{h}^{k}\overline{u}(y)$. Denote
also $v(y)=u(y)\eta(y)$. Keeping in mind the definition of
$\left\langle D_{y}^{m}u\right\rangle _{\omega\gamma
,y}^{(\gamma)(\varepsilon-)}$,  we have by virtue of the
properties of $\eta$ in \eqref{s1.31} and $h<\varepsilon y_{0}<y_{0}/4$%

\[
A_{2}^{(y_{0},h)}(\varepsilon)\equiv(y_{0}+h)^{\omega\gamma}\frac{|D_{y}%
^{m}u(y_{0}+h)-D_{y}^{m}u(y_{0})|}{h^{\gamma}}=
\]

\begin{equation}
=(y_{0}+h)^{\omega\gamma}\frac{|D_{y}^{m}v(y_{0}+h)-D_{y}^{m}v(y_{0}%
)|}{h^{\gamma}}\equiv(y_{0}+h)^{\omega\gamma}\cdot A. \label{s1.33}%
\end{equation}
Note that the truncated function $v(y)=u(y)\eta(y)\in C^{m+\gamma}%
([0,\infty))$ , that is to the usual space without a weight. Thus by
\eqref{s1.11} we have ($l=m+\gamma$)%

\begin{equation}
A\leq C\sup_{y,h>0}\frac{|\Delta_{h}^{k}v(y)|}{h^{l}}. \label{s1.34}%
\end{equation}
The ratio in the right hand side of this inequality has the form%

\[
\frac{\Delta_{h}^{k}v(y)}{h^{l}}=\frac{\Delta_{h}^{k}\left(  u(y)\eta
(y)\right)  }{h^{l}}=%
{\displaystyle\sum\limits_{i=0}^{k}}
C_{i}\frac{\Delta_{h}^{i}u(y_{i}^{(u)})\Delta_{h}^{k-i}\eta(y_{i}^{(\eta)}%
)}{h^{l}}=
\]

\begin{equation}
=\frac{\Delta_{h}^{k}u(y)}{h^{l}}\eta(y_{k}^{(\eta)})+%
{\displaystyle\sum\limits_{i=0}^{k-1}}
C_{i}\frac{\Delta_{h}^{i}u(y_{i}^{(u)})\Delta_{h}^{k-i}\eta(y_{i}^{(\eta)}%
)}{h^{l}}\equiv I_{k}+%
{\displaystyle\sum\limits_{i=0}^{k-1}}
I_{i}, \label{s1.35}%
\end{equation}
where $y_{i}^{(u)}=y+n_{i}h$, $y_{i}^{(\eta)}=y+m_{i}h$, and
$n_{i}\leq k$, $m_{i}\leq k$, $C_{i}\leq C(k)$ are some integers.
Evidently, by virtue of \eqref{s1.31}

\begin{equation}
|I_{k}|\leq\sup_{y\in B,h>0}\frac{|\Delta_{h}^{k}u(y)|}{h^{l}}. \label{s1.36}%
\end{equation}
Let us estimate expressions $I_{i}$ in \eqref{s1.35}. First, it
follows from \eqref{s1.31} and the mean value theorem that%

\begin{equation}
|\Delta_{h}^{k-i}\eta(y_{i}^{(\eta)})|\leq C_{k}h^{k-i}y_{0}^{-(k-i)}.
\label{s1.37}%
\end{equation}
Besides, as it follows from \eqref{s1.32},%

\[
|D_{y}^{i}u(y)|\leq C|y-y_{0}|^{m+\gamma-i}\left\langle D_{y}^{m}%
u\right\rangle _{B}^{(\gamma)},\quad y\in B,i=\overline{0,m}.
\]
Since $\varepsilon<1/(8k)$ is sufficiently small and $h<\varepsilon
y_{0}$, it follows from the last inequality and the mean value
theorem that%

\begin{equation}
|\Delta_{h}^{i}u(y_{i}^{(u)})|\leq C_{k}%
\genfrac{\{}{.}{0pt}{0}{h^{i}y_{0}^{m+\gamma-i}\left\langle D_{y}%
^{m}u\right\rangle _{B}^{(\gamma)},\quad i\leq m,}{h^{m+\gamma}\left\langle
D_{y}^{m}u\right\rangle _{B}^{(\gamma)},\quad m<i\leq k-1.}
\label{s1.38}%
\end{equation}
From \eqref{s1.37} and \eqref{s1.38} we have ($h<\varepsilon y_{0}$)%

\[
|I_{i}|\leq C_{k}h^{-l}h^{k-i}y_{0}^{-(k-i)}h^{i}y_{0}^{m+\gamma
-i}\left\langle D_{y}^{m}u\right\rangle _{B}^{(\gamma)}=
\]

\begin{equation}
=C_{k}h^{(k-l)}y_{0}^{-(k-l)}\left\langle D_{y}^{m}u\right\rangle
_{B}^{(\gamma)}\leq C_{k}\varepsilon^{(k-l)}\left\langle D_{y}^{m}%
u\right\rangle _{B}^{(\gamma)},\quad i\leq m, \label{s1.39}%
\end{equation}
and

\[
|I_{i}|\leq C_{k}h^{-l}h^{k-i}y_{0}^{-(k-i)}h^{m+\gamma}\left\langle D_{y}%
^{m}u\right\rangle _{B}^{(\gamma)}=
\]

\begin{equation}
=C_{k}h^{k-i}y_{0}^{-(k-i)}\left\langle D_{y}^{m}u\right\rangle _{B}%
^{(\gamma)}\leq C_{k}\varepsilon^{(k-i)}\left\langle D_{y}^{m}u\right\rangle
_{B}^{(\gamma)},\quad m<i\leq k-1. \label{s1.40}%
\end{equation}
From \eqref{s1.33}- \eqref{s1.36}, \eqref{s1.39}, and \eqref{s1.40}
it follows that the expression
$A_{2}^{(y_{0},h)}(\varepsilon)$ in \eqref{s1.33} is estimated as follows%

\[
A_{2}^{(y_{0},h)}(\varepsilon)\leq C_{k}(y_{0}+h)^{\omega\gamma}\sup_{y\in
B,h>0}\frac{|\Delta_{h}^{k}u(y)|}{h^{l}}+C_{k}\varepsilon^{1-\gamma}%
(y_{0}+h)^{\omega\gamma}\left\langle D_{y}^{m}u\right\rangle _{B}^{(\gamma)}.
\]
Since $h\leq\varepsilon y_{0}$ and on the ball $B$ we have
$y_{0}/4\leq y\leq7y_{0}/4$, we infer

\[
A_{2}^{(y_{0},h)}(\varepsilon)\leq C_{k}\sup_{y\in B,h>0}y^{\omega\gamma}%
\frac{|\Delta_{h}^{k}u(y)|}{h^{l}}+C_{k}\varepsilon^{1-\gamma}\left\langle
D_{y}^{m}u\right\rangle _{\omega\gamma,y,B}^{(\gamma)}.
\]
As the point $y_{0}$ is arbitrary, we obtain \eqref{s1.30.1}.

Combining now estimates \eqref{s1.16.2} and \eqref{s1.30.1}, we
obtain with $\varepsilon<1/8k$

\[
\left\langle D_{y}^{m}u\right\rangle _{\omega\gamma,y}^{(\gamma)}\leq
C_{\varepsilon,k}\sup_{y,h>0}y^{\omega\gamma}\frac{|\Delta_{h}^{k}u(y)|}%
{h^{l}}+C_{k}\varepsilon^{1-\gamma}\left\langle D_{y}^{m}u\right\rangle
_{\omega\gamma,y}^{(\gamma)}.
\]
Choosing now $\varepsilon$ in the last term sufficiently small and
absorbing this term in the left hand side, we arrive at the
assertion of the proposition.

\end{proof}

As a corollary we have the following assertion.

\begin{proposition}
\label{Ps1.2}

Let a function $u(x)$ be defined in $\overline{H}$ and
\[
\left\langle u\right\rangle _{\omega\gamma,x,\overline{H}}^{(\gamma)}%
\equiv\sup_{x,\overline{h}\in\overline{H}}(x_{N}+h_{N})^{\omega\gamma}%
\frac{|u(x+\overline{h})-u(x)|}{\left\vert \overline{h}\right\vert ^{\gamma}%
}<\infty,\quad\gamma\in(0,1).
\]

Then for any integer $k\geq1$ there is a constant $C_{k}^{(i)}=C^{(i)}%
(k,N,\gamma,\omega)$, $i=1,2$ with%

\begin{equation}
\left\langle u\right\rangle _{\omega\gamma,x,\overline{H}}^{(\gamma)}\leq
C_{k}^{(1)}\sup_{x,\overline{h}\in\overline{H}}x_{N}{}^{\omega\gamma}%
\frac{|\Delta_{\overline{h}}^{k}u(x)|}{\left\vert \overline{h}\right\vert
^{\gamma}}\equiv C_{k}^{(1)}\left\langle \left\langle u\right\rangle
\right\rangle _{\omega\gamma,x,\overline{H}}^{(\gamma)(k)} \label{s1.001}%
\end{equation}
and

\begin{equation}
\left\langle \left\langle u\right\rangle \right\rangle _{\omega\gamma
,x,\overline{H}}^{(\gamma)(k)}\leq C_{k}^{(2)}\left\langle u\right\rangle
_{\omega\gamma,x,\overline{H}}^{(\gamma)}. \label{s1.002}%
\end{equation}
\end{proposition}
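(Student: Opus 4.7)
The plan is to mirror the proof of the one-dimensional version, Proposition \ref{Ps1.1}, now in the $N$-dimensional setting with $m=0$, $l=\gamma$. The trivial direction \eqref{s1.002} comes first: write $\Delta_{\overline{h}}^{k}u$ as an iterate of first differences, so that $|\Delta_{\overline{h}}^{k}u(x)|$ is a sum of at most $2^{k}$ terms of the form $|u(x+(j+1)\overline{h})-u(x+j\overline{h})|$ with $0\le j\le k-1$, each bounded by $C|\overline{h}|^{\gamma}(x_{N}+jh_{N})^{-\omega\gamma}\langle u\rangle^{(\gamma)}_{\omega\gamma,x,\overline H}\le C|\overline{h}|^{\gamma}x_{N}^{-\omega\gamma}\langle u\rangle^{(\gamma)}_{\omega\gamma,x,\overline H}$; multiplying by $x_{N}^{\omega\gamma}$ gives \eqref{s1.002}.

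For \eqref{s1.001} I fix $\varepsilon\in(0,1)$ and split $\langle u\rangle^{(\gamma)}_{\omega\gamma,x,\overline H}$ into a ``large step'' part over pairs with $|\overline{h}|\ge\varepsilon x_{N}$ and a ``small step'' part over pairs with $|\overline{h}|<\varepsilon x_{N}$, exactly as in \eqref{s1.16.1}. The large-step estimate is handled by a contradiction and scaling argument modelled on \eqref{s1.17}--\eqref{s1.29}: assume the inequality fails for each $p$, take a sequence $u_{p}$, normalise by $\langle u_{p}\rangle^{(\gamma)}_{\omega\gamma,x,\overline H}=1$, and pick near-extremal $(x_{p},\overline{h}_{p})$. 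Rescale by $\lambda_{p}=|\overline{h}_{p}|$ via $v_{p}(z)=\lambda_{p}^{-(1-\omega)\gamma}u_{p}(\lambda_{p}z)$, which preserves both seminorms and normalises the step to unit length; the constraint $|\overline{h}_{p}|\ge\varepsilon x_{p,N}$ keeps the scaled base point $z_{p}=x_{p}/\lambda_{p}$ in the bounded set $\{z_{N}\le 1/\varepsilon\}$. Proposition \ref{Ps1.01} supplies a uniform unweighted $C^{\gamma-\omega\gamma}$-bound on compacts (after subtracting a constant, which the differences ignore), so Arzel\`a--Ascoli extracts a locally uniform limit $v$; passing to the limit in $\langle\langle u_{p}\rangle\rangle^{(\gamma)(k)}_{\omega\gamma,x,\overline H}\le 1/p$ yields $\Delta_{\overline{h}}^{k}v\equiv 0$ on $\overline H$, forcing $v$ to be a polynomial of total degree $<k$. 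The weight $(x_{N}+h_{N})^{\omega\gamma}$ is unbounded as $x_{N}\to\infty$, so finiteness of $\langle v\rangle^{(\gamma)}_{\omega\gamma,x,\overline H}\le 1$ forces $v$ to be constant; this contradicts the near-extremal lower bound that survives the limit along a subsequence $z_{p}\to z_{0}$, $\overline{h}_{p}/\lambda_{p}\to\overline{\zeta}_{0}$ with $|\overline{\zeta}_{0}|=1$.

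The small-step estimate is obtained by localisation, as in \eqref{s1.31}--\eqref{s1.40}: fix $x_{0}\in H$, pick a cutoff $\eta$ supported in a ball $B$ of radius $\sim x_{0,N}$ around $x_{0}$ and equal to $1$ on a smaller concentric ball, subtract the constant $u(x_{0})$, and apply the unweighted version, Proposition \ref{Ps1.1}, to the truncated function $v=(u-u(x_{0}))\eta\in C^{\gamma}$. Expanding $\Delta_{\overline{h}}^{k}v$ by the discrete Leibniz rule as in \eqref{s1.35}, the pure term produces $\langle\langle u\rangle\rangle^{(\gamma)(k)}_{\omega\gamma,x,\overline H}$, while each mixed term carries a factor $|\Delta_{\overline{h}}^{k-i}\eta|\le C(|\overline{h}|/x_{0,N})^{k-i}\le C\varepsilon^{k-i}$ from \eqref{s1.31}, together with a factor $|u-u(x_{0})|\le Cx_{0,N}^{\gamma(1-\omega)}\langle u\rangle^{(\gamma)}_{\omega\gamma,x,\overline H}$ on $B$ (for $i=0$) or $|\Delta_{\overline{h}}^{i}u|\le C|\overline{h}|^{\gamma}x_{0,N}^{-\omega\gamma}\langle u\rangle^{(\gamma)}_{\omega\gamma,x,\overline H}$ (for $i\ge 1$). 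Dividing by $|\overline{h}|^{\gamma}$, multiplying by $(x_{0,N}+h_{N})^{\omega\gamma}\sim x_{0,N}^{\omega\gamma}$ and summing over $i$ produces the analogue of \eqref{s1.30.1}, with an $\varepsilon$ to a positive power multiplying $\langle u\rangle^{(\gamma)}_{\omega\gamma,x,\overline H}$. Combining the two cases and choosing $\varepsilon$ small enough to absorb that term on the left yields \eqref{s1.001}.

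The step I expect to be trickiest is the limit argument in the large-step case: verifying rigorously that the limit $v$ is a polynomial of total degree strictly less than $k$ (the characterisation $\Delta_{\overline{h}}^{k}v\equiv 0$ for all $\overline{h}\in\overline H$ needs the standard multi-dimensional polynomial identification), and that the weighted seminorm $\langle v\rangle^{(\gamma)}_{\omega\gamma,x,\overline H}$ really forces $v$ to be a constant, despite the weight degenerating at $\{x_{N}=0\}$ (the decisive input is the unbounded growth of $x_{N}^{\omega\gamma}$ at infinity, not its vanishing at the boundary). The rest is bookkeeping, following \eqref{s1.31}--\eqref{s1.40} essentially verbatim.
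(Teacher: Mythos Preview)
Your proposal is correct but takes a considerably longer route than the paper. The paper does not rerun the contradiction/scaling and localisation arguments in $N$ dimensions; instead it reduces \eqref{s1.001} to two already-known facts. One checks the weighted H\"older property in the $x_{N}$-direction and in the tangential $x'$-direction separately. For $\overline{h}=(0,\dots,0,h)$ with $x'$ fixed, the statement is exactly the one-dimensional Proposition~\ref{Ps1.1} applied to $y\mapsto u(x',y)$, so \eqref{s1.16} gives the bound directly. For $\overline{h}=(h',0)$ with $x_{N}$ fixed, the weight $x_{N}^{\omega\gamma}$ is a constant that pulls through the supremum, reducing the question to the standard unweighted equivalence \eqref{s1.11} in $R^{N-1}$. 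Your approach has the merit of being self-contained and of illustrating that the scaling method works in any dimension, but it repeats all the hard work of Proposition~\ref{Ps1.1}; the paper's reduction is a two-line corollary of results already in hand. A minor slip: in your small-step localisation you invoke ``the unweighted version, Proposition~\ref{Ps1.1}'', but that is the one-dimensional weighted statement; what you actually need there is the multidimensional unweighted equivalence \eqref{s1.11}.
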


\begin{proof}

We prove only \eqref{s1.001} because \eqref{s1.002} can be checked
directly.

It is enough to verify the weighted H\"{o}lder property with respect
to $x^{\prime}=(x_{1},...,x_{N-1})$ and $x_{N}$ separately.
Let first $x^{\prime}$ be fixed and $\overline{h}=(0,...,0,h)$, $h>0$. Then%

\[
(x_{N}+h_{N})^{\omega\gamma}\frac{|u(x+\overline{h})-u(x)|}{\left\vert
\overline{h}\right\vert ^{\gamma}}=
\]

\begin{equation}
=(x_{N}+h_{N})^{\omega\gamma}\frac
{|u(x^{\prime},x_{N}+h)-u(x^{\prime},x_{N})|}{h^{\gamma}}\leq C_{k}%
^{(1)}\left\langle \left\langle u\right\rangle \right\rangle _{\omega
\gamma,x,\overline{H}}^{(\gamma)(k)} \label{s1.003}%
\end{equation}
by Proposition \ref{Ps1.1}.  Let now $x_{N}$ be fixed and $\overline{h}=(h_{1}%
,...,h_{N-1},0)=(h^{\prime},0)$ with $h_{N}=0$.  Then

\[
(x_{N}+h_{N})^{\omega\gamma}\frac{|u(x+\overline{h})-u(x)|}{\left\vert
\overline{h}\right\vert ^{\gamma}}=x{}_{N}^{\omega\gamma}\left(
\frac{|u(x^{\prime}+h^{\prime},x_{N})-u(x^{\prime},x_{N})|}{\left\vert
\overline{h}\right\vert ^{\gamma}}\right)  \leq
\]

\begin{equation}
\leq x{}_{N}^{\omega\gamma}\sup_{x^{\prime},h^{\prime}}\frac{|\Delta
_{h^{\prime}}^{k}u(x^{\prime},x_{N})|}{\left\vert h^{\prime}\right\vert
^{\gamma}}=\sup_{x^{\prime},h^{\prime}}x{}_{N}^{\omega\gamma}\frac
{|\Delta_{h^{\prime}}^{k}u(x^{\prime},x_{N})|}{\left\vert h^{\prime
}\right\vert ^{\gamma}}\leq C_{k}^{(1)}\left\langle \left\langle
u\right\rangle \right\rangle _{\omega\gamma,x,\overline{H}}^{(\gamma)(k)}
\label{s1.004}%
\end{equation}
by \eqref{s1.11}.  The assertion of the proposition follows now from
\eqref{s1.003} and \eqref{s1.004}.

\end{proof}

\begin{corollary} \label{Cs1.1}
The seminorms
\[
\left\langle u\right\rangle _{\omega\gamma,\overline{H}}^{(\gamma)}%
=\sup_{x,\overline{x}\in\overline{H}}\left(  x_{N}^{\ast}\right)
^{\omega\gamma}\frac{|u(x)-u(\overline{x})|}{|x-\overline{x}|^{\gamma}%
},\,x_{N}^{\ast}=\max\{x_{N},\overline{x}_{N}\}
\]
with $x_{N}^{\ast}=\max\{x_{N},\overline{x}_{N}\}$ and

\[
\widehat{\left\langle u\right\rangle }_{\omega\gamma,\overline{H}}^{(\gamma
)}=\sup_{x,\overline{x}\in\overline{H}}\left(  \widehat{x}_{N}\right)
^{\omega\gamma}\frac{|u(x)-u(\overline{x})|}{|x-\overline{x}|^{\gamma}},\,
\widehat{x}_{N}=\min\{x_{N},\overline{x}_{N}\}
\]
with $\widehat{x}_{N}=\min\{x_{N},\overline{x}_{N}\}$ are
equivalent.
\end{corollary}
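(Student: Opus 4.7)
The inequality $\widehat{\left\langle u\right\rangle}_{\omega\gamma,\overline{H}}^{(\gamma)}\leq\left\langle u\right\rangle _{\omega\gamma,\overline{H}}^{(\gamma)}$ is immediate from $\widehat{x}_{N}\leq x_{N}^{\ast}$, so the task is the reverse bound $\left\langle u\right\rangle _{\omega\gamma,\overline{H}}^{(\gamma)}\leq C\widehat{\left\langle u\right\rangle}_{\omega\gamma,\overline{H}}^{(\gamma)}$. Given $x,\overline{x}\in\overline{H}$, I may assume $x_{N}\leq\overline{x}_{N}$, so $x_{N}^{\ast}=\overline{x}_{N}$ and $\widehat{x}_{N}=x_{N}$; write $\rho=|x-\overline{x}|$.

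I would split into two cases according to the size of $\rho$ relative to $x_{N}$. \emph{Case A:} $\rho\leq x_{N}/2$. Then $\overline{x}_{N}\leq x_{N}+\rho\leq(3/2)x_{N}$, hence $x_{N}^{\ast}\leq(3/2)\widehat{x}_{N}$ and the estimate reduces directly to the $\widehat{\,\cdot\,}$-seminorm. \emph{Case B:} $\rho>x_{N}/2$. Here $\overline{x}_{N}\leq x_{N}+\rho\leq 3\rho$. Insert the intermediate point $\widetilde{x}=(x^{\prime},\overline{x}_{N})$ and split
\[
u(x)-u(\overline{x})=\bigl[u(x)-u(\widetilde{x})\bigr]+\bigl[u(\widetilde{x})-u(\overline{x})\bigr].
\]
For the second bracket, both points have $x_{N}$-coordinate $\overline{x}_{N}$, so the $\min$ equals $\overline{x}_{N}$ and $|\widetilde{x}-\overline{x}|=|x^{\prime}-\overline{x}^{\prime}|\leq\rho$; multiplication by $\overline{x}_{N}^{\omega\gamma}/\rho^{\gamma}$ yields the bound $\widehat{\left\langle u\right\rangle}_{\omega\gamma,\overline{H}}^{(\gamma)}$ directly.

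The first bracket is a pure $x_{N}$-increment from $x_{N}$ to $\overline{x}_{N}$, and here one cannot apply $\widehat{\left\langle u\right\rangle}$ in one shot because the $\min$-weight $x_{N}^{-\omega\gamma}$ may be singular. I would therefore use a dyadic chain: take $K$ minimal with $2^{K}x_{N}\geq\overline{x}_{N}$, set $y_{k}=(x^{\prime},2^{k}x_{N})$ for $k<K$, $y_{K}=\widetilde{x}$, and estimate each consecutive increment by the $\widehat{\,\cdot\,}$-seminorm; since $|y_{k}-y_{k+1}|\leq 2^{k}x_{N}$ and $\min(y_{k,N},y_{k+1,N})\geq 2^{k}x_{N}$, one obtains $|u(y_{k})-u(y_{k+1})|\leq C(2^{k}x_{N})^{\gamma(1-\omega)}\widehat{\left\langle u\right\rangle}_{\omega\gamma,\overline{H}}^{(\gamma)}$. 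Since $\gamma(1-\omega)>0$, a geometric sum gives $|u(x)-u(\widetilde{x})|\leq C(2^{K}x_{N})^{\gamma(1-\omega)}\widehat{\left\langle u\right\rangle}_{\omega\gamma,\overline{H}}^{(\gamma)}\leq C\overline{x}_{N}^{\gamma(1-\omega)}\widehat{\left\langle u\right\rangle}_{\omega\gamma,\overline{H}}^{(\gamma)}$.

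Multiplying by $\overline{x}_{N}^{\omega\gamma}/\rho^{\gamma}$ gives the factor $(\overline{x}_{N}/\rho)^{\gamma}\leq 3^{\gamma}$ by the Case B hypothesis, which finishes the bound. The main technical point is the dyadic telescoping on the $x_{N}$-axis: it converts a would-be singular weight $x_{N}^{-\omega\gamma}$ into the harmless value $\overline{x}_{N}^{-\omega\gamma}$, and this step is the one that genuinely requires the positivity $\gamma(1-\omega)>0$. All other steps amount to the triangle inequality and direct applications of the definition of $\widehat{\left\langle u\right\rangle}_{\omega\gamma,\overline{H}}^{(\gamma)}$.
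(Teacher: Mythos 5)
Your proof is correct, and it takes a genuinely different and more elementary route than the paper. The paper disposes of this corollary in a single line by specialising Proposition~\ref{Ps1.2} to $k=1$: the right-hand side of \eqref{s1.001}, taken over $\overline{h}\in\overline{H}$ with a single difference, is exactly the $\min$-weight seminorm $\widehat{\left\langle u\right\rangle}_{\omega\gamma,\overline{H}}^{(\gamma)}$. That route, however, ultimately rests on the compactness/contradiction and scaling machinery used to prove Proposition~\ref{Ps1.1}, and is not quantitative. Your argument, by contrast, is constructive: the easy inequality $\widehat{\left\langle u\right\rangle}\leq\left\langle u\right\rangle$ is immediate from $\widehat{x}_{N}\leq x_{N}^{\ast}$; for the reverse you split on $\rho$ versus $x_{N}$, move tangentially to reduce to a pure $x_{N}$-increment, and then telescope dyadically along the $x_{N}$-axis. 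The geometric sum converges because $\gamma(1-\omega)>0$, producing the clean bound $|u(x)-u(\widetilde{x})|\leq C\,\overline{x}_{N}^{\gamma(1-\omega)}\widehat{\left\langle u\right\rangle}$, after which the hypothesis $\overline{x}_{N}\leq 3\rho$ of Case~B absorbs the remaining factor $(\overline{x}_{N}/\rho)^{\gamma}$. This is, in fact, the same dyadic-chaining idea the paper uses later in the proof of Lemma~\ref{Ls1.3} (with multiplicative steps of ratio $1+\varepsilon$ rather than $2$), so your approach is quite in keeping with the paper's toolbox; you have just deployed it earlier and more directly. One small point worth making explicit: when $x_{N}=0$ the dyadic chain degenerates, but the conclusion then follows either by continuity of $u$ (guaranteed by Proposition~\ref{Ps1.01}) or, more simply, by observing that $x_{N}^{\ast}=\overline{x}_{N}\leq\rho$ in that situation, so the weight $\overline{x}_{N}^{\omega\gamma}/\rho^{\gamma}\leq\rho^{\omega\gamma-\gamma}$ together with Proposition~\ref{Ps1.01} closes the case. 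The advantages of your proof are that it is self-contained, gives explicit constants, and isolates precisely where $\omega<1$ is used; the paper's version is shorter because it piggybacks on a result it needs anyway.
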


For the proof of this corollary it is enough to choose $k=1$ in
\eqref{s1.001}.

Let now we are given a function $u(x,t)\in
C_{n,\omega\gamma}^{m+\gamma ,\frac{m+\gamma}{m}}(\overline{Q})$. We
are going to construct an analog of Taylor polynomial for such a
function at the point $O=(0,0)$. Under this we mean some
"power-like" function $Q_{u}(x,t)$ with the same asymptotic at
$(x,t)\rightarrow(0,0)$ as that of $u(x,t)$. The simplest situation
for constructing of such a function is when $u(x,t)$ is smooth with
respect to the tangent space variables $x^{\prime}$ and it's
derivatives with respect to these variables need not weights near
$\{x_{N}=0\}$. We are going to achieve this situation by the
smoothing process

\begin{equation}
u_{\varepsilon}(x,t)\equiv%
{\displaystyle\int\limits_{R^{N-1}}}
{\displaystyle\int\limits_{-\infty}^{\infty}}
u(y^{\prime},x_{N},\tau)\omega_{\varepsilon}(x^{\prime}-y^{\prime}%
,t-\tau)dy^{\prime}d\tau, \label{s1.005}%
\end{equation}
where
$\omega_{\varepsilon}(x^{\prime},t)=\varepsilon^{-N}\omega(x^{\prime
}/\varepsilon,t/\varepsilon)$, $\varepsilon>0$,
$\omega(x^{\prime},t)\in C^{\infty}$ is a mollifier with compact
support and with unit total integral. For such more smooth function we have%

\begin{equation}
x_{N}^{n-j}D_{x}^{\alpha}u_{\varepsilon}(x,t)\rightarrow0,x_{N}\rightarrow
0,\quad0\leq j<n,\alpha=(\alpha_{1},...,\alpha_{N}),|\alpha|=m-j,\alpha
_{N}<m-j. \label{s1.006}%
\end{equation}
This means that for the mixed derivatives
$D_{x}^{\alpha}u_{\varepsilon}(x,t)$ of order
$|\alpha|=m-j>\alpha_{N}$ (that is when a derivative $D_{x}^{\alpha
}u_{\varepsilon}(x,t)$ does not coincide with the pure derivative $D_{x_{N}%
}^{m-j}u_{\varepsilon}(x,t)$) we have the property as in
\eqref{s1.006}. It turns out that \eqref{s1.006} is valid in fact
for the function $u(x,t)$ itself without smoothing. But this will be
proved later on.

\begin{lemma}
\label{Ls1.01}
Let $u(x,t)\in C_{n,\omega\gamma}^{m+\gamma,\frac{m+\gamma}{m}}(\overline{Q}%
)$. Then \eqref{s1.006} is valid.
\end{lemma}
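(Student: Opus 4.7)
Since $\alpha_N<|\alpha|=m-j$, one can pick $i\in\{1,\dots,N-1\}$ with $\alpha_i\ge 1$ and factor $D^{\alpha}_x u=D_{x_i}D^{\alpha-e_i}_x u$ classically. The weight $x_N^{n-j}$ depends only on $x_N$ and the mollification is carried out only in $(x',t)$, so all operations commute and give the pointwise identity
\[
x_N^{n-j}D^{\alpha}_x u_{\varepsilon}(x,t)=D_{x_i}W_{\varepsilon}(x,t),\qquad W(x,t):=x_N^{n-j}D^{\alpha-e_i}_x u(x,t),
\]
where $W_{\varepsilon}$ denotes the $(x',t)$-mollification of $W$. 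Using the vanishing-moment property $\int D_{x_i}\omega_{\varepsilon}(x'-y',t-\tau)\,dy'\,d\tau=0$, for fixed $x_N>0$ I would rewrite
\[
D_{x_i}W_{\varepsilon}(x,t)=\int\bigl[W(y',x_N,\tau)-W(x',x_N,t)\bigr]\,D_{x_i}\omega_{\varepsilon}(x'-y',t-\tau)\,dy'\,d\tau,
\]
which yields $|D_{x_i}W_{\varepsilon}(x,t)|\le 2\,\|D_{x_i}\omega_{\varepsilon}\|_{L^1}\,\sup_{(y',\tau)}|W(y',x_N,\tau)|$. Hence \eqref{s1.006} will follow as soon as I prove that $\sup_{(y',\tau)}|W(y',x_N,\tau)|\to 0$ as $x_N\to 0$ for each fixed $\varepsilon>0$.

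The bound on $W$ is obtained by a case analysis on the reduced multi-index $\alpha-e_i$ of order $m-(j+1)$ with unchanged $N$-component $(\alpha-e_i)_N=\alpha_N$. If $j+1\le n$ and $\alpha_N\ne m-n$, then $x_N^{n-(j+1)}D^{\alpha-e_i}_x u$ is one of the summands in the norm \eqref{s1.6} and is uniformly bounded; so $|W|=x_N\cdot|x_N^{n-j-1}D^{\alpha-e_i}_x u|\le Cx_N$. If $j+1>n$ (which forces $n$ non-integer and $j=\lfloor n\rfloor$), then $|\alpha-e_i|<m-n$, so $D^{\alpha-e_i}_x u$ itself is bounded by the unweighted Hölder part $\sum_{0<|\beta|<m-n}|D^{\beta}_x u|^{(\gamma)}$ of the norm, yielding $|W|\le Cx_N^{n-j}=Cx_N^{\{n\}}$ with $\{n\}>0$. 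In both subcases $\sup|W|=O(x_N^{\delta})$ for some $\delta>0$ and the mollifier inequality above closes the argument.

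The main technical obstacle is the borderline integer-$n$ subcase $\alpha_N=m-n$ with $j+1\le n$, because the seminorm of $x_N^{n-j-1}D^{\alpha-e_i}_x u$ is intentionally excluded from the norm \eqref{s1.6}. To bypass it I plan to use that the shifted derivative satisfies $(\alpha-e_i+e_N)_N=m-n+1\ne m-n$, so $D^{\alpha-e_i+e_N}_x u$ \emph{is} controlled by the norm with $|D^{\alpha-e_i+e_N}_x u(x,t)|\le Cx_N^{-(n-j)}$. Integrating this inequality in $x_N$ from a fixed reference level produces the pointwise bound $|D^{\alpha-e_i}_x u(x,t)|\le C(1+|\log x_N|+x_N^{1-(n-j)})$ near $x_N=0$, and multiplying by $x_N^{n-j}$ gives $|W|\le Cx_N(1+|\log x_N|)\to 0$. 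The logarithmic factor coming from this extra integration step is the delicate point of the proof; once it is absorbed the mollifier estimate of the first paragraph delivers \eqref{s1.006} in full generality.
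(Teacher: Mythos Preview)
Your argument is correct and your case analysis is complete, but the route differs from the paper's. The paper does not peel off a \emph{single} tangential derivative; instead it moves \emph{all} of $D_{x'}^{\alpha'}$ onto the mollifying kernel at once, so that only the pure normal derivative $D_{x_N}^{\alpha_N}u$ survives under the integral. It then appeals to a separate inductive estimate (equation \eqref{s1.007.1}) of the form
\[
\bigl|D_{x_N}^{m-j}u(x,t)\bigr|\le
\begin{cases}
Cx_N^{-(n-j)},& 0\le j<n,\\
C(1+|\ln x_N|),& j=n,\\
C,& n<j\le m,
\end{cases}
\]
obtained by successively integrating $x_N^{n}D_{x_N}^{m}u$ in $x_N$. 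With this in hand the conclusion is immediate: $x_N^{n-j}F(m-\alpha_N;x_N)\to 0$ since $\alpha_N<m-j$. Thus the paper trades your three-case analysis on the mixed derivative $D_x^{\alpha-e_i}u$ for a single clean lemma on pure $x_N$-derivatives, which is also reused later in the article. Your approach, in turn, avoids setting up that inductive lemma but pays for it with the borderline case $\alpha_N=m-n$ and the extra integration there; both are legitimate.

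One small point you left implicit (and which the paper also glosses over): in your borderline case the integration ``from a fixed reference level'' requires $\sup_{x',t}|D_x^{\alpha-e_i}u(x',1,t)|<\infty$, but this term with $(\alpha-e_i)_N=m-n$ is precisely the one excluded from the norm. You should note that it follows from a mean-value argument using the boundedness of $D_x^{\alpha-e_i-e_N}u$ (which \emph{is} in the norm, weighted or unweighted according to whether $j+2\le n$) together with $|D_x^{\alpha-e_i+e_N}u|\le C x_N^{-(n-j)}\le C$ on $[1,2]$.
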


\begin{proof}

Show first that for a function $u(x,t)\in C_{n,\omega\gamma
}^{m+\gamma,\frac{m+\gamma}{m}}(\overline{Q})$ for a positive
integer $j\leq m$

\begin{equation}
\left\vert D_{x_{N}}^{m-j}u(x,t)\right\vert \leq F(j;x_{N})\equiv\left\{
\begin{array}
[c]{c}%
Cx_{N}^{-(n-j)},\quad0\leq j<n,\\
C(1+|\ln x_{N}|),\quad j=n,\\
C,\quad n<j\leq m,
\end{array}
\right.  ,\quad0\leq x_{N}\leq2. \label{s1.007.1}%
\end{equation}
Really, for $j=0$ this estimate follows directly from the definition
of the space
$C_{n,\omega\gamma}^{m+\gamma,\frac{m+\gamma}{m}}(\overline{Q}).$
Since the functions from $C_{n,\omega\gamma}^{m+\gamma,\frac
{m+\gamma}{m}}(\overline{Q})$ belong to the standard class
$C^{m+\gamma ,\frac{m+\gamma}{m}}(\overline{Q})$ for $x_{N}>0$, for
$j=1$ we have for $x_{N}\leq2$ (if $1<n$)

\[
D_{x_{N}}^{m-1}u(x,t)=-%
{\displaystyle\int\limits_{x_{N}}^{1}}
\xi^{-n}\left[  \xi^{n}D_{\xi}^{m}u(x^{\prime},\xi,t)\right]  d\xi+D_{x_{N}%
}^{m-1}u(x^{\prime},1,t).
\]
Consequently,

\begin{equation}
\left\vert D_{x_{N}}^{m-1}u(x,t)\right\vert \leq C_{1}%
{\displaystyle\int\limits_{x_{N}}^{1}} \xi^{-n}d\xi+C_{2}\leq\left\{
\begin{array}
[c]{c}%
Cx_{N}^{-(n-1)},\quad n>1,\\
C(1+|\ln x_{N}|),\quad n=1,\\
C,\quad n<1
\end{array}
\right.  =F(1;x_{N}),\quad x_{N}\leq2, \label{s1.008.1}%
\end{equation}
that is \eqref{s1.007} is proved for $j=1$. Now \eqref{s1.007.1} for
$j=2$ follows from \eqref{s1.008.1}  and so on by induction for
$j\leq m$.

Let now $\varepsilon>0$, $j<n$ ,
$\alpha=(\alpha_{1},...,\alpha_{N})$, $|\alpha|=m-j$,
$\alpha_{N}<m-j$. Denoting $\alpha^{\prime}=(\alpha
_{1},...,\alpha_{N-1})$,\ we have

\[
\left\vert x_{N}^{n-j}D_{x}^{\alpha}u_{\varepsilon}(x,t)\right\vert
=\left\vert x_{N}^{n-j}%
{\displaystyle\int\limits_{R^{N-1}}}
{\displaystyle\int\limits_{-\infty}^{\infty}}
\left(  D_{x_{N}}^{\alpha_{N}}u(y^{\prime},x_{N},\tau)\right)  D_{x^{\prime}%
}^{\alpha^{\prime}}\omega_{\varepsilon}(x^{\prime}-y^{\prime},t-\tau
)dy^{\prime}d\tau\right\vert \leq
\]

\[
\leq Cx_{N}^{n-j}F(m-\alpha_{N};x_{N})=C\left\{
\begin{array}
[c]{c}%
x_{N}^{n-j-[n-(m-\alpha_{N})]},m-\alpha_{N}<n,\\
x_{N}^{n-j}(1+\ln x_{N}),m-\alpha_{N}=n,\\
x_{N}^{n-j},m-\alpha_{N}>n
\end{array}
\right.  =
\]

\[
=C\left\{
\begin{array}
[c]{c}%
x_{N}^{(m-j)-\alpha_{N}},m-\alpha_{N}<n,\\
x_{N}^{n-j}(1+|\ln x_{N}|),m-\alpha_{N}=n,\\
x_{N}^{n-j},m-\alpha_{N}>n
\end{array}
\right.  \rightarrow0,\quad x_{N}\rightarrow0.
\]
This proves the lemma.
\end{proof}

\begin{lemma}
\label{Ls1.1}

Let a function $u(x,t)\in C_{n,\omega\gamma}^{m+\gamma,\frac{m+\gamma}{m}%
}(\overline{Q})$ satisfy \eqref{s1.006} without smoothing that is

\begin{equation}
x_{N}^{n-j}D_{x}^{\alpha}u(x,t)\rightarrow0,x_{N}\rightarrow0,\quad0\leq
j<n,\alpha=(\alpha_{1},...,\alpha_{N}),|\alpha|=m-j,\alpha_{N}<m-j.
\label{s1.009.1}%
\end{equation}
Denote

\begin{equation}
a=\lim_{(x,t)\rightarrow(0,0)}x_{N}^{n}D_{x_{N}}^{m}u(x,t). \label{s1.007}%
\end{equation}
and denote

\begin{equation}
\widetilde{Q}_{u}(x_{N})=\left\{
\begin{array}
[c]{c}%
bax_{N}^{m-n},\quad n\quad\text{is a noninteger},\\
ba\ln^{(m-n)}x_{N},\quad n\quad\text{is an integer}.
\end{array}
\right.  \label{s1.008}%
\end{equation}
Here

\begin{equation}
\ln^{(k)}x_{N}\equiv%
{\displaystyle\int\limits_{0}^{x_{N}}}
d\xi_{n}%
{\displaystyle\int\limits_{0}^{\xi k}}
d\xi_{n-1}...%
{\displaystyle\int\limits_{0}^{\xi_{2}}}
\ln\xi_{1}d\xi_{1}, \label{s1.009}%
\end{equation}
$b=b(a,m,n)$ is a constant which is chosen from the condition $a=x_{N}%
^{n}D_{x_{N}}^{m}(\widetilde{Q}_{u}(x_{N}))$.

Then

\begin{equation}
\lim\limits_{(x,t)\rightarrow(0,0)}x_{N}^{n-j}D_{x}^{\alpha}[u(x,t)-\widetilde
{Q}_{u}(x_{N})]=0,\quad|\alpha|=m-j,0\leq j<n, \label{s1.0010}%
\end{equation}

\begin{equation}
x_{N}^{n-j}D_{x}^{\alpha}\widetilde{Q}_{u}(x_{N})\equiv const,\quad
|\alpha|=m-j,0\leq j\leq n,\alpha_{N}<m-n. \label{s1.0010.1}%
\end{equation}

If $n$ is an integer and $\left\langle D_{x_{N}}^{m-n}u\right\rangle
_{\omega\gamma,\overline{Q}}^{(\gamma,\gamma/m)}<\infty$ is finite, then%

\begin{equation}
D_{x_{N}}^{m-n}\widetilde{Q}_{u}(x_{N})\equiv\widetilde{Q}_{u}(x_{N})\equiv0.
\label{s1.0010.2}%
\end{equation}
\end{lemma}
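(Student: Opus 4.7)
The plan is to exploit the fact that $\widetilde{Q}_u(x_N)$ is a function of $x_N$ alone, and to reduce all three claims to consequences of this structure together with the hypothesis $x_N^n D_{x_N}^m u\to a$ and, for the last assertion, the finiteness of the extra pure-normal seminorm.

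For \eqref{s1.0010}, the hypothesis \eqref{s1.009.1} applies to exactly those multiindices $\alpha$ with $\alpha_N<|\alpha|=m-j$, i.e.\ those containing at least one differentiation in a tangential variable $x_i$, $i<N$. Since $\widetilde{Q}_u$ depends only on $x_N$, $D_x^\alpha\widetilde{Q}_u\equiv 0$ for any such $\alpha$, and therefore $x_N^{n-j}D_x^\alpha[u-\widetilde{Q}_u]=x_N^{n-j}D_x^\alpha u\to 0$ directly by \eqref{s1.009.1}. The same structural observation handles \eqref{s1.0010.1}: if $|\alpha|=m-j$ with $j\leq n$ and $\alpha_N<m-n$, then $\alpha_N<m-n\leq m-j=|\alpha|$, so some $\alpha_i$ with $i<N$ is positive, whence $D_x^\alpha\widetilde{Q}_u\equiv 0$ and $x_N^{n-j}D_x^\alpha\widetilde{Q}_u\equiv 0$ is a (trivial) constant.

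The substantive part is \eqref{s1.0010.2}. Here $n$ is a positive integer and by hypothesis $\langle D_{x_N}^{m-n}u\rangle_{\omega\gamma,x_N,\overline{Q}}^{(\gamma,\gamma/m)}<\infty$. Applying the $x_N$-direction argument of Proposition \ref{Ps1.01} to $D_{x_N}^{m-n}u$ yields ordinary H\"older continuity in $x_N$ with exponent $\gamma(1-\omega)>0$ up to $\{x_N=0\}$; in particular $D_{x_N}^{m-n}u$ is bounded on a neighbourhood of $(0,0)$. I propose to derive a contradiction from $a\neq 0$ by asymptotically expanding $D_{x_N}^{m-n}u$ at the origin. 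The (weighted) H\"older continuity of $x_N^{n}D_{x_N}^m u$ means the limit $a$ is genuine, so for $(x',t)$ sufficiently small we may write $D_{x_N}^m u(x',x_N,t)=(a+\theta(x',x_N,t))x_N^{-n}$ with $\theta\to 0$ as $(x,t)\to(0,0)$. Fixing $(x',t)$ in this neighbourhood and integrating $n$ times in $x_N$ from a reference level $x_N=\delta$ down to small $x_N$, the principal contribution from $ax_N^{-n}$ produces the term $c_n a\ln x_N$ with $c_n=(-1)^{n-1}/(n-1)!\neq 0$, the $\theta$-perturbation contributes at most $C_n\varepsilon|\ln x_N|$ (with $\varepsilon$ arbitrarily small by shrinking the neighbourhood), and the $n$ constants of integration assemble into a polynomial in $x_N$ of degree $\leq n-1$, hence bounded at $0$. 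Choosing $\varepsilon$ much smaller than $|a|$ then forces $|D_{x_N}^{m-n}u|\geq c|\ln x_N|\to\infty$ as $x_N\to 0$, contradicting the boundedness established above. Hence $a=0$, and since $\widetilde{Q}_u$ is the fixed nonzero multiple $b\,a\,\ln^{(m-n)}x_N$ of the universal function, $\widetilde{Q}_u\equiv 0$.

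The delicate point is propagating the $\theta$-error through the $n$ successive integrations so that the perturbation remains strictly subordinate to the $c_n a\ln x_N$ term; everything else is direct structural accounting using the fact that $\widetilde{Q}_u$ depends only on $x_N$.
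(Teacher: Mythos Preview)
Your treatment of \eqref{s1.0010.1} is correct, and your argument for \eqref{s1.0010.2} is essentially the paper's. However, your proof of \eqref{s1.0010} is incomplete: the conclusion \eqref{s1.0010} is required for \emph{all} $\alpha$ with $|\alpha|=m-j$, $0\leq j<n$, including the pure normal case $\alpha=(0,\dots,0,m-j)$. You only handle the mixed case $\alpha_N<m-j$, where indeed $D_x^\alpha\widetilde{Q}_u\equiv 0$ and the hypothesis \eqref{s1.009.1} applies directly. But the pure normal case is precisely where $\widetilde{Q}_u$ is doing something nontrivial: here $D_{x_N}^{m-j}\widetilde{Q}_u\not\equiv 0$ in general, and \eqref{s1.009.1} says nothing about $x_N^{n-j}D_{x_N}^{m-j}u$. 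This is the heart of the lemma, and you have skipped it.

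The fix is exactly the integration argument you already deploy for \eqref{s1.0010.2}, applied at each level $j<n$ rather than only at $j=n$. Starting from $D_{x_N}^{m}u=a\,x_N^{-n}+o(x_N^{-n})$ as $(x,t)\to(0,0)$ and integrating successively in $x_N$, one obtains for each $0\leq j<n$
\[
D_{x_N}^{m-j}u(x,t)=b_j\,a\,x_N^{-(n-j)}+o\bigl(x_N^{-(n-j)}\bigr)=D_{x_N}^{m-j}\widetilde{Q}_u(x_N)+o\bigl(x_N^{-(n-j)}\bigr),
\]
where the constants $b_j$ are determined by the normalisation $x_N^{n}D_{x_N}^{m}\widetilde{Q}_u=a$. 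Multiplying by $x_N^{n-j}$ gives \eqref{s1.0010} for the pure normal derivatives. This is exactly how the paper proceeds (relations \eqref{s1.0010.3}--\eqref{s1.0011}); your logarithmic computation is the boundary case $j=n$ of the same expansion.
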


\begin{proof}

Note first that since $m-j$ is an integer and $m-j>m-n>0$, we have
$m-j\geq1$. Now if the derivative $D_{x}^{\alpha}$ contains at least
one differentiation in $x^{\prime}$ then
$D_{x}^{\alpha}[u(x,t)-\widetilde{Q}_{u}(x_{N})]=D_{x}^{\alpha}u(x,t)$
and we have \eqref{s1.0010} by \eqref{s1.009.1}. Let now
$D_{x}^{\alpha}=D_{x_{N}}^{m-j}$. Then by the construction

\begin{equation}
D_{x_{N}}^{m}u(x,t)=\frac{a}{x_{N}^{n}}+o(x_{N}^{-n})=D_{x_{N}}^{m}%
\widetilde{Q}_{u}(x_{N})+o(x_{N}^{-n}),\quad(x,t)\rightarrow(0,0).
\label{s1.0010.3}%
\end{equation}
Integrating this relation with respect to $x_{N}\rightarrow\xi$ on
the interval $[x_{N},1]$ for example, we find for $j<n$%

\begin{equation}
D_{x_{N}}^{m-j}u(x,t)=\frac{b_{j}a}{x_{N}^{n-j}}+o(x_{N}^{-(n-j)})=D_{x_{N}%
}^{m-j}\widetilde{Q}_{u}(x_{N})+o(x_{N}^{-(n-j)}),\,(x,t)\rightarrow(0,0),
\label{s1.0011}%
\end{equation}
where $b_{j}$ are some definite constants and these constants agree
with the condition
$a=x_{N}^{n}D_{x_{N}}^{m}(\widetilde{Q}_{u}(x_{N}))$.  In
particular, if $n$ is an integer

\begin{equation}
D_{x_{N}}^{m-n}u(x,t)=b_{n}a\ln x_{N}+o(|\ln x_{N}|),\,(x,t)\rightarrow(0,0).
\label{s1.0011.1}%
\end{equation}
From \eqref{s1.0011} we obtain \eqref{s1.0010}. Relations
\eqref{s1.0010.1} follows directly from the definition of
$\widetilde{Q}_{u}(x_{N})$ by the construction taking into account
that $\widetilde{Q}_{u}(x_{N})$ depends
on $x_{N}$ only. If now for an integer $n$ we have $\left\langle D_{x_{N}%
}^{m-n}u\right\rangle
_{\omega\gamma,\overline{Q}}^{(\gamma,\gamma/m)}<\infty$ then it
follows from \eqref{s1.0011.1} that
we must have $a=0$ in this relation. But in this case $\widetilde{Q}_{u}%
(x_{N})\equiv0$. This proves \eqref{s1.0010.2}.
\end{proof}

\begin{lemma}
\label{Ls1.2}

Denote%

\begin{equation}
Q_{u}(x,t)=\widetilde{Q}_{u}(x_{N})+%
{\displaystyle\sum\limits_{|\alpha|\leq m-n}}
\frac{a_{\alpha}}{\alpha!}(x-\overline{e})^{\alpha}+a^{(1)}t, \label{s1.0012}%
\end{equation}
where $\widetilde{Q}_{u}(x_{N})$ is defined in \eqref{s1.008},
$\alpha=(\alpha_{1},...,\alpha_{N})$, $\alpha!=\alpha
_{1}!...\alpha_{N}!$, $\overline{e}=(0,...,1)\in R^{N}$,
$(x-\overline{e})^{\alpha}=x_{1}^{\alpha_{1}}...x_{N-1}^{\alpha_{N-1}}%
(x_{N}-1)^{\alpha_{N}}$,

\[
a_{\alpha}=D_{x}^{\alpha}(u-\widetilde{Q}_{u}(x_{N}))|_{x=\overline{e}%
,t=0},\qquad\,a^{(1)}=D_{t}(u-\widetilde{Q}_{u}(x_{N}))|_{x=\overline{e}%
,t=0}.
\]

Then the function $Q_{u}(x,t)$ has the following properties%

\begin{equation}
x_{N}^{n-j}D_{x}^{\alpha}[u(x,t)-Q_{u}(x,t)]|_{(x,t)=(0,0)}=0,\quad
j<n,\,|\alpha|=m-j, \label{s1.0012+1}%
\end{equation}

\begin{equation}
D_{x}^{\alpha}[u(x,t)-Q_{u}(x,t)]|_{(x,t)=(\overline{e},0)}=0,\,|\alpha|\leq
m-n,\hspace{0.05in}D_{t}[u(x,t)-Q_{u}(x,t)]|_{(x,t)=(\overline{e},0)}=0.
\label{s1.0012+2}%
\end{equation}

\begin{equation}
x_{N}^{n-j}D_{x}^{\alpha}Q_{u}(x,t)\equiv const,\quad|\alpha|=m-j,0\leq j\leq
n,\alpha_{N}<m-n,\, \,D_{t}Q_{u}(x,t)\equiv const. \label{s1.0015}%
\end{equation}

If $n$ is an integer and $\left\langle D_{x_{N}}^{m-n}u\right\rangle
_{\omega\gamma,\overline{Q}}^{(\gamma,\gamma/m)}<\infty$ then%

\begin{equation}
D_{x_{N}}^{m-n}Q_{u}(x,t)\equiv const. \label{s1.0016}%
\end{equation}

At last for $j\leq n$ and $|\alpha|=[m-n+(1-\omega)\gamma]-j$%

\begin{equation}
D_{x^{\prime}}^{\alpha}D_{x_{N}}^{j}Q_{u}(x,t)\quad\text{does not depend on
}x^{\prime}\text{ and }t. \label{s1.0017}%
\end{equation}

\end{lemma}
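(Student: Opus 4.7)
The plan is to verify the five displayed identities by decomposing $Q_u$ into three pieces and treating each on its own: the $x_N$-only part $\widetilde{Q}_u(x_N)$ from Lemma \ref{Ls1.1}, the polynomial piece $P(x):=\sum_{|\alpha|\leq m-n}\frac{a_\alpha}{\alpha!}(x-\overline{e})^\alpha$ of total degree at most $m-n$, and the linear-in-$t$ piece $a^{(1)}t$. Since $u\in C_{n,\omega\gamma}^{m+\gamma,(m+\gamma)/m}(\overline{Q})$, Lemma \ref{Ls1.01} guarantees the vanishing condition \eqref{s1.009.1} for $u$, which is exactly what Lemma \ref{Ls1.1} requires, so $\widetilde{Q}_u$ is well-defined and obeys \eqref{s1.0010}, \eqref{s1.0010.1}, \eqref{s1.0010.2}.

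For \eqref{s1.0012+1}, the pointwise observation is that $|\alpha|=m-j>m-n$ when $j<n$, so $D_x^\alpha$ annihilates both $P(x)$ and $a^{(1)}t$; hence $D_x^\alpha(u-Q_u)=D_x^\alpha(u-\widetilde{Q}_u)$ and the limit \eqref{s1.0010} supplies the claim. For \eqref{s1.0012+2} I would just unwind definitions: the Taylor form of $P$ gives $D_x^\alpha P(\overline{e})=a_\alpha$ for $|\alpha|\leq m-n$, and $a_\alpha$ was defined as $D_x^\alpha(u-\widetilde{Q}_u)(\overline{e},0)$, so the contributions assemble to $D_x^\alpha Q_u(\overline{e},0)=D_x^\alpha u(\overline{e},0)$; the $D_t$-statement is identical via the definition of $a^{(1)}$.

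To establish \eqref{s1.0015} I would split on $j$. For $j<n$ (so $|\alpha|=m-j>m-n$), $D_x^\alpha$ kills $P$ and $a^{(1)}t$ as before, while the side condition $\alpha_N<m-n<m-j=|\alpha|$ forces at least one tangential differentiation and therefore $D_x^\alpha\widetilde{Q}_u(x_N)=0$; for $j=n$ (necessarily $n$ integer) the polynomial degree matches $|\alpha|$ so $D_x^\alpha P$ is the constant $a_\alpha$, while $\alpha_N<m-n$ again yields $D_x^\alpha\widetilde{Q}_u=0$. The statement $D_tQ_u\equiv a^{(1)}$ is immediate. For \eqref{s1.0016} I would invoke \eqref{s1.0010.2} in Lemma \ref{Ls1.1} to kill $\widetilde{Q}_u$ outright, after which $D_{x_N}^{m-n}P$ is a constant (its degree matches the order of differentiation) and $D_{x_N}^{m-n}(a^{(1)}t)=0$.

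The main obstacle is \eqref{s1.0017}, where the careful use of the technical hypothesis \eqref{s1.2.1} is essential. Applying $D_{x'}^\alpha D_{x_N}^j$ to $P$ leaves a polynomial of degree at most $m-n-(|\alpha|+j)=m-n-[m-n+(1-\omega)\gamma]$. In the integer $n$ case this is exactly $0$; in the noninteger case it equals $\{m-n\}-(1-\omega)\gamma=(1-\{n\})-(1-\omega)\gamma$, which the assumption $(1-\omega)\gamma<\min(\{n\},1-\{n\})$ forces to lie in $(0,1)$, hence being a nonnegative integer it is $0$. Thus the residual $D_{x'}^\alpha D_{x_N}^j P$ is a constant. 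The $\widetilde{Q}_u(x_N)$-contribution trivially depends only on $x_N$, and $D_{x'}^\alpha D_{x_N}^j(a^{(1)}t)$ vanishes in the relevant range $|\alpha|+j\geq 1$, yielding \eqref{s1.0017}.
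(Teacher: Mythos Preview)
Your argument is correct and matches the paper's approach: everything reduces to Lemma \ref{Ls1.1} together with the observation that the added Taylor polynomial $P$ has total degree at most $m-n$, so spatial derivatives of order $m-j>m-n$ (integer $j<n$) annihilate it, which is precisely the one computation the paper singles out. One small correction to your framing: Lemma \ref{Ls1.01} concerns the \emph{smoothed} function $u_\varepsilon$, not $u$ itself, so it does not supply \eqref{s1.009.1} for $u$; rather, \eqref{s1.009.1} is an implicit hypothesis inherited from Lemma \ref{Ls1.1} (the paper establishes it for general $u$ only later, as \eqref{s1.7.1} in Theorem \ref{Ts1.1}). Also, in your degree count for \eqref{s1.0017} the quantity $m-n-[m-n+(1-\omega)\gamma]$ actually equals $\{m-n\}=1-\{n\}$ rather than $(1-\{n\})-(1-\omega)\gamma$, but since this is still in $(0,1)$ your conclusion is unaffected.
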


The proof of this lemma follows from Lemma \ref{Ls1.1} directly by
the construction of $Q_{u}(x,t)$ with the
taking into account that for $j<n$ we have ($\beta=(\beta_{1},...,\beta_{N})$)%

\[
D_{x}^{\beta}\left( {\displaystyle\sum\limits_{|\alpha|\leq m-n}}
\frac{a_{\alpha}}{\alpha!}(x-\overline{e})^{\alpha}+a^{(1)}t\right)
\equiv0,\,|\beta|=m-j.
\]

We prove in addition two useful lemmas about H\"{o}lder spaces.
First we prove some lemma that makes the verification of the
H\"{o}lder condition for functions on domains with boundaries more
simple. This is done by restricting the general position of two
different points of a domain to the situation when the two points
are away from a boundary in some sense.

\begin{lemma}
\label{Ls1.3}

Let $\gamma\in(0,1)$ and $\omega\in\lbrack0,1)$ (the case $\omega=0$
corresponds to the nonweighted case).
Let a function $u(y)$, $y\in R^{+}\equiv(0,\infty)$ satisfy the condition%

\begin{equation}
\sup_{0<h\leq\varepsilon y}y^{\omega\gamma}\frac{|u(y+h)-u(y)|}{h^{\gamma}%
}\equiv\left\langle u\right\rangle _{\omega\gamma,R^{+}}^{(\gamma
)(\varepsilon-)}<\infty. \label{s1.0018}%
\end{equation}

Then $u(y)$ is continuous on $[0,\infty)$ and%

\begin{equation}
\left\langle u\right\rangle _{\omega\gamma,\overline{R^{+}}}^{(\gamma)}\leq
C_{\gamma}\varepsilon^{-1}\left\langle u\right\rangle _{\omega\gamma,R^{+}%
}^{(\gamma)(\varepsilon-)}. \label{s1.0019}%
\end{equation}

\end{lemma}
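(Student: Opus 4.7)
The plan is to split any pair $y,h>0$ according to the ratio $h/y$ and handle each regime separately. When $h\leq\varepsilon y$ the hypothesis \eqref{s1.0018} applies directly; when $h>\varepsilon y$ the two endpoints $y$ and $y+h$ are joined by a geometric chain of short increments, each of which falls into the first regime, and the local estimates are summed.

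In the short-step regime $h\leq\varepsilon y$ one has $y+h\leq(1+\varepsilon)y\leq 2y$, so
$$
(y+h)^{\omega\gamma}\,\frac{|u(y+h)-u(y)|}{h^{\gamma}}\leq 2^{\omega\gamma}\,y^{\omega\gamma}\,\frac{|u(y+h)-u(y)|}{h^{\gamma}}\leq 2^{\omega\gamma}\,\langle u\rangle^{(\gamma)(\varepsilon-)}_{\omega\gamma,R^{+}},
$$
which is already of the desired form, with an even better $\varepsilon$-factor.

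For the long-step regime $h>\varepsilon y$ I build the geometric chain $y_{0}=y$, $y_{i}=y(1+\varepsilon)^{i}$, stopping at the first index $k$ for which $y_{k}\geq y+h$ and, if necessary, shortening the last step so that $y_{k}=y+h$; each increment still satisfies $y_{i+1}-y_{i}\leq\varepsilon y_{i}$. The hypothesis then gives $|u(y_{i+1})-u(y_{i})|\leq\varepsilon^{\gamma}\langle u\rangle^{(\gamma)(\varepsilon-)}\,y_{i}^{(1-\omega)\gamma}$. Telescoping and summing the geometric series in $q=(1+\varepsilon)^{(1-\omega)\gamma}$ yields
$$
|u(y+h)-u(y)|\leq\frac{\varepsilon^{\gamma}\,\langle u\rangle^{(\gamma)(\varepsilon-)}}{(1+\varepsilon)^{(1-\omega)\gamma}-1}\bigl[(y+h)^{(1-\omega)\gamma}-y^{(1-\omega)\gamma}\bigr].
$$
Since $(1-\omega)\gamma\in(0,1)$, the map $t\mapsto t^{(1-\omega)\gamma}$ is subadditive and the bracket is bounded by $h^{(1-\omega)\gamma}$; an elementary convexity estimate gives $(1+\varepsilon)^{(1-\omega)\gamma}-1\geq c_{\gamma}\varepsilon$ for $\varepsilon\in(0,1]$; and $h>\varepsilon y$ gives $(y+h)/h\leq 1+1/\varepsilon\leq 2/\varepsilon$. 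Multiplying through by $(y+h)^{\omega\gamma}/h^{\gamma}$ then produces the bound $C_{\gamma}\,\varepsilon^{(1-\omega)\gamma-1}\langle u\rangle^{(\gamma)(\varepsilon-)}\leq C_{\gamma}\,\varepsilon^{-1}\langle u\rangle^{(\gamma)(\varepsilon-)}$, which is \eqref{s1.0019}.

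Continuity of $u$ on $[0,\infty)$ falls out of the same chain estimate: applied to $0<y_{1}<y_{2}$ with $y_{2}$ small it yields $|u(y_{2})-u(y_{1})|\leq C\bigl(y_{2}^{(1-\omega)\gamma}-y_{1}^{(1-\omega)\gamma}\bigr)\to 0$ as $y_{1},y_{2}\to 0^{+}$, so $u$ is Cauchy at the origin and extends continuously to $[0,\infty)$; the seminorm $\langle u\rangle^{(\gamma)}_{\omega\gamma,\overline{R^{+}}}$ then equals the supremum taken over $R^{+}$ by a standard limiting argument. The main obstacle is a careful bookkeeping of the $\varepsilon$-dependence through the chain: the positive exponent $(1-\omega)\gamma$ (guaranteed by $\omega<1$, $\gamma>0$) is exactly what delivers both the subadditivity used to absorb the telescoping term and the linear lower bound on $(1+\varepsilon)^{(1-\omega)\gamma}-1$, and together these produce the claimed $\varepsilon^{-1}$ prefactor.
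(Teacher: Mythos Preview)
Your argument is correct and follows the same geometric-chain strategy as the paper's proof; the only cosmetic difference is that the paper sums $\sum_i\bigl((y_{i+1}-y_i)/h\bigr)^{\gamma}$ directly, whereas you sum $\sum_i y_i^{(1-\omega)\gamma}$ and then invoke subadditivity of $t\mapsto t^{(1-\omega)\gamma}$. One small bookkeeping slip: since $k$ is the \emph{first} index with $y(1+\varepsilon)^{k}\geq y+h$, the unshortened top of your geometric sum may overshoot $(y+h)^{(1-\omega)\gamma}$ by up to a factor $q=(1+\varepsilon)^{(1-\omega)\gamma}\leq 2$, so your displayed telescoped inequality should carry that extra factor---harmlessly absorbed into $C_{\gamma}$.
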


\begin{proof}

Due to Corollary \ref{Cs1.1} and \eqref{s1.0018} it is enough to
verify that

\begin{equation}
\sup_{\varepsilon y\leq h}y^{\omega\gamma}\frac{|u(y+h)-u(y)|}{h^{\gamma}}\leq
C_{\gamma}\varepsilon^{-1}\left\langle u\right\rangle _{\omega\gamma,R^{+}%
}^{(\gamma)(\varepsilon-)}. \label{s1.0020}%
\end{equation}

Let $y,h\geq\varepsilon y>0$ be arbitrary, $\Delta_{h}u(y)\equiv
u(y+h)-u(y)$. Denote ($[a]$ is the integer part of $a$)%

\[
M=\left\{
\begin{array}
[c]{c}%
\left[  \log_{(1+\varepsilon)}\left(  \frac{y+h}{y}\right)  \right]  ,\text{
if }\log_{(1+\varepsilon)}\left(  \frac{y+h}{y}\right)  \text{ is a
noninteger,}\\
\log_{(1+\varepsilon)}\left(  \frac{y+h}{y}\right)  -1,\text{ if }%
\log_{(1+\varepsilon)}\left(  \frac{y+h}{y}\right)  \text{ is an integer.}%
\end{array}
\right.
\]
Consider the difference%

\[
u(y+h)-u(y)=\Delta_{h}u(y)=%
{\displaystyle\sum\limits_{i=1}^{M}} (u(y_{i+1})-u(y_{i})),
\]
where

\[
y_{1}=y,\quad y_{i}=y_{i-1}+\varepsilon y_{i-1}=(1+\varepsilon)y_{i-1}%
=(1+\varepsilon)^{i-1}y,i\leq M,\quad y_{M+1}=y+h,
\]
so that $\left(  y_{i+1}-y_{i}\right)  =\varepsilon y_{i}$.  We have

\[
y^{\omega\gamma}\frac{|u(y+h)-u(y)|}{h^{\gamma}}\leq%
{\displaystyle\sum\limits_{i=1}^{M}}
y_{i}^{\omega\gamma}\frac{|u(y_{i+1})-u(y_{i})|}{|y_{i+1}-y_{i}|^{\gamma}%
}\left(  \frac{|y_{i+1}-y_{i}|^{\gamma}}{h^{\gamma}}\right)  \leq
\]

\[
\leq\left\langle u\right\rangle _{\omega\gamma,R^{+}}^{(\gamma)(\varepsilon-)}%
{\displaystyle\sum\limits_{i=1}^{M}} \left(
\frac{|y_{i+1}-y_{i}|}{h}\right)  ^{\gamma}\equiv\left\langle
u\right\rangle _{\omega\gamma,R^{+}}^{(\gamma)(\varepsilon-)}S.
\]
On the other hand%

\[
S\equiv%
{\displaystyle\sum\limits_{i=1}^{M}}
\left(  \frac{|y_{i+1}-y_{i}|}{h}\right)  ^{\gamma}\leq%
{\displaystyle\sum\limits_{i=1}^{M}} \left(
\frac{\varepsilon(1+\varepsilon)^{i-1}y}{h}\right)  ^{\gamma}=
\]

\[
=\varepsilon^{\gamma}\left(  \frac{y}{h}\right)  ^{\gamma}%
{\displaystyle\sum\limits_{i=1}^{M}}
(1+\varepsilon)^{\gamma(i-1)}=\varepsilon^{\gamma}\left(
\frac{y}{h}\right) ^{\gamma}\frac{(1+\varepsilon)^{\gamma
M}-1}{(1+\varepsilon)^{\gamma}-1}.
\]
But according to the definition of the number $M$%

\[
(1+\varepsilon)^{\gamma M}\leq\left(  \frac{y+h}{y}\right)  ^{\gamma},
\]
so that%

\[
S=\left(  \frac{\varepsilon^{\gamma}}{(1+\varepsilon)^{\gamma}-1}\right)
\left[  \left(  \frac{y}{h}\right)  ^{\gamma}\left[  (1+\varepsilon)^{\gamma
M}-1\right]  \right]  \leq C_{\gamma}\varepsilon^{-1+\gamma}\left(  \frac
{y+h}{h}\right)  ^{\gamma}\leq C_{\gamma}\varepsilon^{-1}.
\]
From this \eqref{s1.0020} follows for $y>0$. That is on the open set
$(0,\infty)$

\[
\left\langle u\right\rangle _{\omega\gamma,(0,\infty)}^{(\gamma)}\leq
C_{\gamma}\varepsilon^{-1}\left\langle u\right\rangle _{\omega\gamma,R^{+}%
}^{(\gamma)(\varepsilon-)}.
\]
Then from Corollary \ref{Cs1.1} and the proof of Proposition
\ref{Ps1.01} it follows that

\[
\left\langle u\right\rangle _{(0,\infty)}^{(\gamma-\omega\gamma)}\leq
C_{\gamma}\varepsilon^{-1}\left\langle u\right\rangle _{\omega\gamma,R^{+}%
}^{(\gamma)(\varepsilon-)}.
\]
This means that $u(y)$ has a finite limit as $y\rightarrow0$ and
consequently can be defined at $y=0$ as a continuous function with
\eqref{s1.0019}. Thus the lemma follows.
\end{proof}

\begin{corollary}
\label{Cs1.5}

Let $\gamma\in(0,1)$ and $\omega\in\lbrack0,1)$ (the case $\omega=0$
corresponds to the nonweighted case).
Let a function $u(x)$, $x\in H$ satisfy the condition%

\begin{equation}
\sup_{\overline{h}\in H,|\overline{h}|\leq\varepsilon x_{N}}x_{N}{}%
^{\omega\gamma}\frac{|u(x+\overline{h})-u(x)|}{|\overline{h}|^{\gamma}}%
\equiv\left\langle u\right\rangle _{\omega\gamma,H}^{(\gamma)(\varepsilon
-)}<\infty. \label{s1.0021}%
\end{equation}

Then $u(x)$ is continuous on $\overline{H}$ and%

\begin{equation}
\left\langle u\right\rangle _{\omega\gamma,\overline{H}}^{(\gamma)}\leq
C_{\gamma}\varepsilon^{-1-\gamma}\left\langle u\right\rangle _{\omega\gamma
,H}^{(\gamma)(\varepsilon-)}. \label{s1.0022}%
\end{equation}

\end{corollary}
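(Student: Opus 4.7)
The plan is to reduce the multi-dimensional assertion to the one-dimensional Lemma~\ref{Ls1.3} by controlling increments in the $x_N$ direction and in the tangential $x'$ directions separately, and then to fill in the gap $|\overline{h}|>\varepsilon x_N$ for horizontal increments by the standard trick of stepping up in $x_N$, sliding sideways, and coming back down. The main point is to organize the bookkeeping so that the vertical Lemma~\ref{Ls1.3} (with its factor $\varepsilon^{-1}$) combines with a horizontal step of length $\varepsilon^{-1}|h'|$ to produce exactly the advertised factor $\varepsilon^{-1-\gamma}$.

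First, as in the proof of Proposition~\ref{Ps1.01}, the full weighted H\"older seminorm is controlled by the sum of the one-sided seminorms $\langle u\rangle_{\omega\gamma,x_N,\overline{H}}^{(\gamma)}$ and $\langle u\rangle_{\omega\gamma,x',\overline{H}}^{(\gamma)}$; one decomposes a general step into a purely vertical and a purely horizontal step and uses the weight $\max\{x_N,\overline{x}_N\}$ together with Corollary~\ref{Cs1.1} if needed. For the $x_N$-direction, fixing $x'\in R^{N-1}$ reduces exactly to Lemma~\ref{Ls1.3} applied to $y\mapsto u(x',y)$ (the constant $\langle u\rangle_{\omega\gamma,H}^{(\gamma)(\varepsilon-)}$ of course dominates the $x'$-sectional one), yielding $\langle u\rangle_{\omega\gamma,x_N,H}^{(\gamma)}\le C_\gamma\varepsilon^{-1}\langle u\rangle_{\omega\gamma,H}^{(\gamma)(\varepsilon-)}$.

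For the $x'$-direction, fix $x_N>0$ and $h'\in R^{N-1}$. If $|h'|\le\varepsilon x_N$ the hypothesis itself already gives what we need, so assume $|h'|>\varepsilon x_N$. Set $L=|h'|/\varepsilon$ and write
\[
u(x'+h',x_N)-u(x',x_N)=I_1+I_2+I_3,
\]
with $I_1=u(x'+h',x_N)-u(x'+h',x_N+L)$, $I_2=u(x'+h',x_N+L)-u(x',x_N+L)$, and $I_3=u(x',x_N+L)-u(x',x_N)$. Each of $I_1,I_3$ is a purely vertical increment, controlled by the previous step and satisfying $|I_i|\le C_\gamma\varepsilon^{-1}x_N^{-\omega\gamma}L^\gamma\langle u\rangle_{\omega\gamma,H}^{(\gamma)(\varepsilon-)}=C_\gamma\varepsilon^{-1-\gamma}x_N^{-\omega\gamma}|h'|^\gamma\langle u\rangle_{\omega\gamma,H}^{(\gamma)(\varepsilon-)}$. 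The horizontal piece $I_2$ lives at height $Y=x_N+L\ge |h'|/\varepsilon$, so that $|h'|\le\varepsilon Y$ and the hypothesis applies at the level $Y$, giving $|I_2|\le Y^{-\omega\gamma}|h'|^\gamma\langle u\rangle_{\omega\gamma,H}^{(\gamma)(\varepsilon-)}\le x_N^{-\omega\gamma}|h'|^\gamma\langle u\rangle_{\omega\gamma,H}^{(\gamma)(\varepsilon-)}$, using $Y\ge x_N$. Summing the three pieces and taking the supremum yields the required bound on $\langle u\rangle_{\omega\gamma,x',\overline{H}}^{(\gamma)}$, and continuity up to $\{x_N=0\}$ then follows exactly as in the closing step of Proposition~\ref{Ps1.01}, since once $\langle u\rangle_{\omega\gamma,x,H}^{(\gamma)}<\infty$ the associated unweighted $(\gamma-\omega\gamma)$-H\"older bound provides a uniformly continuous extension to $\overline{H}$. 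No genuine obstacle arises beyond the correct choice $L=|h'|/\varepsilon$, which is minimal so as to make the horizontal piece admissible for the hypothesis.
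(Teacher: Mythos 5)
Your proof is correct and follows essentially the same route as the paper: reduce to the one-dimensional Lemma~\ref{Ls1.3} for increments in $x_N$, then for a tangential increment with $|h'|>\varepsilon x_N$ lift to height $x_N+\varepsilon^{-1}|h'|$, slide sideways (where the hypothesis applies because $|h'|\leq\varepsilon(x_N+\varepsilon^{-1}|h'|)$), and descend, which is exactly the paper's three-term decomposition with the step $\varepsilon^{-1}|\overline{h}'|$. The only difference is that you work out the factor $\varepsilon^{-1-\gamma}$ explicitly, which the paper leaves implicit behind the phrase ``completed now exactly as in Lemma~\ref{Ls1.3}''.
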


\begin{proof}

In view of Lemma \ref{Ls1.3} for arbitrary fixed $x^{\prime}$ we have%

\begin{equation}
\left\langle u(x^{\prime},\cdot)\right\rangle _{\omega\gamma,x_{N},[0,\infty
)}^{(\gamma)}\leq C_{\gamma}\varepsilon^{-1}\left\langle u\right\rangle
_{\omega\gamma,H}^{(\gamma)(\varepsilon-)}. \label{s1.0023}%
\end{equation}
Therefore it is enough to consider the H\"{o}lder property of $u(x)$
with respect to the tangent variables $x^{\prime}$ only under the
condition $|\overline {h}|\geq\varepsilon x_{N}$. That is for a
given $\overline{h^{\prime}}=(h_{1},...,h_{N-1})$, and for
$x_{N}>0$\ with $|\overline{h^{\prime}}|\geq\varepsilon x_{N}$ we
must estimate the expression

\[
A(x,\overline{h^{\prime}})\equiv x_{N}{}^{\omega\gamma}\frac{|u(x^{\prime
}+\overline{h^{\prime}},x_{N})-u(x^{\prime},x_{N})|}{|\overline{h^{\prime}%
}|^{\gamma}}.
\]
We estimate $A(x,\overline{h^{\prime}})$ as follows%

\[
A(x,\overline{h^{\prime}})\leq x_{N}{}^{\omega\gamma}\frac{|u(x^{\prime
}+\overline{h^{\prime}},x_{N}+\varepsilon^{-1}|\overline{h^{\prime}%
}|)-u(x^{\prime},x_{N}+\varepsilon^{-1}|\overline{h^{\prime}}|)|}%
{|\overline{h^{\prime}}|^{\gamma}}+
\]

\[
+x_{N}{}^{\omega\gamma}\frac{|u(x^{\prime}+\overline{h^{\prime}}%
,x_{N}+\varepsilon^{-1}|\overline{h^{\prime}}|)-u(x^{\prime}+\overline
{h^{\prime}},x_{N})|}{|\overline{h^{\prime}}|^{\gamma}}+x_{N}{}^{\omega\gamma
}\frac{|u(x^{\prime},x_{N}+\varepsilon^{-1}|\overline{h^{\prime}%
}|)-u(x^{\prime},x_{N})|}{|\overline{h^{\prime}}|^{\gamma}}.
\]
In view of \eqref{s1.0021} and \eqref{s1.0023} the proof of the
corollary is completed now exactly as in Lemma \ref{Ls1.3}.
\end{proof}

Now we prove a simple lemma about compactness and convergence in
weighted H\"{o}lder spaces. This assertion is "almost well known".
But the experience of the author shows that the following simple
fact is not generally known: after convergence of a sequence from a
H\"{o}lder space in a weaker H\"{o}lder space the limit belongs to
the original space. This fact is a very useful tool in applications
because smooth functions are not dense in H\"{o}lder spaces. The
precise statement is as follows.

\begin{proposition}
\label{Ps1.04}

Let $\gamma\in(0,1)$, $\omega\in\lbrack0,1)$.  Let
$K\subset\overline{H}$ be a compact domain in $\overline{H}$ with
smooth boundary. Let $U\subset C_{\omega\gamma}^{\gamma}(K)$ be a
bounded subset in  $C_{\omega\gamma
}^{\gamma}(K)$ that is%

\begin{equation}
u(x)\in U\Rightarrow\left\Vert u\right\Vert _{C_{\omega\gamma}^{\gamma}%
(K)}\leq M \label{s1.0024}%
\end{equation}
for some constant $M>0$.

Then there exists a sequence $\{u_{n}(x)\} \subset U$ and a function
$u_{0}(x)\in$ $C_{\omega\gamma}^{\gamma}(K)$ from the same space
$C_{\omega\gamma}^{\gamma}(K)$ such that for any $\gamma
^{\prime}\in(0,\gamma)$

\begin{equation}
\left\Vert u_{n}-u_{0}\right\Vert _{C_{\omega\gamma^{\prime}}^{\gamma^{\prime
}}(K)}+\left\Vert u_{n}-u_{0}\right\Vert _{C^{(1-\omega)\gamma^{\prime}}%
(K)}\rightarrow_{n\rightarrow\infty}0,\quad\left\Vert u_{0}\right\Vert
_{C_{\omega\gamma}^{\gamma}(K)}\leq M. \label{s1.0025}%
\end{equation}
\end{proposition}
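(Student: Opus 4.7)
The plan is to run a standard Arzelà–Ascoli argument, then upgrade the uniform limit to membership in $C^{\gamma}_{\omega\gamma}(K)$ by lower semicontinuity, and finally obtain the two convergence statements via an elementary pointwise interpolation in the seminorm.

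First I would use Proposition \ref{Ps1.01}: every $u\in U$ satisfies $\langle u\rangle^{(\gamma-\omega\gamma)}_{x,K}\le C\,\langle u\rangle^{(\gamma)}_{\omega\gamma,K}\le CM$, together with $|u|^{(0)}_K\le M$. Thus $U$ is uniformly bounded and uniformly $(1-\omega)\gamma$-Hölder continuous on the compact set $K$, so by Arzelà–Ascoli some subsequence $\{u_n\}\subset U$ converges uniformly on $K$ to a continuous function $u_0$.

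Next I would verify that $u_0\in C^{\gamma}_{\omega\gamma}(K)$ with $\|u_0\|_{C^{\gamma}_{\omega\gamma}(K)}\le M$. For every fixed pair $x,\bar x\in K$, the map $u\mapsto (x_N^{*})^{\omega\gamma}|u(x)-u(\bar x)|/|x-\bar x|^{\gamma}$ is continuous under uniform convergence (the weight is just a fixed number), so
\[
(x_N^{*})^{\omega\gamma}\frac{|u_0(x)-u_0(\bar x)|}{|x-\bar x|^{\gamma}}
=\lim_{n\to\infty}(x_N^{*})^{\omega\gamma}\frac{|u_n(x)-u_n(\bar x)|}{|x-\bar x|^{\gamma}}
\le M,
\]
and likewise $|u_0|^{(0)}_K\le M$. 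Taking the supremum over pairs gives the norm bound.

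For the convergence statement, I would set $v_n:=u_n-u_0$, so that $\|v_n\|_{C(K)}\to 0$ and $\|v_n\|_{C^{\gamma}_{\omega\gamma}(K)}\le 2M$. The key observation is the pointwise interpolation identity
\[
(x_N^{*})^{\omega\gamma'}\frac{|v_n(x)-v_n(\bar x)|}{|x-\bar x|^{\gamma'}}
=\left[(x_N^{*})^{\omega\gamma}\frac{|v_n(x)-v_n(\bar x)|}{|x-\bar x|^{\gamma}}\right]^{\gamma'/\gamma}|v_n(x)-v_n(\bar x)|^{1-\gamma'/\gamma},
\]
valid for any $0<\gamma'<\gamma$, which yields the uniform bound
\[
\langle v_n\rangle^{(\gamma')}_{\omega\gamma',K}\le (2M)^{\gamma'/\gamma}\bigl(2\|v_n\|_{C(K)}\bigr)^{1-\gamma'/\gamma}\longrightarrow 0.
\]
The same interpolation applied to the unweighted quotient (replacing the weighted seminorm by $\langle v_n\rangle^{(\gamma-\omega\gamma)}_{x,K}$, which is controlled by Proposition \ref{Ps1.01}) gives $\langle v_n\rangle^{((1-\omega)\gamma')}_{x,K}\to 0$. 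Together with $\|v_n\|_{C(K)}\to 0$ this establishes \eqref{s1.0025}.

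I do not expect a serious obstacle here: the only minor subtlety is that smooth functions are not dense in $C^{\gamma}_{\omega\gamma}(K)$, which is exactly why the lower-semicontinuity step (rather than an approximation/completeness argument) is needed to put $u_0$ back into the original strong space with the same norm bound.
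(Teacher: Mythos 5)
Your proof is correct and follows the same overall skeleton as the paper's — Arzelà–Ascoli via Proposition \ref{Ps1.01}, then lower semicontinuity of the weighted seminorm under uniform convergence to place $u_0$ in $C^{\gamma}_{\omega\gamma}(K)$ with $\left\Vert u_0\right\Vert\le M$ — but the final convergence step is handled by a genuinely different device. The paper splits the supremum over pairs into the regimes $|\overline{h}|\le\varepsilon x_N$ and $|\overline{h}|>\varepsilon x_N$: on the first it extracts a factor $\varepsilon^{\gamma-\gamma'}R_K^{(1-\omega)(\gamma-\gamma')}$ from $|\overline h|^{\gamma-\gamma'}$ and the boundedness of $K$, and on the second it estimates $\left(x_N/|\overline h|\right)^{\omega\gamma'}\le\varepsilon^{-\omega\gamma'}$ to reduce to the unweighted $C^{(1-\omega)\gamma'}$-norm, then picks $\varepsilon$ small first and $n$ large second. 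You instead write the exact pointwise interpolation identity
\[
(x_N^{*})^{\omega\gamma'}\frac{|v_n(x)-v_n(\overline x)|}{|x-\overline x|^{\gamma'}}
=\left[(x_N^{*})^{\omega\gamma}\frac{|v_n(x)-v_n(\overline x)|}{|x-\overline x|^{\gamma}}\right]^{\gamma'/\gamma}|v_n(x)-v_n(\overline x)|^{1-\gamma'/\gamma},
\]
which immediately gives $\left\langle v_n\right\rangle^{(\gamma')}_{\omega\gamma',K}\le(2M)^{\gamma'/\gamma}\bigl(2\left\Vert v_n\right\Vert_{C(K)}\bigr)^{1-\gamma'/\gamma}\to0$, and similarly for the unweighted seminorm. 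Both arguments are sound. Your interpolation route is shorter and does not invoke the size $R_K$ of the domain at all (the constant depends only on $M$, $\gamma$, $\gamma'$), whereas the paper's $\varepsilon$-splitting is one more instance of the near/far decomposition that is the leitmotif of the whole paper (Lemmas \ref{Ls1.3}, \ref{Cs1.5}, and the proof of Theorem \ref{Ts1.1}); in the paper's version the convergence in $C^{(1-\omega)\gamma'}(K)$ comes for free from the compactness step, whereas you re-derive it by the same interpolation — a harmless redundancy. One stylistic note: the weighted Hölder quotient is only defined for $x_N^{*}>0$; when $x_N^{*}=0$ the quantity vanishes identically, so your pointwise identity should be read with that tacit restriction, as the paper does explicitly.
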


\begin{proof}

From Proposition \ref{Ps1.01} and from \eqref{s1.0024} it follows
that

\[
u(x)\in U\Rightarrow\left\Vert u\right\Vert _{C^{(1-\omega)\gamma}(K)}\leq
CM.
\]
Thus, as it is well known, there exists a sequence $\{u_{n}(x)\}
\subset U$
and a function $u_{0}(x)\in$ $\cap_{\gamma^{\prime}\in(0,\gamma)}%
C^{(1-\omega)\gamma^{\prime}}(K)$ with

\begin{equation}
\left\Vert u_{n}-u_{0}\right\Vert _{C^{(1-\omega)\gamma^{\prime}}%
(K)}\rightarrow_{n\rightarrow\infty}0,\quad\gamma^{\prime}\in(0,\gamma).
\label{s1.0026}%
\end{equation}
Let us show that $u_{0}(x)$ belongs to the original space
$C_{\omega\gamma}^{\gamma}(K)$ and the estimate in \eqref{s1.0025}
is valid. Let $x\in\overline{K}$ and $\overline{h}\neq0\in H$ be
fixed and such that $x+\overline{h}\in\overline{K}$.  Consider the
expression

\begin{equation}
A_{n}(x,\overline{h})=x_{N}^{\omega\gamma}\frac{|u_{n}(x+\overline{h}%
)-u_{n}(x)|}{|\overline{h}|^{\gamma}}\leq M \label{s1.0026.1}%
\end{equation}
and suppose that $x_{N}>0$ because in the case $x_{N}=0$ we have
$A_{n}(x,\overline{h})=0$. From \eqref{s1.0026} it follows that
$u_{n}(x)\rightarrow u_{0}(x)$ uniformly on $K$. Therefore letting
$n\rightarrow\infty$ in \eqref{s1.0026.1}, we obtain

\[
A_{0}(x,\overline{h})=x_{N}^{\omega\gamma}\frac{|u_{0}(x+\overline{h}%
)-u_{0}(x)|}{|\overline{h}|^{\gamma}}\leq M.
\]
Since $x$ and $\overline{h}$ are arbitrary we infer from the last
inequality and from \eqref{s1.0026}

\[
u_{0}(x)\in C_{\omega\gamma}^{\gamma}(K),\quad\left\Vert u_{0}\right\Vert
_{C_{\omega\gamma}^{\gamma}(K)}\leq M.
\]

Let us show now that

\begin{equation}
\left\Vert u_{n}-u_{0}\right\Vert _{C_{\omega\gamma^{\prime}}^{\gamma^{\prime
}}(K)}\rightarrow_{n\rightarrow\infty}0,\quad\gamma^{\prime}\in(0,\gamma).
\label{s1.0027}%
\end{equation}

Let $x\in\overline{K}$ and $\overline{h}\neq0\in H$ be such that
$x+\overline{h}\in\overline{K}$ and let
$\gamma^{\prime}\in(0,\gamma)$. Denote $v_{n}(x)=u_{n}(x)-u_{0}(x)$.
and consider the expression

\[
A_{n}(x,\overline{h})=x_{N}^{\omega\gamma^{\prime}}\frac{|v_{n}(x+\overline
{h})-v_{n}(x)|}{|\overline{h}|^{\gamma^{\prime}}}.
\]
Let we are given an $\varepsilon>0$.  If $x_{N}=0$ then
$A_{n}(x,\overline {h})=0$ therefore we suppose that $x_{N}>0$.\
Denote $R_{K}=\inf\{R>0:K\subset\{0\leq x_{N}\leq R\} \}$\ \ and
consider two cases.
If $|\overline{h}|\leq\varepsilon x_{N}$ then we have%

\[
A_{n}(x,\overline{h})=x_{N}^{\omega\gamma^{\prime}}|\overline{h}%
|^{\gamma-\gamma^{\prime}}\frac{|v_{n}(x+\overline{h})-v_{n}(x)|}%
{|\overline{h}|^{\gamma}}\leq
\]

\[
\leq\varepsilon^{\gamma-\gamma^{\prime}}R_{K}^{(1-\omega)(\gamma
-\gamma^{\prime})}\left(
x_{N}^{\omega\gamma}\frac{|v_{n}(x+\overline
{h})-v_{n}(x)|}{|\overline{h}|^{\gamma}}\right)  \leq
\]

\[
\leq \varepsilon
^{\gamma-\gamma^{\prime}}R_{K}^{(1-\omega)(\gamma-\gamma^{\prime})}\left(
\left\langle u_{n}\right\rangle
_{\omega\gamma,K}^{(\gamma)}+\left\langle u_{0}\right\rangle
_{\omega\gamma,K}^{(\gamma)}\right)  \leq
\]

\begin{equation}
\leq\varepsilon^{\gamma-\gamma^{\prime}}2MR_{K}^{(1-\omega)(\gamma
-\gamma^{\prime})}. \label{s1.0028}%
\end{equation}
If now $|\overline{h}|>\varepsilon x_{N}$ then

\begin{equation}
A_{n}(x,\overline{h})=\left(  \frac{x_{N}}{|\overline{h}|}\right)
^{\omega\gamma^{\prime}}\frac{|v_{n}(x+\overline{h})-v_{n}(x)|}{|\overline
{h}|^{(1-\omega)\gamma^{\prime}}}\leq\varepsilon^{-\omega\gamma^{\prime}%
}\left\langle u_{n}-u_{0}\right\rangle _{K}^{((1-\omega)\gamma^{\prime})}.
\label{s1.0029}%
\end{equation}
Since $x$ and $\overline{h}$ are arbitrary, from \eqref{s1.0028} and
\eqref{s1.0029} it follows that

\[
\left\langle u_{n}-u_{0}\right\rangle _{\omega\gamma^{\prime},K}%
^{(\gamma^{\prime})}\leq\varepsilon^{\gamma-\gamma^{\prime}}C(M,K)+\varepsilon
^{-\omega\gamma^{\prime}}\left\langle u_{n}-u_{0}\right\rangle _{K}%
^{((1-\omega)\gamma^{\prime})}.
\]
Taking into account \eqref{s1.0026}, we have

\[
\left\Vert u_{n}-u_{0}\right\Vert _{C_{\omega\gamma^{\prime}}^{\gamma^{\prime
}}(K)}\leq\varepsilon^{\gamma-\gamma^{\prime}}C(M,K)+\varepsilon
^{-\omega\gamma^{\prime}}\left\Vert u_{n}-u_{0}\right\Vert _{C^{(1-\omega
)\gamma^{\prime}}(K)}.
\]
From this we see that the left hand side can be made arbitrary small
for large $n$ by choosing first $\varepsilon$ sufficiently small and
then $n\geq N(\varepsilon)$ sufficiently large.

This completes the proof of the proposition.
\end{proof}

\section{Proof of Theorem \ref{Ts1.1} }
\label{ss1.3}

We prove only \eqref{s1.7} since \eqref{s1.8} is a consequence of
\eqref{s1.7} for functions without dependance on $t$. We use the
idea of scaling arguments from \cite{LS} and the reasoning by
contradiction exactly as in the proof of Proposition \ref{Ps1.1}.

On the base of Proposition \ref{Ps1.2} we can turn to prof of  the
estimate

\[
\left\langle \left\langle u\right\rangle \right\rangle _{n,\omega
\gamma,\overline{Q}}^{(m+\gamma)(2s)}\equiv%
{\displaystyle\sum\limits_{j=0}^{j\leq n}}
{\displaystyle\sum\limits_{|\alpha|=m-j}} \left\langle \left\langle
x_{N}^{n-j}D_{x}^{\alpha}u\right\rangle
\right\rangle _{\omega\gamma,x,\overline{Q}}^{(\gamma)(2s)}+%
{\displaystyle\sum\limits_{j=0}^{j\leq n}}
{\displaystyle\sum\limits_{|\alpha|=m-j}} \left\langle \left\langle
x_{N}^{n-j}D_{x}^{\alpha}u\right\rangle \right\rangle
_{t,\overline{Q}}^{(\gamma/m)(2s)}+
\]

\[
+%
{\displaystyle\sum\limits_{j=0}^{j\leq n}}
{\displaystyle\sum\limits_{|\alpha|=m-j}} \left\langle \left\langle
x_{N}^{n-j\omega}D_{x}^{\alpha}u\right\rangle \right\rangle
_{t,\overline{Q}}^{(\frac{\gamma+j}{m})(2s)}+\left\langle
\left\langle D_{t}u\right\rangle \right\rangle _{\omega\gamma,x,\overline{Q}%
}^{(\gamma)(4)}+\left\langle \left\langle D_{t}u\right\rangle \right\rangle
_{t,\overline{Q}}^{(\gamma/m)(4)}+
\]

\[
+%
{\displaystyle\sum\limits_{j=0}^{j\leq m-n}}
{\displaystyle\sum\limits_{|\alpha|=[m-n+(1-\omega)\gamma]-j}}
\left\langle \left\langle
D_{x^{\prime}}^{\alpha}D_{x_{N}}^{j}u\right\rangle
\right\rangle _{x^{\prime},\overline{Q}}^{(\{m-n+(1-\omega)\gamma\})(2s)}+%
\]

\[
+{\displaystyle\sum\limits_{j=0}^{j\leq m-n}}
{\displaystyle\sum\limits_{|\alpha|=[m-n+\gamma]-j}} \left\langle
\left\langle D_{x^{\prime}}^{\alpha}D_{x_{N}}^{j}u\right\rangle
\right\rangle
_{\omega\gamma,x^{\prime},\overline{Q}}^{(\{m-n+\gamma\})(2s)}+
\]

\begin{equation}
+%
{\displaystyle\sum\limits_{j=1}^{j\leq m-n}}
{\displaystyle\sum\limits_{|\alpha|=j}} \left\langle \left\langle
D_{x}^{\alpha}u\right\rangle \right\rangle
_{t,\overline{Q}}^{(1-\frac{j}{m-n}+\frac{\gamma}{m})(2s)}\leq
C\left( {\displaystyle\sum\limits_{i=1}^{N}} \left\langle
x_{N}^{n}D_{x_{i}}^{m}u\right\rangle _{\omega\gamma
,x_{i},\overline{Q}}^{(\gamma)}+\left\langle D_{t}u\right\rangle
_{t,\overline{Q}}^{(\gamma/m)}\right)  , \label{s1.41}%
\end{equation}
where $s=m+1$ and for a function $v(x,t)$ we denote
($\varepsilon,\beta \in(0,1)$)

\[
\left\langle \left\langle v\right\rangle \right\rangle _{\omega\gamma
,x,\overline{Q}}^{(\gamma)(k)}\equiv\sup_{(x,t)\in\overline{Q},\overline{h}%
\in\overline{H}}x_{N}^{\omega\gamma}\frac{|\Delta_{\overline{h},x}^{k}%
v(x,t)|}{|\overline{h}|^{\gamma}}\leq\sup_{(x,t)\in\overline{Q},\overline
{h}\in\overline{H},|\overline{h}|\geq\varepsilon x_{N}}x_{N}^{\omega\gamma
}\frac{|\Delta_{\overline{h},x}^{k}v(x,t)|}{|\overline{h}|^{\gamma}}+
\]

\begin{equation}
+\sup_{(x,t)\in\overline{Q},\overline{h}\in\overline{H},|\overline{h}%
|\leq\varepsilon x_{N}}x_{N}^{\omega\gamma}\frac{|\Delta_{\overline{h},x}%
^{k}v(x,t)|}{|\overline{h}|^{\gamma}}\equiv\left\langle \left\langle
v\right\rangle \right\rangle _{\omega\gamma,x,\overline{Q}}^{(\gamma
)(k)(\varepsilon+)}+\left\langle \left\langle v\right\rangle \right\rangle
_{\omega\gamma,x,\overline{Q}}^{(\gamma)(k)(\varepsilon-)}, \label{s1.42}%
\end{equation}

\[
\left\langle \left\langle v\right\rangle \right\rangle _{x^{\prime}%
,\overline{Q}}^{(\gamma)(k)}\equiv\sup_{(x,t)\in\overline{Q},\overline
{h}^{\prime}\in R^{N-1}}\frac{|\Delta_{\overline{h}^{\prime},x^{\prime}}%
^{k}v(x,t)|}{|\overline{h}^{\prime}|^{\gamma}}\leq\sup_{(x,t)\in\overline
{Q},\overline{h}^{\prime}\in R^{N-1},|\overline{h}^{\prime}|\geq\varepsilon
x_{N}}\frac{|\Delta_{\overline{h}^{\prime},x^{\prime}}^{k}v(x,t)|}%
{|\overline{h}^{\prime}|^{\gamma}}+
\]

\begin{equation}
+\sup_{(x,t)\in\overline{Q},\overline{h}^{\prime}\in R^{N-1},|\overline
{h}^{\prime}|\leq\varepsilon x_{N}}\frac{|\Delta_{\overline{h}^{\prime
},x^{\prime}}^{k}v(x,t)|}{|\overline{h}^{\prime}|^{\gamma}}\equiv\left\langle
\left\langle v\right\rangle \right\rangle _{x^{\prime},\overline{Q}}%
^{(\gamma)(k)(\varepsilon+)}+\left\langle \left\langle v\right\rangle
\right\rangle _{x^{\prime},\overline{Q}}^{(\gamma)(k)(\varepsilon-)},
\label{s1.42.1}%
\end{equation}

\[
\left\langle \left\langle v\right\rangle \right\rangle _{t,\overline{Q}%
}^{(\beta)(k)}\equiv\sup_{(x,t)\in\overline{Q},h>0}\frac{|\Delta_{h,t}%
^{k}v(x,t)|}{h^{\beta}}\leq\sup_{(x,t)\in\overline{Q},h\geq\varepsilon x_{N}%
}\frac{|\Delta_{h,t}^{k}v(x,t)|}{h^{\beta}}+
\]

\begin{equation}
+\sup_{(x,t)\in\overline{Q},h\leq\varepsilon x_{N}}\frac{|\Delta_{h,t}%
^{k}v(x,t)|}{h^{\beta}}\equiv\left\langle \left\langle v\right\rangle
\right\rangle _{t,\overline{Q}}^{(\beta)(k)(\varepsilon+)}+\left\langle
\left\langle v\right\rangle \right\rangle _{t,\overline{Q}}^{(\beta
)(k)(\varepsilon-)}, \label{s1.43}%
\end{equation}

\[
\Delta_{\overline{h},x}v(x,t)=\Delta_{\overline{h},x}^{1}v(x,t)=v(x+\overline
{h})-v(x),\Delta_{\overline{h},x}^{k}v(x,t)=\Delta_{\overline{h},x}%
(\Delta_{\overline{h},x}^{k-1}v(x,t)),
\]

\[
\Delta_{h,t}v(x,t)=\Delta_{h,t}^{1}v(x,t)=v(x,t+h)-v(x,t),\Delta_{h,t}%
^{k}v(x,t)=\Delta_{h,t}(\Delta_{h,t}^{k-1}v(x,t)).
\]
We first prove \eqref{s1.41} under the additional restriction
\eqref{s1.009.1} on functions $u(x,t)$, that is we suppose that

\begin{equation}
x_{N}^{n-j}D_{x}^{\alpha}u(x,t)\rightarrow0,x_{N}\rightarrow0,\quad0\leq
j<n,\alpha=(\alpha_{1},...,\alpha_{N}),|\alpha|=m-j,\alpha_{N}<m-j.
\label{s1.43.01}%
\end{equation}
According to the definitions in \eqref{s1.42}, \eqref{s1.43} we
represent left hand side of \eqref{s1.41} as

\begin{equation}
\left\langle \left\langle u\right\rangle \right\rangle _{n,\omega
\gamma,\overline{Q}}^{(m+\gamma)(2s)}\leq\left\langle \left\langle
u\right\rangle \right\rangle _{n,\omega\gamma,\overline{Q}}^{(m+\gamma
)(2s)(\varepsilon+)}+\left\langle \left\langle u\right\rangle \right\rangle
_{n,\omega\gamma,\overline{Q}}^{(m+\gamma)(2s)(\varepsilon-)}, \label{s1.44}%
\end{equation}
where correspondingly

\begin{equation}
\left\langle \left\langle u\right\rangle \right\rangle _{n,\omega
\gamma,\overline{Q}}^{(m+\gamma)(2s)(\varepsilon\pm)}(u)\equiv%
{\displaystyle\sum\limits_{j=0}^{j\leq n}}
{\displaystyle\sum\limits_{|\alpha|=m-j}} \left\langle \left\langle
x_{N}^{n-j}D_{x}^{\alpha}u\right\rangle
\right\rangle _{\omega\gamma,x,\overline{Q}}^{(\gamma)(2s)(\varepsilon\pm)}+%
\label{s1.45}
\end{equation}

\[
+{\displaystyle\sum\limits_{j=0}^{j\leq n}}
{\displaystyle\sum\limits_{|\alpha|=m-j}} \left\langle \left\langle
x_{N}^{n-j}D_{x}^{\alpha}u\right\rangle
\right\rangle _{t,\overline{Q}}^{(\gamma/m)(2s)(\varepsilon\pm)}+ %
\]

\[
+%
{\displaystyle\sum\limits_{j=0}^{j\leq m-n}}
{\displaystyle\sum\limits_{|\alpha|=[m-n+(1-\omega)\gamma]-j}}
\left\langle \left\langle
D_{x^{\prime}}^{\alpha}D_{x_{N}}^{j}u\right\rangle \right\rangle
_{x^{\prime},\overline{Q}}^{(\{m-n+(1-\omega)\gamma
\})(2s)(\varepsilon\pm)}+%
\]

\[
+{\displaystyle\sum\limits_{j=1}^{j\leq m-n}}
{\displaystyle\sum\limits_{|\alpha|=j}} \left\langle \left\langle
D_{x}^{\alpha}u\right\rangle \right\rangle
_{t,\overline{Q}}^{(1-\frac{j}{m-n}+\frac{\gamma}{m})(2s)(\varepsilon\pm)}+
\]

\[
+%
{\displaystyle\sum\limits_{j=0}^{j\leq m-n}}
{\displaystyle\sum\limits_{|\alpha|=[m-n+\gamma]-j}} \left\langle
\left\langle D_{x^{\prime}}^{\alpha}D_{x_{N}}^{j}u\right\rangle
\right\rangle _{\omega\gamma,x^{\prime},\overline{Q}}^{(\{m-n+\gamma
\})(2s)(\varepsilon\pm)}+
\]

\[
+%
{\displaystyle\sum\limits_{j=0}^{j\leq n}}
{\displaystyle\sum\limits_{|\alpha|=m-j}} \left\langle \left\langle
x_{N}^{n-j\omega}D_{x}^{\alpha}u\right\rangle \right\rangle
_{t,\overline{Q}}^{(\frac{\gamma+j}{m})(2s)(\varepsilon\pm
)}+\left\langle \left\langle D_{t}u\right\rangle \right\rangle
_{\omega
\gamma,x,\overline{Q}}^{(\gamma)(4)(\varepsilon\pm)}+\left\langle
\left\langle D_{t}u\right\rangle \right\rangle
_{t,\overline{Q}}^{(\gamma/m)(4)(\varepsilon \pm)}.
\]

Suppose first that

\begin{equation}
\left\langle \left\langle u\right\rangle \right\rangle _{n,\omega
\gamma,\overline{Q}}^{(m+\gamma)(2s)(\varepsilon-)}\leq\left\langle
\left\langle u\right\rangle \right\rangle _{n,\omega\gamma,\overline{Q}%
}^{(m+\gamma)(2s)(\varepsilon+)}, \label{s1.45.1}%
\end{equation}
and consequently%

\begin{equation}
\left\langle \left\langle u\right\rangle \right\rangle _{n,\omega
\gamma,\overline{Q}}^{(m+\gamma)(2s)(\varepsilon+)}\leq\left\langle
\left\langle u\right\rangle \right\rangle _{n,\omega\gamma,\overline{Q}%
}^{(m+\gamma)(2s)}\leq2\left\langle \left\langle u\right\rangle \right\rangle
_{n,\omega\gamma,\overline{Q}}^{(m+\gamma)(2s)(\varepsilon+)}. \label{s1.46}%
\end{equation}
Let us show that on the class of functions $u$ with this condition%

\begin{equation}
\left\langle \left\langle u\right\rangle \right\rangle _{n,\omega
\gamma,\overline{Q}}^{(m+\gamma)(2s)(\varepsilon+)}\leq
C_{\varepsilon}\left( {\displaystyle\sum\limits_{i=1}^{N}}
\left\langle x_{N}^{n}D_{x_{i}}^{m}u\right\rangle _{\omega\gamma
,x_{i},\overline{Q}}^{(\gamma)}+\left\langle D_{t}u\right\rangle
_{t,\overline{Q}}^{(\gamma/m)}\right)  . \label{s1.47}%
\end{equation}
The proof is by contradiction. Suppose that \eqref{s1.47} is not
valid. Then there exists \ a sequence $\{u_{p}(x,t)\} \subset
C_{n,\omega\gamma}^{m+\gamma,\frac{m+\gamma}{m}}(\overline{Q})$,
$p=1,2,...,$ , with the property \eqref{s1.43.01} and with

\begin{equation}
\left\langle \left\langle u_{p}\right\rangle \right\rangle
_{n,\omega \gamma,\overline{Q}}^{(m+\gamma)(2s)(\varepsilon+)}\geq
p\left( {\displaystyle\sum\limits_{i=1}^{N}} \left\langle
x_{N}^{n}D_{x_{i}}^{m}u_{p}\right\rangle _{\omega\gamma
,x_{i},\overline{Q}}^{(\gamma)}+\left\langle D_{t}u_{p}\right\rangle
_{t,\overline{Q}}^{(\gamma/m)}\right)  \label{s1.48}%
\end{equation}
and%

\begin{equation}
\left\langle \left\langle u_{p}\right\rangle \right\rangle _{n,\omega
\gamma,\overline{Q}}^{(m+\gamma)(2s)(\varepsilon+)}\leq\left\langle
\left\langle u_{p}\right\rangle \right\rangle _{n,\omega\gamma,\overline{Q}%
}^{(m+\gamma)(2s)}\leq2\left\langle \left\langle u_{p}\right\rangle
\right\rangle _{n,\omega\gamma,\overline{Q}}^{(m+\gamma)(2s)(\varepsilon+)}.
\label{s1.49}%
\end{equation}
Denote $v_{p}(x,t)\equiv u_{p}(x,t)/\left\langle \left\langle u_{p}%
\right\rangle \right\rangle
_{n,\omega\gamma,\overline{Q}}^{(m+\gamma )(2s)(\varepsilon+)}$. For
the functions $\{v_{p}\}$ we have from \eqref{s1.48}

\[
1=\left\langle \left\langle v_{p}\right\rangle \right\rangle
_{n,\omega \gamma,\overline{Q}}^{(m+\gamma)(2s)(\varepsilon+)}\geq
p\left( {\displaystyle\sum\limits_{i=1}^{N}} \left\langle
x_{N}^{n}D_{x_{i}}^{m}v_{p}\right\rangle _{\omega\gamma
,x_{i},\overline{Q}}^{(\gamma)}+\left\langle D_{t}v_{p}\right\rangle
_{t,\overline{Q}}^{(\gamma/m)}\right)  .
\]
And from the last inequality and from \eqref{s1.48} we infer that%

\[
{\displaystyle\sum\limits_{i=1}^{N}} \left\langle
x_{N}^{n}D_{x_{i}}^{m}v_{p}\right\rangle _{\omega\gamma
,x_{i},\overline{Q}}^{(\gamma)}+\left\langle D_{t}v_{p}\right\rangle
_{t,\overline{Q}}^{(\gamma/m)}\leq\frac{1}{p},
\]

\begin{equation}%
\quad1\leq\left\langle
\left\langle v_{p}\right\rangle \right\rangle _{n,\omega\gamma,\overline{Q}%
}^{(m+\gamma)(2s)}\leq2\left\langle \left\langle v_{p}\right\rangle
\right\rangle _{n,\omega\gamma,\overline{Q}}^{(m+\gamma)(2s)(\varepsilon
+)}\leq2. \label{s1.50}%
\end{equation}
It follows from the second inequality in \eqref{s1.50} that there is
a term in the definition of $\left\langle
\left\langle v_{p}\right\rangle \right\rangle _{n,\omega\gamma,\overline{Q}%
}^{(m+\gamma)(2s)(\varepsilon+)}$,

\begin{equation}
\left\langle \left\langle v_{p}\right\rangle \right\rangle _{n,\omega
\gamma,\overline{Q}}^{(m+\gamma)(2s)(\varepsilon+)}=%
{\displaystyle\sum\limits_{j=0}^{j\leq n}}
{\displaystyle\sum\limits_{|\alpha|=m-j}} \left\langle \left\langle
x_{N}^{n-j}D_{x}^{\alpha}v_{p}\right\rangle
\right\rangle _{\omega\gamma,x,\overline{Q}}^{(\gamma)(2s)(\varepsilon+)}+%
\label{s1.51}
\end{equation}

\[
+{\displaystyle\sum\limits_{j=0}^{j\leq n}}
{\displaystyle\sum\limits_{|\alpha|=m-j}} \left\langle \left\langle
x_{N}^{n-j}D_{x}^{\alpha}v_{p}\right\rangle
\right\rangle _{t,\overline{Q}}^{(\gamma/m)(2s)(\varepsilon+)}+ %
\]

\[
+%
{\displaystyle\sum\limits_{j=0}^{j\leq m-n}}
{\displaystyle\sum\limits_{|\alpha|=[m-n+(1-\omega)\gamma]-j}}
\left\langle \left\langle D_{x^{\prime}}^{\alpha}D_{x_{N}}^{j}v_{p}%
\right\rangle \right\rangle _{x^{\prime},\overline{Q}}^{(\{m-n+(1-\omega
)\gamma\})(2s)(\varepsilon+)}+%
\]

\[
+{\displaystyle\sum\limits_{j=1}^{j\leq m-n}}
{\displaystyle\sum\limits_{|\alpha|=j}} \left\langle \left\langle
D_{x}^{\alpha}v_{p}\right\rangle \right\rangle
_{t,\overline{Q}}^{(1-\frac{j}{m-n}+\frac{\gamma}{m})(2s)(\varepsilon+)}+
\]

\[
+%
{\displaystyle\sum\limits_{j=0}^{j\leq m-n}}
{\displaystyle\sum\limits_{|\alpha|=[m-n+\gamma]-j}}
\left\langle \left\langle D_{x^{\prime}}^{\alpha}D_{x_{N}}^{j}v_{p}%
\right\rangle \right\rangle _{\omega\gamma,x^{\prime},\overline{Q}%
}^{(\{m-n+\gamma\})(2s)(\varepsilon+)}+
\]

\[
+%
{\displaystyle\sum\limits_{j=0}^{j\leq n}}
{\displaystyle\sum\limits_{|\alpha|=m-j}} \left\langle \left\langle
x_{N}^{n-j\omega}D_{x}^{\alpha}v_{p}\right\rangle \right\rangle
_{t,\overline{Q}}^{(\frac{\gamma+j}{m})(2s)(\varepsilon
+)}+\left\langle \left\langle D_{t}v_{p}\right\rangle \right\rangle
_{\omega\gamma,x,\overline{Q}}^{(\gamma)(4)(\varepsilon+)}+\left\langle
\left\langle D_{t}v_{p}\right\rangle \right\rangle _{t,\overline{Q}}%
^{(\gamma/m)(4)(\varepsilon+)}\geq\frac{1}{2},
\]
which is not less than some absolute constant $\nu=\nu(m,n,N)>0$.
This is valid at least for a subsequence of indexes $\{p\}$. It can
not be the sequence of terms $\left\langle \left\langle
D_{t}v_{p}\right\rangle \right\rangle
_{t,\overline{Q}}^{(\gamma/m)(4)(\varepsilon+)}$ because of
\eqref{s1.50}. We suppose, for example, that for some multiindex
$\widehat{\alpha}$, $|\widehat{\alpha}|=m$,

\begin{equation}
\left\langle \left\langle x_{N}^{n}D_{x}^{\widehat{\alpha}}v_{p}\right\rangle
\right\rangle _{\omega\gamma,x,\overline{Q}}^{(\gamma)(2s)(\varepsilon+)}%
\geq\nu>0,\quad p=1,2,..... \label{s1.52}%
\end{equation}
The all reasonings below are completely the same for all other terms
in \eqref{s1.51}.

From \eqref{s1.52}  and from the definition of  $\left\langle
\left\langle x_{N}^{n}D^{\alpha}v_{p}\right\rangle \right\rangle
_{\omega\gamma ,x,\overline{Q}}^{(\gamma)(2s)(\varepsilon+)}$ in
\eqref{s1.42} it follows that there exist sequences of points
$\{(x^{(p)},t^{(p)})\in\overline{Q}\}$ and
vectors $\{ \overline{h}^{(p)}\in\overline{H}\}$ with%

\begin{equation}
h_{p}\equiv|\overline{h}^{(p)}|\geq\varepsilon x_{N}^{(p)},\quad p=1,2,...
\label{s1.53}%
\end{equation}
and with%

\begin{equation}
\left(  x_{N}^{(p)}\right)  ^{\omega\gamma}\frac{|\Delta_{\overline{h}^{(p)}%
}^{2s}\left[  (x_{N}^{(p)})^{n}D_{x}^{\widehat{\alpha}}v_{p}(x^{(p)}%
,t^{(p)})\right]  |}{h_{p}^{\gamma}}\geq\frac{\nu}{2}>0. \label{s1.54}%
\end{equation}
We make in the functions $\{v_{p}\}$ the change of the independent
variables $(x,t)\rightarrow(y,\tau)$%

\begin{equation}
x_{i}=x_{i}^{(p)}+y_{i}h_{p},i=\overline{1,N-1},x_{N}=y_{N}h_{p};\quad
t=t^{(p)}+h_{p}^{m-n}\tau\label{s1.55}%
\end{equation}
and denote%

\begin{equation}
w_{p}(y,\tau)=h_{p}^{-(m-n+(1-\omega)\gamma)}v_{p}(x^{\prime(p)}+y^{\prime
}h_{p},y_{N}h_{p},\tau h_{p}^{m-n}). \label{s1.56}%
\end{equation}
Taking into account that $\omega=n/m$, it can be checked directly
that the rescaled functions $w^{(p)}(y,\tau)$ satisfy%

\begin{equation}
\left\langle \left\langle w_{p}\right\rangle \right\rangle _{n,\omega
\gamma,\overline{Q},y,\tau}^{(m+\gamma)(2s)}\equiv%
{\displaystyle\sum\limits_{j=0}^{j\leq n}}
{\displaystyle\sum\limits_{|\alpha|=m-j}} \left\langle \left\langle
y_{N}^{n-j}D_{y}^{\alpha}w_{p}\right\rangle
\right\rangle _{\omega\gamma,y,\overline{Q}}^{(\gamma)(2s)}+%
\label{s1.56.0003}
\end{equation}

\[
+{\displaystyle\sum\limits_{j=0}^{j\leq n}}
{\displaystyle\sum\limits_{|\alpha|=m-j}} \left\langle \left\langle
y_{N}^{n-j}D_{y}^{\alpha}w_{p}\right\rangle
\right\rangle _{\tau,\overline{Q}}^{(\gamma/m)(2s)}+ %
\]

\[
+%
{\displaystyle\sum\limits_{j=0}^{j\leq m-n}}
{\displaystyle\sum\limits_{|\alpha|=[m-n+(1-\omega)\gamma]-j}}
\left\langle \left\langle D_{y^{\prime}}^{\alpha}D_{y_{N}}^{j}w_{p}%
\right\rangle \right\rangle _{y^{\prime},\overline{Q}}^{(\{m-n+(1-\omega
)\gamma\})(2s)}+%
\]

\[
+{\displaystyle\sum\limits_{j=1}^{j\leq m-n}}
{\displaystyle\sum\limits_{|\alpha|=j}} \left\langle \left\langle
D_{y}^{\alpha}w_{p}\right\rangle \right\rangle
_{\tau,\overline{Q}}^{(1-\frac{j}{m-n}+\frac{\gamma}{m})(2s)}+
\]

\[
+%
{\displaystyle\sum\limits_{j=0}^{j\leq m-n}}
{\displaystyle\sum\limits_{|\alpha|=[m-n+\gamma]-j}}
\left\langle \left\langle D_{y^{\prime}}^{\alpha}D_{y_{N}}^{j}w_{p}%
\right\rangle \right\rangle _{\omega\gamma,y^{\prime},\overline{Q}%
}^{(\{m-n+\gamma\})(2s)}+
\]

\[
+%
{\displaystyle\sum\limits_{j=0}^{j\leq n}}
{\displaystyle\sum\limits_{|\alpha|=m-j}} \left\langle \left\langle
y_{N}^{n-j\omega}D_{y}^{\alpha}w_{p}\right\rangle \right\rangle
_{\tau,\overline{Q}}^{(\frac{\gamma+j}{m})(2s)}+
\]

\[
+\left\langle \left\langle D_{\tau}w_{p}\right\rangle \right\rangle
_{\omega\gamma
,y,\overline{Q}}^{(\gamma)(4)}+\left\langle \left\langle D_{\tau}%
w_{p}\right\rangle \right\rangle _{\tau,\overline{Q}}^{(\gamma/m)(4)}%
=\left\langle \left\langle v_{p}\right\rangle \right\rangle
_{n,\omega \gamma,\overline{Q},x,t}^{(m+\gamma)(2s)}.
\]
And also (see \eqref{s1.7})

\begin{equation}
\left\langle w_{p}\right\rangle _{n,\omega\gamma,\overline{Q}}^{(m+\gamma
,\frac{m+\gamma}{m})}=\left\langle v_{p}\right\rangle _{n,\omega
\gamma,\overline{Q}}^{(m+\gamma,\frac{m+\gamma}{m})}. \label{s1.57}%
\end{equation}
Thus from the second inequality in \eqref{s1.50} and Proposition
\ref{Ps1.2} it follows that

\begin{equation}
\left\langle w_{p}\right\rangle _{n,\omega\gamma,\overline{Q}}^{(m+\gamma
,\frac{m+\gamma}{m})}\leq C\left\langle \left\langle w_{p}\right\rangle
\right\rangle _{n,\omega\gamma,\overline{Q},y,\tau}^{(m+\gamma)(2s)}\leq2C=C.
\label{s1.58}%
\end{equation}
From \eqref{s1.50} and \eqref{s1.57} we have

\begin{equation}%
{\displaystyle\sum\limits_{i=1}^{N}} \left\langle
y_{N}^{n}D_{y_{i}}^{m}w_{p}\right\rangle _{\omega\gamma
,y_{i},\overline{Q}}^{(\gamma)}+\left\langle D_{t}w_{p}\right\rangle
_{\tau,\overline{Q}}^{(\gamma/m)}\leq\frac{1}{p}. \label{s1.59}%
\end{equation}
And from \eqref{s1.54} we obtain%

\begin{equation}
\left(  y_{N}^{(p)}\right)  ^{\omega\gamma}|\Delta_{\overline{e}^{(p)}}%
^{2s}(y_{N}^{(p)})^{n}D_{y}^{\widehat{\alpha}}w_{p}(P^{(p)},0)|\geq\nu/2,
\label{s1.60}%
\end{equation}
where

\bigskip%
\begin{equation}
y_{N}^{(p)}\equiv x_{N}^{(p)}/h_{p},\, \overline{e}^{(p)}\equiv\overline
{h}^{(p)}/h_{p},\,|\overline{e}^{(p)}|=1,\,P^{(p)}\equiv(0^{\prime}%
,y_{N}^{(p)}). \label{s1.60.1}%
\end{equation}
Note also that the functions $w_{p}(y,\tau)$ inherit property \eqref{s1.43.01}%

\begin{equation}
y_{N}^{n-j}D_{x}^{\alpha}w_{p}(y,\tau)\rightarrow0,y_{N}\rightarrow
0,\quad0\leq j<n,\alpha=(\alpha_{1},...,\alpha_{N}),|\alpha|=m-j,\alpha
_{N}<m-j. \label{s1.61}%
\end{equation}
Denote by $Q_{p}(y,\tau)\equiv Q_{w_{p}}(y,\tau)$ the "Taylor"
function $Q_{w_{p}}(y,\tau)$ for the function $w_{p}(y,\tau)$, which
was constructed in Lemma \ref{Ls1.2} and denote $r_{p}(y,\tau)\equiv
w_{p}(y,\tau)-Q_{p}(y,\tau)$.  From Lemma \ref{Ls1.2}
it follows that%

\begin{equation}
y_{N}^{n-j}D_{y}^{\alpha}r_{p}(y,\tau)|_{(y,\tau)=(0,0)}=0,\quad
j<n,\,|\alpha|=m-j, \label{s1.62}%
\end{equation}

\begin{equation}
D_{y}^{\alpha}r_{p}(y,\tau)|_{(y,\tau)=(\overline{e},0)}=0,\,|\alpha|\leq
m-n,\hspace{0.05in}D_{\tau}r_{p}(y,\tau)|_{(y,\tau)=(\overline{e},0)}=0.
\label{s1.63}%
\end{equation}
Recall that

\begin{equation}
y_{N}^{n-j}D_{y}^{\alpha}Q_{p}(y,\tau)\equiv const,\quad|\alpha|=m-j,j\leq
n,\alpha_{N}<m-n,\, \,D_{\tau}Q_{p}(y,\tau)\equiv const, \label{s1.64}%
\end{equation}
and also

\begin{equation}
D_{y_{N}}^{m-n}Q_{p}(y,\tau)\equiv const\text{ \ if the seminorm }\left\langle
\left\langle D_{x_{N}}^{m-n}u\right\rangle \right\rangle _{\omega
\gamma,x,\overline{Q}}^{(\gamma)(2s)}<\infty\text{ is finite} \label{s1.65}%
\end{equation}
and it is included in the left hand side of \eqref{s1.41} and the
seminorm $\left\langle \left\langle D_{y_{N}}
^{m-n}w_{p}\right\rangle \right\rangle _{\omega\gamma,y,\overline{Q}}%
^{(\gamma)(2s)}$ is included in \eqref{s1.56.0003}. Consequently,
from \eqref{s1.64}, \eqref{s1.65} and from the definition of
H\"{o}lder classes in view of \eqref{s1.58} it follows that

\begin{equation}
\left\langle r_{p}\right\rangle _{n,\omega\gamma,\overline{Q}}^{(m+\gamma
,\frac{m+\gamma}{m})}=\left\langle w_{p}-Q_{p}(y,\tau)\right\rangle
_{n,\omega\gamma,\overline{Q}}^{(m+\gamma,\frac{m+\gamma}{m})}\leq C.
\label{s1.66}%
\end{equation}
For the same reason we have from \eqref{s1.59}

\begin{equation}%
{\displaystyle\sum\limits_{i=1}^{N}} \left\langle
y_{N}^{n}D_{y_{i}}^{m}r_{p}\right\rangle _{\omega\gamma
,y_{i},\overline{Q}}^{(\gamma)}+\left\langle
D_{\tau}r_{p}\right\rangle
_{\tau,\overline{Q}}^{(\gamma/m)}\leq\frac{1}{p} \label{s1.67}%
\end{equation}
and from \eqref{s1.60}

\begin{equation}
\left(  y_{N}^{(p)}\right)  ^{\omega\gamma}|\Delta_{\overline{e}^{(p)}}%
^{2s}(y_{N}^{(p)})^{n}D_{y}^{\widehat{\alpha}}r_{p}(P^{(p)},0)|\geq\nu.
\label{s1.68}%
\end{equation}
From \eqref{s1.61}, \eqref{s1.62}, \eqref{s1.63}, and \eqref{s1.66}
it follows that the sequence of functions $\{r_{p}(y,\tau)\}$ is
bounded in $C^{m+\gamma,\frac{m+\gamma}{m}}(K_{\delta})$ for any
compact set $K_{\delta}\subset Q\cap\{ \delta\leq
x_{N}\leq\delta^{-1}\}$, $\delta \in(0,1)$. Therefore there exists a
function $r(y,\tau)\in
C^{m+\gamma,\frac{m+\gamma}{m}}(Q\cap\{x_{N}>0\})$
with (at least for a subsequence)%

\begin{equation}
r_{p}\rightarrow r\text{ in }C^{m+\gamma^{\prime},\frac{m+\gamma^{\prime}}{m}%
}(K_{\delta}),\,p\rightarrow\infty,\, \forall K_{\delta}\subset Q\cap\{
\delta\leq y_{N}\leq\delta^{-1}\},\quad\gamma^{\prime}<\gamma. \label{s1.69}%
\end{equation}
At the same time, since the sequences $\{y_{N}^{(p)}\}$, $\{
\overline{e}^{(p)}\}$, and $\{P^{(p)}\}$ are bounded (recall
that $y_{N}^{(p)}=x_{N}^{(p)}/h_{p}\leq\varepsilon^{-1}$ since $h_{p}%
\geq\varepsilon x_{N}^{(p)}$)%

\begin{equation}
y_{N}^{(p)}\rightarrow y_{N}^{(0)},\quad\overline{e}^{(p)}\rightarrow
\overline{e}^{(0)},\quad P^{(p)}\rightarrow P^{(0)},\quad p\rightarrow\infty,
\label{s1.70}%
\end{equation}
where $y_{N}^{(0)}$ is a nonnegative number, $\overline{e}^{(0)}\in
\overline{H}$ is a unit vector, $P^{(0)}=(0^{\prime},y_{N}^{(0)})\in
\overline{H}$. From \eqref{s1.62} and \eqref{s1.66} (together with
\eqref{s1.01} and the Arzela theorem) it follows that the functions
$y_{N}^{n}D_{y}^{\widehat{\alpha}}r_{p}(y,\tau)$ are uniformly \
convergent (for a subsequence) on any compact set
$K_{R}\subset\overline{Q}\cap\{0\leq
y_{N}\leq R\}$, $R>0$,%

\[
y_{N}^{n}D_{y}^{\widehat{\alpha}}r_{p}(y,\tau)\rightrightarrows y_{N}^{n}%
D_{y}^{\widehat{\alpha}}r(y,\tau),\quad p\rightarrow\infty.
\]
Thus we can choose a compact set $K_{R}$ and take the limit of
relation \eqref{s1.68} on this set. This gives

\begin{equation}
|\Delta_{\overline{e}^{(0)}}^{2s}(y_{N}^{(0)})^{n}D_{y}^{\widehat{\alpha}%
}r(P^{(0)},0)|\geq\nu>0. \label{s1.71}%
\end{equation}
Moreover, from \eqref{s1.01} and \eqref{s1.66} it follows that uniformly in $p$%

\begin{equation}
\left\langle y_{N}^{n}D_{y}^{\widehat{\alpha}}r_{p}\right\rangle
_{y,\overline{Q}}^{(\gamma-\omega\gamma)}+\left\langle y_{N}^{n}%
D_{y}^{\widehat{\alpha}}r_{p}\right\rangle _{\tau,\overline{Q}}^{(\gamma
/m)}\leq C. \label{s1.71.1}%
\end{equation}
Together with \eqref{s1.62} this means that the sequence
$\{y_{N}^{n}D_{y}^{\widehat {\alpha}}r_{p}\}$ is bounded in the
space $C^{\gamma-\omega\gamma,\frac {\gamma}{m}}(K_{R})$ for any
compact set $K_{R}$. Therefore for any
$\gamma^{\prime}<\gamma$ the sequence $\{y_{N}^{n}D_{y}^{\widehat{\alpha}%
}r_{p}\}$ converges to $y_{N}^{n}D_{y}^{\widehat{\alpha}}r$ in
the space $C^{\gamma^{\prime}-\omega\gamma^{\prime},\frac{\gamma^{\prime}}{m}%
}(K_{R})$ and for the limit $y_{N}^{n}D_{y}^{\widehat{\alpha}}r$ we have with
the same exponent $\gamma$%

\begin{equation}
\left\langle y_{N}^{n}D_{y}^{\widehat{\alpha}}r\right\rangle _{y,\overline{Q}%
}^{(\gamma-\omega\gamma)}+\left\langle y_{N}^{n}D_{y}^{\widehat{\alpha}%
}r\right\rangle _{\tau,\overline{Q}}^{(\gamma/m)}\leq C. \label{s1.71.2}%
\end{equation}
Further, from \eqref{s1.67} it follows that

\begin{equation}%
\begin{array}
[c]{c}%
y_{N}^{n}D_{y_{N}}^{m}r(y,\tau)\text{ does not depend on }y_{N},\\
D_{y_{i}}^{m}r(y,\tau)\text{ does not depend on }y_{i},\quad i=\overline
{1,N-1},\\
D_{\tau}r(y,\tau)\text{ does not depend on }\tau.
\end{array}
\label{s1.72}%
\end{equation}
Really, let us prove the first assertion. Let
$y=(y^{\prime},y_{N})$, $y_{N}>0$, $\tau$, and $h>0$ be fixed. Then
we have directly from the definition and from \eqref{s1.67}

\[
y_{N}^{\omega\gamma}\frac{|(y_{N}+h)^{n}D_{y_{N}}^{m}r_{p}(y^{\prime}%
,y_{N}+h,\tau)-y_{N}{}^{n}D_{y_{N}}^{m}r_{p}(y^{\prime},y_{N},\tau
)|}{h^{\gamma}}\leq\left\langle y_{N}^{n}D_{y_{i}}^{m}r_{p}\right\rangle
_{\omega\gamma,y_{i},\overline{Q}}^{(\gamma)}\leq\frac{1}{p}.
\]
Making use of \eqref{s1.69} and taking limit in this inequality as
$p\rightarrow\infty$ we obtain

\[
(y_{N}+h)^{n}D_{y_{N}}^{m}r_{p}(y^{\prime},y_{N}+h,\tau)=y_{N}{}^{n}D_{y_{N}%
}^{m}r_{p}(y^{\prime},y_{N},\tau).
\]
Since $y$, $\tau$, and $h$ are arbitrary this proves the first
assertion in \eqref{s1.72}. Other assertions are completely
analogous.

Now from the first assertion in \eqref{s1.72} we have with some
functions \ $a(y^{\prime},\tau)$

\[
D_{y_{N}}^{m}r(y,\tau)=\frac{a(y^{\prime},\tau)}{y_{N}^{n}}.
\]
Integrating this equality in $y_{N}$, we find

\begin{equation}
r(y,\tau)=\left\{
\begin{array}
[c]{c}%
b_{0}(y^{\prime},\tau)y_{N}^{m-n}+%
{\displaystyle\sum\limits_{i=1}^{m}}
b_{i}(y^{\prime},\tau)y_{N}^{i-1},\quad n\text{ is a noninteger},\\
b_{0}(y^{\prime},\tau)\ln^{(m-n)}y_{N}+%
{\displaystyle\sum\limits_{i=1}^{m}}
b_{i}(y^{\prime},\tau)y_{N}^{i-1},\quad n\text{ is an integer},
\end{array}
\right.  \label{s1.73}%
\end{equation}
where $b_{0}(y^{\prime},\tau)$ and $b_{i}(y^{\prime},\tau)$ are some
functions and $\ln^{(m-n)}y_{N}$ is defined in \eqref{s1.009}.

Making use again of \eqref{s1.72} and taking into account the
independence of all terms in \eqref{s1.73}, we see

\[
D_{y_{1}}^{m+1}...D_{y_{N-1}}^{m+1}D_{t}^{2}b_{i}(y^{\prime},\tau
)\equiv0\text{ in }Q,\quad i=\overline{0,m}\quad-
\]
at least in the sense of distributions. This means, as it is well
known, that the functions $b_{i}(y^{\prime},\tau)$ are polynomials
in $y^{\prime}$ of degree not greater than $m$ and in $t$ of degree
not greater than $2$. Consequently, the function
$y_{N}^{n}D_{y}^{\widehat{\alpha}}r_{p}(y,\tau)$ has the form

\begin{equation}
y_{N}^{n}D_{y}^{\widehat{\alpha}}r(y,\tau)=P_{0}(y^{\prime},\tau)y_{N}^{n}%
\ln^{(k)}y_{N}+%
{\displaystyle\sum\limits_{j=1}^{m+1}}
P_{j}(y^{\prime},\tau)y_{N}^{d_{j}}, \label{s1.74}%
\end{equation}
where  $P_{0}(y^{\prime},\tau)$\ and\ $P_{j}(y^{\prime},\tau)$ are
some polynomials, $k$ is an integer, and for each
$j=\overline{1,m+1}$ $\ $the number \ $\ d_{j}$ is either an integer
or a number of the form $k_{j}+n$ with integer $k_{j}$. Now relation
\eqref{s1.71} means that the function
$y_{N}^{n}D_{y}^{\widehat{\alpha}}r_{p}(y,\tau)$ in \eqref{s1.74} is
not a constant identically. At last, as it can be checked directly,
a nonconstant function of the form \eqref{s1.74} can not have a
finite values of $\left\langle y_{N}^{n}
D_{y}^{\widehat{\alpha}}r\right\rangle _{\omega\gamma,y,\overline{Q}}%
^{(\gamma-\omega\gamma)}$ and $\left\langle y_{N}^{n}D_{y}^{\widehat{\alpha}%
}r\right\rangle _{\tau,\overline{Q}}^{(\gamma/m)}$ under our
assumption $\gamma-\omega\gamma<n$ over unbounded halfspace $Q$ (it
is enough to consider the term in \eqref{s1.74} with the maximal
growth at infinity). This contradict to \eqref{s1.71.2}.

This contradiction proves estimate \eqref{s1.47} on the class of
functions $u(x,t)$ with \eqref{s1.45.1}. Note again that all the
above reasonings for the term $\left\langle \left\langle
x_{N}^{n}D_{x}^{\widehat{\alpha}}v_{p}\right\rangle \right\rangle
_{\omega\gamma,x,\overline{Q}}^{(\gamma)(2s)(\varepsilon+)}$ from
\eqref{s1.51}  with \eqref{s1.52} are completely the same for other
terms in \eqref{s1.51}. For any other term in \eqref{s1.51} we
obtain an analog of relations \eqref{s1.71} and \eqref{s1.74} with
the same contradiction.

We now turn to the estimate of the value of $\left\langle
\left\langle u\right\rangle \right\rangle _{n,\omega\gamma,\overline{Q}%
}^{(m+\gamma)(2s)(\varepsilon-)}$ in \eqref{s1.45}. Our goal is to
obtain the estimate (compare \eqref{s1.30.1})

\begin{equation}
\left\langle \left\langle u\right\rangle \right\rangle _{n,\omega
\gamma,\overline{Q}}^{(m+\gamma)(2s)(\varepsilon-)}\leq
C_{\varepsilon}\left( {\displaystyle\sum\limits_{i=1}^{N}}
\left\langle x_{N}^{n}D_{x_{i}}^{m}u\right\rangle _{\omega\gamma
,x_{i},\overline{Q}}^{(\gamma)}+\left\langle D_{t}u\right\rangle
_{t,\overline{Q}}^{(\gamma/m)}\right)
+C\varepsilon^{\gamma}\left\langle
\left\langle u\right\rangle \right\rangle _{n,\omega\gamma,\overline{Q}%
}^{(m+\gamma)(2s)}. \label{s1.75}%
\end{equation}
All terms in the definition of $\left\langle \left\langle
u\right\rangle \right\rangle
_{n,\omega\gamma,\overline{Q}}^{(m+\gamma)(2s)(\varepsilon-)}$ in
\eqref{s1.45} are estimated completely similarly. We estimate
the most complex term with a degenerate factor%

\[
\left\langle \left\langle x_{N}^{n-j}D_{x}^{\alpha}u\right\rangle
\right\rangle
_{\omega\gamma,x,\overline{Q}}^{(\gamma)(2s)(\varepsilon-)}=
\]

\begin{equation}
=\sup_{(x,t)\in\overline{Q},\overline{h}\in\overline{H},|\overline{h}%
|\leq\varepsilon x_{N}}x_{N}^{\omega\gamma}\frac{|\Delta_{\overline{h},x}%
^{2s}\left(  x_{N}^{n-j}D_{x}^{\alpha}u(x,t)\right)  |}{|\overline{h}%
|^{\gamma}},\quad|\alpha|=m-j,j\leq n. \label{s1.76}%
\end{equation}
It can be checked directly that it is enough to consider the
H\"{o}lder property of $x_{N}^{n-j}D_{x}^{\alpha}u$ with respect to
the tangent variables $x^{\prime}$ and with respect to the variable
$x_{N}$ separately. This corresponds to the obtaining separately the
estimates for two cases of step $\overline{h}$ :
$\overline{h}=(\overline{h}^{\prime },0)=(h_{1},...,h_{N-1},0)$ and
$\overline{h}=(0,...,0,h)$, where $h>0$. We first obtain the
estimate \eqref{s1.75} with respect to the tangent variables, that
is we estimate the expression

\[
\left\langle \left\langle x_{N}^{n-j}D_{x}^{\alpha}u\right\rangle
\right\rangle _{\omega\gamma,x^{\prime},\overline{Q}}^{(\gamma
)(2s)(\varepsilon-)}\equiv\sup_{(x,t)\in\overline{Q},h^{\prime}\in
R^{N-1},|h^{\prime}|\leq\varepsilon x_{N}}x_{N}^{\omega\gamma}\frac
{|\Delta_{\overline{h}^{\prime},x^{\prime}}^{2s}\left(  x_{N}^{n-j}%
D_{x}^{\alpha}u(x,t)\right)  |}{|\overline{h}^{\prime}|^{\gamma}}=
\]

\begin{equation}
=\sup_{(x,t)\in\overline{Q},h^{\prime}\in R^{N-1},|h^{\prime}|\leq\varepsilon
x_{N}}x_{N}^{\omega\gamma}\frac{|x_{N}^{n-j}\Delta_{\overline{h}^{\prime
},x^{\prime}}^{s}D_{x}^{\alpha}v(x,t)|}{|\overline{h}^{\prime}|^{\gamma}},
\label{s1.77}%
\end{equation}
where $v(x,t)=\Delta_{h^{\prime},x^{\prime}}^{s}u(x,t)$. Let a point
$(x,t)=(x_{0},t_{0})=(x_{0}^{\prime},x_{N}^{0},t_{0})$ be fixed and
fix also a vector $\overline{h}^{\prime}\in R^{N-1}$,
$|\overline{h}^{\prime}|\leq\varepsilon x_{N}^{0}$. Suppose that
$\varepsilon\in(0,1/32m)$. Consider the expression

\begin{equation}
A\equiv\left(  x_{N}^{0}\right)  ^{\omega\gamma}\frac{|\left(  x_{N}%
^{0}\right)  ^{n-j}\Delta_{\overline{h}^{\prime},x^{\prime}}^{s}D_{x}^{\alpha
}v(x_{0},t_{0})|}{|\overline{h}^{\prime}|^{\gamma}},\quad|\alpha|=m-j,j\leq n.
\label{s1.78}%
\end{equation}
Make in the functions $u(x,t)$ \ and $v(x,t)$ the change of
variables $(x,t)\rightarrow(y,\tau)$, $v(x,t)\rightarrow v(y,\tau)$%

\begin{equation}
x^{\prime}=x_{0}^{\prime}+\left(  x_{N}^{0}\right)  y^{\prime},\quad
x_{N}=\left(  x_{N}^{0}\right)  y_{N},\quad t=t_{0}+\left(  x_{N}^{0}\right)
^{m-n}\tau\label{s1.79}%
\end{equation}
and denote $\overline{d}=\overline{h}^{\prime}/x_{N}^{0}$,
$\left\vert \overline{d}\right\vert \leq\varepsilon<1/32m$,
$P_{1}\equiv(y_{0},\tau _{0})\equiv(0^{\prime},1,0)$, that is
$(x_{0},t_{0})\rightarrow(y_{0},\tau _{0})$. In the new variables
the expression $A$ takes the form%

\begin{equation}
A=\left(  x_{N}^{0}\right)  ^{\omega\gamma+n-m-\gamma}\frac{|\Delta
_{\overline{d},y^{\prime}}^{s}D_{y}^{\alpha}v(0^{\prime},1,0)|}{|\overline
{d}|^{\gamma}}. \label{s1.80}%
\end{equation}
Denote for $\rho<1$%

\[
Q_{\rho}\equiv\{(y,\tau)\in Q:|y^{\prime}|\leq\rho,|y_{N}-1|\leq\rho
,|\tau|\leq\left(  \rho\right)  ^{m-n}\}
\]
and consider the function $v(y,\tau)$ on this cylinder. Note first,
that since $y_{N}\geq1/4$ on $Q_{3/4}$, the function $v(y,\tau)$
belongs to the usual smooth class $C^{m+\gamma,1+\gamma
/m}(\overline{Q}_{3/4})$. Considering this function on $\overline{Q}%
_{1/4}\subset\overline{Q}_{3/4}$ and applying \eqref{s1.12+2.1}, we
obtain

\[
\frac{|\Delta_{\overline{d},y^{\prime}}^{s}D_{y}^{\alpha}v(0^{\prime}%
,1,0)|}{|\overline{d}|^{\gamma}}\leq C\left\langle D_{y}^{\alpha}%
v(y,\tau)\right\rangle _{y^{\prime},\overline{Q}_{1/4}}^{(\gamma)}\leq
\]

\[
\leq C\left( {\displaystyle\sum\limits_{i=1}^{N}}
\left\langle D_{y_{i}}^{m}v\right\rangle _{y_{i},\overline{Q}_{1/4}}%
^{(\gamma)}+|v|_{\overline{Q}_{1/4}}^{(0)}\right) \leq
\]

\begin{equation}
 \leq C\left(
{\displaystyle\sum\limits_{i=1}^{N-1}}
\left\langle D_{y_{i}}^{m}u\right\rangle _{y_{i},\overline{Q}_{3/4}}%
^{(\gamma)}+\left\langle D_{y_{N}}^{m}v\right\rangle _{y_{N},\overline
{Q}_{1/4}}^{(\gamma)}+|v|_{\overline{Q}_{1/4}}^{(0)}\right)  . \label{s1.81}%
\end{equation}
Note that we drop the term $\left\langle D_{\tau}v\right\rangle
_{\tau,\overline{Q}_{1/4}}^{(\gamma/m)}$ in the right hand side of
\eqref{s1.81} because this estimate can be obtained at a fixed
$\tau$ with respect to the variables $y$ only. Now we go back to the
variables $(x,t)$ in the last estimate and obtain ($|\alpha|=m-j$)%

\begin{equation}
\left(  x_{N}^{0}\right)  ^{\gamma+m-j}\frac{|\Delta_{\overline{h}^{\prime
},x^{\prime}}^{s}D_{x}^{\alpha}v(x_{0},t_{0})|}{|\overline{h}^{\prime
}|^{\gamma}}\leq C\left(  \left(  x_{N}^{0}\right)  ^{m+\gamma}%
{\displaystyle\sum\limits_{i=1}^{N-1}}
\left\langle D_{x_{i}}^{m}u\right\rangle _{x_{i},\overline{Q}_{(3/4)x_{N}^{0}%
}}^{(\gamma)}+\right.  \label{s1.82}%
\end{equation}

\[
\left.  +\left(  x_{N}^{0}\right)  ^{m+\gamma}\left\langle D_{x_{N}}%
^{m}v\right\rangle _{x_{N},\overline{Q}_{(1/4)x_{N}^{0}}}^{(\gamma
)}+|v|_{\overline{Q}_{(1/4)x_{N}^{0}}}^{(0)}\right)  ,
\]
where for $\rho\in(0,1)$,

\[
Q_{\rho x_{N}^{0}}\equiv\{(x,t)\in Q:|x^{\prime}|\leq\rho x_{N}^{0}%
,|x_{N}-x_{N}^{0}|\leq\rho x_{N}^{0},|t-t_{0}|\leq\left(  \rho x_{N}%
^{0}\right)  ^{m-n}\}.
\]
Before proceeding further with the estimate of the expression $A$ in
\eqref{s1.78}, note that since $x_{N}\sim x_{N}^{0}$ on the set
$\overline {Q}_{(1/4)x_{N}^{0}}$ we have just from the definition of
the H\"{o}lder constants

\begin{equation}
\left(  x_{N}^{0}\right)  ^{n}\left\langle D_{x_{N}}^{m}v\right\rangle
_{x_{N},\overline{Q}_{(1/4)x_{N}^{0}}}^{(\gamma)}\leq C\left(  \left\langle
x_{N}^{n}D_{x_{N}}^{m}v\right\rangle _{x_{N},\overline{Q}_{(1/4)x_{N}^{0}}%
}^{(\gamma)}+\left(  x_{N}^{0}\right)  ^{n-\gamma}|D_{x_{N}}^{m}%
v|_{\overline{Q}_{(1/4)x_{N}^{0}}}^{(0)}\right)  . \label{s1.82.1}%
\end{equation}
Substituting this estimate in \eqref{s1.82}, dividing both parts of
obtained inequality by $\left( x_{N}^{0}\right)
^{\gamma+m-n-\omega\gamma}$ , and taking into account that
$v(x,t)=\Delta_{h^{\prime},x^{\prime}}^{s}u(x,t)$, we obtain%

\[
A\leq C\left(  \left(  x_{N}^{0}\right)  ^{n+\omega\gamma}%
{\displaystyle\sum\limits_{i=1}^{N-1}}
\left\langle D_{x_{i}}^{m}u\right\rangle _{x_{i},\overline{Q}_{(3/4)x_{N}^{0}%
}}^{(\gamma)}+\left(  x_{N}^{0}\right)  ^{\omega\gamma}\left\langle x_{N}%
^{n}D_{x_{N}}^{m}v\right\rangle _{x_{N},\overline{Q}_{(1/4)x_{N}^{0}}%
}^{(\gamma)}\right)  +
\]

\[
+C\left(  \left(  x_{N}^{0}\right)  ^{\omega\gamma-\gamma}|\Delta_{h^{\prime
},x^{\prime}}^{s}\left(  x_{N}^{n}D_{x_{N}}^{m}u\right)  |_{\overline
{Q}_{(1/4)x_{N}^{0}}}^{(0)}+\left(  x_{N}^{0}\right)  ^{-\gamma-m+n+\omega
\gamma}|\Delta_{h^{\prime},x^{\prime}}^{s}u(x,t)|_{\overline{Q}_{(1/4)x_{N}%
^{0}}}^{(0)}\right)  \leq
\]

\[
\leq C%
{\displaystyle\sum\limits_{i=1}^{N}} \left\langle
x_{N}^{n}D_{x_{i}}^{m}u\right\rangle _{\omega\gamma
,x_{i},\overline{Q}_{(3/4)x_{N}^{0}}}^{(\gamma)}+
\]

\[
+C\left(  \left(  x_{N}^{0}\right)  ^{-\gamma}|x_{N}^{\omega\gamma}%
\Delta_{h^{\prime},x^{\prime}}^{s}\left(
x_{N}^{n}D_{x_{N}}^{m}u\right)
|_{\overline{Q}_{(1/4)x_{N}^{0}}}^{(0)}+\right.
\]

\begin{equation}
+\left. \left(  x_{N}^{0}\right) ^{-(\gamma+m)}\left(
x_{N}^{0}\right) ^{\omega\gamma}|\Delta_{h^{\prime
},x^{\prime}}^{s}\left(  x_{N}^{n}u(x,t)\right)  |_{\overline{Q}%
_{(1/4)x_{N}^{0}}}^{(0)}\right)  . \label{s1.83}%
\end{equation}
At the same time for the last two terms in the right hand side of
\eqref{s1.83} we have

\[
\left(  x_{N}^{0}\right)
^{-\gamma}|x_{N}^{\omega\gamma}\Delta_{h^{\prime
},x^{\prime}}^{s}\left(  x_{N}^{n}D_{x_{N}}^{m}u\right) |_{\overline
{Q}_{(1/4)x_{N}^{0}}}^{(0)}=
\]

\[
=\left(  \frac{|h^{\prime}|}{x_{N}^{0}}\right) ^{\gamma}\left\{
\left(  x_{N}^{0}\right)  ^{\omega\gamma}\left\vert
\frac{\Delta_{h^{\prime},x^{\prime}}^{s}\left(  x_{N}^{n}D_{x_{N}}%
^{m}u\right)  }{|h^{\prime}|^{\gamma}}\right\vert _{\overline{Q}%
_{(1/4)x_{N}^{0}}}^{(0)}\right\}  \leq
\]

\[
\leq C\varepsilon^{\gamma}\left\langle x_{N}^{n}D_{x_{N}}^{m}u\right\rangle
_{\omega\gamma,x^{\prime},\overline{Q}}^{(\gamma)}\leq C\varepsilon^{\gamma
}\left\langle \left\langle u\right\rangle \right\rangle _{n,\omega
\gamma,\overline{Q}}^{(m+\gamma)(2s)},
\]

\[
\left(  x_{N}^{0}\right)  ^{-(\gamma+m)}|x_{N}^{\omega\gamma}\Delta
_{h^{\prime},x^{\prime}}^{s}\left(  x_{N}^{n}u(x,t)\right)
|_{\overline {Q}_{(1/4)x_{N}^{0}}}^{(0)}=
\]

\[
=\left(  \frac{|h^{\prime}|}{x_{N}^{0}}\right) ^{m+\gamma}\left\{
\left(  x_{N}^{0}\right)  ^{\omega\gamma}\left\vert
\frac{\Delta_{h^{\prime},x^{\prime}}^{s}\left(  x_{N}^{n}u\right)
}{|h^{\prime}|^{\gamma}}\right\vert _{\overline{Q}_{(1/4)x_{N}^{0}}}%
^{(0)}\right\}  \leq
\]

\[
\leq C\varepsilon^{m+\gamma}\left(
{\displaystyle\sum\limits_{|\beta|=m}} \left\langle
x_{N}^{n}D_{x^{\prime}}^{\beta}u\right\rangle _{\omega
\gamma,x^{\prime},\overline{Q}}^{(\gamma)}\right)  \leq
C\varepsilon^{\gamma }\left\langle \left\langle u\right\rangle
\right\rangle _{n,\omega \gamma,\overline{Q}}^{(m+\gamma)(2s)},
\]
where we made use of the mean value theorem and of Proposition
\ref{Ps1.2}. Substituting these two inequalities in \eqref{s1.83}
and taking into account the definition of the expression $A$ in
\eqref{s1.78}, we get

\[
\left\langle \left\langle x_{N}^{n-j}D_{x}^{\alpha}u\right\rangle
\right\rangle _{\omega\gamma,x^{\prime},\overline{Q}}^{(\gamma
)(2s)(\varepsilon-)}\leq
\]

\begin{equation}
\leq C_{\varepsilon}\left( {\displaystyle\sum\limits_{i=1}^{N}}
\left\langle x_{N}^{n}D_{x_{i}}^{m}u\right\rangle _{\omega\gamma
,x_{i},\overline{Q}}^{(\gamma)}+\left\langle D_{t}u\right\rangle
_{t,\overline{Q}}^{(\gamma/m)}\right)
+C\varepsilon^{\gamma}\left\langle
\left\langle u\right\rangle \right\rangle _{n,\omega\gamma,\overline{Q}%
}^{(m+\gamma)(2s)}. \label{s1.84}%
\end{equation}
We turn now to the obtaining the same estimate for $\left\langle
\left\langle x_{N}^{n-j}D_{x}^{\alpha}u\right\rangle \right\rangle
_{\omega\gamma,x_{N},\overline{Q}}^{(\gamma)(2s)(\varepsilon-)}$
with respect to the variable $x_{N}$, $|\alpha|=m-j$%

\[
\left\langle \left\langle x_{N}^{n-j}D_{x}^{\alpha}u\right\rangle
\right\rangle
_{\omega\gamma,x_{N},\overline{Q}}^{(\gamma)(2s)(\varepsilon -)}\leq
\]

\begin{equation}
\leq C_{\varepsilon}\left( {\displaystyle\sum\limits_{i=1}^{N}}
\left\langle x_{N}^{n}D_{x_{i}}^{m}u\right\rangle _{\omega\gamma
,x_{i},\overline{Q}}^{(\gamma)}+\left\langle D_{t}u\right\rangle
_{t,\overline{Q}}^{(\gamma/m)}\right)
+C\varepsilon^{\gamma}\left\langle
\left\langle u\right\rangle \right\rangle _{n,\omega\gamma,\overline{Q}%
}^{(m+\gamma)(2s)}. \label{s1.85}%
\end{equation}
We consider only the case $j<n$ because for an integer $n$ in the
case $j=n$ the function $x_{N}^{n-j}D_{x}^{\alpha}u=D_{x}^{\alpha}u$
has no a degeneration and all the estimates below are completely the
same and become simpler. The schema of the reasonings is quite
similar to the proof of \eqref{s1.84} above.

Let $Q_{u}(x,t)$ be the polynomial from \eqref{s1.0012} with the
properties \eqref{s1.0015}, \eqref{s1.0016} for the function
$u(x,t)$ under the consideration. If we consider the function
$v(x,t)=u(x,t)-Q_{u}(x,t)$ instead of the function $u(x,t)$ itself,
we see that all terms in both sides of \eqref{s1.85} remain
unchanged because of \eqref{s1.0015}- \eqref{s1.0017}. Therefore it
is enough to prove that

\begin{equation}
\left\langle \left\langle x_{N}^{n-j}D_{x}^{\alpha}v\right\rangle
\right\rangle
_{\omega\gamma,x_{N},\overline{Q}}^{(\gamma)(2s)(\varepsilon -)}\leq
C_{\varepsilon}\left( {\displaystyle\sum\limits_{i=1}^{N}}
\left\langle x_{N}^{n}D_{x_{i}}^{m}v\right\rangle _{\omega\gamma
,x_{i},\overline{Q}}^{(\gamma)}+\left\langle D_{t}v\right\rangle
_{t,\overline{Q}}^{(\gamma/m)}\right)
+C\varepsilon^{\gamma}\left\langle
\left\langle v\right\rangle \right\rangle _{n,\omega\gamma,\overline{Q}%
}^{(m+\gamma)(2s)}. \label{s1.86}%
\end{equation}
It is important for us that the function $v(x,t)$ possess the property%

\begin{equation}
x_{N}^{\omega\gamma}x_{N}^{n-j-\gamma}|D_{x}^{\alpha}v(x,t)|\leq\left\langle
x_{N}^{n-j}D_{x}^{\alpha}v\right\rangle _{\omega\gamma,x_{N},\overline{Q}%
}^{(\gamma)},\quad(x,t)\in\overline{Q}. \label{s1.87}%
\end{equation}
Really, from \eqref{s1.0012+1} it follows that
$x_{N}^{n-j}D_{x}^{\alpha}v(x,t)|_{x_{N} =0}=0$ and we obtain

\[
x_{N}^{\omega\gamma}x_{N}^{n-j-\gamma}|D_{x}^{\alpha}v(x,t)|=
\]

\[
=x_{N}%
^{\omega\gamma}\frac{|x_{N}^{n-j}D_{x}^{\alpha}v(x,t)-\left[  x_{N}^{n-j}%
D_{x}^{\alpha}v(x,t)\right]  |_{x_{N}=0}|}{x_{N}^{\gamma}}\leq\left\langle
x_{N}^{n-j}D_{x}^{\alpha}v\right\rangle _{\omega\gamma,x_{N},\overline{Q}%
}^{(\gamma)}.
\]
As above, let a point
$(x_{0},t_{0})=(x_{0}^{\prime},x_{N}^{0},t_{0})$ \ and
$0<h<\varepsilon x_{N}$\ be fixed, $0<\varepsilon<1/(32m)$. Consider the expression%

\begin{equation}
A\equiv\left(  x_{N}^{0}\right)  ^{\omega\gamma}\frac{|\Delta_{h,x_{N}}%
^{2s}\left[  \left(  x_{N}^{0}\right)  ^{n-j}D_{x}^{\alpha}v(x_{0}%
,t_{0})\right]  |}{h^{\gamma}}\equiv\frac{\left(  x_{N}^{0}\right)
^{\omega\gamma}}{h^{\gamma}}B. \label{s1.88}%
\end{equation}
We have

\[
B=\Delta_{h,x_{N}}^{2s}\left[  \left(  x_{N}^{0}\right)
^{n-j}D_{x}^{\alpha }v(x_{0},t_{0})\right] =
\]

\begin{equation}
 =%
{\displaystyle\sum\limits_{i=1}^{2s}}
C_{i}\Delta_{h,x_{N}}^{i}\left[  \left(  x_{N}^{0}+h_{\theta}\right)
^{n-j}\right]  \Delta_{h,x_{N}}^{2s-i}D_{x}^{\alpha}v(x_{0}^{\prime},x_{N}%
^{0}+h_{\theta},t_{0})+ \label{s1.89}%
\end{equation}

\[
+\left(  x_{N}^{0}+h_{\theta}\right)  ^{n-j}\Delta_{h,x_{N}}^{2s}D_{x}%
^{\alpha}v(x_{0}^{\prime},x_{N}^{0},t_{0})\equiv%
{\displaystyle\sum\limits_{i=1}^{2s}} B_{i}+B_{0},
\]
where $C_{i}$ are some constants, and by $h_{\theta}$  here and
below we denote all possible expressions of the form
$h_{\theta}=C\cdot h$ \ with $0\leq C\leq C(m)$.  Consider  $B_{i}$
with $i\geq1$. Making use of the mean value theorem to estimate
$\Delta_{h,x_{N}}^{i}\left[  \left(  x_{N}^{0}+h_{\theta}\right)
^{n-j}\right]  $ and keeping in mind the assumption
$h\leq\varepsilon x_{N}^{0}$, we have

\[
B_{i}\leq%
{\displaystyle\sum\limits_{h_{\theta}}}
C\left(  x_{N}^{0}+h_{\theta}\right)  ^{n-j-i}h^{i}|D_{x}^{\alpha}%
v(x_{0}^{\prime},x_{N}^{0}+h_{\theta},t_{0})|\leq
\]

\[
\leq\varepsilon^{i}%
{\displaystyle\sum\limits_{h_{\theta}}}
C\left(  x_{N}^{0}+h_{\theta}\right)  ^{n-j-\gamma}|D_{x}^{\alpha}%
v(x_{0}^{\prime},x_{N}^{0}+h_{\theta},t_{0})|h^{\gamma}.
\]
Therefore, in view of the definition of the expression $A$ in
\eqref{s1.88},

\[
A_{i}\equiv\left(  x_{N}^{0}\right)  ^{\omega\gamma}\frac{B_{i}}{h^{\gamma}%
}\leq
\]

\begin{equation}
\leq\varepsilon^{i}%
{\displaystyle\sum\limits_{h_{\theta}}}
C\left(  x_{N}^{0}+h_{\theta}\right)  ^{n-j-\gamma+\omega\gamma}|D_{x}%
^{\alpha}v(x_{0}^{\prime},x_{N}^{0}+h_{\theta},t_{0})|\leq C\varepsilon
^{i}\left\langle x_{N}^{n-j}D_{x}^{\alpha}v\right\rangle _{\omega\gamma
,x_{N},\overline{Q}}^{(\gamma)}. \label{s1.90}%
\end{equation}
Consider now the expression $B_{0}$ in \eqref{s1.89}. The
considerations in this case are similar to the previous case of the
variables $x^{\prime}$. Denote

\[
w(x,t)\equiv\Delta_{h,x_{N}}^{s}v(x^{\prime},x_{N},t)\text{ so that }%
\Delta_{h,x_{N}}^{2s}D_{x}^{\alpha}v(x_{0}^{\prime},x_{N}^{0},t_{0}%
)=\Delta_{h,x_{N}}^{s}D_{x}^{\alpha}w(x_{0}^{\prime},x_{N}^{0},t_{0})\text{ }%
\]
and consider the expression%

\[
A_{0}\equiv\frac{\left(  x_{N}^{0}\right)  ^{\omega\gamma}}{h^{\gamma}}B_{0}.
\]
As above, make in the functions $v(x,t)$  and $w(x,t)$ the change of
variables \eqref{s1.79} and denote $d=h/x_{N}^{0}$,
$d\leq\varepsilon<1/32m$, $d_{\theta}=h_{\theta }/x_{N}^{0}\leq
C(m)\varepsilon$,\ $P_{1}\equiv(y_{0},\tau_{0})\equiv
(0^{\prime},1,0)$, that is
$(x_{0},t_{0})\rightarrow(y_{0},\tau_{0})$. In the
new variables the expression $A_{0}$ takes the form%

\[
A_{0}=\left(  x_{N}^{0}\right)  ^{\omega\gamma+n-m-\gamma}(1+d_{\theta}%
)^{n-j}\frac{|\Delta_{d,y_{N}}^{s}D_{y}^{\alpha}w(0^{\prime},1,0)|}{d^{\gamma
}}\leq
\]

\begin{equation}
\leq C\left(  x_{N}^{0}\right)  ^{\omega\gamma+n-m-\gamma}\frac
{|\Delta_{d,y_{N}}^{s}D_{y}^{\alpha}w(0^{\prime},1,0)|}{d^{\gamma}}.
\label{s1.91}%
\end{equation}
Denote for $\rho<1$%

\[
Q_{\rho}\equiv\{(y,\tau)\in Q:|y^{\prime}|\leq\rho,|y_{N}-1|\leq\rho
,|\tau|\leq\left(  \rho\right)  ^{m-n}\}
\]
and consider the function $w(y,\tau)$ on this cylinder. As above,
since $y_{N}\geq1/4$ on $Q_{3/4}$, the function $w(y,\tau)$ belongs
to the usual smooth class $C^{m+\gamma,1+\gamma
/m}(\overline{Q}_{3/4})$. \ Considering, as above, this function on
$\overline{Q}_{1/4}\subset\overline{Q}_{3/4}$ and applying
\eqref{s1.12+2.1}, we obtain

\[
\frac{|\Delta_{d,y_{N}}^{s}D_{y}^{\alpha}w(0^{\prime},1,0)|}{d^{\gamma}}\leq
C\left\langle D_{y}^{\alpha}w(y,\tau)\right\rangle _{y_{N},\overline{Q}_{1/4}%
}^{(\gamma)}\leq
\]

\[
\leq C\left( {\displaystyle\sum\limits_{i=1}^{N}}
\left\langle D_{y_{i}}^{m}w\right\rangle _{y_{i},\overline{Q}_{1/4}}%
^{(\gamma)}+|w|_{\overline{Q}_{1/4}}^{(0)}\right) \leq
\]

\begin{equation}
 \leq C\left(
{\displaystyle\sum\limits_{i=1}^{N-1}}
\left\langle D_{y_{i}}^{m}v\right\rangle _{y_{i},\overline{Q}_{3/4}}%
^{(\gamma)}+\left\langle D_{y_{N}}^{m}v\right\rangle _{y_{N},\overline
{Q}_{3/4}}^{(\gamma)}+|w|_{\overline{Q}_{1/4}}^{(0)}\right)  . \label{s1.92}%
\end{equation}
Note that we again drop the term $\left\langle D_{\tau
}v\right\rangle _{\tau,\overline{Q}_{1/4}}^{(\gamma/m)}$ in the
right hand side of \eqref{s1.92} because this estimate can be
obtained at a fixed $\tau$ with respect to the variables $y$ only.
Now we go back to the variables $(x,t)$ in the last estimate and
obtain ($|\alpha|=m-j$)

\[
\frac{|\Delta_{d,y_{N}}^{s}D_{y}^{\alpha}w(0^{\prime},1,0)|}{d^{\gamma}}%
\leq\left(  x_{N}^{0}\right)  ^{\gamma+m-j}\frac{|\Delta_{h,x_{N}}^{s}%
D_{x}^{\alpha}w(x_{0},t_{0})|}{h^{\gamma}}\leq
\]

\begin{equation}
\leq C\left(  \left(  x_{N}%
^{0}\right)  ^{m+\gamma}%
{\displaystyle\sum\limits_{i=1}^{N-1}}
\left\langle D_{x_{i}}^{m}v\right\rangle _{x_{i},\overline{Q}_{(3/4)x_{N}^{0}%
}}^{(\gamma)}+\right.  \label{s1.93}%
\end{equation}

\[
\left.  +\left(  x_{N}^{0}\right)  ^{m+\gamma}\left\langle D_{x_{N}}%
^{m}v\right\rangle _{x_{N},\overline{Q}_{(3/4)x_{N}^{0}}}^{(\gamma
)}+|w|_{\overline{Q}_{(1/4)x_{N}^{0}}}^{(0)}\right)  ,
\]
where again for $\rho\in(0,1)$

\[
Q_{\rho x_{N}^{0}}\equiv\{(x,t)\in Q:|x^{\prime}|\leq\rho x_{N}^{0}%
,|x_{N}-x_{N}^{0}|\leq\rho x_{N}^{0},|t-t_{0}|\leq\left(  \rho x_{N}%
^{0}\right)  ^{m-n}\}.
\]
Substituting this estimate in \eqref{s1.91}, we obtain%

\[
A_{0}\leq C\left(  \left(  x_{N}^{0}\right)  ^{\omega\gamma+n}%
{\displaystyle\sum\limits_{i=1}^{N-1}}
\left\langle D_{x_{i}}^{m}v\right\rangle _{x_{i},\overline{Q}_{(3/4)x_{N}^{0}%
}}^{(\gamma)}+\right.
\]

\begin{equation}
\left.+\left(  x_{N}^{0}\right)  ^{\omega\gamma+n}\left\langle
D_{x_{N}}^{m}v\right\rangle
_{x_{N},\overline{Q}_{(3/4)x_{N}^{0}}}^{(\gamma )}+\left(
x_{N}^{0}\right)  ^{\omega\gamma+n-m-\gamma}|w|_{\overline
{Q}_{(1/4)x_{N}^{0}}}^{(0)}\right)  . \label{s1.94}%
\end{equation}
Again, since $x_{N}\sim x_{N}^{0}$ on the set
$\overline{Q}_{(3/4)x_{N}^{0}}$, for $i=\overline{1,N-1}$ in the
first term of \eqref{s1.94}

\begin{equation}
\left(  x_{N}^{0}\right)  ^{\omega\gamma+n}\left\langle D_{x_{i}}%
^{m}v\right\rangle _{x_{i},\overline{Q}_{(3/4)x_{N}^{0}}}^{(\gamma)}\leq
C\left(  x_{N}^{0}\right)  ^{\omega\gamma}\left\langle x_{N}^{n}D_{x_{i}}%
^{m}v\right\rangle _{x_{i},\overline{Q}_{(3/4)x_{N}^{0}}}^{(\gamma)}\leq
C\left\langle x_{N}^{n}D_{x_{i}}^{m}v\right\rangle _{\omega\gamma
,x_{i},\overline{Q}}^{(\gamma)}. \label{s1.95}%
\end{equation}
Making further use of \eqref{s1.82.1} and then \eqref{s1.87} we have
on $\overline{Q}_{(3/4)x_{N}^{0}}$ for the second term

\[
\left(  x_{N}^{0}\right)  ^{\omega\gamma+n}\left\langle D_{x_{N}}%
^{m}v\right\rangle _{x_{N},\overline{Q}_{(3/4)x_{N}^{0}}}^{(\gamma)}\leq
C\left(  x_{N}^{0}\right)  ^{\omega\gamma}\left(  x_{N}^{n}\left\langle
D_{x_{N}}^{m}v\right\rangle _{x_{N},\overline{Q}_{(3/4)x_{N}^{0}}}^{(\gamma
)}\right)  \leq
\]

\[
\leq C\left(  x_{N}^{0}\right)  ^{\omega\gamma}\left(  \left\langle x_{N}%
^{n}D_{x_{N}}^{m}v\right\rangle _{x_{N},\overline{Q}_{(3/4)x_{N}^{0}}%
}^{(\gamma)}+\left\vert x_{N}^{n-\gamma}|D_{x_{N}}^{m}v|\right\vert
_{\overline{Q}_{(3/4)x_{N}^{0}}}^{(0)}\right)  \leq
\]

\begin{equation}
\leq C\left(  x_{N}^{0}\right)  ^{\omega\gamma}\left\langle x_{N}^{n}D_{x_{N}%
}^{m}v\right\rangle _{x_{N},\overline{Q}_{(3/4)x_{N}^{0}}}^{(\gamma)}\leq
C\left\langle x_{N}^{n}D_{x_{i}}^{m}v\right\rangle _{\omega\gamma
,x_{N},\overline{Q}}^{(\gamma)}. \label{s1.96}%
\end{equation}
The third term in \eqref{s1.94} we estimate as follows ($s=m+1$)

\[
\left(  x_{N}^{0}\right)  ^{\omega\gamma+n-m-\gamma}|w|_{\overline
{Q}_{(1/4)x_{N}^{0}}}^{(0)}=\left(  \frac{h}{x_{N}^{0}}\right)  ^{m+\gamma
}\left\vert \left(  x_{N}^{0}\right)  ^{\omega\gamma+n}\frac{\Delta_{h,x_{N}%
}^{s}v(x^{\prime},x_{N},t)}{h^{m+\gamma}}\right\vert _{\overline
{Q}_{(1/4)x_{N}^{0}}}^{(0)}\leq
\]

\begin{equation}
\leq C\varepsilon^{m+\gamma}\left\vert \left(  x_{N}^{0}\right)
^{\omega\gamma+n}\frac{\Delta_{h,x_{N}}D_{x_{N}}^{m}v(x^{\prime}%
,x_{N}+h_{\theta},t)}{h^{\gamma}}\right\vert _{\overline{Q}_{(1/4)x_{N}^{0}}%
}^{(0)}, \label{s1.97}%
\end{equation}
where the mean value theorem was used. Making again use of
\eqref{s1.82.1} and \eqref{s1.87} we have on
$\overline{Q}_{(3/4)x_{N}^{0}}$ as above

\begin{equation}
\left\vert \left(  x_{N}^{0}\right)  ^{\omega\gamma+n}\frac{\Delta_{h,x_{N}%
}D_{x_{N}}^{m}v(x^{\prime},x_{N}+h_{\theta},t)}{h^{\gamma}}\right\vert
_{\overline{Q}_{(1/4)x_{N}^{0}}}^{(0)}\leq\left(  x_{N}^{0}\right)
^{\omega\gamma+n}\left\langle D_{x_{N}}^{m}v\right\rangle _{x_{N},\overline
{Q}_{(3/4)x_{N}^{0}}}^{(\gamma)}\leq\label{s1.98}%
\end{equation}

\[
\leq C\left(  x_{N}^{0}\right)  ^{\omega\gamma}\left\langle x_{N}^{n}D_{x_{N}%
}^{m}v\right\rangle _{x_{N},\overline{Q}_{(3/4)x_{N}^{0}}}^{(\gamma)}\leq
C\left\langle x_{N}^{n}D_{x_{N}}^{m}v\right\rangle _{\omega\gamma
,x_{N},\overline{Q}}^{(\gamma)}.
\]
From \eqref{s1.95}- \eqref{s1.98} it follows that

\[
A_{0}\leq C%
{\displaystyle\sum\limits_{i=1}^{N}} \left\langle
x_{N}^{n}D_{x_{i}}^{m}v\right\rangle _{\omega\gamma
,x_{i},\overline{Q}}^{(\gamma)}%
\]
and from this and from \eqref{s1.90}, \eqref{s1.88} it follows that

\[
\left(  x_{N}^{0}\right)
^{\omega\gamma}\frac{|\Delta_{h,x_{N}}^{2s}\left[ \left(
x_{N}^{0}\right)  ^{n-j}D_{x}^{\alpha}v(x_{0},t_{0})\right]
|}{h^{\gamma}}\leq
\]

\begin{equation}
\leq C%
{\displaystyle\sum\limits_{i=1}^{N}} \left\langle
x_{N}^{n}D_{x_{i}}^{m}v\right\rangle _{\omega\gamma
,x_{i},\overline{Q}}^{(\gamma)}+C\varepsilon\left\langle
\left\langle x_{N}^{n-j}D_{x}^{\alpha}v\right\rangle \right\rangle
_{\omega\gamma
,x_{N},\overline{Q}}^{(\gamma)}. \label{s1.99}%
\end{equation}
Since $(x_{0},t_{0})$ and $h\leq\varepsilon x_{N}$ are arbitrary,
this means \eqref{s1.86} and therefore \eqref{s1.85}.

Other terms in the definition of $\left\langle \left\langle
u\right\rangle \right\rangle
_{n,\omega\gamma,\overline{Q}}^{(m+\gamma)(2s)(\varepsilon-)}$ in
\eqref{s1.45} are estimated completely similarly. The smoothness
with respect to the $t$ - variable is estimated identically to the
estimates of the smoothness with respect to $x^{\prime}$ with the
taking into account the relation between the dimensions of $x$ and
$t$ : $x\sim x_{N}^{0}$, $t\sim\left(  x_{N}^{0}\right)  ^{m-n}$ or
$x\sim h$, $t\sim h^{m-n}$. \ Note that all terms in the definition
of $\left\langle \left\langle u\right\rangle \right\rangle
_{n,\omega\gamma,\overline{Q}}^{(m+\gamma)(2s)(\varepsilon-)}$ have
the same total dimension with respect to this relation of the
dimensions. Now from the alternative \eqref{s1.46}  and from
\eqref{s1.47} with \eqref{s1.75} it follows that for
arbitrary function $u(x,t)\in C_{n,\omega\gamma}^{m+\gamma,\frac{m+\gamma}{m}%
}(\overline{Q})$ we have

\[
\left\langle \left\langle u\right\rangle \right\rangle _{n,\omega
\gamma,\overline{Q}}^{(m+\gamma)(2s)}\leq C_{\varepsilon}\left(
{\displaystyle\sum\limits_{i=1}^{N}} \left\langle
x_{N}^{n}D_{x_{i}}^{m}u\right\rangle _{\omega\gamma
,x_{i},\overline{Q}}^{(\gamma)}+\left\langle D_{t}u\right\rangle
_{t,\overline{Q}}^{(\gamma/m)}\right)
+C\varepsilon^{\gamma}\left\langle
\left\langle u\right\rangle \right\rangle _{n,\omega\gamma,\overline{Q}%
}^{(m+\gamma)(2s)}.
\]

Finally, choosing $\varepsilon$ in this estimate sufficiently small
and absorbing the last term in the left hand side, we arrive at
\eqref{s1.41}. This completes the proof of Theorem \ref{Ts1.1} under
assumption \eqref{s1.43.01}.

We now remove this assumption. Let $u(x,t)\in
C_{n,\omega\gamma}^{m+\gamma ,\frac{m+\gamma}{m}}(\overline{Q})$ and
let $u_{\varepsilon}(x,t)$ be defined by \eqref{s1.005} so it
satisfies \eqref{s1.006}, \eqref{s1.43.01}. By what was proved above%

\[
\left\langle u_{\varepsilon}\right\rangle _{n,\omega\gamma,\overline{Q}%
}^{(m+\gamma)}\leq C\left( {\displaystyle\sum\limits_{i=1}^{N}}
\left\langle x_{N}^{n}D_{x_{i}}^{m}u_{\varepsilon}\right\rangle
_{\omega \gamma,x_{i},\overline{Q}}^{(\gamma)}+\left\langle
D_{t}u_{\varepsilon }\right\rangle
_{t,\overline{Q}}^{(\gamma/m)}\right)  ,
\]
where the constant $C$ does not depend on $\varepsilon$.  It can be
directly verified that

\[
\left\langle x_{N}^{n}D_{x_{i}}^{m}u_{\varepsilon}\right\rangle _{\omega
\gamma,x_{i},\overline{Q}}^{(\gamma)}\leq C\left\langle x_{N}^{n}D_{x_{i}}%
^{m}u\right\rangle _{\omega\gamma,x_{i},\overline{Q}}^{(\gamma)}%
,\quad\left\langle D_{t}u_{\varepsilon}\right\rangle _{t,\overline{Q}%
}^{(\gamma/m)}\leq\left\langle D_{t}u\right\rangle _{t,\overline{Q}}%
^{(\gamma/m)}%
\]
and therefore uniformly in $\varepsilon$%

\begin{equation}
\left\langle u_{\varepsilon}\right\rangle _{n,\omega\gamma,\overline{Q}%
}^{(m+\gamma)}\leq C\left( {\displaystyle\sum\limits_{i=1}^{N}}
\left\langle x_{N}^{n}D_{x_{i}}^{m}u\right\rangle _{\omega\gamma
,x_{i},\overline{Q}}^{(\gamma)}+\left\langle D_{t}u\right\rangle
_{t,\overline{Q}}^{(\gamma/m)}\right)  . \label{s1.100}%
\end{equation}
Let $\delta\in(0,1)$ and let $Q_{\delta}=\{(x,t):\delta\leq x_{N}%
\leq\delta^{-1},|x^{\prime}|\leq\delta^{-1},|t|\leq\delta^{-1}\}$.
It is well known that for each $\delta\in(0,1)$ the sequence
$\{u_{\varepsilon}\}$ is bounded in the standard space
$C^{m+\gamma,\frac{m+\gamma}{m}}(Q_{\delta})$. Therefore for a subsequence%

\begin{equation}
u_{\varepsilon}\rightrightarrows u\text{ on each }Q_{\delta}\text{ in the
space }C^{m+\gamma^{\prime},\frac{m+\gamma^{\prime}}{m}}(Q_{\delta})
\label{s1.101}%
\end{equation}
with $\gamma^{\prime}<\gamma$. Besides, all weighted terms $x_{N}^{n-j}%
D_{x}^{\alpha}u_{\varepsilon}$ and
$x_{N}^{n-j\omega}D_{x}^{\alpha}u_{\varepsilon}$\ in the definition
of $\left\langle u_{\varepsilon}\right\rangle _{n,\omega\gamma,\overline{Q}%
}^{(m+\gamma)}$ , $j\leq n$, $|\alpha|=m-j$, converge to
$x_{N}^{n-j} D_{x}^{\alpha}u$ and $x_{N}^{n-j\omega}D_{x}^{\alpha}u$
in the space
$C^{(1-\omega)\gamma^{\prime},\frac{\gamma^{\prime}}{m}}(K_{\delta})$
for each $K_{\delta}=\{(x,t):0\leq
x_{N}\leq\delta^{-1},|x^{\prime}|\leq\delta
^{-1},|t|\leq\delta^{-1}\}$

\begin{equation}
x_{N}^{n-j}D_{x}^{\alpha}u_{\varepsilon}\rightarrow_{C^{(1-\omega
)\gamma^{\prime},\frac{\gamma^{\prime}}{m}}(K_{\delta})}x_{N}^{n-j}%
D_{x}^{\alpha}u,\quad x_{N}^{n-j\omega}D_{x}^{\alpha}u_{\varepsilon
}\rightarrow_{C^{(1-\omega)\gamma^{\prime},\frac{\gamma^{\prime}}{m}%
}(K_{\delta})}x_{N}^{n-j\omega}D_{x}^{\alpha}u. \label{s1.101.1}%
\end{equation}
And the same is valid for the term $D_{t}u_{\varepsilon}$. Exactly
the same reasonings as in the proof of Proposition \ref{Ps1.04} show
that

\[%
{\displaystyle\sum\limits_{j=0}^{j\leq n}}
{\displaystyle\sum\limits_{|\alpha|=m-j}} \left\langle
x_{N}^{n-j}D_{x}^{\alpha}u\right\rangle _{\omega\gamma
,\overline{Q}}^{(\gamma,\gamma/m)}+%
{\displaystyle\sum\limits_{j=0}^{j\leq n}}
{\displaystyle\sum\limits_{|\alpha|=m-j}}
\left\langle x_{N}^{n-j\omega}D_{x}^{\alpha}u\right\rangle _{t,\overline{Q}%
}^{(\frac{\gamma+j}{m})}+\left\langle D_{t}u\right\rangle _{\omega
\gamma,\overline{Q}}^{(\gamma,\gamma/m)}\leq
\]

\[
\leq C\limsup_{\varepsilon\rightarrow0}\left(
{\displaystyle\sum\limits_{j=0}^{j\leq n}}
{\displaystyle\sum\limits_{|\alpha|=m-j}} \left\langle
x_{N}^{n-j}D_{x}^{\alpha}u_{\varepsilon}\right\rangle
_{\omega\gamma,\overline{Q}}^{(\gamma,\gamma/m)}+\right.
\]

\[
\left. +{\displaystyle\sum\limits_{j=0}^{j\leq n}}
{\displaystyle\sum\limits_{|\alpha|=m-j}} \left\langle
x_{N}^{n-j\omega}D_{x}^{\alpha}u_{\varepsilon}\right\rangle
_{t,\overline{Q}}^{(\frac{\gamma+j}{m})}+\left\langle
D_{t}u_{\varepsilon }\right\rangle
_{\omega\gamma,\overline{Q}}^{(\gamma,\gamma/m)}\right)  \leq
\]

\[
\leq C\left( {\displaystyle\sum\limits_{i=1}^{N}} \left\langle
x_{N}^{n}D_{x_{i}}^{m}u\right\rangle _{\omega\gamma
,x_{i},\overline{Q}}^{(\gamma)}+\left\langle D_{t}u\right\rangle
_{t,\overline{Q}}^{(\gamma/m)}\right)  .
\]
Note that this is valid for the pure derivative $D_{x_{N}}^{m-n}u$
in the case of an integer $n$ only if this derivative and it's
seminorms are included in the definition of the space. Besides,
since the functions $u_{\varepsilon}$ satisfy \eqref{s1.006},
\eqref{s1.43.01}, it follows from \eqref{s1.101.1} that the function
$u(x,t)$ itself satisfies \eqref{s1.7.1}. The same reasoning holds
true also for the term ${\displaystyle\sum\limits_{j=1}^{j\leq m-n}}
{\displaystyle\sum\limits_{|\alpha|=j}}
\left\langle D_{x}^{\alpha}u_{\varepsilon}\right\rangle _{t,\overline{Q}%
}^{(1-\frac{j}{m-n}+\frac{\gamma}{m})}$ in the definition of
$\left\langle u_{\varepsilon}\right\rangle
_{n,\omega\gamma,\overline{Q}}^{(m+\gamma)}$ and therefore

\[
{\displaystyle\sum\limits_{j=1}^{j\leq m-n}}
{\displaystyle\sum\limits_{|\alpha|=j}}
\left\langle D_{x}^{\alpha}u\right\rangle _{t,\overline{Q}}^{(1-\frac{j}%
{m-n}+\frac{\gamma}{m})}\leq C\limsup_{\varepsilon\rightarrow0}%
{\displaystyle\sum\limits_{j=1}^{j\leq m-n}}
{\displaystyle\sum\limits_{|\alpha|=j}}
\left\langle D_{x}^{\alpha}u_{\varepsilon}\right\rangle _{t,\overline{Q}%
}^{(1-\frac{j}{m-n}+\frac{\gamma}{m})}\leq
\]

\[
\leq C\left( {\displaystyle\sum\limits_{i=1}^{N}} \left\langle
x_{N}^{n}D_{x_{i}}^{m}u\right\rangle _{\omega\gamma
,x_{i},\overline{Q}}^{(\gamma)}+\left\langle D_{t}u\right\rangle
_{t,\overline{Q}}^{(\gamma/m)}\right)  .
\]
Note again that this is valid for the pure derivative
$D_{x_{N}}^{m-n}u$ in the case of an integer $n$ only if this
derivative and it's seminorms are included in the definition of the
space.

At last, since $u_{\varepsilon}(x,t)$ is smooth with respect to
$x^{\prime}$ and $t$ for any $x_{N}>0$, we have in the open domain
$Q=\overline{Q}\cap\{x_{N}>0\}$ by the same reasonins as above
uniformly in $x_{N}$

\[
{\displaystyle\sum\limits_{j=0}^{j\leq m-n}}
{\displaystyle\sum\limits_{|\alpha|=[m-n+(1-\omega)\gamma]-j}}
\left\langle D_{x^{\prime}}^{\alpha}D_{x_{N}}^{j}u\right\rangle
_{x^{\prime
},Q}^{(\{m-n+(1-\omega)\gamma\})}+%
\]

\[
+{\displaystyle\sum\limits_{j=0}^{j\leq m-n}}
{\displaystyle\sum\limits_{|\alpha|=[m-n+\gamma]-j}} \left\langle
D_{x^{\prime}}^{\alpha}D_{x_{N}}^{j}u\right\rangle _{\omega
\gamma,x^{\prime},Q}^{(\{m-n+\gamma\})}\leq
\]

\[
\leq C\limsup_{\varepsilon\rightarrow0}\left(
{\displaystyle\sum\limits_{j=0}^{j\leq m-n}}
{\displaystyle\sum\limits_{|\alpha|=[m-n+(1-\omega)\gamma]-j}}
\left\langle
D_{x^{\prime}}^{\alpha}D_{x_{N}}^{j}u_{\varepsilon}\right\rangle
_{x^{\prime},\overline{Q}}^{(\{m-n+(1-\omega)\gamma\})}+\right.%
\]

\[
\left.+{\displaystyle\sum\limits_{j=0}^{j\leq m-n}}
{\displaystyle\sum\limits_{|\alpha|=[m-n+\gamma]-j}} \left\langle
D_{x^{\prime}}^{\alpha}D_{x_{N}}^{j}u_{\varepsilon}\right\rangle
_{\omega\gamma,x^{\prime},Q}^{(\{m-n+\gamma\})}\right)  \leq
\]

\begin{equation}
\leq C\left( {\displaystyle\sum\limits_{i=1}^{N}} \left\langle
x_{N}^{n}D_{x_{i}}^{m}u\right\rangle _{\omega\gamma
,x_{i},\overline{Q}}^{(\gamma)}+\left\langle D_{t}u\right\rangle
_{t,\overline{Q}}^{(\gamma/m)}\right)  . \label{s1.102}%
\end{equation}
But from estimates \eqref{s1.007.1} it follows that the functions
$D_{x_{N}}^{j}u$, $j<m-n$,\ are continuous at $x_{N}\rightarrow0$.
Then from the above estimate it follows that at a fixed $t_{0}>0$
the sequence of the functions
$\{D_{x_{N}}^{j}u(\cdot,x_{N},t_{0})\}$ with $x_{N}$ as a parameter
is bounded in $C^{m-n+(1-\omega)\gamma-j}(R^{N-1})$. Therefore, in
view of Proposition \ref{Ps1.04} this sequence converges (at least
for a subsequence) on compact sets $K$ in $R^{N-1}$ as
$x_{N}\rightarrow0$ in the space
$C^{m-n+(1-\omega)\gamma^{\prime}-j}(K)$, $\gamma^{\prime}<\gamma$,\
to some function $v(x^{\prime},t_{0})\in$
$C^{m-n+(1-\omega)\gamma-j}(R^{N-1})$ with the same estimate of the
highest seminorm $\left\langle D_{x^{\prime}}^{\alpha}v\right\rangle
_{x^{\prime},R^{N-1}}^{(\{m-n+(1-\omega)\gamma\})}$. This means that
at a fixed $t_{0}>0$ the functions $D_{x_{N}}^{j}u$, $j<m-n$, have
traces at $x_{N}=0$ from the space
$C^{m-n+(1-\omega)\gamma-j}(R^{N-1})$ and estimate \eqref{s1.102} is
valid also at $x_{N}=0$ that is
$D_{x^{\prime}}^{\alpha}D_{x_{N}}^{j}u$, $j<m-n$,
$|\alpha|=[m-n+(1-\omega)\gamma]-j$, exist in the usual classical
sense at $x_{N}=0$ and

\[
{\displaystyle\sum\limits_{j=0}^{j\leq m-n}}
{\displaystyle\sum\limits_{|\alpha|=[m-n+(1-\omega)\gamma]-j}}
\left\langle D_{x^{\prime}}^{\alpha}D_{x_{N}}^{j}u\right\rangle
_{x^{\prime
},\overline{Q}}^{(\{m-n+(1-\omega)\gamma\})}+%
\]

\[
+{\displaystyle\sum\limits_{j=0}^{j\leq m-n}}
{\displaystyle\sum\limits_{|\alpha|=[m-n+\gamma]-j}} \left\langle
D_{x^{\prime}}^{\alpha}D_{x_{N}}^{j}u\right\rangle _{\omega
\gamma,x^{\prime},Q}^{(\{m-n+\gamma\})}\leq
\]

\[
\leq C\left( {\displaystyle\sum\limits_{i=1}^{N}} \left\langle
x_{N}^{n}D_{x_{i}}^{m}u\right\rangle _{\omega\gamma
,x_{i},\overline{Q}}^{(\gamma)}+\left\langle D_{t}u\right\rangle
_{t,\overline{Q}}^{(\gamma/m)}\right).
\]
This completes the proof of \eqref{s1.7} and of the Theorem
\ref{Ts1.1}.

\section{ Mixed and lower order derivatives of functions from
$C_{n,\omega\gamma}^{m+\gamma,\frac{m+\gamma}{m}}(\overline{Q})$.}
\label{ss1.4}

In this section we consider the mixed derivatives  $x_{N}%
^{n-(m-|\alpha|)}D_{x}^{\alpha}u$ of order $m-n<|\alpha|\leq m$ and
also the lower order derivatives $D_{x}^{\alpha}u$ of order
$|\alpha|\leq m-n$. The last lower order derivatives do not require
a weight in general but the situation in the case of an integer $n$
differs from that in the case of a noninteger $n$ . Therefore we
consider these two cases separately. Besides, we concentrate on the
local behaviour of functions near the singular boundary
$\{x_{N}=0\}$ and assume in this section that all functions under
consideration have compact support in $\overline{Q}$ or in
$\overline{H}$. This permits us to avoid the consideration of
possible behaviour of functions at infinity. Particularly, we will
show that for functions with compact support of fixed dimensions
norms \eqref{s1.3}, \eqref{s1.6}  and \eqref{s1.3.2},
\eqref{s1.6.3.0001} are equivalent.

Below we need the following lemma which is valid for both cases of an integer
and a noninteger $n$.

\begin{lemma}
\label{Ls1.s4.1}

Let $n\in(0,m)$ be arbitrary and a function $u(x,t)\in
C_{n,\omega\gamma }^{m+\gamma,\frac{m+\gamma}{m}}(\overline{Q})$ in the sense 
of \eqref{s1.6.2.n} has
compact support.
Then

\begin{equation}
{\displaystyle\sum\limits_{j=0}^{j<n}} \left\langle
x_{N}^{n-j}D_{x_{N}}^{m-j}u\right\rangle _{\omega\gamma
,x_{N},\overline{Q}}^{(\gamma)}\leq C\left\langle x_{N}^{n}D_{x_{N}}%
^{m}u\right\rangle _{\omega\gamma,x_{N},\overline{Q}}^{(\gamma)}.
\label{s1.s4.1}%
\end{equation}

\end{lemma}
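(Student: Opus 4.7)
I would reduce (s1.s4.1) to a one-dimensional bound by fixing $x'\in R^{N-1}$ and $t\in R$, setting $v(x_{N})=u(x',x_{N},t)$ and $G(\xi)=\xi^{n}v^{(m)}(\xi)$. The case $j=0$ in (s1.s4.1) is a tautology; for each integer $j$ with $1\leq j<n$ the goal is the $(x',t)$-independent one-variable bound
\[
\left\langle x_{N}^{n-j}v^{(m-j)}\right\rangle_{\omega\gamma,x_{N}}^{(\gamma)}\leq C(n,j,\gamma,\omega)\left\langle G\right\rangle_{\omega\gamma,x_{N}}^{(\gamma)},
\]
after which summing in $j$ and then taking the supremum in $(x',t)$ yields (s1.s4.1).

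Because $u$ has compact support there is $R_{0}>0$ with $v^{(k)}(\xi)\equiv0$ for all $\xi\geq R_{0}$ and all $k$. Iterating the fundamental theorem of calculus $j$ times on $[x_{N},\infty)$ (all boundary terms at infinity vanish) and applying Fubini gives the Taylor-type remainder formula
\[
v^{(m-j)}(x_{N})=\frac{(-1)^{j}}{(j-1)!}\int_{x_{N}}^{\infty}(\xi-x_{N})^{j-1}v^{(m)}(\xi)\,d\xi.
\]
Multiplying by $x_{N}^{n-j}$, inserting $v^{(m)}(\xi)=\xi^{-n}G(\xi)$ and rescaling $\xi=x_{N}s$ produces the scale-invariant representation
\[
x_{N}^{n-j}v^{(m-j)}(x_{N})=\frac{(-1)^{j}}{(j-1)!}\int_{1}^{\infty}(s-1)^{j-1}s^{-n}G(x_{N}s)\,ds,
\]
whose integrand is supported in $s\leq R_{0}/x_{N}$ and therefore absolutely convergent for every $x_{N}>0$.

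Applying $\Delta_{h,x_{N}}$ under the integral sign replaces $G(x_{N}s)$ by $G((x_{N}+h)s)-G(x_{N}s)$. The hypothesis on $G$ controls this increment, taken over a step of size $hs$ starting from the point $x_{N}s$, by
\[
\bigl|G((x_{N}+h)s)-G(x_{N}s)\bigr|\leq\frac{(hs)^{\gamma}}{((x_{N}+h)s)^{\omega\gamma}}\left\langle G\right\rangle_{\omega\gamma,x_{N}}^{(\gamma)}.
\]
Dividing by $h^{\gamma}$ and multiplying by the weight $(x_{N}^{\ast})^{\omega\gamma}=(x_{N}+h)^{\omega\gamma}$ cancels the $(x_{N}+h)^{\omega\gamma}$ in the denominator and leaves the additional factor $s^{(1-\omega)\gamma}$ inside the integral, so that
\[
(x_{N}^{\ast})^{\omega\gamma}\frac{\bigl|\Delta_{h,x_{N}}\bigl(x_{N}^{n-j}v^{(m-j)}\bigr)\bigr|}{h^{\gamma}}\leq\frac{\left\langle G\right\rangle_{\omega\gamma,x_{N}}^{(\gamma)}}{(j-1)!}\int_{1}^{\infty}(s-1)^{j-1}s^{-n+(1-\omega)\gamma}\,ds.
\]

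The only real obstacle is to verify that the right-hand integral is finite. Near $s=1$ the factor $(s-1)^{j-1}$ is integrable for every $j\geq1$; at infinity the integrand decays like $s^{j-1-n+(1-\omega)\gamma}$, which is integrable precisely when $j+(1-\omega)\gamma<n$. For noninteger $n$ one has $j\leq\lbrack n\rbrack$ and, by the standing assumption (s1.2.1), $(1-\omega)\gamma<\{n\}$, hence $j+(1-\omega)\gamma<[n]+\{n\}=n$; for integer $n$ one has $j\leq n-1$ and $(1-\omega)\gamma<1$, again giving $j+(1-\omega)\gamma<n$. This is the exact place where both the compact support of $u$ (which legitimizes the integral representation on $[x_{N},\infty)$ and lets the $s$-integration run up to infinity) and the technical restriction (s1.2.1) come into play. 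Taking the supremum over $x_{N}$ and $h$, summing over admissible $j\geq1$, and finally taking the supremum over $(x',t)$ gives (s1.s4.1).
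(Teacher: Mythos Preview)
Your proof is correct and follows essentially the same approach as the paper: an integral representation on $[x_{N},\infty)$ (enabled by compact support), the scaling substitution $\xi=x_{N}s$ to obtain a scale-invariant form, the H\"older estimate applied under the integral, and the convergence criterion $j+(1-\omega)\gamma<n$ drawn from assumption~\eqref{s1.2.1}. The only organizational difference is that the paper argues by induction on $j$, proving $\left\langle x_{N}^{n-j}D_{x_{N}}^{m-j}u\right\rangle_{\omega\gamma,x_{N}}^{(\gamma)}\leq C\left\langle x_{N}^{n-j+1}D_{x_{N}}^{m-j+1}u\right\rangle_{\omega\gamma,x_{N}}^{(\gamma)}$ one integration at a time (with kernel $\eta^{-n+j-1}$), whereas you go directly from order $m$ to order $m-j$ via the $j$-fold Taylor remainder kernel $(s-1)^{j-1}s^{-n}$; the resulting convergence condition at infinity is identical in both versions.
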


\begin{proof}
The proof is by induction. For $j=0$ there is nothing to prove. Let
us prove that for $j<n$%

\begin{equation}
\left\langle x_{N}^{n-j}D_{x_{N}}^{m-j}u\right\rangle _{\omega\gamma
,x_{N},\overline{Q}}^{(\gamma)}\leq C\left\langle x_{N}^{n-j+1}D_{x_{N}%
}^{m-j+1}u\right\rangle _{\omega\gamma,x_{N},\overline{Q}}^{(\gamma)}.
\label{s1.s4.1.01}%
\end{equation}
Since the function $u(x,t)$ has a compact support, we have

\begin{equation}
x_{N}^{n-j}D_{x_{N}}^{m-j}u(x,t)=-x_{N}^{n-j}%
{\displaystyle\int\limits_{x_{N}}^{\infty}} \xi_{N}^{-n+j-1}\left[
\xi_{N}^{n-j+1}D_{\xi_{N}}^{m-j+1}u(x^{\prime},\xi
_{N},t)\right]  d\xi_{N}. \label{s1.s4.1.02}%
\end{equation}
Denote for brevity

\[
f(x,t)\equiv x_{N}^{n-j}D_{x_{N}}^{m-j}u(x,t),\quad-\left[  \xi_{N}%
^{n-j+1}D_{\xi_{N}}^{m-j+1}u(x^{\prime},\xi_{N},t)\right]  \equiv a(x^{\prime
},\xi_{N},t)
\]
and transform the representation for $f(x,t)$ making the change of
the variable $\xi_{N}=\eta x_{N}$ in the corresponding integral

\[
f(x,t)=%
{\displaystyle\int\limits_{1}^{\infty}}
\eta^{-n+j-1}a(x^{\prime},\eta x_{N},t)d\eta.
\]
Let $x_{N}$, $\overline{x}_{N}$ be fixed,
$0<x_{N}<\overline{x}_{N}$. We have

\[
\left\vert x_{N}^{\omega\gamma}\left[  f(x^{\prime},\overline{x}%
_{N},t)-f(x^{\prime},x_{N},t)\right]  \right\vert =
\]

\[
=\left\vert {\displaystyle\int\limits_{1}^{\infty}}
\eta^{-n+j-1-\omega\gamma}\left(  \eta x_{N}\right)
^{\omega\gamma}\left[
a(x^{\prime},\eta\overline{x}_{N},t)-a(x^{\prime},\eta
x_{N},t)\right] d\eta\right\vert \leq
\]

\[
\leq\left\langle a\right\rangle _{\omega\gamma,x_{N},\overline{Q}}^{(\gamma
)}(\overline{x}_{N}-x_{N})^{\gamma}%
{\displaystyle\int\limits_{1}^{\infty}}
\eta^{-n+j-1+(1-\omega)\gamma}d\eta\leq C\left\langle a\right\rangle
_{\omega\gamma,x_{N},\overline{Q}}^{(\gamma)}(\overline{x}_{N}-x_{N})^{\gamma
},
\]
where $-n+j-1+(1-\omega)\gamma<-1$ in view of \eqref{s1.2.1} and the
assumption $j<n$. This proves \eqref{s1.s4.1.01} and the lemma
follows by induction.
\end{proof}

\subsection{The case of an integer $n$.}
\label{sss1.4.1}

We suppose in this subsection that $n$ is an integer, $0<n<m$.
Consider first the derivatives $x_{N}^{n-j}D_{x}^{\alpha}u(x,t)$ of
order $|\alpha|=m-j$, $j\leq n$. In this case we have two different
situations for the derivatives $D_{x}^{\alpha}u(x,t)$ of the
particular order $|\alpha|=m-n$ with $\alpha_{N}<m-n$ and for the
derivative $D_{x_{N}}^{m-n}u(x,t)$.

\begin{proposition}
\label{Ps1.s4.1} Let $n\in(0,m)$ be an integer. Let a function
$u(x,t)\in
C_{n,\omega\gamma}^{m+\gamma,\frac{m+\gamma}{m}}(\overline{Q})$ in the sense
of \eqref{s1.6.2.n} has
compact support .  Then all seminorm of the function $u(x,t)$ in the
left hand side of \eqref{s1.7} except for, may be, seminorms of
$D_{x_{N}}^{m-n}u(x,t)$ are finite and

\[
\left\langle u\right\rangle _{n,\omega\gamma,\overline{Q}}^{(m+\gamma
,\frac{m+\gamma}{m})}\equiv%
{\displaystyle\sum\limits_{j=0}^{j\leq n}}
{\displaystyle\sum\limits_{\substack{|\alpha|=m-j,\\\alpha\neq(0,...,m-n)}}}
\left\langle x_{N}^{n-j}D_{x}^{\alpha}u\right\rangle _{\omega\gamma
,\overline{Q}}^{(\gamma,\gamma/m)}+%
\]

\[
+{\displaystyle\sum\limits_{j=0}^{j\leq n}}
{\displaystyle\sum\limits_{\substack{|\alpha|=m-j,\\\alpha\neq(0,...,m-n)}}}
\left\langle x_{N}^{n-j\omega}D_{x}^{\alpha}u\right\rangle _{t,\overline{Q}%
}^{(\frac{\gamma+j}{m})}+\left\langle D_{t}u\right\rangle _{\omega
\gamma,\overline{Q}}^{(\gamma,\gamma/m)}+
\]

\[
+ {\displaystyle\sum\limits_{j=0}^{j<m-n}}
{\displaystyle\sum\limits_{|\alpha|=m-n-j}} \left\langle
D_{x^{\prime}}^{\alpha}D_{x_{N}}^{j}u\right\rangle _{x^{\prime
},\overline{Q}}^{((1-\omega)\gamma)}+%
{\displaystyle\sum\limits_{j=0}^{j<m-n}}
{\displaystyle\sum\limits_{|\alpha|=m-n-j}} \left\langle
D_{x^{\prime}}^{\alpha}D_{x_{N}}^{j}u\right\rangle _{\omega
\gamma,x^{\prime},\overline{Q}}^{(\gamma)}+
\]

\begin{equation}
+%
{\displaystyle\sum\limits_{j=1}^{j\leq m-n}}
{\displaystyle\sum\limits_{\substack{|\alpha|=j,\\\alpha\neq(0,...,m-n)}}}
\left\langle D_{x}^{\alpha}u\right\rangle _{t,\overline{Q}}^{(1-\frac{j}%
{m-n}+\frac{\gamma}{m})}\leq C\left(
{\displaystyle\sum\limits_{i=1}^{N}} \left\langle
x_{N}^{n}D_{x_{i}}^{m}u\right\rangle _{\omega\gamma
,x_{i},\overline{Q}}^{(\gamma)}+\left\langle D_{t}u\right\rangle
_{t,\overline{Q}}^{(\gamma/m)}\right)  . \label{s1.s4.2}%
\end{equation}

\end{proposition}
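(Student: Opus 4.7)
The plan is to deduce Proposition \ref{Ps1.s4.1} from Theorem \ref{Ts1.1} by verifying, under the hypothesis that $u$ has compact support and the norm \eqref{s1.6.2.n} is finite, that every seminorm appearing on the left hand side of \eqref{s1.s4.2} is in fact finite. Once this is known, Theorem \ref{Ts1.1}, whose left hand side by convention contains only those seminorms which are finite, delivers \eqref{s1.s4.2} directly, because the right hand side of \eqref{s1.7} coincides with the right hand side of \eqref{s1.s4.2} (and with the tildes of \eqref{s1.6.2.n}). So the whole content of the proposition is the verification that every retained seminorm is finite.

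First I would handle the weighted pure $x_N$-derivatives. Since $u$ has compact support, Lemma \ref{Ls1.s4.1} applies directly and yields
\[
\sum_{j=0}^{j<n}\bigl\langle x_N^{n-j}D_{x_N}^{m-j}u\bigr\rangle_{\omega\gamma,x_N,\overline Q}^{(\gamma)}\leq C\bigl\langle x_N^{n}D_{x_N}^{m}u\bigr\rangle_{\omega\gamma,x_N,\overline Q}^{(\gamma)}.
\]
The strict inequality $j<n$ in this lemma is the crucial feature of the integer-$n$ case: the iteration of the integration-in-$x_N$ argument stops exactly one step before the logarithmic resonance $D_{x_N}^{m-n}u\sim a\ln x_N$ identified in \eqref{s1.0011.1}. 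This is precisely the derivative excluded from \eqref{s1.s4.2}.

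To address the mixed derivatives $x_N^{n-j}D_x^{\alpha}u$ with $|\alpha|=m-j$, $\alpha_N<m-j$ (including the boundary case $j=n$, $\alpha\ne(0,\dots,m-n)$), I would use the compact support to integrate in $x_N$ starting from pure derivatives that are already controlled. Concretely, since $\alpha_N<m-j$, we may write
\[
D_x^{\alpha}u(x,t)=-\int_{x_N}^{\infty}D_{x'}^{\alpha'}D_{x_N}^{\alpha_N+1}u(x',\xi_N,t)\,d\xi_N,
\]
and iterate the weighted estimate of Lemma \ref{Ls1.s4.1} (now with a commuting tangential operator $D_{x'}^{\alpha'}$ outside the integral, which is harmless because the kernel $\eta^{-n+j-1+(1-\omega)\gamma}$ of the change of variable $\xi_N=\eta x_N$ is unaffected) to reduce the order of $x_N$-differentiation one step at a time. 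The reduction terminates either in a pure $D_{x_N}^{m-j}u$ (handled by Step 1) or in a pure $D_{x_i}^{m}u$ with $i<N$ (covered by the hypothesis). Crucially, because at least one tangential differentiation $D_{x_i}$, $i<N$, is present whenever $\alpha\ne(0,\dots,m-n)$, this reduction never passes through the singular derivative $D_{x_N}^{m-n}u$. The $t$-directional seminorms in \eqref{s1.s4.2} are handled analogously, using the dimensional correspondence $t\sim x_N^{m-n}$ already exploited at the end of Section \ref{ss1.3}.

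Finally, the lower-order unweighted seminorms $\langle D_{x'}^\alpha D_{x_N}^j u\rangle$ for $j<m-n$ and the time seminorms $\langle D_x^\alpha u\rangle_t^{(1-j/(m-n)+\gamma/m)}$ with $\alpha\ne(0,\dots,m-n)$ are obtained from the previous step by one further integration in $x_N$ using compact support, which again bypasses the excluded pure derivative because the condition $j<m-n$ (respectively $\alpha\ne(0,\dots,m-n)$) is preserved by the integration. Once all retained seminorms are finite, Theorem \ref{Ts1.1} immediately yields \eqref{s1.s4.2}. The main obstacle throughout is precisely the integer-$n$ logarithmic resonance: every derivative whose reduction terminates at $D_{x_N}^{m-n}u$ is potentially unbounded, and the only subtlety of the proof is checking that the precise exclusion patterns in the statement of \eqref{s1.s4.2} guarantee that each retained seminorm admits a reduction that bypasses this one singular term.
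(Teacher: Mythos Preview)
Your overall plan---establish finiteness of each retained seminorm, then invoke Theorem \ref{Ts1.1}---is the right one, and your use of Lemma \ref{Ls1.s4.1} for the pure $x_N$-derivatives is correct. However, your treatment of the \emph{mixed} derivatives has a genuine gap.

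You write the representation
\[
D_x^{\alpha}u(x,t)=-\int_{x_N}^{\infty}D_{x'}^{\alpha'}D_{x_N}^{\alpha_N+1}u(x',\xi_N,t)\,d\xi_N
\]
and propose to ``iterate the weighted estimate of Lemma \ref{Ls1.s4.1} \dots\ to reduce the order of $x_N$-differentiation one step at a time''. But this integral representation \emph{increases} the $x_N$-order by one while leaving $\alpha'$ unchanged; that is exactly what Lemma \ref{Ls1.s4.1} does. Iterating from $x_N^{n-j}D_{x'}^{\alpha'}D_{x_N}^{\alpha_N}u$ with $|\alpha'|+\alpha_N=m-j$ and $|\alpha'|\geq 1$ therefore terminates at the top-order \emph{mixed} quantity
\[
\bigl\langle x_N^{n}\,D_{x'}^{\alpha'}D_{x_N}^{\,m-|\alpha'|}u\bigr\rangle_{\omega\gamma,x_N,\overline{Q}}^{(\gamma)},
\]
which is \emph{not} part of the hypothesis \eqref{s1.6.2.n}; the hypothesis supplies only the pure seminorms $\langle x_N^{n}D_{x_i}^{m}u\rangle_{\omega\gamma,x_i,\overline{Q}}^{(\gamma)}$. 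Thus the iteration does not close, and your claim that ``the reduction terminates either in a pure $D_{x_N}^{m-j}u$ \dots\ or in a pure $D_{x_i}^{m}u$ with $i<N$'' is not correct: the tangential block $D_{x'}^{\alpha'}$ is carried along unchanged and never disappears. This is precisely the obstruction illustrated by the counterexample after Theorem \ref{Ts1.1} (the function $x_1^2x_2^{2-n}$), which shows that finiteness of the mixed seminorms does not follow from finiteness of the pure ones alone; compact support rules out that specific example but does not by itself supply a reduction to pure derivatives.

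The paper circumvents this by first mollifying in the tangential variables $(x',t)$ as in \eqref{s1.005}. For $u_\varepsilon$ every tangential differentiation falls on the smooth kernel, so by \eqref{s1.007.1} and Lemma \ref{Ls1.s4.1} all retained seminorms in \eqref{s1.s4.2} are finite (with $\varepsilon$-dependent constants). Theorem \ref{Ts1.1} then applies to $u_\varepsilon$ and yields bounds that are \emph{uniform in $\varepsilon$}, since the right hand side of \eqref{s1.7} does not deteriorate under tangential mollification. Passing to the limit $\varepsilon\to 0$ exactly as in the end of Section \ref{ss1.3} gives \eqref{s1.s4.2} for $u$ itself. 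What you are missing is this regularization step; without it, the finiteness of the mixed seminorms cannot be obtained directly from the hypothesis.
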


\begin{proof}
The proof essentially follows from the proof of Theorem \ref{Ts1.1}.
Consider the smoothed  function $u_{\varepsilon}(x,t)$ as in
\eqref{s1.005} and in the end part of the proof of Theorem
\ref{Ts1.1}

\[
u_{\varepsilon}(x,t)\equiv%
{\displaystyle\int\limits_{R^{N-1}}}
{\displaystyle\int\limits_{-\infty}^{\infty}}
u(y^{\prime},x_{N},\tau)\omega_{\varepsilon}(x^{\prime}-y^{\prime}%
,t-\tau)dy^{\prime}d\tau.
\]
From Lemma \ref{Ls1.s4.1}, from \eqref{s1.007.1} with $j=n$, and
from the way of the construction of $u_{\varepsilon}(x,t)$ it
follows that for this function all seminorms in the left hand side
of \eqref{s1.s4.2} are finite (including the seminorm $\left\langle
x_{N}^{n-n\omega}D_{x_{N}}^{m-n}u_{\varepsilon}\right\rangle _{t,\overline{Q}%
}^{(\frac{\gamma+j}{m})}$) and thus this function satisfies
\eqref{s1.s4.2}. The rest of the proof coincides with the end part
of the proof of Theorem \ref{Ts1.1}.

\end{proof}

Consider now the derivative $D_{x_{N}}^{m-n}u(x,t)$.

\begin{lemma}
\label{Ls1.s4.2} Let $n\in(0,m)$ be an integer and a function
$u(x,t)\in C_{n,\omega\gamma
}^{m+\gamma,\frac{m+\gamma}{m}}(\overline{Q})$ in the sense
of \eqref{s1.6.2.n} has compact support.
Then $D_{x_{N}}^{m-n}u(x,t)$ is bounded and

\begin{equation}
\left\langle D_{x_{N}}^{m-n}u\right\rangle _{\omega\gamma,x_{N},\overline{Q}%
}^{(\gamma)}\leq C\left\langle x_{N}^{n}D_{x_{N}}^{m}u\right\rangle
_{\omega\gamma,x_{N},\overline{Q}}^{(\gamma)} \label{s1.s4.3}%
\end{equation}
if and only if

\begin{equation}
\left[  x_{N}^{n}D_{x_{N}}^{m}u(x,t)\right]  |_{x_{N}=0}\equiv0. \label{s1.s4.4}%
\end{equation}

\end{lemma}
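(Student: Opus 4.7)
The plan is to prove the two directions separately, with the necessity following quickly from the Taylor-expansion machinery already developed, and the sufficiency coming from a direct integral representation plus a scaling argument.

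\textbf{Necessity.} Assume the weighted Hölder seminorm of $D_{x_N}^{m-n}u$ is finite. I would fix an arbitrary base point $(x_0',0,t_0)$ on $\{x_N=0\}$ and, after a translation, apply the Taylor construction from Lemma \ref{Ls1.01}/Lemma \ref{Ls1.2} at the origin. In the integer-$n$ case the singular piece is $\widetilde Q_u(x_N)=ba\ln^{(m-n)}x_N$ with $a=\lim_{(x,t)\to(0,0)}x_N^n D_{x_N}^m u$, and the logarithm would survive under $D_{x_N}^{m-n}$ to give a genuine $\ln x_N$ blow-up. Relation \eqref{s1.0010.2} in Lemma \ref{Ls1.01} says precisely that finiteness of $\langle D_{x_N}^{m-n}u\rangle_{\omega\gamma,x_N,\overline Q}^{(\gamma)}$ forces $a=0$. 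Because $(x_0',t_0)$ is arbitrary, this gives $[x_N^n D_{x_N}^m u]|_{x_N=0}\equiv 0$.

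\textbf{Sufficiency.} Assume $f:=x_N^n D_{x_N}^m u$ vanishes on $\{x_N=0\}$. The weighted Hölder condition applied between $(x',x_N,t)$ and $(x',0,t)$ (with $x_N^{*}=x_N$) immediately gives the pointwise bound
\begin{equation*}
|f(x',x_N,t)|\le \langle f\rangle_{\omega\gamma,x_N,\overline Q}^{(\gamma)}\,x_N^{(1-\omega)\gamma}.
\end{equation*}
Because $u$ has compact support, iterated integration in $x_N$ yields
\begin{equation*}
D_{x_N}^{m-n}u(x',x_N,t)=\frac{(-1)^n}{(n-1)!}\int_{x_N}^{\infty}(\xi-x_N)^{n-1}\xi^{-n}f(x',\xi,t)\,d\xi,
\end{equation*}
and the pointwise bound above, together with the exponent condition \eqref{s1.2.1}, shows at once that $D_{x_N}^{m-n}u$ is bounded.

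\textbf{Estimating the seminorm.} To estimate $\overline x_N^{\omega\gamma}|D_{x_N}^{m-n}u(x',\overline x_N,t)-D_{x_N}^{m-n}u(x',x_N,t)|/(\overline x_N-x_N)^{\gamma}$ with $0<x_N\le\overline x_N$, I would substitute $\xi=x_N\eta$ (respectively $\xi=\overline x_N\eta$), which renormalises the kernel to $(\eta-1)^{n-1}\eta^{-n}$ on $\eta\in[1,\infty)$, independent of $x_N$. After this rescaling the difference becomes essentially $\int_1^{\infty}(\eta-1)^{n-1}\eta^{-n}\bigl[f(x',\overline x_N\eta,t)-f(x',x_N\eta,t)\bigr]d\eta$, as in the trick already used in Lemma \ref{Ls1.s4.1}. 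I would split $\eta\in[1,\eta_0]$ from $\eta>\eta_0$ (with $\eta_0$ of order $\overline x_N/(\overline x_N-x_N)$): on the first part I would bound the bracket by the weighted Hölder seminorm of $f$ (picking up $\overline x_N^{-\omega\gamma}\eta^{-\omega\gamma}(\overline x_N-x_N)^{\gamma}\eta^{\gamma}$), and on the second part I would use instead the pointwise bound $|f(x',x_N\eta,t)|\le C(x_N\eta)^{(1-\omega)\gamma}$ (and the analogous bound at $\overline x_N\eta$), together with the compact-support truncation to cut the tail. The two pieces combine to exactly $C\langle f\rangle\overline x_N^{-\omega\gamma}(\overline x_N-x_N)^{\gamma}$.

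\textbf{Main obstacle.} The delicate point is the convergence of the scaled tail integral: the kernel $(\eta-1)^{n-1}\eta^{-n}\eta^{(1-\omega)\gamma}$ behaves like $\eta^{-1+(1-\omega)\gamma}$ at infinity, which is log-divergent if treated naively. The compact support of $u$ saves the day, but the book-keeping must be uniform in $x'$, $t$, and the base point, and the case $n=1$ (where the inner integrand loses the factor $(\xi-x_N)^{n-1}$) needs to be treated as a separate, simpler direct estimate. Both the necessity of the vanishing condition and the failure of \eqref{s1.s4.3} when the condition fails have to be extracted from the sharpness observation that a nonzero $a(x_0',t_0)$ produces a $\ln x_N$ singularity in $D_{x_N}^{m-n}u$.
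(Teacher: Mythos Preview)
Your necessity argument via the logarithmic Taylor term is correct and matches the paper in spirit. Your boundedness argument from compact support and the pointwise decay $|f|\le\langle f\rangle_{\omega\gamma,x_N}^{(\gamma)}\,x_N^{(1-\omega)\gamma}$ is also fine.

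The gap is in the seminorm estimate. After your rescaling the difference is indeed
\[
c_n\int_1^{\infty}(\eta-1)^{n-1}\eta^{-n}\bigl[f(x',\overline x_N\eta,t)-f(x',x_N\eta,t)\bigr]\,d\eta,
\]
but the kernel decays only like $\eta^{-1}$, while the H\"older bound on the bracket produces the growth $\eta^{(1-\omega)\gamma}$. Hence the integrand is $\sim\eta^{-1+(1-\omega)\gamma}$, and integrating up to your proposed cutoff $\eta_0\sim\overline x_N/(\overline x_N-x_N)$ already gives a factor $\eta_0^{(1-\omega)\gamma}$, which blows up as $\overline x_N\to x_N$. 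The far piece is no better: compact support only truncates at $\eta\sim R/x_N$, and the resulting bound then carries $(R/x_N)^{(1-\omega)\gamma}$, so it is \emph{not} uniform in $x_N$. Your assertion that ``the two pieces combine to exactly $C\langle f\rangle\,\overline x_N^{-\omega\gamma}(\overline x_N-x_N)^{\gamma}$'' cannot be realised with this splitting.

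The paper sidesteps the divergence by first applying Lemma~\ref{Ls1.s4.1} to pass from $f=x_N^{n}D_{x_N}^{m}u$ to $a:=x_ND_{x_N}^{m-n+1}u$, for which $\langle a\rangle_{\omega\gamma,x_N}^{(\gamma)}\le C\langle f\rangle_{\omega\gamma,x_N}^{(\gamma)}$ and, equivalently to \eqref{s1.s4.4}, $a|_{x_N=0}=0$. One then has the \emph{single} integral $D_{x_N}^{m-n}u=-\int_{x_N}^{\infty}\xi^{-1}a(x',\xi,t)\,d\xi$, so the difference at $x_N<\overline x_N$ is just $\int_{x_N}^{\overline x_N}\xi^{-1}a\,d\xi$ over a finite interval; using $x_N^{\omega\gamma}|a(x',\xi,t)|\le\xi^{\omega\gamma}|a(x',\xi,t)-a(x',0,t)|\le\langle a\rangle\,\xi^{\gamma}$ and $\int_{x_N}^{\overline x_N}\xi^{\gamma-1}d\xi\le C(\overline x_N-x_N)^{\gamma}$ finishes the estimate in one line. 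If you want to keep your $n$-fold representation, the route that actually works is to difference the kernels $(\xi-\overline x_N)^{n-1}-(\xi-x_N)^{n-1}$ rather than rescale; but the paper's reduction to a single integration via Lemma~\ref{Ls1.s4.1} is cleaner and is exactly the missing step in your argument.
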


\begin{proof}
From \eqref{s1.0010.3} and \eqref{s1.0011} it follows that condition
\eqref{s1.s4.4} is equivalent to the condition

\[
\left[  x_{N}D_{x_{N}}^{m-n+1}u(x,t)\right]  |_{x_{N}=0}\equiv0.
\]

Denote $a(x,t)\equiv x_{N}D_{x_{N}}^{m-n+1}u(x,t)$ and note that by
Lemma \ref{Ls1.s4.1} $\left\langle a(x,t)\right\rangle
_{\omega\gamma
,x_{N},\overline{Q}}^{(\gamma)}\leq C\left\langle x_{N}^{n}D_{x_{N}}%
^{m}u\right\rangle _{\omega\gamma,x_{N},\overline{Q}}^{(\gamma)}$.
We have the following representation

\begin{equation}
D_{x_{N}}^{m-n}u(x,t)=-%
{\displaystyle\int\limits_{x_{N}}^{\infty}}
D_{x_{N}}^{m-n+1}u(x^{\prime},\xi_{N},t)d\xi_{N}=-%
{\displaystyle\int\limits_{x_{N}}^{\infty}}
\frac{1}{\xi_{N}}a(x^{\prime},\xi_{N},t)d\xi_{N}. \label{s1.s4.5}%
\end{equation}
From this it directly follows that $D_{x_{N}}^{m-n}u(x,t)$ is
bounded at $x_{N}=0$ if and only if $a(x^{\prime},0,t)\equiv0$ that
is if and only if \eqref{s1.s4.4} is valid. Further, suppose that
$a(x^{\prime},0,t)\equiv0$. Let $x_{N}$, $\overline{x}_{N}$ be
fixed, $0<x_{N}<\overline{x}_{N}$. We have

\[
\left\vert x_{N}^{\omega\gamma}\left[  D_{x_{N}}^{m-n}(x^{\prime},\overline
{x}_{N},t)-D_{x_{N}}^{m-n}(x^{\prime},x_{N},t)\right]  \right\vert =
\]

\[
=\left\vert {\displaystyle\int\limits_{x_{N}}^{\overline{x}_{N}}}
\frac{1}{\xi_{N}}\left[
x_{N}^{\omega\gamma}a(x^{\prime},\xi_{N},t)\right]
d\xi_{N}\right\vert \leq%
{\displaystyle\int\limits_{x_{N}}^{\overline{x}_{N}}}
\frac{1}{\xi_{N}}\left\vert \xi_{N}^{\omega\gamma}\left[
a(x^{\prime},\xi _{N},t)-a(x^{\prime},0,t)\right]  \right\vert
d\xi_{N}\leq
\]

\[
\leq\left\langle a(x,t)\right\rangle _{\omega\gamma,x_{N},\overline{Q}%
}^{(\gamma)}%
{\displaystyle\int\limits_{x_{N}}^{\overline{x}_{N}}}
\frac{\xi_{N}^{\gamma}}{\xi_{N}}d\xi_{N}\leq C\left\langle
a(x,t)\right\rangle
_{\omega\gamma,x_{N},\overline{Q}}^{(\gamma)}(\overline{x}_{N}-x_{N})^{\gamma
}.
\]
In view of the definition of $a(x,t)$ this means that%

\[
\left\langle D_{x_{N}}^{m-n}u\right\rangle _{\omega\gamma,x_{N},\overline{Q}%
}^{(\gamma)}\leq C\left\langle x_{N}D_{x_{N}}^{m-n+1}u\right\rangle
_{\omega\gamma,x_{N},\overline{Q}}^{(\gamma)}\leq C\left\langle x_{N}%
^{n}D_{x_{N}}^{m}u\right\rangle _{\omega\gamma,x_{N},\overline{Q}}^{(\gamma)}%
\]
hence, the lemma.
\end{proof}

\begin{corollary}
\label{Cs1.s4.1} Let \ $n\in(0,m)$ be an integer. Let a function
$u(x,t)\in C_{n,\omega\gamma
}^{m+\gamma,\frac{m+\gamma}{m}}(\overline{Q})$ in the sense
of \eqref{s1.6.2.n} has compact support
and condition \eqref{s1.s4.4} is fulfilled.  Then all seminorm of
the function $u(x,t)$ in the left hand side of \eqref{s1.7} are
finite and

\[
\left\langle u\right\rangle _{n,\omega\gamma,\overline{Q}}^{(m+\gamma
,\frac{m+\gamma}{m})}\equiv%
{\displaystyle\sum\limits_{j=0}^{j\leq n}}
{\displaystyle\sum\limits_{|\alpha|=m-j}} \left\langle
x_{N}^{n-j}D_{x}^{\alpha}u\right\rangle _{\omega\gamma
,\overline{Q}}^{(\gamma,\gamma/m)}+%
{\displaystyle\sum\limits_{j=0}^{j\leq n}}
{\displaystyle\sum\limits_{|\alpha|=m-j}}
\left\langle x_{N}^{n-j\omega}D_{x}^{\alpha}u\right\rangle _{t,\overline{Q}%
}^{(\frac{\gamma+j}{m})}+\left\langle D_{t}u\right\rangle _{\omega
\gamma,\overline{Q}}^{(\gamma,\gamma/m)}+
\]

\[
+%
{\displaystyle\sum\limits_{j=0}^{j\leq m-n}}
{\displaystyle\sum\limits_{|\alpha|=m-n-j}} \left\langle
D_{x^{\prime}}^{\alpha}D_{x_{N}}^{j}u\right\rangle _{x^{\prime
},\overline{Q}}^{((1-\omega)\gamma)}+%
{\displaystyle\sum\limits_{j=0}^{j\leq m-n}}
{\displaystyle\sum\limits_{|\alpha|=m-n-j}} \left\langle
D_{x^{\prime}}^{\alpha}D_{x_{N}}^{j}u\right\rangle _{\omega
\gamma,x^{\prime},\overline{Q}}^{(\gamma)}%
\]

\begin{equation}
+%
{\displaystyle\sum\limits_{j=1}^{j\leq m-n}}
{\displaystyle\sum\limits_{|\alpha|=j}}
\left\langle D_{x}^{\alpha}u\right\rangle _{t,\overline{Q}}^{(1-\frac{j}%
{m-n}+\frac{\gamma}{m})}\leq C\left(
{\displaystyle\sum\limits_{i=1}^{N}} \left\langle
x_{N}^{n}D_{x_{i}}^{m}u\right\rangle _{\omega\gamma
,x_{i},\overline{Q}}^{(\gamma)}+\left\langle D_{t}u\right\rangle
_{t,\overline{Q}}^{(\gamma/m)}\right)  . \label{s1.s4.6}%
\end{equation}

\end{corollary}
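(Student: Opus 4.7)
The plan is to combine Proposition \ref{Ps1.s4.1}, Lemma \ref{Ls1.s4.2}, and Theorem \ref{Ts1.1} in a straightforward way. The key observation is that Theorem \ref{Ts1.1} is already conditional in the required sense: once every seminorm listed on the left-hand side of \eqref{s1.7} is known to be finite, the theorem bounds all of them simultaneously by the right-hand side. Hence the entire task is to verify finiteness for each summand in \eqref{s1.s4.6}.

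First, Proposition \ref{Ps1.s4.1} already covers every summand in \eqref{s1.s4.6} that does \emph{not} involve the critical pure derivative $D_{x_N}^{m-n}u$. The only seminorms not yet controlled are those of $D_{x_N}^{m-n}u$ itself, namely the weighted $(x,t)$-Hölder seminorm $\langle D_{x_N}^{m-n}u\rangle_{\omega\gamma,\overline{Q}}^{(\gamma,\gamma/m)}$, the $t$-Hölder seminorm $\langle x_N^{n-n\omega}D_{x_N}^{m-n}u\rangle_{t,\overline{Q}}^{(\gamma/m)}$, and the lower-order $t$-seminorm of the multi-index $\alpha=(0,\dots,0,m-n)$ in the last sum.

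Next, Lemma \ref{Ls1.s4.2} provides exactly what is missing in one direction: under the compatibility condition \eqref{s1.s4.4} the $x_N$-seminorm $\langle D_{x_N}^{m-n}u\rangle_{\omega\gamma,x_N,\overline{Q}}^{(\gamma)}$ is finite and bounded by $C\langle x_N^n D_{x_N}^m u\rangle_{\omega\gamma,x_N,\overline{Q}}^{(\gamma)}$. This, together with Proposition \ref{Ps1.s4.1}, renders every seminorm on the left-hand side of \eqref{s1.7} finite. Theorem \ref{Ts1.1} then immediately yields \eqref{s1.s4.6}, provided we identify the sums over lower-order derivatives correctly: for integer $n$ one has $[m-n+(1-\omega)\gamma]=[m-n+\gamma]=m-n$ and $\{m-n+(1-\omega)\gamma\}=(1-\omega)\gamma$, $\{m-n+\gamma\}=\gamma$, so that the lower-order contributions in \eqref{s1.7} and \eqref{s1.s4.6} coincide term-by-term.

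The main subtlety is that Lemma \ref{Ls1.s4.2} only controls the $x_N$-directional Hölder smoothness of $D_{x_N}^{m-n}u$, whereas \eqref{s1.s4.6} also demands its tangential and temporal Hölder seminorms. However, no additional hypothesis is needed for these: the tangential and temporal smoothness of $D_{x_N}^{m-n}u$ are delivered automatically by Theorem \ref{Ts1.1} once the $x_N$-seminorm is known to be finite, since the theorem treats all finite left-hand-side seminorms at once. Thus condition \eqref{s1.s4.4} is the sole compatibility requirement, and the argument closes without further estimates.
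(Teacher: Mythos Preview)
Your approach---combining Proposition~\ref{Ps1.s4.1}, Lemma~\ref{Ls1.s4.2}, and Theorem~\ref{Ts1.1}---is exactly the one the paper uses (its one-line proof cites only Proposition~\ref{Ps1.s4.1} and Lemma~\ref{Ls1.s4.2}, with Theorem~\ref{Ts1.1} implicit through the former). Your identification of the lower-order sums via $[m-n+(1-\omega)\gamma]=m-n$, $\{m-n+(1-\omega)\gamma\}=(1-\omega)\gamma$ for integer $n$ is also correct.

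There is, however, a small logical slip in your last paragraph. Theorem~\ref{Ts1.1} bounds only those left-hand-side seminorms that are \emph{already known} to be finite; it does not promote finiteness from the $x_N$-seminorm of $D_{x_N}^{m-n}u$ to its tangential or temporal seminorms. So the sentence ``the tangential and temporal smoothness of $D_{x_N}^{m-n}u$ are delivered automatically by Theorem~\ref{Ts1.1} once the $x_N$-seminorm is known to be finite'' is circular as written. The cleanest repair is to re-run the mollification argument from the proof of Proposition~\ref{Ps1.s4.1}: condition~\eqref{s1.s4.4} is preserved under the smoothing~\eqref{s1.005}, so Lemma~\ref{Ls1.s4.2} applied to $u_\varepsilon$ gives $\langle D_{x_N}^{m-n}u_\varepsilon\rangle_{\omega\gamma,x_N,\overline{Q}}^{(\gamma)}$ bounded uniformly in $\varepsilon$, while smoothness in $(x',t)$ makes the remaining $D_{x_N}^{m-n}u_\varepsilon$ seminorms finite; now \emph{all} terms in \eqref{s1.7} are finite for $u_\varepsilon$, Theorem~\ref{Ts1.1} yields the full estimate uniformly in $\varepsilon$, and the limit passage from the end of Theorem~\ref{Ts1.1}'s proof finishes. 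Alternatively, one can bound the $x'$- and $t$-seminorms of $D_{x_N}^{m-n}u$ directly from the integral representation~\eqref{s1.s4.5}, using that $a=x_ND_{x_N}^{m-n+1}u$ has finite $x'$- and $t$-seminorms by Proposition~\ref{Ps1.s4.1} and vanishes at $x_N=0$.
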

The corollary follows from Proposition \ref{Ps1.s4.1} and Lemma
\ref{Ls1.s4.2}.

Consider now derivatives $D_{x}^{\alpha}u$ of order $|\alpha|<m-n$.
If condition \eqref{s1.s4.4} is fulfilled then all such derivatives
are characterized by the following statement.

\begin{corollary}
\label{Cs1.s4.2} Let $n\in(0,m)$ be an integer. Let a function
$u(x,t)\in
C_{n,\omega\gamma}^{m+\gamma,\frac{m+\gamma}{m}}(\overline{Q})$ in the sense
of \eqref{s1.6.2.n} has
compact support and condition \eqref{s1.s4.4} is fulfilled. For
multiindex $\alpha$ with $|\alpha|=j<m-n$ denote
$v_{\alpha}=D_{x}^{\alpha}u.$ The following estimate is valid

\begin{equation}%
{\displaystyle\sum\limits_{i=1}^{N}}
\left\langle D_{x_{i}}v_{\alpha}\right\rangle _{\omega\gamma,\overline{Q}%
}^{(\gamma,\gamma/m)}+\left\langle v_{\alpha}\right\rangle _{t,\overline{Q}%
}^{(1-\frac{j}{m-n}+\frac{\gamma}{m})}\leq C\left(
{\displaystyle\sum\limits_{i=1}^{N}} \left\langle
x_{N}^{n}D_{x_{i}}^{m}u\right\rangle _{\omega\gamma
,x_{i},\overline{Q}}^{(\gamma)}+\left\langle D_{t}u\right\rangle
_{t,\overline{Q}}^{(\gamma/m)}\right)  . \label{s1.s4.7}%
\end{equation}

\end{corollary}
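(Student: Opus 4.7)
My plan is to derive \eqref{s1.s4.7} from Corollary \ref{Cs1.s4.1} by exploiting the compact support of $u$ and the integration technique introduced in Lemma \ref{Ls1.s4.1}.

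The second summand on the left of \eqref{s1.s4.7}, namely $\langle v_\alpha\rangle_{t,\overline{Q}}^{(1-j/(m-n)+\gamma/m)}$, appears explicitly on the left-hand side of \eqref{s1.s4.6} (with the same index $j=|\alpha|$), so Corollary \ref{Cs1.s4.1} bounds it directly by the right-hand side of \eqref{s1.s4.7}.

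For the weighted seminorm $\langle D_{x_i}v_\alpha\rangle_{\omega\gamma,\overline{Q}}^{(\gamma,\gamma/m)}$, note that $D_{x_i}v_\alpha=D_x^{\alpha+e_i}u$ is a derivative of total order $j+1\leq m-n$. When $j+1=m-n$, the slice $j=n$ of the first sum in \eqref{s1.s4.6} carries weight $x_N^{n-n}=1$ and covers every multiindex of length $m-n$, so the bound
\[
\langle D_x^{\alpha+e_i}u\rangle_{\omega\gamma,\overline{Q}}^{(\gamma,\gamma/m)}\leq C\Bigl(\sum_{i=1}^{N}\langle x_N^{n}D_{x_i}^{m}u\rangle_{\omega\gamma,x_i,\overline{Q}}^{(\gamma)}+\langle D_{t}u\rangle_{t,\overline{Q}}^{(\gamma/m)}\Bigr)
\]
is immediate. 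When $j+1<m-n$, I use compact support of $u$ to write
\[
D_x^{\alpha+e_i}u(x,t)=-\int_{x_N}^{\infty}D_x^{\alpha+e_i+e_N}u(x',\xi_N,t)\,d\xi_N,
\]
and iterate this identity $(m-n)-(j+1)$ times, producing an iterated-integral representation of $D_x^{\alpha+e_i}u$ in terms of derivatives of order exactly $m-n$, whose weighted seminorms are already controlled by the previous case.

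The propagation of the weighted H\"older seminorm through these integrations is carried out by the substitution $\xi_N=\eta x_N$ used in the proof of Lemma \ref{Ls1.s4.1}. After this change of variable and a factoring of $x_N^{\omega\gamma}$ inside the integral, the weighted difference of $D_x^{\alpha+e_i}u$ with any step $\overline{h}$ is dominated by $\int_{1}^{\infty}\eta^{-(k+1)+(1-\omega)\gamma}\,d\eta$ multiplied by the corresponding weighted order-$(m-n)$ seminorm, where $k\geq 1$ counts the iterations performed; the integral converges by \eqref{s1.2.1}. This argument works uniformly for $x'$-direction differences, for $x_N$-direction differences (where the hypothesis \eqref{s1.s4.4} plays the same vanishing-boundary-value role as in the proof of Lemma \ref{Ls1.s4.2}), and for $t$-differences. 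The main obstacle is the careful bookkeeping of the weight exponents across all iterations to guarantee integrability at $\eta=\infty$, but this is exactly the computation performed in Lemmas \ref{Ls1.s4.1} and \ref{Ls1.s4.2}, applied here with a slightly longer chain of integrations.
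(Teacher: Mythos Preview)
Your overall strategy --- reduce to Corollary \ref{Cs1.s4.1} and then pass from order-$(m-n)$ control down to lower orders by integration --- is exactly what the paper has in mind when it says the result ``directly follows from \eqref{s1.s4.6}''. The $t$-seminorm $\langle v_\alpha\rangle_{t,\overline{Q}}^{(1-j/(m-n)+\gamma/m)}$ and, for $j+1=m-n$, the terms $\langle D_x^{\alpha+e_i}u\rangle_{\omega\gamma,\overline{Q}}^{(\gamma,\gamma/m)}$ are indeed read off from \eqref{s1.s4.6} as you say.

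There is, however, a genuine error in your integration step for $j+1<m-n$. The convergent integral $\int_1^\infty\eta^{-(k+1)+(1-\omega)\gamma}\,d\eta$ in Lemma \ref{Ls1.s4.1} arises \emph{only} because the representation there carries the kernel $\xi_N^{-(n-j+1)}$, which in turn comes from the degenerating factors $x_N^{n-j}$ and $x_N^{n-j+1}$ attached to the derivatives. In your situation both $D_x^{\alpha+e_i}u$ and $D_x^{\alpha+e_i+e_N}u$ have order $\le m-n$, hence carry the weight $x_N^{0}$; the integral representation is simply
\[
D_x^{\alpha+e_i}u(x,t)=-\int_{x_N}^{\infty}D_x^{\alpha+e_i+e_N}u(x',\xi_N,t)\,d\xi_N
\]
with no kernel at all. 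After the substitution $\xi_N=\eta x_N$ and the factoring you describe, one obtains $\int_1^{\infty}\eta^{-\omega\gamma}\,d\eta$ for a single step, which diverges since $\omega\gamma<1$. The exponent $-(k+1)$ cannot be produced by iterating; it tracks the weight, not the number of integrations, and here the weight is zero. Condition \eqref{s1.2.1} plays no role.

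The bound is nonetheless correct, but the mechanism is the compact support you invoked at the outset: the integrals are really over $[x_N,R]$, and the estimate follows with a constant depending on $R$. An even shorter route, closer in spirit to the paper's one-line proof, is to observe that for $|\beta|=j+1<m-n$ every first-order derivative $D_{x_k}D_x^{\beta}u$ has order $\le m-n$ and is therefore bounded (by \eqref{s1.s4.6} and the support bound); hence $D_x^{\beta}u$ is Lipschitz in $x$ and trivially lies in $C_{\omega\gamma}^{\gamma}$. For the $t$-part, \eqref{s1.s4.6} already gives the stronger exponent $1-(j+1)/(m-n)+\gamma/m>\gamma/m$, which dominates $\gamma/m$ on the bounded $t$-support.
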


This corollary directly follows from \eqref{s1.s4.6}.

In general case we have a weaker statement about smoothness of the
derivatives $D_{x}^{\alpha}u$ of order $|\alpha|<m-n$. In
particular, the derivative $D_{x_{N}}^{m-n-1}u$ belongs only to a
kind of Zygmund space $Z^{1}$ - see \eqref{s1.s4.9.1} below.

\begin{proposition}
\label{Ps1.s4.2}
Let $n\in(0,m)$ be an integer. Let a function
$u(x,t)\in C_{n,\omega\gamma
}^{m+\gamma,\frac{m+\gamma}{m}}(\overline{Q})$ has compact support.
For $v_{\alpha}=D_{x}^{\alpha}u$ with $|\alpha|=j<m-n-1$ we have

\begin{equation}%
{\displaystyle\sum\limits_{i=1}^{N}}
\left\langle D_{x_{i}}v_{\alpha}\right\rangle _{\omega\gamma,\overline{Q}%
}^{(\gamma,\gamma/m)}+\left\langle v_{\alpha}\right\rangle _{t,\overline{Q}%
}^{(1-\frac{j}{m-n}+\frac{\gamma}{m})}\leq C\left(
{\displaystyle\sum\limits_{i=1}^{N}} \left\langle
x_{N}^{n}D_{x_{i}}^{m}u\right\rangle _{\omega\gamma
,x_{i},\overline{Q}}^{(\gamma)}+\left\langle D_{t}u\right\rangle
_{t,\overline{Q}}^{(\gamma/m)}\right)  . \label{s1.s4.8}%
\end{equation}

For $v_{\alpha}=D_{x}^{\alpha}u$ with $|\alpha|=m-n-1>0$ we have

\begin{equation}
{\displaystyle\sum\limits_{\substack{|\alpha|=m-n-1,\\\alpha_{N}<m-n-1}}}
{\displaystyle\sum\limits_{i=1}^{N}}
\left\langle D_{x_{i}}v_{\alpha}\right\rangle _{\omega\gamma,\overline{Q}%
}^{(\gamma,\gamma/m)}+%
{\displaystyle\sum\limits_{i=1}^{N-1}} \left\langle
D_{x_{i}}D_{x_{N}}^{m-n-1}u\right\rangle _{\omega\gamma
,\overline{Q}}^{(\gamma,\gamma/m)}+%
{\displaystyle\sum\limits_{|\alpha|=m-n-1}}
\left\langle v_{\alpha}\right\rangle _{t,\overline{Q}}^{(1-\frac{m-n-1}%
{m-n}+\frac{\gamma}{m})}\leq\label{s1.s4.9}%
\end{equation}

\[
\leq C\left( {\displaystyle\sum\limits_{i=1}^{N}} \left\langle
x_{N}^{n}D_{x_{i}}^{m}u\right\rangle _{\omega\gamma
,x_{i},\overline{Q}}^{(\gamma)}+\left\langle D_{t}u\right\rangle
_{t,\overline{Q}}^{(\gamma/m)}\right)
\]
and also

\begin{equation}
\lbrack D_{x_{N}}^{m-n-1}u]_{x_{N},x^{\prime}}^{(1,(1-\omega)\gamma
)}+[D_{x_{N}}^{m-n-1}u]_{x_{N},t}^{(1,\frac{\gamma}{m})}\leq\label{s1.s4.9.1}%
\end{equation}

\[
\leq C\left( {\displaystyle\sum\limits_{i=1}^{N}} \left\langle
x_{N}^{n}D_{x_{i}}^{m}u\right\rangle _{\omega\gamma
,x_{i},\overline{Q}}^{(\gamma)}+\left\langle D_{t}u\right\rangle
_{t,\overline{Q}}^{(\gamma/m)}\right)  .
\]
where

\begin{equation}
\lbrack v]_{x_{N},x^{\prime},\overline{Q}}^{(1,(1-\omega)\gamma)}%
=\sup_{\substack{\theta>0,\overline{h}\in R^{N-1}\\(x,t)\in\overline{Q}}%
}\frac{|\Delta_{\theta,x_{N}}^{2}\Delta_{\overline{h},x^{\prime}}%
v(x,t)|}{\theta|\overline{h}|^{(1-\omega)\gamma}}, \label{s1.s4.10}%
\end{equation}

\begin{equation}
\lbrack v]_{x_{N},t,\overline{Q}}^{(1,\frac{\gamma}{m})}=\sup
_{\substack{\theta>0,\tau>0\\(x,t)\in\overline{Q}}}\frac{|\Delta_{\theta
,x_{N}}^{2}\Delta_{\tau,t}v(x,t)|}{\theta\tau^{\frac{\gamma}{m}}}.
\label{s1.s4.11}%
\end{equation}

\end{proposition}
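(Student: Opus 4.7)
The plan is to derive each of \eqref{s1.s4.8}, \eqref{s1.s4.9}, and \eqref{s1.s4.9.1} by reducing to estimates already available from Theorem \ref{Ts1.1} and Lemma \ref{Ls1.s4.1}, exploiting the compact support of $u$ in $x_N$ to freely integrate without boundary contributions at infinity. The guiding principle is that for $|\beta| < m-n$ the derivative $D_{x}^{\beta}u$ can be expressed as an iterated $x_N$-integral of a derivative of order exactly $m-n$, which \emph{is} controlled by Theorem \ref{Ts1.1}, and the weight factors $\xi^{-\omega\gamma}$ (arising from the weighted seminorms) survive these integrations because $\omega\gamma < 1$.

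For \eqref{s1.s4.8} with $v_{\alpha}=D_{x}^{\alpha}u$, $|\alpha|=j\leq m-n-2$, I would write, with $k=m-n-j-1\geq 1$,
\[
D_{x_{i}}v_{\alpha}(x,t)=(-1)^{k}\int_{x_{N}}^{R}d\xi_{1}\cdots\int_{\xi_{k-1}}^{R}D_{x_{N}}^{k}D_{x_{i}}v_{\alpha}(x^{\prime},\xi_{k},t)\,d\xi_{k},
\]
so that the integrand has total order exactly $m-n$ and, provided $i$ or some component of $\alpha$ is tangential, has its $x_N$-component strictly less than $m-n$; hence Theorem \ref{Ts1.1} controls its weighted $x^\prime$-Hölder seminorm with exponent $\gamma$. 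Commuting $\Delta_{h^{\prime},x^\prime}$ under the integral and absorbing the weight $\xi_{k}^{-\omega\gamma}$ into the iterated integral (bounded by $CR^{k-\omega\gamma}$) yields even an unweighted $\gamma$-Hölder bound in $x^\prime$. For the bound in $x_N$, I would use the identity $\Delta_{h,x_N}D_{x_i}v_\alpha=(-1)^{k+1}\int_{x_N}^{x_N+h}d\xi_{1}\cdots$ together with the boundedness of the order-$(m-n)$ integrand (away from the pure $D_{x_N}^{m-n}u$, which never appears here); the pure-$x_N$ component is handled by Lemma \ref{Ls1.s4.1}. The time-Hölder estimate with exponent $1-j/(m-n)+\gamma/m$ comes directly from the corresponding sum in Theorem \ref{Ts1.1}, and the $\gamma/m$-in-$t$ component of $\langle D_{x_i}v_\alpha\rangle^{(\gamma,\gamma/m)}_{\omega\gamma,\overline{Q}}$ follows from stronger time-Hölder via compact $t$-support.

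For \eqref{s1.s4.9}, when $|\alpha|=m-n-1$ and $\alpha_{N}<m-n-1$ the derivative $D_{x_{i}}v_{\alpha}$ has total order exactly $m-n$ and $x_N$-component strictly less than $m-n$, so Theorem \ref{Ts1.1} applies immediately; for the mixed derivative $D_{x_{i}}D_{x_{N}}^{m-n-1}u$ with $i\neq N$, the $x_N$-component is $m-n-1<m-n$, and Theorem \ref{Ts1.1} again suffices. The one excluded term $D_{x_N}^{m-n}u$ is, in the absence of condition \eqref{s1.s4.4}, only controlled up to a $|\ln x_N|$-singularity (Lemma \ref{Ls1.s4.2}), which is why it is replaced by the mixed-Zygmund statement \eqref{s1.s4.9.1}. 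The time-Hölder of $v_{\alpha}$ with exponent $1/(m-n)+\gamma/m$ is again directly furnished by Theorem \ref{Ts1.1}.

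For \eqref{s1.s4.9.1} I would use the second-difference identity
\[
\Delta_{\theta,x_{N}}^{2}D_{x_{N}}^{m-n-1}u(x,t)=\int_{0}^{\theta}\!\int_{0}^{\theta}D_{x_{N}}^{m-n+1}u(x^{\prime},x_{N}+s+s^{\prime},t)\,ds^{\prime}\,ds,
\]
commute $\Delta_{\overline{h},x^{\prime}}$ (or $\Delta_{\tau,t}$) under the integral, and estimate the integrand. Lemma \ref{Ls1.s4.1} with $j=n-1$ together with Theorem \ref{Ts1.1} gives $x_N D_{x_N}^{m-n+1}u\in C^{\gamma,\gamma/m}_{\omega\gamma}(\overline{Q})$, and Proposition \ref{Ps1.01} translates this into
\[
|\Delta_{\overline{h},x^{\prime}}D_{x_{N}}^{m-n+1}u|\leq C\,x_{N}^{-1}|\overline{h}|^{(1-\omega)\gamma}\,M,\qquad |\Delta_{\tau,t}D_{x_{N}}^{m-n+1}u|\leq C\,x_{N}^{-1}\tau^{\gamma/m}\,M,
\]
where $M$ denotes the right-hand side of \eqref{s1.s4.9.1}. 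The central---and the only genuinely nontrivial---observation is the uniform bound
\[
\int_{0}^{\theta}\!\int_{0}^{\theta}\frac{ds^{\prime}\,ds}{x_{N}+s+s^{\prime}}\leq\int_{0}^{\theta}\!\int_{0}^{\theta}\frac{ds^{\prime}\,ds}{s+s^{\prime}}=2\theta\ln 2,\qquad x_{N}\geq 0,
\]
which shows that the double integral gains a linear factor of $\theta$ despite the $x_N^{-1}$-singularity of the integrand. This gives $|\Delta_{\theta,x_{N}}^{2}\Delta_{\overline{h},x^{\prime}}D_{x_{N}}^{m-n-1}u|\leq C\theta|\overline{h}|^{(1-\omega)\gamma}M$ and the analogous time bound. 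The main obstacle is precisely this last computation: without the structural condition \eqref{s1.s4.4} the first difference of $D_{x_N}^{m-n-1}u$ in $x_N$ cannot be Hölder-controlled, and the mixed second-difference/first-difference structure of \eqref{s1.s4.10}--\eqref{s1.s4.11} is not optional but dictated by the fact that only the weighted product $x_N D_{x_N}^{m-n+1}u$ is controlled by the right-hand side.
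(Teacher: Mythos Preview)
Your treatment of \eqref{s1.s4.9.1} is essentially identical to the paper's: the paper writes
\(D_{x_{N}}^{m-n-1}u=\int_{x_{N}}^{\infty}\!d\xi\int_{\xi}^{\infty}\eta^{-1}a(x',\eta,t)\,d\eta\)
with \(a=x_{N}D_{x_{N}}^{m-n+1}u\), observes that the second \(x_{N}\)-difference collapses this to
\(\int_{x_{N}}^{x_{N}+\theta}\!d\xi\int_{\xi}^{\xi+\theta}\eta^{-1}b\,d\eta\)
(your change of variables \(s=\xi-x_{N}\), \(s'=\eta-\xi\) recovers exactly your double integral), pulls out \(|b|^{(0)}\leq C|\overline h|^{(1-\omega)\gamma}M\), and bounds the remaining kernel integral by \(\int_{x_{N}}^{x_{N}+\theta}(\ln(\xi+\theta)-\ln\xi)\,d\xi\leq C\theta\). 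Your computation \(\int_{0}^{\theta}\!\int_{0}^{\theta}(s+s')^{-1}\,ds'\,ds=2\theta\ln 2\) is precisely this bound in the worst case \(x_{N}=0\).

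For \eqref{s1.s4.8} and \eqref{s1.s4.9} the paper says only that the proof ``is completely similar to the proof of Proposition \ref{Ps1.s4.1} in view of \eqref{s1.007.1}'', i.e.\ mollify in \((x',t)\), verify finiteness of the relevant seminorms via the growth bounds \eqref{s1.007.1}, and invoke the machinery behind Theorem \ref{Ts1.1}. Your route through iterated \(x_{N}\)-integrals down to level \(m-n\) is a legitimate and somewhat more explicit alternative. One small imprecision: when both \(i=N\) and \(\alpha\) is pure \(x_{N}\), your integrand at level \(m-n\) is \(D_{x_{N}}^{m-n}u\), and you cite Lemma \ref{Ls1.s4.1}; but that lemma concerns the \(x_{N}\)-H\"older of \emph{weighted} higher pure derivatives, not the \(x'\)-H\"older you need here. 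The fix is immediate---\(D_{x_{N}}^{\,j+1}u\) is Lipschitz in \(x'\) because \(D_{x_{k}}D_{x_{N}}^{\,j+1}u\) (\(k<N\)) has order at most \(m-n\) with \(x_{N}\)-component strictly less than \(m-n\) and is therefore bounded by Proposition \ref{Ps1.s4.1}---so this is a labeling slip rather than a gap.
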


\begin{proof}
The proof of \eqref{s1.s4.8} and \eqref{s1.s4.9} is completely
similar to the proof of Proposition \ref{Ps1.s4.1} in view of
\eqref{s1.007.1}. Let us prove \eqref{s1.s4.9.1}. Since estimates
for both terms in \eqref{s1.s4.9.1} are completely similar, we
estimate only the term
$[D_{x_{N}}^{m-n-1}u]_{x_{N},x^{\prime}}^{(1,(1-\omega)\gamma)}$.
Denote $a(x,t)=x_{N}D_{x_{N}}^{m-n+1}u$. We have from
\eqref{s1.s4.6} and from Proposition \ref{Ps1.01}

\begin{equation}
\left\langle a(x,t)\right\rangle
_{\omega\gamma,\overline{Q}}^{(\gamma
,\frac{\gamma}{m})}+\left\langle a(x,t)\right\rangle _{(1-\omega
)\gamma,x,\overline{Q}}^{(\gamma)}\leq C\left(
{\displaystyle\sum\limits_{i=1}^{N}} \left\langle
x_{N}^{n}D_{x_{i}}^{m}u\right\rangle _{\omega\gamma
,x_{i},\overline{Q}}^{(\gamma)}+\left\langle D_{t}u\right\rangle
_{t,\overline{Q}}^{(\gamma/m)}\right)  . \label{s1.s4.12}%
\end{equation}

From the definition of $a(x,t)$ we have%

\begin{equation}
D_{x_{N}}^{m-n-1}u=%
{\displaystyle\int\limits_{x_{N}}^{\infty}}
d\xi%
{\displaystyle\int\limits_{\xi}^{\infty}}
D_{x_{N}}^{m-n+1}u(x^{\prime},\eta,t)d\eta=%
{\displaystyle\int\limits_{x_{N}}^{\infty}}
d\xi%
{\displaystyle\int\limits_{\xi}^{\infty}}
\frac{1}{\eta}a(x^{\prime},\eta,t)d\eta. \label{s1.s4.12+1}%
\end{equation}
Let $\overline{h}\in R^{N-1}$ and $\theta>0$ be fixed. Denote
$b(x^{\prime
},\eta,t)=a(x^{\prime},\eta+\overline{h},t)-a(x^{\prime},\eta,t)$.
Then by simple direct calculations

\[
\left\vert \Delta_{\theta,x_{N}}^{2}\Delta_{\overline{h},x^{\prime}}D_{x_{N}%
}^{m-n-1}u(x,t)\right\vert =\left\vert \Delta_{\theta,x_{N}}^{2}%
{\displaystyle\int\limits_{x_{N}}^{\infty}}
d\xi%
{\displaystyle\int\limits_{\xi}^{\infty}}
\frac{1}{\eta}b(x^{\prime},\eta,t)d\eta\right\vert =
\]

\[
=\left\vert {\displaystyle\int\limits_{x_{N}}^{x_{N}+\theta}}
d\xi%
{\displaystyle\int\limits_{\xi}^{\xi+\theta}}
\frac{1}{\eta}b(x^{\prime},\eta,t)d\eta\right\vert \leq|b|_{\overline{Q}%
}^{(0)}%
{\displaystyle\int\limits_{x_{N}}^{x_{N}+\theta}}
d\xi%
{\displaystyle\int\limits_{\xi}^{\xi+\theta}} \frac{1}{\eta}d\eta=
\]

\[
=|b|_{\overline{Q}}^{(0)}%
{\displaystyle\int\limits_{x_{N}}^{x_{N}+\theta}}
(\ln(\xi+\theta)-\ln\xi)d\xi\leq C|b|_{\overline{Q}}^{(0)}\theta\leq
C\left\langle x_{N}D_{x_{N}}^{m-n+1}u\right\rangle
_{(1-\omega)\gamma
,x^{\prime},\overline{Q}}^{(\gamma)}|\overline{h}|^{(1-\omega)\gamma}\theta.
\]
Since $\overline{h}$ and $\theta$ are arbitrary, this proves
estimate \eqref{s1.s4.9.1} for the first term. The estimate of the
second term is completely analogous. This completes the proof of the
proposition.

\end{proof}

\subsection{The case of a noninteger $n$.}
\label{sss1.4.2}

In this case we have the following propositions analogous to
Propositions \ref{Ps1.s4.1}, \ref{Ps1.s4.2}.

\begin{proposition}
\label{Ps1.s4.3}

Let $n\in(0,m)$ be a noninteger. Let a function $u(x,t)\in C_{n,\omega\gamma
}^{m+\gamma,\frac{m+\gamma}{m}}(\overline{Q})$ in the sense
of \eqref{s1.6.2.n} has compact support.

Then

\[ \left\langle u\right\rangle
_{n,\omega\gamma,\overline{Q}}^{(m+\gamma
,\frac{m+\gamma}{m})}\equiv%
{\displaystyle\sum\limits_{j=0}^{j<n}}
{\displaystyle\sum\limits_{|\alpha|=m-j}} \left\langle
x_{N}^{n-j}D_{x}^{\alpha}u\right\rangle _{\omega\gamma
,\overline{Q}}^{(\gamma,\gamma/m)}+%
\]

\[
+{\displaystyle\sum\limits_{j=0}^{j<n}}
{\displaystyle\sum\limits_{|\alpha|=m-j}}
\left\langle x_{N}^{n-j\omega}D_{x}^{\alpha}u\right\rangle _{t,\overline{Q}%
}^{(\frac{\gamma+j}{m})}+\left\langle D_{t}u\right\rangle _{\omega
\gamma,\overline{Q}}^{(\gamma,\gamma/m)}+
\]

\[
+%
{\displaystyle\sum\limits_{j=0}^{j<m-n}}
{\displaystyle\sum\limits_{|\alpha|=[m-n+(1-\omega)\gamma]-j}}
\left\langle D_{x^{\prime}}^{\alpha}D_{x_{N}}^{j}u\right\rangle
_{x^{\prime
},\overline{Q}}^{(\{m-n+(1-\omega)\gamma\})}+%
\]

\[
+{\displaystyle\sum\limits_{j=0}^{j<m-n}}
{\displaystyle\sum\limits_{|\alpha|=[m-n+\gamma]-j}} \left\langle
D_{x^{\prime}}^{\alpha}D_{x_{N}}^{j}u\right\rangle _{\omega
\gamma,x^{\prime},\overline{Q}}^{(\{m-n+\gamma\})}+
\]

\begin{equation}
+%
{\displaystyle\sum\limits_{j=1}^{j<m-n}}
{\displaystyle\sum\limits_{|\alpha|=j}}
\left\langle D_{x}^{\alpha}u\right\rangle _{t,\overline{Q}}^{(1-\frac{j}%
{m-n}+\frac{\gamma}{m})}\leq C\left(
{\displaystyle\sum\limits_{i=1}^{N}} \left\langle
x_{N}^{n}D_{x_{i}}^{m}u\right\rangle _{\omega\gamma
,x_{i},\overline{Q}}^{(\gamma)}+\left\langle D_{t}u\right\rangle
_{t,\overline{Q}}^{(\gamma/m)}\right)  . \label{s1.s4.12+2}%
\end{equation}

\end{proposition}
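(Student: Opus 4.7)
The plan is to follow the approach used for Proposition \ref{Ps1.s4.1}, which is in fact simpler in the noninteger case because the critical borderline derivative $D_{x_N}^{m-n}u$ of the integer case disappears entirely: any integer $j$ with $j\le n$ automatically satisfies $j\le[n]<n$ (and similarly $j\le m-n$ forces $j<m-n$), so every pure normal derivative $D_{x_N}^{m-j}u$ appearing on the left-hand side of \eqref{s1.s4.12+2} carries a genuine positive weight $x_N^{n-j}$, and no analog of the logarithmic obstruction encountered in Lemma \ref{Ls1.s4.2} can arise.

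First I would regularize $u$ by the tangent-and-time convolution \eqref{s1.005} to obtain $u_\varepsilon$, which by Lemma \ref{Ls1.01} satisfies the asymptotic vanishing condition \eqref{s1.43.01}. This places $u_\varepsilon$ into the class where Theorem \ref{Ts1.1} directly produces \eqref{s1.7} with every listed seminorm finite and bounded by $\sum_i\langle x_N^n D_{x_i}^m u_\varepsilon\rangle_{\omega\gamma,x_i,\overline Q}^{(\gamma)}+\langle D_t u_\varepsilon\rangle_{t,\overline Q}^{(\gamma/m)}$. Since convolution in $(x',t)$ commutes with the $x_N$-weights and does not enlarge the governing seminorms, the inequality holds uniformly in $\varepsilon$ in terms of the corresponding seminorms of $u$ itself. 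For the pure normal derivative seminorms (not asserted a priori finite through the definition \eqref{s1.6.2.n}), I would invoke Lemma \ref{Ls1.s4.1}: the compact support of $u$ together with the restriction $j<n$ (automatic here) and the gap condition \eqref{s1.2.1} permit integration from $x_N$ to $\infty$ and yield $\langle x_N^{n-j}D_{x_N}^{m-j}u\rangle \le C\langle x_N^n D_{x_N}^m u\rangle$. The pointwise bounds \eqref{s1.007.1}, together with the compact support, give the sup-norm control needed for the $t$-seminorms with the modified weight $x_N^{n-j\omega}$.

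Finally I would pass to the limit $\varepsilon\to 0$ exactly as in the closing paragraphs of the proof of Theorem \ref{Ts1.1}: interior bounds in $C^{m+\gamma,(m+\gamma)/m}$ on compact subsets of $\{x_N>0\}$ yield local convergence of $u_\varepsilon$ to $u$ in the slightly weaker space $C^{m+\gamma',(m+\gamma')/m}$ with $\gamma'<\gamma$, while Proposition \ref{Ps1.04} transfers the uniform weighted Hölder seminorm bounds through the limit. The main obstacle is the index bookkeeping rather than any genuinely new analytic difficulty: one must verify that every sum on the left-hand side of \eqref{s1.s4.12+2} is either a mixed-derivative or time-derivative term already produced by Theorem \ref{Ts1.1} applied to $u_\varepsilon$, or a pure normal derivative seminorm handled by Lemma \ref{Ls1.s4.1}, and that the strict inequalities $j<n$ and $j<m-n$ in the sums correctly reflect the noninteger restriction so that no integer-$n$ pathology enters. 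Once this bookkeeping is laid out the remainder of the argument is essentially identical to that of Proposition \ref{Ps1.s4.1}.
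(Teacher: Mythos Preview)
Your proposal is correct and follows essentially the same approach as the paper: the paper simply states that the proof is completely analogous to that of Proposition~\ref{Ps1.s4.1}, which in turn proceeds by smoothing via \eqref{s1.005}, invoking Lemma~\ref{Ls1.s4.1} and the bounds \eqref{s1.007.1} to ensure all left-hand seminorms of $u_\varepsilon$ are finite, applying Theorem~\ref{Ts1.1}, and then passing to the limit exactly as in the closing part of the proof of Theorem~\ref{Ts1.1}. Your observation that the noninteger case is strictly simpler (no borderline $D_{x_N}^{m-n}u$ term) is also implicit in the paper's treatment.
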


The proof of this proposition is completely analogous to the proof
of Proposition \ref{Ps1.s4.1}.

In addition we have the following proposition.

\begin{proposition}
\label{Ps1.s4.4}

Let $n\in(0,m)$ be a noninteger. Let a function $u(x,t)\in
C_{n,\omega\gamma }^{m+\gamma,\frac{m+\gamma}{m}}(\overline{Q})$ in the sense
of \eqref{s1.6.2.n} has
compact support in a set $\{x_{N}\leq R\}$, $R>0$. Then

\[
\left\langle D_{x_{N}}^{[m-n]}u\right\rangle _{x_{N},\overline{Q}}%
^{(1-\{n\})}\leq
C|x_{N}^{\{n\}}D_{x_{N}}^{[m-n]+1}u|_{\overline{Q}}^{(0)}\leq
C(R)\left\langle x_{N}^{\{n\}}D_{x_{N}}^{[m-n]+1}u\right\rangle
_{\omega \gamma,\overline{Q}}^{(\gamma)}\leq
\]

\begin{equation}
\leq C(R)\left\langle x_{N}^{n}D_{x_{N}}%
^{m}u\right\rangle _{\omega\gamma,\overline{Q}}^{(\gamma)}. \label{s1.s4.15}%
\end{equation}

\end{proposition}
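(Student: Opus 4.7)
Set $k=[m-n]$ and $\beta=\{n\}\in(0,1)$, so that $k+1=m-[n]$ and $1-\beta=\{m-n\}$; note that $[n]<n$ since $n$ is noninteger. I will establish the three inequalities in the order (rightmost), (middle), (leftmost), each of which is short given the earlier results in the paper.

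\emph{Rightmost inequality.} I would apply Lemma \ref{Ls1.s4.1} with the integer index $j=[n]$ (permissible because $[n]<n$). Since $n-[n]=\{n\}=\beta$ and $m-[n]=k+1$, that lemma immediately gives
\[
\bigl\langle x_{N}^{\{n\}}D_{x_{N}}^{k+1}u\bigr\rangle_{\omega\gamma,x_{N},\overline{Q}}^{(\gamma)}\le C\bigl\langle x_{N}^{n}D_{x_{N}}^{m}u\bigr\rangle_{\omega\gamma,x_{N},\overline{Q}}^{(\gamma)},
\]
which is the rightmost estimate in \eqref{s1.s4.15}.

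\emph{Middle inequality.} Here I would exploit the compact support hypothesis. Since $u\equiv0$ for $x_{N}>R$, we also have $x_{N}^{\{n\}}D_{x_{N}}^{k+1}u\equiv0$ there. Fix any $(x,t)\in\overline{Q}$ with $x_{N}\le R$ and compare with the point $(x',2R,t)$: from the definition of the weighted H\"older seminorm with weight based on the larger coordinate $x_N^*=2R$,
\[
\bigl|x_{N}^{\{n\}}D_{x_{N}}^{k+1}u(x,t)\bigr|=\bigl|x_{N}^{\{n\}}D_{x_{N}}^{k+1}u(x,t)-(2R)^{\{n\}}D_{x_{N}}^{k+1}u(x',2R,t)\bigr|\le(2R)^{-\omega\gamma}(2R)^{\gamma}\bigl\langle x_{N}^{\{n\}}D_{x_{N}}^{k+1}u\bigr\rangle_{\omega\gamma,\overline{Q}}^{(\gamma)},
\]
giving the constant $C(R)$ and the middle inequality.

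\emph{Leftmost inequality.} Write $M=|x_{N}^{\{n\}}D_{x_{N}}^{k+1}u|_{\overline{Q}}^{(0)}$, so that $|D_{x_{N}}^{k+1}u(x,t)|\le Mx_{N}^{-\{n\}}$. For $0<x_{N}<\overline{x}_{N}$, integrate in the last variable:
\[
\bigl|D_{x_{N}}^{k}u(x',\overline{x}_{N},t)-D_{x_{N}}^{k}u(x',x_{N},t)\bigr|\le M\int_{x_{N}}^{\overline{x}_{N}}\xi^{-\{n\}}\,d\xi=\frac{M}{1-\{n\}}\bigl(\overline{x}_{N}^{\,1-\{n\}}-x_{N}^{1-\{n\}}\bigr)\le\frac{M}{1-\{n\}}(\overline{x}_{N}-x_{N})^{1-\{n\}},
\]
where the last step uses subadditivity of $s\mapsto s^{1-\{n\}}$. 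This is exactly the leftmost estimate. The only nontrivial point in the whole argument is the use of compact support in the middle step to convert a weighted H\"older seminorm into a pointwise bound; the rest is standard integration combined with Lemma \ref{Ls1.s4.1}.
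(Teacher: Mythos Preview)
Your proof is correct and follows essentially the same route as the paper, which merely states that the proposition ``directly follows from the Newton--Leibniz formula and from \eqref{s1.s4.12+2}.'' Your three steps unpack this: the leftmost inequality is exactly the Newton--Leibniz integration, the middle inequality uses compact support to pass from a weighted H\"older seminorm to a sup bound, and for the rightmost inequality you invoke Lemma \ref{Ls1.s4.1} with $j=[n]$ (the $x_N$--direction ingredient underlying \eqref{s1.s4.12+2}) rather than the full Proposition \ref{Ps1.s4.3}, which is a harmless shortcut since only the $x_N$--seminorm is needed in the chain.
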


The proof of this proposition directly follows from the
Newton-Leibnitz formula and from \eqref{s1.s4.12+2}.

\section{Traces of functions from $C_{n,\omega\gamma}^{m+\gamma,\frac
{m+\gamma}{m}}(\overline{Q})$ at $\{x_{N}=0\}$.} \label{ss1.5}

As it was proved in Theorem \ref{Ts1.1}, for $u(x,t)\in C_{n,\omega\gamma}^{m+\gamma,\frac{m+\gamma}{m}%
}(\overline{Q})$ we have

\[%
{\displaystyle\sum\limits_{j=0}^{j\leq m-n}}
{\displaystyle\sum\limits_{|\alpha|=[m-n+(1-\omega)\gamma]-j}}
\left\langle D_{x^{\prime}}^{\alpha}D_{x_{N}}^{j}u\right\rangle
_{x^{\prime
},\overline{Q}}^{(\{m-n+(1-\omega)\gamma\})}+%
{\displaystyle\sum\limits_{j=1}^{j\leq m-n}}
{\displaystyle\sum\limits_{|\alpha|=j}}
\left\langle D_{x}^{\alpha}u\right\rangle _{t,\overline{Q}}^{(1-\frac{j}%
{m-n}+\frac{\gamma}{m})}\leq
\]

\begin{equation}
\leq C\left( {\displaystyle\sum\limits_{i=1}^{N}} \left\langle
x_{N}^{n}D_{x_{i}}^{m}u\right\rangle _{\omega\gamma
,x_{i},\overline{Q}}^{(\gamma)}+\left\langle D_{t}u\right\rangle
_{t,\overline{Q}}^{(\gamma/m)}\right)  , \label{s1.s5.1}%
\end{equation}
where the terms for $j=m-n$ are included if $n$ is an integer and\
if $D_{x_{N}}^{m-n}u$ is bounded. From this estimate it follows that
for $j\leq m-n$ and for a fixed $x_{N}>0$ the function
$D_{x_{N}}^{j}u(x^{\prime},x_{N},t)$ belongs to the space
$C_{x^{\prime},t}^{m-n+(1-\omega)\gamma-j,1-\frac{j}
{m-n}+\frac{\gamma}{m}}(R^{N-1}\times R^{1})$. This means that we
have the following statement.

\begin{proposition}
\label{s1.s5.1}

A function $u(x,t)\in
C_{n,\omega\gamma}^{m+\gamma,\frac{m+\gamma}{m} }(\overline{Q})$
with compact support and it's derivatives $D_{x_{N}}^{j}u$, $j\leq
m-n$, have traces at $x_{N}=0$ from the spaces
$D_{x_{N}}^{j}u(x^{\prime},0,t)\in$
 $ C_{x^{\prime},t}^{m-n+(1-\omega
)\gamma-j,1-\frac{j}{m-n}+\frac{\gamma}{m}}(R^{N-1}\times R^{1})$
and

\[
\left\Vert D_{x_{N}}^{j}u(x^{\prime},0,t)\right\Vert _{C_{x^{\prime}%
,t}^{m-n+(1-\omega)\gamma-j,1-\frac{j}{m-n}+\frac{\gamma}{m}}(R^{N-1}\times
R^{1})}\leq
\]

\begin{equation}
\leq C\left( {\displaystyle\sum\limits_{i=1}^{N}} \left\langle
x_{N}^{n}D_{x_{i}}^{m}u\right\rangle _{\omega\gamma
,x_{i},\overline{Q}}^{(\gamma)}+\left\langle D_{t}u\right\rangle
_{t,\overline{Q}}^{(\gamma/m)}\right)  . \label{s1.s5.2}%
\end{equation}

\end{proposition}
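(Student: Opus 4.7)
The plan is to read off the desired trace bound essentially directly from inequality \eqref{s1.7} of Theorem \ref{Ts1.1} and then establish genuine existence of the trace via the smoothing/compactness mechanism already deployed at the end of that proof. Concretely, among the finitely many seminorms appearing on the left-hand side of \eqref{s1.7}, picking the pure tangential multiindex $\alpha=(\alpha_{1},0,\ldots,0)$ with $|\alpha|=[m-n+(1-\omega)\gamma]-j$ in the family
\[
\bigl\langle D_{x^{\prime}}^{\alpha}D_{x_{N}}^{j}u\bigr\rangle _{x^{\prime },\overline{Q}}^{(\{m-n+(1-\omega)\gamma\})}
\]
and the pure normal multiindex $\alpha=(0,\ldots,0,j)$ in the family $\langle D_{x}^{\alpha}u\rangle _{t,\overline{Q}}^{(1-\frac{j}{m-n}+\frac{\gamma}{m})}$ furnishes exactly the two seminorms that, combined, constitute the anisotropic H\"older seminorm of $D_{x_{N}}^{j}u$ on $R^{N-1}\times R^{1}$ of order $(m-n+(1-\omega)\gamma-j,\,1-\frac{j}{m-n}+\frac{\gamma}{m})$, and crucially these bounds hold uniformly in the level $x_{N}>0$.

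Second, I would supply the missing $L^{\infty}$-part of the target norm from the pointwise estimate \eqref{s1.007.1}: for $j\leq m-n$ we have $m-j\geq n$, hence $|D_{x_{N}}^{j}u|$ is bounded on $\overline{Q}$ (compact support eliminating any behaviour at infinity); the borderline case $j=m-n$ with integer $n$ is handled by the hypothesis that the left-hand side terms in \eqref{s1.7} involving $D_{x_{N}}^{m-n}u$ are finite, equivalently by Lemma \ref{Ls1.s4.2}. Hence the map $x_{N}\mapsto D_{x_{N}}^{j}u(\cdot,x_{N},\cdot)$ is a bounded family in $C_{x^{\prime},t}^{m-n+(1-\omega)\gamma-j,\,1-\frac{j}{m-n}+\frac{\gamma}{m}}(R^{N-1}\times R^{1})$, with norm dominated by the right-hand side of \eqref{s1.s5.2}.

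Third, to extract an actual trace at $x_{N}=0$ rather than a mere uniform bound along each slice, I would mirror the concluding argument of Theorem \ref{Ts1.1}: convolve $u$ in the $(x^{\prime},t)$-variables to obtain $u_{\varepsilon}$ satisfying \eqref{s1.006} with the same bounds, for which $D_{x_{N}}^{j}u_{\varepsilon}(\cdot,x_{N},\cdot)$ has a classical limit as $x_{N}\to 0$. Applying Proposition \ref{Ps1.04} on each compact $K\subset R^{N-1}\times R^{1}$ to the family $\{D_{x_{N}}^{j}u(\cdot,x_{N},\cdot)\}_{x_{N}>0}$ gives precompactness in a slightly weaker anisotropic H\"older norm, and the lower-semicontinuity built into that proposition promotes any subsequential limit $v_{j}(x^{\prime},t)$ back to the original (stronger) space with the same bound, yielding \eqref{s1.s5.2}.

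The main obstacle is purely technical and twofold: verifying that the limit $v_{j}$ is independent of the extracted subsequence (that is, that the convergence as $x_{N}\to 0$ is genuine rather than only subsequential), and controlling the borderline index $j=m-n$. Independence of subsequence follows from the estimate $\langle D_{x_{N}}^{j+1}u\rangle$ being finite for $j<m-n$ via Lemma \ref{Ls1.s4.1} and \eqref{s1.007.1}, which makes $x_{N}\mapsto D_{x_{N}}^{j}u(\cdot,x_{N},\cdot)$ continuous into $C^{0}(K)$; the borderline case reduces, via Lemma \ref{Ls1.s4.2}, to the condition $[x_{N}^{n}D_{x_{N}}^{m}u]|_{x_{N}=0}\equiv 0$ which is built into the hypothesis that the seminorm $\langle D_{x_{N}}^{m-n}u\rangle _{\omega\gamma,x_{N},\overline{Q}}^{(\gamma)}$ figure among the finite terms on the left of \eqref{s1.7}.
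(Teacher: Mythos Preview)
Your proposal is correct and follows essentially the same approach as the paper: the paper simply records inequality \eqref{s1.s5.1} as a direct consequence of Theorem \ref{Ts1.1} and declares that ``this means that we have the following statement,'' relying on the trace-existence argument already carried out at the very end of the proof of Theorem \ref{Ts1.1} (the passage invoking \eqref{s1.007.1} and Proposition \ref{Ps1.04}). You have made explicit the steps the paper leaves implicit --- the $L^{\infty}$ bound via \eqref{s1.007.1}, the compactness/lower-semicontinuity via Proposition \ref{Ps1.04}, and the borderline case $j=m-n$ via Lemma \ref{Ls1.s4.2} --- but the route is the same.
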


Now we consider the question of  the extension of functions
$v(x^{\prime},t)$ from the class
$C_{x^{\prime},t}^{m-n+(1-\omega)\gamma,1+\frac{\gamma}{m}}
(R^{N-1}\times R^{1})$ to the region $\overline{Q}$.

\begin{theorem}
\label{Ts1.s5.1}

There exists an operator $E:C_{x^{\prime},t}^{m-n+(1-\omega)\gamma
,1+\frac{\gamma}{m}}(R^{N-1}\times R^{1})\rightarrow
C_{n,\omega\gamma }^{m+\gamma,\frac{m+\gamma}{m}}(\overline{Q})$
defined on functions with compact supports in
$B_{R}=\{|x^{\prime}|\leq R,|t|\leq R\}$ with the property:

for a given function $v(x^{\prime},t)\in C_{x^{\prime},t}^{m-n+(1-\omega
)\gamma,1+\frac{\gamma}{m}}(R^{N-1}\times R^{1})$ with compact
support in $B_{R}$ the function $w(x,t)=Ev\in C_{n,\omega\gamma}%
^{m+\gamma,\frac{m+\gamma}{m}}(\overline{Q})$ has compact support and satisfies%

\begin{equation}
w(x^{\prime},0,t)=v(x^{\prime},t),\quad\left\Vert w\right\Vert _{C_{n,\omega
\gamma}^{m+\gamma,\frac{m+\gamma}{m}}(\overline{Q})}\leq C\left\Vert
v\right\Vert _{C_{x^{\prime},t}^{m-n+(1-\omega)\gamma,1+\frac{\gamma}{m}%
}(R^{N-1}\times R^{1})}, \label{s1.s5.3}%
\end{equation}
where the constant $C$ does not depend on $v$.

\end{theorem}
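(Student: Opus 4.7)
The plan is to construct the extension by an anisotropic mollification matched to the natural scaling $y'\sim x_N$, $\tau\sim x_N^{m-n}$ of the degenerate equation. Concretely, I would fix a mollifier $\psi(y',\tau)\in C_c^{\infty}(R^{N-1}\times R)$ with $\int\psi\,dy'\,d\tau=1$ and a cut-off $\chi\in C_c^{\infty}([0,\infty))$ that is identically $1$ near $x_N=0$, and then define
\[
w(x',x_N,t)\equiv(Ev)(x',x_N,t):=\chi(x_N)\int_{R^{N-1}}\!\int_{-\infty}^{\infty} v(x'+x_N y',\,t+x_N^{m-n}\tau)\,\psi(y',\tau)\,dy'\,d\tau.
\]
At $x_N=0$ the identities $\chi(0)=1$ and $\int\psi=1$ immediately give $w(x',0,t)=v(x',t)$, and the compactness of the supports of $\psi$, $\chi$, and $v$ makes $w$ compactly supported as required, so the first half of \eqref{s1.s5.3} is automatic.

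Next I would estimate $w$ in the equivalent norm \eqref{s1.6.2.n}. Derivatives in $x'$ and in $t$ commute with the integral, so $D^{\alpha}_{x'}w$ for $|\alpha|\leq[m-n+(1-\omega)\gamma]$ and $D_tw$ are controlled pointwise by the corresponding derivatives of $v$, and their (weighted) Hölder seminorms are inherited from the trace norm of $v$. For the $x_N$-derivatives, each application of $D_{x_N}$ inside the integral produces either $y'\cdot\nabla_{x'}v$ or $(m-n)x_N^{m-n-1}\tau\,D_tv$; iterating $m$ times yields a finite sum of terms of the form $x_N^{(m-n)\ell-(m-k-\ell)}\int D^{k}_{x'}D^{\ell}_{t}v(x'+x_Ny',t+x_N^{m-n}\tau)\,q(y',\tau)\psi(y',\tau)\,dy'\,d\tau$ with $k+\ell\leq m$. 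The relation $\omega=n/m$ is exactly what ensures that every such term carries total weight $x_N^{-n}$ after multiplication, so $x_N^n D_{x_N}^m w$ is bounded in terms of the trace norm of $v$; whenever the number $k$ of $x'$-derivatives would exceed the available regularity $[m-n+(1-\omega)\gamma]$ of $v$, I would first transfer the excess onto $\psi$ by integration by parts in $y'$, picking up compensating factors of $x_N^{-1}$ that again combine correctly because of the choice of anisotropic scaling.

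The main technical obstacle is the verification of the weighted Hölder seminorms $\langle x_N^{n}D_{x_i}^{m}w\rangle^{(\gamma)}_{\omega\gamma,x_i,\overline{Q}}$ and the analogous $\gamma/m$-estimate for $D_tw$. I would split each finite difference into two cases depending on the step $|h|$. In the regime $|h|\leq x_N/2$, one additional derivative can be placed either on $v$ or on $\psi$, and the fractional part $\{m-n+(1-\omega)\gamma\}$ of the smoothness of the highest classical $x'$-derivative of $v$, together with the $\gamma/m$-Hölder continuity of $D_tv$ in $t$, supplies exactly an $|h|^{\gamma}$ (respectively $|h|^{\gamma/m}$) factor, which combines correctly with the weight $x_N^{\omega\gamma}$. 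In the regime $|h|>x_N/2$ the uniform bound on $x_N^n D_{x_i}^m w$ obtained in the previous step is used together with $x_N^{\omega\gamma}\leq(2|h|)^{\omega\gamma}$, leaving a factor $|h|^{-\gamma(1-\omega)}$ that is harmless because $w$ is compactly supported. Once these two top-order seminorms are controlled, Theorem \ref{Ts1.1} produces all the remaining seminorms of the norm of $C_{n,\omega\gamma}^{m+\gamma,\frac{m+\gamma}{m}}(\overline{Q})$ and completes the estimate in \eqref{s1.s5.3}.
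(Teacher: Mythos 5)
Your construction is genuinely different from the paper's. The paper takes $w$ to be (a cutoff of) the bounded harmonic extension of $v(\cdot,t)$ into the half-space, with $t$ frozen, and then imports the $t$-H\"older estimate from an earlier reference; all the $x$-estimates flow from classical Schauder theory for $\Delta u=0$ (in particular the pointwise decay $|D^\alpha_x u|\leq C x_N^{-|\alpha|+l}\|v\|_{C^l}$) together with the interpolation inequality $\langle g\rangle^{(\gamma)}\lesssim (|g|^{(0)})^{1-\gamma}(\langle g\rangle^{(1)})^\gamma$. Your anisotropic, compactly supported mollifier with $y'\sim x_N$, $\tau\sim x_N^{m-n}$ achieves the same thing more self-containedly, and the integration-by-parts device (transferring excess $x'$-derivatives onto $\psi$ and picking up compensating powers of $x_N$, matching $\mu-m+n=(1-\omega)\gamma$) is the correct replacement for the harmonic-function decay estimate. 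Your treatment of the $t$-seminorm and of the small-step regime $|h|\leq x_N/2$ is in order.

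There is, however, a genuine gap in your handling of the regime $|h|>x_N/2$. You write that the uniform bound on $x_N^n D^m_{x_i}w$, combined with $x_N^{\omega\gamma}\leq(2|h|)^{\omega\gamma}$, leaves a factor $|h|^{-(1-\omega)\gamma}$ that is ``harmless because $w$ is compactly supported.'' Compact support controls \emph{large} $|h|$, but the factor $|h|^{-(1-\omega)\gamma}$ blows up as $|h|\to 0$ --- and in this regime $|h|\to 0$ forces $x_N\to 0$, so boundedness of $x_N^n D^m_{x_i}w$ alone cannot close the estimate. What is actually needed, and what the paper's proof secretly uses (see its step $|x_N^{n-(1-\omega)\gamma}D^m_{x_N}u|\leq C\|v\|$), is the quantitative vanishing $|x_N^n D^m_{x_i}w|\leq C\,x_N^{(1-\omega)\gamma}\|v\|$ near $x_N=0$. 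With that decay one gets, for a step $h'$ in $x'$, the bound $x_N^{\omega\gamma}\cdot 2Cx_N^{(1-\omega)\gamma}/|h'|^\gamma\leq 2C(x_N/|h'|)^\gamma\leq C'$, and similarly for steps in $x_N$. The good news is that the decay does come out of exactly the computation you outline for boundedness: after integrating by parts at least once, $\int D^j_{y'}[q\psi]\,dy'\,d\tau=0$, so one may subtract $D^{k'}_{x'}D^\ell_t v(x',t)$ inside the integral and invoke the H\"older seminorm of $v$, and the anisotropic scaling conspires so that the resulting power of $x_N$ is precisely $(1-\omega)\gamma$ uniformly over all the terms. You need to make this decay explicit and use it --- not mere boundedness plus compact support --- to close the large-step case.
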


\begin{proof}

The proof is similar to the proof of corresponding Lemma 2.4 from
\cite{D1}. Let we are given a function $v(x^{\prime},t)\in C_{x^{\prime},t}%
^{m-n+(1-\omega)\gamma,1+\frac{\gamma}{m}}(R^{N-1}\times R^{1})$
with compact support.  Consider the following boundary problem with
$t$ as a parameter for a unknown function $u(x,t)$%

\begin{equation}
\Delta u(x,t)=\frac{\partial^{2}u}{\partial x_{1}^{2}}+...+\frac{\partial
^{2}u}{\partial x_{N}^{2}}=0,\quad x\in H,t\in R^{1}, \label{s1.s5.4}%
\end{equation}

\begin{equation}
u(x^{\prime},0,t)=v(x^{\prime},t),\quad x^{\prime}\in R^{N-1},t\in R^{1}.
\label{s1.s5.5}%
\end{equation}
It is well known (see \cite{19}, \cite{20}, for example ) that for a
fixed $t>0$ problem \eqref{s1.s5.4}, \eqref{s1.s5.5} has the unique
bounded solution $u(x,t)$ with

\begin{equation}
\left\Vert u(\cdot,t)\right\Vert _{C_{x}^{m-n+(1-\omega)\gamma}(\overline{H}%
)}\leq C\left\Vert v(\cdot,t)\right\Vert _{C_{x^{\prime}}^{m-n+(1-\omega
)\gamma}(R^{N-1})}. \label{s1.s5.6}%
\end{equation}
In Lemma 2.4 from \cite{D1} also proved that

\begin{equation}
\left\langle \frac{\partial u(x,t)}{\partial t}\right\rangle _{t,\overline{Q}%
}^{(\frac{\gamma}{m})}\leq C\left\langle \frac{\partial v(x^{\prime}%
,t)}{\partial t}\right\rangle _{t,R^{N-1}\times R^{1}}^{(\frac{\gamma}{m})}.
\label{s1.s5.7}%
\end{equation}
Therefore it is enough to consider the properties of $u(x,t)$ with
respect to the variables $x$. For this we will use the following
inequality (see \cite{Stein}, Chapter 5.4)

\begin{equation}
|D_{x}^{\alpha}u(x,t)|\leq C_{\alpha}x_{N}^{-|\alpha|+l}||v(\cdot
,t)||_{C_{x}^{l}(R^{N-1})},\quad|\alpha|\geq l, \label{s1.s5.8}%
\end{equation}
We now prove the estimate%

\begin{equation}%
{\displaystyle\sum\limits_{i=1}^{N}} \left\langle
x_{N}^{n}D_{x_{i}}^{m}u(\cdot,t)\right\rangle _{\omega
\gamma,x_{i},\overline{H}}^{(\gamma)}\leq C\left\Vert
v(\cdot,t)\right\Vert
_{C_{x^{\prime}}^{m-n+(1-\omega)\gamma}(R^{N-1})}. \label{s1.s5.9}%
\end{equation}

Since it is important to prove \eqref{s1.s5.9} for $x_{N}<1$ only
(for $x_{N}>1$, such the estimate follows from the local estimates
and is well- known), we consider only the case $x_{N}<1$. We also
use the well-known interpolation inequality
\begin{equation}
\left\langle v(x)\right\rangle _{x,\overline{\Omega}}^{(\gamma)}\leq C\left(
|v|_{\overline{\Omega}}^{(0)}\right)  ^{1-\gamma}\left(  \langle
v\rangle_{\overline{\Omega}}^{(1)}\right)  ^{\gamma}, \label{s1.s5.10}%
\end{equation}
which is valid for functions $v(x)\in C^{1}(\overline{\Omega})$, where
$\Omega$ is a domain (possibly unbounded) with sufficiently smooth boundary
(see, for example, \cite{18.1}, Ch.1 ). It is important that the constant $C$
does not depend on the size of the domain $\Omega$ under scaling. Consider
first the tangent variables $x_{i}$, $i=1,\overline{N-1}$.

Let $x_{N}$ be fixed. Then by \eqref{s1.s5.9} and \eqref{s1.s5.10},

\[
x_{N}^{\omega\gamma}\left\langle x_{N}^{n}D_{x_{i}}^{m}u(\cdot,x_{N}%
)\right\rangle _{x_{i},R^{N-1}}^{(\gamma)}\leq
\]%

\[
\leq Cx_{N}^{\omega\gamma}\left(  |x_{N}^{n}D_{x_{i}}^{m}u(\cdot
,x_{N})|_{,R^{N-1}}^{(0)}\right)  ^{1-\gamma}\left(  |x_{N}^{n}D_{x_{i}}%
^{m}u(\cdot,x_{N})|_{x,R^{N-1}}^{(1)}\right)  ^{\gamma}\leq
\]%

\[
\leq
C||u||_{C_{x}^{m-n+(1-\omega)\gamma}(R_{T}^{N-1})}x_{N}^{\omega\gamma
}\left(  x_{N}^{n}x_{N}^{-m+(m-n+(1-\omega)\gamma)}\right)
^{1-\gamma}\left(
x_{N}^{n}x_{N}^{-m-1+(m-n+(1-\omega)\gamma)}\right)  ^{\gamma}\leq
\]

\begin{equation}
\leq
C||v||_{C_{x}^{m-n+(1-\omega)\gamma}(R_{T}^{N-1})}. \label{s1.s5.11}%
\end{equation}
By the definition, this means that

\[
\left\langle x_{N}^{n}D_{x_{i}}^{m}u(\cdot,t)\right\rangle _{\omega
\gamma,x_{i},\overline{H}}^{(\gamma)}\leq C\left\Vert v(\cdot,t)\right\Vert
_{C_{x^{\prime}}^{m-n+(1-\omega)\gamma}(R^{N-1})},\quad i=\overline{1,N-1}.
\]

We consider now $\left\langle
x_{N}^{n}D_{x_{N}}^{m}u(\cdot,t)\right\rangle
_{\omega\gamma,x_{N},\overline{H}}^{(\gamma)}$ .  Let $x_{N}$ and
$\overline{x}_{N}$ be fixed.  We fix some
$\varepsilon_{0}\in(0,1/16)$ and consider the two cases, assuming
without loss of generality that $\overline {x}_{N}\leq x_{N}$. Let
first

\[
|x_{N}-\ \overline{x}_{N}|=(x_{N}-\ \overline{x}_{N})\geq\varepsilon_{0}%
x_{N}.
\]
Then
\[
x_{N}^{\omega\gamma}\frac{|x_{N}^{n}D_{x_{N}}^{m}u(x,t)-\overline{x}_{N}%
^{n}D_{x_{N}}^{m}u(\overline{x},t)|}{|x_{N}-\
\overline{x}_{N}|^{\gamma}}\leq
\]

\begin{equation}
\leq
C\left(  |x_{N}^{n-(1-\omega)\gamma}D_{x_{N}}^{m}u(x,t)|+|\overline{x}%
_{N}^{n-(1-\omega)\gamma}D_{x_{N}}^{m}u(\overline{x},t)|\right)  .
\label{s1.s5.12}%
\end{equation}
In this case, as above

\begin{equation}
|x_{N}^{n-(1-\omega)\gamma}D_{x_{N}}^{m}u(x,t)|\leq C||v||_{C_{x}%
^{m-n+(1-\omega)\gamma}(R_{T}^{N-1})}x_{N}^{n-(1-\omega)\gamma}x_{N}%
^{-m+(m-n+(1-\omega)\gamma)}= \label{s1.s5.12+1}%
\end{equation}

\[
=C|u|_{C_{x}^{m-n+(1-\omega)\gamma}(R_{T}^{N-1})}\leq C||v||_{C_{x}%
^{m-n+(1-\omega)\gamma}(R_{T}^{N-1})},
\]
and similarly for $|\overline{x}_{N}^{n-(1-\omega)\gamma}D_{x_{N}}%
^{m}u(\overline{x},t)|$.

Let now

\begin{equation}
0<(x_{N}-\overline{x}_{N})\leq\varepsilon_{0}x_{N}, \label{s1.s5.12+2}%
\end{equation}
and let also

\begin{equation} \Pi(x_{N})=\left\{  y\in
R_{+}^{N}:x_{N}-2\varepsilon_{0}x_{N}\leq y_{N}\leq
x_{N}+2\varepsilon_{0}x_{N}\right\}  , \label{s1.s5.15}%
\end{equation}
Then, taking into account that on $\Pi(x_{N})$ we have $y_{N}\sim x_{N}$, as
in the previous case

\[
x_{N}^{\omega\gamma}\frac{|x_{N}^{n}D_{x_{N}}^{m}u(x,t)-\overline{x}_{N}%
^{n}D_{x_{N}}^{m}u(\overline{x},t)|}{|x_{N}-\ \overline{x}_{N}|^{\gamma}}\leq
x_{N}^{\omega\gamma}\left\langle y_{N}^{n}D_{y_{N}}^{m}u(y,t)\right\rangle
_{y,\Pi(x_{N})}^{(\gamma)}\leq
\]%

\begin{equation}
\leq C\left(  x_{N}^{\omega\gamma}|y_{N}^{n-\gamma}D^{2}u|_{\Pi(x_{N})}%
^{(0)}+x_{N}^{\omega\gamma}x_{N}^{n}\left\langle D_{y_{N}}^{m}%
u(y,t)\right\rangle _{y,\Pi(x_{N})}^{(\gamma)}\right)  \equiv A_{1}+A_{2}.
\label{s1.s5.16}%
\end{equation}
Here $A_{1}$ is estimated in the same way as in \eqref{s1.s5.12+1},
and $A_{2}$ - in similar way after the estimate

\[
\left\langle D_{y_{N}}^{m}u(y,t)\right\rangle _{y,\Pi(x_{N})}^{(\gamma)}\leq
C\left(  |D_{y_{N}}^{m}u(y,t)|_{\overline{\Omega}}^{(0)}\right)  ^{1-\gamma
}\left(  |D_{y_{N}}^{m+1}u(y,t)|_{\overline{\Omega}}^{(1)}\right)  ^{\gamma}.
\]
This gives

\[
\left\langle x_{N}^{n}D_{x_{N}}^{m}u(\cdot,t)\right\rangle _{\omega
\gamma,x_{N},\overline{H}}^{(\gamma)}\leq C\left\Vert v(\cdot,t)\right\Vert
_{C_{x^{\prime}}^{m-n+(1-\omega)\gamma}(R^{N-1})}%
\]
Now fix $\eta(x,t)\in C^{\infty}(\overline{Q})$ with compact support and with
$\eta(x^{\prime},0,t)\equiv1$ on $B_{R}$. Then we can define $w(x,t)\equiv
Ev(x,t)\equiv u(x,t)\eta(x,t)$ . This completes the proof of the theorem.

\end{proof}

\section{Some interpolations inequalities for functions from
$C_{n,\omega\gamma}^{m+\gamma,\frac{m+\gamma}{m}}(\overline{Q})$,
$C_{n,\omega\gamma}^{m+\gamma}(\overline{H})$.} \label{ss1.6}

In this section we prove some interpolation inequalities for
functions from the spaces
$C_{n,\omega\gamma}^{m+\gamma,\frac{m+\gamma}{m}}(\overline{Q})$,
$C_{n,\omega\gamma}^{m+\gamma}(\overline{H})$. These inequalities
are consequences of \eqref{s1.7}, \eqref{s1.8} and they are useful
in applications.

\begin{theorem}
\label{Ts5.1}

Let a function $u(x)\in$
$C_{n,\omega\gamma}^{m+\gamma}(\overline{H})$ and
$\alpha=(\alpha_{1},...,\alpha_{N})$, $|\alpha|=m$, be a multiindex,
$k\in\{1,2,...,N\}$. Then for any $\varepsilon>0$ ($\varepsilon$ may
be chosen big or small)

\[
\left\langle x_{N}^{n}D_{x}^{\alpha}u\right\rangle _{\omega\gamma
,x_{k},,\overline{H}}^{(\gamma)}\leq C\varepsilon^{-\alpha_{k}-\gamma}%
{\displaystyle\sum\limits_{i=1,i\neq k}^{N}} \left\langle
x_{N}^{n}D_{x_{i}}^{m}u\right\rangle _{\omega\gamma
,x_{i},\overline{H}}^{(\gamma)}+
\]

\begin{equation}
+C\varepsilon^{m-\alpha_{k}}\left\langle
x_{N}^{n}D_{x_{k}}^{m}u\right\rangle _{\omega\gamma,x_{k},\overline{H}%
}^{(\gamma)}, \ \ \ k<N, \label{s1.6.1}%
\end{equation}

\[
\left\langle x_{N}^{n}D_{x}^{\alpha}u\right\rangle _{\omega\gamma
,x_{N},,\overline{H}}^{(\gamma)}\leq
C\varepsilon^{-\alpha_{k}-(1-\omega
)\gamma}%
{\displaystyle\sum\limits_{i=1}^{N-1}} \left\langle
x_{N}^{n}D_{x_{i}}^{m}u\right\rangle _{\omega\gamma
,x_{i},\overline{H}}^{(\gamma)}+
\]

\begin{equation}
+C\varepsilon^{m-\alpha_{k}}\left\langle
x_{N}^{n}D_{x_{N}}^{m}u\right\rangle _{\omega\gamma,x_{N},\overline{H}%
}^{(\gamma)}, \ \ \ k=N, \label{s1.6.1.1}%
\end{equation}
where the constants $C$ does not depend on $\varepsilon$, $u$.

\end{theorem}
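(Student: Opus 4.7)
The plan is to derive both \eqref{s1.6.1} and \eqref{s1.6.1.1} directly from the stationary estimate \eqref{s1.8} of Theorem \ref{Ts1.1} via a one-parameter rescaling of the single coordinate $x_k$, with the scaling parameter eventually identified with $\varepsilon$. For $\lambda>0$ I would introduce the rescaled function
\[
v(x) = u(x_1, \dots, x_{k-1}, \lambda x_k, x_{k+1}, \dots, x_N),
\]
which preserves $\overline H$ and belongs to $C_{n,\omega\gamma}^{m+\gamma}(\overline H)$ together with $u$. Applying \eqref{s1.8} to $v$, retaining on the left only the term corresponding to $\alpha$, and using the trivial bound $\langle\cdot\rangle_{\omega\gamma,x_k,\overline H}^{(\gamma)}\le\langle\cdot\rangle_{\omega\gamma,\overline H}^{(\gamma)}$ yields
\[
\langle x_N^n D_x^\alpha v\rangle_{\omega\gamma, x_k, \overline H}^{(\gamma)} \le C\sum_{i=1}^{N}\langle x_N^n D_{x_i}^m v\rangle_{\omega\gamma, x_i, \overline H}^{(\gamma)},
\]
and both \eqref{s1.6.1} and \eqref{s1.6.1.1} will fall out of this once the various seminorms of $v$ are expressed in terms of those of $u$ and their $\lambda$-dependence is collected.

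In the tangential case $k<N$ the weight $x_N^n$ is untouched by the rescaling. A change $\tilde h=\lambda h$ in the step of the seminorm \eqref{s1.2.03} together with the chain-rule factor $\lambda^{\alpha_k}$ coming from $D_x^\alpha v=\lambda^{\alpha_k}D_y^\alpha u(\ldots,\lambda x_k,\ldots)$ gives the scaling identities
\[
\langle x_N^n D_x^\alpha v\rangle_{\omega\gamma,x_k,\overline H}^{(\gamma)} = \lambda^{\alpha_k+\gamma}\langle x_N^n D_x^\alpha u\rangle_{\omega\gamma,x_k,\overline H}^{(\gamma)},
\]
\[
\langle x_N^n D_{x_k}^m v\rangle_{\omega\gamma,x_k,\overline H}^{(\gamma)} = \lambda^{m+\gamma}\langle x_N^n D_{x_k}^m u\rangle_{\omega\gamma,x_k,\overline H}^{(\gamma)},\qquad \langle x_N^n D_{x_i}^m v\rangle_{\omega\gamma,x_i,\overline H}^{(\gamma)} = \langle x_N^n D_{x_i}^m u\rangle_{\omega\gamma,x_i,\overline H}^{(\gamma)}\ (i\ne k).
\]
Substituting these identities, dividing by $\lambda^{\alpha_k+\gamma}$, and setting $\lambda=\varepsilon$ produces \eqref{s1.6.1}.

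In the normal case $k=N$ the same rescaling $v(x)=u(x',\lambda x_N)$ is applied, but the weight $x_N^n$ itself transforms: with $y_N=\lambda x_N$ one has $x_N^n=\lambda^{-n}y_N^n$, and moreover the factor $(x_N^*)^{\omega\gamma}$ appearing in the seminorm rescales as $\lambda^{-\omega\gamma}(y_N^*)^{\omega\gamma}$. Combining these two extra powers of $\lambda$ with the chain-rule factor $\lambda^{\alpha_N}$ from $D_x^\alpha v=\lambda^{\alpha_N}D_y^\alpha u$ and the $\lambda^{\gamma}$ from the rescaled difference step in the $x_N$ direction yields the analogous scaling identities for $v$, each pure-derivative seminorm on the right picking up its own power of $\lambda$; substitution into the displayed inequality, division by the resulting prefactor of the left-hand side, and the choice $\lambda=\varepsilon$ produce \eqref{s1.6.1.1}.

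The main obstacle is the bookkeeping in the normal case $k=N$: three distinct powers of $\lambda$ appear simultaneously in each seminorm identity — one from the rescaling of the weight $x_N^n$, one from the factor $(x_N^*)^{\omega\gamma}$, and one from the rescaled step in the $x_N$ direction — and it is the combined effect of these three factors that is responsible for the appearance of $(1-\omega)\gamma$ rather than $\gamma$ in the coefficient of the tangential sum in \eqref{s1.6.1.1} as opposed to \eqref{s1.6.1}; no technical difficulty is expected beyond the careful verification of these identities.
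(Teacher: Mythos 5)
Your proposal is correct and takes essentially the same approach as the paper: both rescale a single coordinate $x_k$ (the paper writes $v_{\varepsilon}(y)=u(y_1,\dots,\varepsilon y_k,\dots,y_N)$ and then changes variables back, while you write the rescaling and the scaling identities for the seminorms explicitly) and then apply \eqref{s1.8}. The bookkeeping you describe for the normal case $k=N$, including the extra $\lambda^{-n}$ from the weight and $\lambda^{-\omega\gamma}$ from $(x_N^*)^{\omega\gamma}$ that together with the step factor $\lambda^{\gamma}$ produce the exponent $(1-\omega)\gamma$, is exactly what the paper's change of variables $y_N=\varepsilon^{-1}x_N$ encodes.
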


\begin{proof}

Let $\varepsilon>0$ be fixed. Consider the function $v_{\varepsilon
}(y)=u(y_{1},y_{2},...,\varepsilon y_{k},...y_{N-1},y_{N})\in
C_{n,\omega \gamma}^{m+\gamma}(\overline{H})$.  Then from
\eqref{s1.8} we have

\begin{equation}
\left\langle y_{N}^{n}D_{y}^{\alpha}v_{\varepsilon}\right\rangle
_{\omega\gamma,y_{k},,\overline{H}}^{(\gamma)}\leq C%
{\displaystyle\sum\limits_{i=1}^{N}} \left\langle
y_{N}^{n}D_{y_{i}}^{m}v_{\varepsilon}\right\rangle _{\omega
\gamma,x_{i},\overline{H}}^{(\gamma)}, \label{s1.6.2}%
\end{equation}
where the constant $C$ does not depend on $\varepsilon$.  Now make
in \eqref{s1.6.2} the change of the variables

\[
y=e(x):\quad y_{i}=x_{i},i\neq k,\quad y_{k}=\varepsilon^{-1}x_{k}%
\]

and take into account that $v_{\varepsilon}(y)\circ e(x)=u(x)$. This
gives \eqref{s1.6.1}, \eqref{s1.6.1.1} and completes the proof.

\end{proof}

\begin{theorem}
\label{Ts6.2}

Let a function $u(x)\in$ $C_{n,\omega\gamma}^{m+\gamma,\frac{m+\gamma}{m}%
}(\overline{Q})$ and $\alpha=(\alpha_{1},...,\alpha_{N})$, $|\alpha|=m$, be a
multiindex, $k\in\{1,2,...,N\}$. Then for any $\varepsilon>0$%

\[
\left\langle x_{N}^{n}D_{x}^{\alpha}u\right\rangle _{\omega\gamma
,x_{k,}\overline{Q}}^{(\gamma)}\leq C\varepsilon^{-\alpha_{k}-\gamma}%
{\displaystyle\sum\limits_{i=1,i\neq k}^{N}} \left\langle
x_{N}^{n}D_{x_{i}}^{m}u\right\rangle _{\omega\gamma
,x_{i},\overline{Q}}^{(\gamma)}+
\]

\begin{equation}
+C\varepsilon^{m-\alpha_{k}}\left\langle
x_{N}^{n}D_{x_{k}}^{m}u\right\rangle _{\omega\gamma,x_{k},\overline{Q}%
}^{(\gamma)}, \ \ \ k<N, \label{s1.6.3}%
\end{equation}

\[
\left\langle x_{N}^{n}D_{x}^{\alpha}u\right\rangle _{\omega\gamma
,x_{N},,\overline{H}}^{(\gamma)}\leq
C\varepsilon^{-\alpha_{k}-(1-\omega
)\gamma}%
{\displaystyle\sum\limits_{i=1}^{N-1}} \left\langle
x_{N}^{n}D_{x_{i}}^{m}u\right\rangle _{\omega\gamma
,x_{i},\overline{H}}^{(\gamma)}+
\]

\begin{equation}
+C\varepsilon^{m-\alpha_{k}}\left\langle
x_{N}^{n}D_{x_{N}}^{m}u\right\rangle _{\omega\gamma,x_{N},\overline{H}%
}^{(\gamma)}, \ \ \ k=N, \label{s1.6.4}%
\end{equation}

\begin{equation}
\left\langle x_{N}^{n}D_{x}^{\alpha}u\right\rangle _{t,\overline{Q}}%
^{(\gamma/m)}\leq\varepsilon^{-\gamma/m}C%
{\displaystyle\sum\limits_{i=1}^{N}} \left\langle
x_{N}^{n}D_{x_{i}}^{m}u\right\rangle _{\omega\gamma
,x_{i},\overline{Q}}^{(\gamma)}+C\varepsilon\left\langle
D_{t}u\right\rangle
_{t,\overline{Q}}^{(\gamma/m)}, \label{s1.6.5}%
\end{equation}

\[
\left\langle D_{t}u\right\rangle
_{\omega\gamma,x_{k},\overline{Q}}^{(\gamma
)}\leq C\varepsilon^{-\gamma}%
{\displaystyle\sum\limits_{i=1,i\neq k}^{N}} \left\langle
x_{N}^{n}D_{x_{i}}^{m}u\right\rangle _{\omega\gamma
,x_{i},\overline{Q}}^{(\gamma)}+\varepsilon^{-(1-\omega)\gamma}C\left\langle
D_{t}u\right\rangle _{t,\overline{Q}}^{(\gamma/m)}+
\]

\begin{equation}
+\varepsilon^{m}%
C\left\langle x_{N}^{n}D_{x_{k}}^{m}u\right\rangle _{\omega\gamma
,x_{k},\overline{Q}}^{(\gamma)}, \ \ \ k<N, \label{s1.6.6}%
\end{equation}

\[
\left\langle D_{t}u\right\rangle
_{\omega\gamma,x_{N},\overline{Q}}^{(\gamma
)}\leq C\varepsilon^{-n-(1-\omega)\gamma}%
{\displaystyle\sum\limits_{i=1}^{N-1}} \left\langle
x_{N}^{n}D_{x_{i}}^{m}u\right\rangle _{\omega\gamma
,x_{i},\overline{Q}}^{(\gamma)}+
\]

\begin{equation}
+C\varepsilon^{-(1-\omega)\gamma}\left\langle
D_{t}u\right\rangle _{t,\overline{Q}}^{(\gamma/m)}+C\varepsilon^{m-n}%
\left\langle x_{N}^{n}D_{x_{N}}^{m}u\right\rangle _{\omega\gamma
,x_{N},\overline{Q}}^{(\gamma)}, \label{s1.6.7}%
\end{equation}
where the constants $C$ does not depend on $\varepsilon$, $u$.

\end{theorem}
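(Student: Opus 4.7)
The plan is to derive each of \eqref{s1.6.3}--\eqref{s1.6.7} by a scaling argument of the same type already used in the proof of Theorem \ref{Ts5.1}: for each inequality, introduce a suitably rescaled function $v_\varepsilon$ of $u$, apply Theorem \ref{Ts1.1} (or, where it suffices, Theorem \ref{Ts5.1}) to $v_\varepsilon$, and translate back to the original variables. The various $\varepsilon$-powers in the statement are then read off from the way each seminorm transforms under the chosen scaling.

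For \eqref{s1.6.3} and \eqref{s1.6.4}, I would observe first that the seminorm $\left\langle x_N^n D_x^\alpha u\right\rangle_{\omega\gamma,x_k,\overline{Q}}^{(\gamma)}$ is, by definition, the supremum over pairs of points sharing the same $t$-coordinate, so it equals $\sup_t \left\langle x_N^n D_x^\alpha u(\cdot,t)\right\rangle_{\omega\gamma,x_k,\overline{H}}^{(\gamma)}$. At each fixed $t$, the time slice $u(\cdot,t)$ lies in $C_{n,\omega\gamma}^{m+\gamma}(\overline{H})$, so Theorem \ref{Ts5.1} is directly applicable. Taking the supremum over $t$ on both sides produces \eqref{s1.6.3} and \eqref{s1.6.4} with no further work.

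For \eqref{s1.6.5}, I would apply Theorem \ref{Ts1.1} to the time-dilation $v_\varepsilon(x,\tau)=u(x,\varepsilon^m\tau)$. Under this change $D_\tau v_\varepsilon = \varepsilon^m D_t u$ while the spatial derivatives and the weight $x_N$ are unchanged, so $\left\langle D_\tau v_\varepsilon\right\rangle_{\tau}^{(\gamma/m)}=\varepsilon^{m+\gamma}\left\langle D_t u\right\rangle_{t}^{(\gamma/m)}$, $\left\langle x_N^n D_{x_i}^m v_\varepsilon\right\rangle_{\omega\gamma,x_i}^{(\gamma)}=\left\langle x_N^n D_{x_i}^m u\right\rangle_{\omega\gamma,x_i}^{(\gamma)}$, and $\left\langle x_N^n D_x^\alpha v_\varepsilon\right\rangle_{\tau}^{(\gamma/m)}=\varepsilon^\gamma\left\langle x_N^n D_x^\alpha u\right\rangle_{t}^{(\gamma/m)}$. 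Since the last quantity is part of the combined weighted seminorm $\left\langle \cdot\right\rangle_{\omega\gamma,\overline{Q}}^{(\gamma,\gamma/m)}$ on the left-hand side of \eqref{s1.7}, dividing by $\varepsilon^\gamma$ and relabeling $\varepsilon\mapsto\varepsilon^{1/m}$ yields exactly \eqref{s1.6.5}. For \eqref{s1.6.6} and \eqref{s1.6.7}, I would instead apply Theorem \ref{Ts1.1} to the spatial dilation $v_\varepsilon(y,\tau)=u(y_1,\dots,\varepsilon y_k,\dots,y_N,\tau)$; the key transformation rule is $\left\langle D_\tau v_\varepsilon\right\rangle_{\omega\gamma,y_k}^{(\gamma)}=\varepsilon^\gamma\left\langle D_t u\right\rangle_{\omega\gamma,x_k}^{(\gamma)}$, and dividing out this factor while tracking the powers picked up by $\left\langle y_N^n D_{y_i}^m v_\varepsilon\right\rangle_{\omega\gamma,y_i}^{(\gamma)}$ delivers the claimed estimates.

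The main obstacle is bookkeeping, especially for the case $k=N$ in \eqref{s1.6.4} and \eqref{s1.6.7}: when the normal variable is dilated, both the weight $x_N^{\omega\gamma}$ entering the seminorm and the degenerate factor $x_N^n$ inside the seminorm are rescaled simultaneously, and these extra powers of $\varepsilon$ combine non-trivially with the $\gamma$-power of the step size. It is exactly this interaction that produces the exponents $(1-\omega)\gamma$ and $m-n$ in the inequalities for the normal direction, as opposed to the exponents $\gamma$ and $m$ in the tangent direction; once one accounts for the factor $a^{-\omega\gamma}$ coming from $y_N^{\omega\gamma}=a^{-\omega\gamma}x_N^{\omega\gamma}$ and the factor $a^{-n}$ from $y_N^n=a^{-n}x_N^n$ under $x_N=ay_N$, the dimensional balance dictated by the parabolic scaling $x\sim\varepsilon$, $t\sim\varepsilon^m$ fixes all the remaining exponents in \eqref{s1.6.6}--\eqref{s1.6.7}.
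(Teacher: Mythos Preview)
Your proposal is correct and follows essentially the same approach as the paper: the paper's entire proof of Theorem \ref{Ts6.2} is the single sentence ``The proof of this theorem is identical to that of the previous theorem,'' i.e.\ the scaling argument of Theorem \ref{Ts5.1} applied with Theorem \ref{Ts1.1} (estimate \eqref{s1.7}) in place of \eqref{s1.8}. Your shortcut of deducing \eqref{s1.6.3}--\eqref{s1.6.4} by freezing $t$ and quoting Theorem \ref{Ts5.1} directly, and your use of a pure time-dilation for \eqref{s1.6.5} and a pure spatial dilation for \eqref{s1.6.6}--\eqref{s1.6.7}, are exactly the kind of rescalings the paper has in mind; the remaining work is, as you say, only bookkeeping of the $\varepsilon$-powers.
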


The proof of this theorem is identical to that of the previous theorem.

\begin{theorem}
\label{Ts6.3}

Let a function $u(x,t)\in$ $C_{n,\omega\gamma}^{m+\gamma,\frac{m+\gamma}{m}%
}(\overline{Q})$ has compact support. Let the support of $u(x,t)$ is
included in $Q_{R}=\{|x|\leq R,|t|\leq R\}$. Then for an integer
$0\leq j<n$ and for an arbitrary $h>0$%

\begin{equation}%
{\displaystyle\sum\limits_{|\alpha|=m-j}}
|x_{N}^{n-j}D_{x}^{\alpha}u(x,t)|_{\overline{Q}}^{(0)}\leq C\left(
h^{(1-\omega)\gamma}%
{\displaystyle\sum\limits_{|\alpha|=m-j}} \left\langle
x_{N}^{n-j}D_{x}^{\alpha}u(x,t)\right\rangle _{\omega
\gamma,x,\overline{Q}}^{(\gamma)}\right.  + \label{s1.6.7.1}%
\end{equation}

\[
+\left.  \frac{(1+R)}{h}%
{\displaystyle\sum\limits_{|\alpha|=m-j-1}}
|x_{N}^{n-j-1}D_{x}^{\alpha}u(x,t)|_{\overline{Q}}^{(0)}\right)
,n-j\geq1,
\]

\begin{equation}%
{\displaystyle\sum\limits_{|\alpha|=m-j}}
|x_{N}^{n-j}D_{x}^{\alpha}u(x,t)|_{\overline{Q}}^{(0)}\leq C\left(
h^{(1-\omega)\gamma}%
{\displaystyle\sum\limits_{|\alpha|=m-j}} \left\langle
x_{N}^{n}D_{x}^{\alpha}u(x,t)\right\rangle _{\omega
\gamma,x,\overline{Q}}^{(\gamma)}\right.  + \label{s1.6.7.1}%
\end{equation}

\[
+\left.  \frac{(1+R^{n-j})}{h}%
{\displaystyle\sum\limits_{|\alpha|=m-j-1}}
|D_{x}^{\alpha}u(x,t)|_{\overline{Q}}^{(0)}\right)  ,n-j<1.
\]

\end{theorem}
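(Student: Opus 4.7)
Both inequalities are ``sup-norm vs.\ weighted H\"older seminorm'' interpolation inequalities, to be derived from the elementary identity
\[
f(y) = \frac{F(y+he_i)-F(y)}{h} - \frac{1}{h}\int_0^h \bigl[f(y+se_i)-f(y)\bigr]\,ds,
\]
valid whenever $f=D_{x_i}F$. For the integral remainder I would apply Proposition \ref{Ps1.01} (the embedding $\langle\cdot\rangle^{((1-\omega)\gamma)}\le C\langle\cdot\rangle_{\omega\gamma}^{(\gamma)}$), converting the weighted H\"older seminorm of $f$ into an unweighted one of exponent $(1-\omega)\gamma$; integrating over $s\in(0,h)$ then produces precisely the factor $h^{(1-\omega)\gamma}$ in front of $\langle f\rangle_{\omega\gamma,x_i}^{(\gamma)}$. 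The boundary terms $F(y\pm he_i)$ and $F(y)$ are controlled by $|F|_{\overline{Q}}^{(0)}$, which compact support of $u$ will express through the lower-order quantities on the right-hand side.

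For each multiindex $\alpha$ with $|\alpha|=m-j$ I split into two subcases. If $\alpha_i\ge 1$ for some $i<N$, take $f=x_N^{n-j}D_x^{\alpha}u$ and $F=x_N^{n-j}D_x^{\alpha-e_i}u$ and apply the identity in direction $x_i$; in the regime $n-j\ge 1$ one factors $x_N^{n-j}=x_N\cdot x_N^{n-j-1}$ to obtain $|F|^{(0)}\le R\,|x_N^{n-j-1}D_x^{\alpha-e_i}u|^{(0)}$ on $Q_R$, while in the regime $n-j<1$ one uses $x_N^{n-j}\le R^{n-j}$ to get $|F|^{(0)}\le R^{n-j}|D_x^{\alpha-e_i}u|^{(0)}$. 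In the remaining subcase $\alpha=(0,\ldots,0,m-j)$, the product rule
\[
x_N^{n-j}D_{x_N}^{m-j}u = D_{x_N}\bigl(x_N^{n-j}D_{x_N}^{m-j-1}u\bigr) - (n-j)\,x_N^{n-j-1}D_{x_N}^{m-j-1}u
\]
reduces matters to the same identity in the $x_N$-direction, with the correction term absorbed into the lower-order part. Lemma \ref{Ls1.s4.1} ensures that the weighted H\"older seminorm of $x_N^{n-j}D_{x_N}^{m-j}u$ is dominated by that of $x_N^{n}D_{x_N}^{m}u$, so the right-hand side retains the stated form.

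The main obstacle is the case $\alpha$ purely normal together with $n-j<1$: the correction $x_N^{n-j-1}D_{x_N}^{m-j-1}u$ then carries the singular weight $x_N^{n-j-1}$ at $x_N\to 0$ and the product-rule reduction breaks down. I would bypass the interpolation in this subcase entirely and invoke Proposition \ref{Ps1.s4.4}: with $\{n\}=n-j$ and $[m-n]+1=m-j$, it gives $|x_N^{n-j}D_{x_N}^{m-j}u|^{(0)}\le C(R)\,\langle x_N^{n}D_{x_N}^{m}u\rangle_{\omega\gamma}^{(\gamma)}$, which is stronger than the claimed interpolation bound and absorbs into the right-hand side of the second inequality for any $h>0$. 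Summing over $\alpha$ and grouping constants in the form $C+CR\le C(1+R)/h\cdot h$ (respectively $C+CR^{n-j}\le C(1+R^{n-j})/h\cdot h$) produces the two stated inequalities.
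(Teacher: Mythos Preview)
Your tangential case (difference in $x_i$, $i<N$, with the weight $x_N^{n-j}$ constant along that direction) is exactly what the paper does. The normal case, however, is handled differently in the paper, and your version has two gaps.

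\medskip

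\textbf{Gap in the case $n-j\ge1$.} After the product rule you apply the identity to $D_{x_N}F$ with $F=x_N^{n-j}D_{x_N}^{m-j-1}u$. The remainder integral is then controlled by the $(1-\omega)\gamma$-H\"older seminorm of $D_{x_N}F$, not of $f=x_N^{n-j}D_{x_N}^{m-j}u$. Since $D_{x_N}F=f+(n-j)\,x_N^{n-j-1}D_{x_N}^{m-j-1}u$, you pick up the extra seminorm $\langle x_N^{n-j-1}D_{x_N}^{m-j-1}u\rangle_{\omega\gamma,x_N}^{(\gamma)}$, which is \emph{not} among the quantities on the stated right-hand side. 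Your appeal to Lemma~\ref{Ls1.s4.1} is aimed at the wrong term (the seminorm of $x_N^{n-j}D_{x_N}^{m-j}u$ is already on the right-hand side and needs no reduction). One more application of the induction step of Lemma~\ref{Ls1.s4.1} would salvage this when $n-j>1$, but you do not say so, and the boundary case $n-j=1$ needs Lemma~\ref{Ls1.s4.2} instead.

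\medskip

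\textbf{Gap in the case $n-j<1$.} Proposition~\ref{Ps1.s4.4} gives only the $h$-free bound
\[
|x_N^{n-j}D_{x_N}^{m-j}u|_{\overline Q}^{(0)}\le C(R)\,\langle x_N^{n-j}D_{x_N}^{m-j}u\rangle_{\omega\gamma,\overline Q}^{(\gamma)},
\]
and this does \emph{not} imply the claimed inequality for all $h>0$: when $h$ is small the seminorm term on the right of the theorem carries the factor $h^{(1-\omega)\gamma}\ll1$, while the sup-norm term involves a different quantity $|D_x^{\alpha}u|^{(0)}$, so there is no way to absorb $C(R)\langle\cdot\rangle$ into that right-hand side. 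The bypass therefore fails.

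\medskip

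\textbf{What the paper does instead.} In the normal direction the paper does not use the product rule at all. It applies the identity to the \emph{unweighted} derivative, writing
\[
x_N^{n-j}D_{x_N}^{m-j}u=x_N^{n-j}\frac{\Delta_{h,x_N}D_{x_N}^{m-j-1}u}{h}
-\,x_N^{n-j}\!\int_0^1\!\bigl[D_{x_N}^{m-j}u(x_N+\theta h)-D_{x_N}^{m-j}u(x_N)\bigr]\,d\theta,
\]
and in the integral remainder it adds and subtracts the shifted weight $(x_N+\theta h)^{n-j}$. This splits the remainder into
\[
A_{11}=-\!\int_0^1\!\Delta_{\theta h,x_N}\bigl[x_N^{n-j}D_{x_N}^{m-j}u\bigr]\,d\theta,\qquad
A_{12}=\!\int_0^1\!\bigl[(x_N+\theta h)^{n-j}-x_N^{n-j}\bigr]D_{x_N}^{m-j}u(x_N+\theta h)\,d\theta.
\]
Now $A_{11}$ is bounded directly by $h^{(1-\omega)\gamma}\langle x_N^{n-j}D_{x_N}^{m-j}u\rangle_{\omega\gamma,x_N}^{(\gamma)}$ via Proposition~\ref{Ps1.01}, and $A_{12}$ is integrated by parts in $\theta$ to replace $D_{x_N}^{m-j}u$ by $D_{x_N}^{m-j-1}u$, producing only sup norms of the lower-order derivative. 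This works uniformly: for $n-j\ge1$ the integrated term is bounded via $|x_N^{n-j-1}D_{x_N}^{m-j-1}u|^{(0)}$, while for $n-j<1$ the integral $\int_0^h(x_N+y)^{n-j-1}\,dy$ is still convergent and $\le CR^{n-j}$, which gives the $(1+R^{n-j})/h$ factor. No lower-level H\"older seminorm ever appears.
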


\begin{proof}

Let $n-1>0$. Consider first the estimate of a derivative
$D_{x_{N}}D_{x}^{\alpha}u$ , $|\alpha|=m-1$\  with respect to
$x_{N}$. Let $h>0$ and $\varepsilon\in(0,1)$\  be fixed. We have

\[
x_{N}^{n}D_{x_{N}}D_{x}^{\alpha}u(x,t)=\left(  x_{N}^{n}D_{x_{N}}D_{x}%
^{\alpha}u(x,t)-x_{N}^{n}\frac{\Delta_{h,x_{N}}D_{x}^{\alpha}u(x,t)}%
{h}\right)+
\]

\begin{equation}
  +x_{N}^{n}\frac{\Delta_{h,x_{N}}D_{x}^{\alpha}u(x,t)}{h}=
\label{s1.6.8}%
\end{equation}

\[
=-x_{N}^{n}%
{\displaystyle\int\limits_{0}^{1}}
\left[  D_{x_{N}}D_{x}^{\alpha}u(x^{\prime},x_{N}+\theta h,t)-D_{x_{N}}%
D_{x}^{\alpha}u(x,t)\right]  d\theta+x_{N}^{n}\frac{\Delta_{h,x_{N}}%
D_{x}^{\alpha}u(x,t)}{h}=A_{1}+A_{2}%
\]
and evidently

\begin{equation}
|A_{2}|\leq CR\frac{|x_{N}^{n-1}D_{x}^{\alpha}u(x,t)|_{\overline{Q}}^{(0)}}%
{h}. \label{s1.6.9}%
\end{equation}
The expression $A_{1}$ we represent as%

\[
A_{1}=-%
{\displaystyle\int\limits_{0}^{1}} \Delta_{\theta h,x_{N}}\left[
x_{N}^{n}D_{x_{N}}D_{x}^{\alpha}u(x,t)\right] d\theta+
\]

\[
+%
{\displaystyle\int\limits_{0}^{1}} \left[  (x_{N}+\theta
h)^{n}-x_{N}^{n}\right]  D_{x_{N}}D_{x}^{\alpha
}u(x^{\prime},x_{N}+\theta h)d\theta=A_{11}+A_{12}.
\]

The estimate of $A_{11}$ is%

\[
|A_{11}|\leq\left( {\displaystyle\int\limits_{0}^{1}}
\frac{|\Delta_{\theta h,x_{N}}\left[
x_{N}^{n}D_{x_{N}}D_{x}^{\alpha
}u(x,t)\right]  |}{\left(  \theta h\right)  ^{(1-\omega)\gamma}}%
\theta^{(1-\omega)\gamma}d\theta\right)  h^{(1-\omega)\gamma}\leq
\]

\begin{equation}
\leq C\left\langle x_{N}^{n}D_{x_{N}}D_{x}^{\alpha}u(x,t)\right\rangle
_{x_{N},\overline{Q}}^{((1-\omega)\gamma)}h^{(1-\omega)\gamma}\leq
C\left\langle x_{N}^{n}D_{x_{N}}D_{x}^{\alpha}u(x,t)\right\rangle
_{\omega\gamma,x_{N},\overline{Q}}^{(\gamma)}h^{(1-\omega)\gamma}.
\label{s1.6.11}%
\end{equation}
To estimate $A_{12}$ we apply the integration by parts. This gives
($y=\theta h$)

\[
A_{12}=\frac{1}{h}\left[  (x_{N}+h)^{n}-x_{N}^{n}\right]  D_{x}^{\alpha
}u(x^{\prime},x_{N}+h)-
\]

\[
-\frac{n}{h}%
{\displaystyle\int\limits_{0}^{h}}
(x_{N}+y)^{n-1}D_{x}^{\alpha}u(x^{\prime},x_{N}+y)dy
\]
and we obtain

\begin{equation}
|A_{12}|\leq C(1+R)\frac{|x_{N}^{n-1}D_{x}^{\alpha}u(x,t)|_{\overline{Q}%
}^{(0)}}{h}. \label{s1.6.12}%
\end{equation}
Note that in the case $n<1$ the derivative $D_{x}^{\alpha}u(x,t)$
and we have

\[
\left\vert {\displaystyle\int\limits_{0}^{h}}
(x_{N}+y)^{n-1}D_{x}^{\alpha}u(x^{\prime},x_{N}+y)dy\right\vert \leq
|D_{x}^{\alpha}u(x,t)|_{\overline{Q}}^{(0)}%
{\displaystyle\int\limits_{0}^{h}}
(x_{N}+y)^{n-1}dy\leq CR^{n}|D_{x}^{\alpha}u(x,t)|_{\overline{Q}}^{(0)}%
\]
From \eqref{s1.6.8}- \eqref{s1.6.12} it follows that

\[
|x_{N}^{n}D_{x_{N}}D_{x}^{\alpha}u(x,t)|_{\overline{Q}}^{(0)}\leq
Ch^{(1-\omega)\gamma}\left\langle x_{N}^{n}D_{x_{N}}D_{x}^{\alpha
}u(x,t)\right\rangle _{\omega\gamma,x_{N},\overline{Q}}^{(\gamma)}+
\]

\[
+\left\{
\begin{array}
[c]{c}%
(1+R)\frac{|x_{N}^{n-1}D_{x}^{\alpha}u(x,t)|_{\overline{Q}}^{(0)}}{h}%
,n\geq1,\\
(1+R^{n})\frac{|D_{x}^{\alpha}u(x,t)|_{\overline{Q}}^{(0)}}{h},n<1.
\end{array}
\right.
\]
The estimates of other derivatives
$x_{N}^{n}D_{x^{\prime}}D_{x}^{\alpha }u(x,t)$ are completely
similar and give for an arbitrary $h>0$

\[%
{\displaystyle\sum\limits_{|\alpha|=m}}
|x_{N}^{n}D_{x_{N}}D_{x}^{\alpha}u(x,t)|_{\overline{Q}}^{(0)}\leq
C\left(
h^{(1-\omega)\gamma}%
{\displaystyle\sum\limits_{|\alpha|=m}} \left\langle
x_{N}^{n}D_{x}^{\alpha}u(x,t)\right\rangle _{\omega
\gamma,x,\overline{Q}}^{(\gamma)}+\right.
\]

\[
\left.+\frac{(1+R)}{h}%
{\displaystyle\sum\limits_{|\alpha|=m-1}}
|x_{N}^{n-1}D_{x}^{\alpha}u(x,t)|_{\overline{Q}}^{(0)}\right) , \ \
\ n\geq1.
\]

\[%
{\displaystyle\sum\limits_{|\alpha|=m}}
|x_{N}^{n}D_{x_{N}}D_{x}^{\alpha}u(x,t)|_{\overline{Q}}^{(0)}\leq
C\left(
h^{(1-\omega)\gamma}%
{\displaystyle\sum\limits_{|\alpha|=m}} \left\langle
x_{N}^{n}D_{x}^{\alpha}u(x,t)\right\rangle _{\omega
\gamma,x,\overline{Q}}^{(\gamma)}+\right.
\]

\[
\left.+\frac{(1+R^{n})}{h}%
{\displaystyle\sum\limits_{|\alpha|=m-1}}
|D_{x}^{\alpha}u(x,t)|_{\overline{Q}}^{(0)}\right)  , \ \ \ n<1.
\]

The estimates of derivatives $D_{x}^{\alpha}u(x,t)$ of order
$|\alpha|=m-j$, $j<n$, are obtained in the same way.

\end{proof}

\begin{corollary}
The norms \eqref{s1.6} and \eqref{s1.6.2.n} and also the norms \eqref{s1.6.1a} and \eqref{s1.6.3.0001}   are equivalent.
\end{corollary}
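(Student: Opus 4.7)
The plan is to combine Theorem \ref{Ts1.1} (which handles every Hölder seminorm appearing in the full norms) with Theorem \ref{Ts6.3} (which converts seminorm control into sup-norm control via an arbitrary parameter $h$), plus elementary interpolation between sup-norms of unweighted derivatives on a compact support, to close the two equivalences.

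The easy direction is immediate: every term of \eqref{s1.6.2.n} and of \eqref{s1.6.3.0001} is either $|u|^{(0)}_{\overline{Q}}$, which occurs verbatim in \eqref{s1.6} and \eqref{s1.6.1a}, or a seminorm majorized by the corresponding full norm appearing in \eqref{s1.6} or \eqref{s1.6.1a}. Thus norm \eqref{s1.6.2.n} $\leq$ norm \eqref{s1.6} and norm \eqref{s1.6.3.0001} $\leq$ norm \eqref{s1.6.1a} trivially.

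For the reverse direction I would first invoke Theorem \ref{Ts1.1}: inequality \eqref{s1.7} controls every weighted seminorm $\langle x_{N}^{n-j}D_{x}^{\alpha}u\rangle^{(\gamma,\gamma/m)}_{\omega\gamma,\overline{Q}}$ and the seminorm $\langle D_{t}u\rangle^{(\gamma,\gamma/m)}_{\omega\gamma,\overline{Q}}$ required in \eqref{s1.6}, while the seminorm parts of the unweighted lower-order terms $D_{x}^{\alpha}u$ with $0<|\alpha|<m-n$ are obtained from the left-hand side terms of \eqref{s1.7} with exponents $\{m-n+\gamma\}$, $\{m-n+(1-\omega)\gamma\}$ and $1-j/(m-n)+\gamma/m$; these give Hölder regularity of strictly higher order in the corresponding variables and hence control the desired $C^{\gamma}$-seminorm on a bounded support. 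Next I would bound the sup-norm parts by iterating Theorem \ref{Ts6.3} downward in $|\alpha|$. Starting from the top order $|\alpha|=m$ with weight $x_{N}^{n}$, each application of \eqref{s1.6.7.1} estimates $|x_{N}^{n-j}D_{x}^{\alpha}u|^{(0)}_{\overline{Q}}$ by the same-order seminorm (already controlled) plus the sup-norm of a derivative one order lower, with coupling tunable by $h$. After at most $[n]+1$ such steps the second case of Theorem \ref{Ts6.3} ejects the iteration from the weighted into the unweighted regime, producing sup-norms of $D_{x}^{\gamma}u$ with $|\gamma|$ on the order of $m-[n]-1$. A further chain of the elementary interpolation $|D_{x}^{\gamma}u|^{(0)}\leq C\bigl(|D_{x}^{\gamma'}u|^{(0)}\bigr)^{a}\bigl(|u|^{(0)}\bigr)^{1-a}$ on the compact support reduces all remaining lower-order sup-norms to $|u|^{(0)}_{\overline{Q}}$. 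Combining these bounds yields (norm \eqref{s1.6}) $\leq C\cdot$(norm \eqref{s1.6.2.n}); the same argument, kept verbatim but with the additional seminorm $\langle D_{x_{N}}^{m-n}u\rangle^{(\gamma)}_{\omega\gamma,x_{N},\overline{Q}}$ retained as an extra controlled quantity on the right, gives the equivalence of \eqref{s1.6.1a} and \eqref{s1.6.3.0001}.

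The main bookkeeping hurdle will be tracking the dichotomy in Theorem \ref{Ts6.3} between $n-j\geq 1$ and $n-j<1$ together with the integer-versus-noninteger split of $n$ at $j=[n]$, so that the downward induction terminates at $|u|^{(0)}_{\overline{Q}}$ without invoking any term outside \eqref{s1.6.2.n} (respectively \eqref{s1.6.3.0001}). Since the number of inductive steps is bounded by $m$ and every constant in Theorem \ref{Ts6.3} is uniform once $R$ and $h$ are fixed, choosing, say, $h=1$ throughout reduces the matter to a finite and routine case analysis.
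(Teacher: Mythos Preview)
Your approach is essentially the one the paper takes: the paper's one-line proof cites exactly Theorem~\ref{Ts6.3} (``the previous theorem''), section~\ref{ss1.4}, and Theorem~\ref{Ts1.1}, and you have correctly identified the first and last of these as the main tools and described how to chain Theorem~\ref{Ts6.3} downward to reach $|u|^{(0)}_{\overline{Q}}$.

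The one place where your write-up is loose is the role played by section~\ref{ss1.4}. You invoke Theorem~\ref{Ts1.1} to control all the mixed weighted seminorms $\langle x_{N}^{n-j}D_{x}^{\alpha}u\rangle^{(\gamma,\gamma/m)}_{\omega\gamma,\overline{Q}}$, but recall that Theorem~\ref{Ts1.1} explicitly bounds only those left-hand-side terms that are \emph{already known to be finite}; the counterexample $u(x)=x_{1}^{2}x_{2}^{2-n}$ immediately after the theorem shows this restriction is genuine for mixed derivatives. What guarantees finiteness of all the mixed terms, for compactly supported $u$ with finite norm \eqref{s1.6.2.n}, is precisely the content of Propositions~\ref{Ps1.s4.1}--\ref{Ps1.s4.4} and Lemma~\ref{Ls1.s4.1} in section~\ref{ss1.4}. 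These also supply the unweighted lower-order seminorm and sup-norm bounds that you attribute to ``elementary interpolation on a compact support,'' and they handle the integer/noninteger-$n$ and $\alpha_{N}=m-n$ bookkeeping you flagged. So rather than rederive those estimates, you should simply cite section~\ref{ss1.4} alongside Theorems~\ref{Ts1.1} and~\ref{Ts6.3}; then your argument matches the paper's.
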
\label{Cn}

This assertion follows from the previous theorem, from section \ref{ss1.4} and from Theorem \ref{Ts1.1}.

\section{The spaces $C_{n,\omega\gamma}^{m+\gamma,\frac{m+\gamma}{m}%
}(\overline{\Omega}_{T})$,
$C_{n,\omega\gamma}^{m+\gamma}(\overline{\Omega})$ in the case of an
arbitrary smooth domain.} \label{ss1.8}

Let $\Omega$ be a domain in $R^{N}$  (bounded or unbounded) with
boundary $\partial\Omega$ of the class $C^{m+\gamma}$. Let $d(x)$ be
a function of the class $C^{1+\gamma}(\overline{\Omega})$ with the
property

\begin{equation}
\nu\cdot dist(x,\partial\Omega)\leq d(x)\leq\nu^{-1}\cdot dist(x,\partial
\Omega),\quad,dist(x,\partial\Omega)\leq1,\quad\nu>0. \label{s1.8.1}%
\end{equation}
As such a function can serve, for example, the bounded solution of
the problem%

\[
\Delta d(x)=-1,x\in\Omega,\quad d(x)|_{\partial\Omega}=0.
\]
For $x,\overline{x}\in\Omega$ we denote $d(x,\overline{x})=\max
\{d(x),d(\overline{x})\}$ and for a function $v(x)$ denote%

\[
\left\langle v\right\rangle _{\omega\gamma,\overline{\Omega}}^{(\gamma)}%
=\sup_{x,\overline{x}\in\overline{\Omega}}d(x,\overline{x})^{\omega\gamma
}\frac{|u(\overline{x})-u(x)|}{|\overline{x}-x|^{\gamma}}.
\]
Define the space $C_{\omega\gamma}^{\gamma}(\overline{\Omega})$ as
the space of functions $u(x)$ with the finite norm%

\begin{equation}
\left\Vert u\right\Vert _{C_{\omega\gamma}^{\gamma}(\overline{\Omega})}%
\equiv|u|_{\overline{\Omega}}^{(0)}+\left\langle u\right\rangle _{\omega
\gamma,\overline{\Omega}}^{(\gamma)}. \label{s1.8.2}%
\end{equation}
And define the space
$C_{n,\omega\gamma}^{m+\gamma}(\overline{\Omega})$ as the
space of continuous in $\overline{\Omega}$ functions $u(x)$ with the finite norm%

\begin{equation}
\left\Vert u\right\Vert _{C_{n,\omega\gamma}^{m+\gamma}(\overline{\Omega}%
)}\equiv|u|_{n,\omega\gamma,\overline{\Omega}}^{(m+\gamma)}\equiv
|u|_{\overline{\Omega}}^{(0)}+\sum_{|\alpha|=m}\left\langle d(x)^{n}%
D_{x}^{\alpha}u(x)\right\rangle _{\omega\gamma,\overline{\Omega}}^{(\gamma)}.
\label{s1.8.3}%
\end{equation}

For $T>0$ denote $\Omega_{T}=\{(x,t):x\in\Omega,t\in(0,T)\}$ and define the
space $C_{n,\omega\gamma}^{m+\gamma,\frac{m+\gamma}{m}}(\overline{\Omega}%
_{T})$ as the space of continuous in $\overline{\Omega}_{T}$ functions
$u(x,t)$ with the finite norm%

\begin{equation}
\left\Vert u\right\Vert _{C_{n,\omega\gamma}^{m+\gamma,\frac{m+\gamma}{m}%
}(\overline{\Omega}_{T})}\equiv|u|_{n,\omega\gamma,\overline{\Omega}_{T}%
}^{(m+\gamma)}\equiv|u|_{\overline{\Omega}_{T}}^{(0)}+\sum_{|\alpha
|=m}\left\langle d(x)^{n}D_{x}^{\alpha}u(x,t)\right\rangle _{\omega
\gamma,\overline{\Omega}_{T}}^{(\gamma)}+\left\langle D_{t}u\right\rangle
_{t,\overline{\Omega}_{T}}^{(\gamma/m)}. \label{s1.8.4}%
\end{equation}

\begin{theorem}
\label{Ts1.8.1}

Let $\left\Vert u\right\Vert _{C_{n,\omega\gamma}^{m+\gamma
,\frac{m+\gamma}{m}}(\overline{\Omega}_{T})}<\infty$. Then

\[%
{\displaystyle\sum\limits_{j=0}^{j\leq n}}
{\displaystyle\sum\limits_{|\alpha|=m-j}} \left\langle
d(x)^{n-j}D_{x}^{\alpha}u\right\rangle _{\omega\gamma
,\overline{Q}}^{(\gamma,\gamma/m)}+%
{\displaystyle\sum\limits_{j=0}^{j\leq n}}
{\displaystyle\sum\limits_{|\alpha|=m-j}}
\left\langle d(x)^{n-j\omega}D_{x}^{\alpha}u\right\rangle _{t,\overline{Q}%
}^{(\frac{\gamma+j}{m})}+
\]

\begin{equation}
+%
{\displaystyle\sum\limits_{j=1}^{j\leq m-n}}
{\displaystyle\sum\limits_{|\alpha|=j}}
\left\langle D_{x}^{\alpha}u\right\rangle _{t,\overline{Q}}^{(1-\frac{j}%
{m-n}+\frac{\gamma}{m})}+%
{\displaystyle\sum\limits_{|\alpha|<m-n}}
|D_{x}^{\alpha}u|_{\omega\gamma,\Omega_{T}}^{(\gamma)}\leq
C\left\Vert u\right\Vert
_{C_{n,\omega\gamma}^{m+\gamma,\frac{m+\gamma}{m}}(\overline
{\Omega}_{T})}. \label{s1.8.5}%
\end{equation}

 If $n$ is an integer and $d(x)^{n}D_{x}^{\alpha}u|_{\partial\Omega
}\equiv0$ for $|\alpha|=m$ then also%

\begin{equation}%
{\displaystyle\sum\limits_{|\alpha|=m-n}}
|D_{x}^{\alpha}u|_{\omega\gamma,\Omega_{T}}^{(\gamma)}\leq
C\left\Vert u\right\Vert
_{C_{n,\omega\gamma}^{m+\gamma,\frac{m+\gamma}{m}}(\overline
{\Omega}_{T})}. \label{s1.8.6}%
\end{equation}

\end{theorem}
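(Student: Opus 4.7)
The plan is to reduce the statement on $\Omega$ to the half-space Theorem \ref{Ts1.1} by boundary flattening combined with a partition of unity. First I would cover $\overline{\Omega}$ by finitely many charts of two types: \emph{interior} patches $U_i \Subset \Omega$ on which $d(x)\geq c_i>0$, and \emph{boundary} patches $U_j$ small enough that $\partial\Omega\cap U_j$ can be straightened by a $C^{m+\gamma}$-diffeomorphism $\Phi_j : U_j\cap\Omega \to H$ with $\Phi_j(\partial\Omega\cap U_j)\subset\{y_N=0\}$. With a subordinate partition of unity $\{\zeta_i,\zeta_j\}$, I would write $u=\sum\zeta_i u+\sum\zeta_j u$ and estimate each piece separately. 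On the interior patches the weight $d^{n-k}$ is uniformly trapped between two positive constants, so every weighted seminorm there reduces to the classical unweighted one and the required bounds follow from \eqref{s1.12+2.1} applied to $\zeta_i u$.

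For a boundary patch set $\widetilde{v}_j:=(\zeta_j u)\circ\Phi_j^{-1}$. Since $\partial\Omega\in C^{m+\gamma}$ and $d\asymp\mathrm{dist}(\cdot,\partial\Omega)$, one has $d\circ\Phi_j^{-1}(y)=y_N\psi_j(y)$ with $\psi_j\in C^{m+\gamma}$ and $\psi_j\geq c>0$, so each weight $d^{n-k}$ becomes $y_N^{n-k}$ times a smooth positive factor. The chain rule represents $D_x^\alpha u$, evaluated at $\Phi_j^{-1}(y)$, as a finite linear combination of mixed derivatives $D_y^\beta\widetilde{v}_j$, $|\beta|\leq|\alpha|$, with smooth coefficients, and the analogous rule for time derivatives is trivial since $\Phi_j$ is time-independent. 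Consequently the hypothesis that the $C_{n,\omega\gamma}^{m+\gamma,(m+\gamma)/m}(\overline{\Omega}_T)$-norm is finite transfers, on the support of $\zeta_j$, to the finiteness of $\sum_{i=1}^N\langle y_N^n D_{y_i}^m\widetilde{v}_j\rangle_{\omega\gamma,y_i,\overline{Q}}^{(\gamma)}+\langle D_\tau\widetilde{v}_j\rangle_{\tau,\overline{Q}}^{(\gamma/m)}$. Theorem \ref{Ts1.1} then produces the full left-hand side of \eqref{s1.7} for $\widetilde{v}_j$; pulling back by $\Phi_j$ and summing over $i,j$ yields \eqref{s1.8.5}. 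The mixed and lower-order derivative bounds of Section \ref{ss1.4} (in particular Lemma \ref{Ls1.s4.1} and Propositions \ref{Ps1.s4.1}, \ref{Ps1.s4.3}) guarantee that the lower-order terms produced by the chain rule and by differentiating the cutoff $\zeta_j$ are automatically controlled.

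For \eqref{s1.8.6}, observe that in each boundary chart the extra hypothesis $d(x)^n D_x^\alpha u|_{\partial\Omega}\equiv 0$ for $|\alpha|=m$ transforms, modulo lower-order pieces that are already handled by \eqref{s1.8.5}, into $\bigl[y_N^n D_{y_N}^m\widetilde{v}_j\bigr]|_{y_N=0}\equiv 0$, which is precisely the hypothesis \eqref{s1.s4.4} of Lemma \ref{Ls1.s4.2}. Applying Corollary \ref{Cs1.s4.1} to $\widetilde{v}_j$ therefore produces the bound on $\langle D_{y_N}^{m-n}\widetilde{v}_j\rangle_{\omega\gamma,y_N}^{(\gamma)}$ (together with the mixed derivatives of the same order), and transferring back through $\Phi_j$ yields the corresponding bound on $\sum_{|\alpha|=m-n}|D_x^\alpha u|_{\omega\gamma,\Omega_T}^{(\gamma)}$ on the support of $\zeta_j$; summing over $j$ and adding the trivial interior contribution gives \eqref{s1.8.6}.

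The principal technical obstacle I anticipate is the bookkeeping needed to show that pullback by $\Phi_j$ and multiplication by smooth functions (the cutoffs $\zeta_j$, the factor $\psi_j^{n-k}$, and the chain-rule coefficients) act boundedly on the weighted classes with constants independent of $j$. The weight $d^n$ carries a non-smooth boundary behaviour, so the Leibniz rule mixes derivatives of all intermediate orders, each weighted by a different power of $d$; the resolution is that Theorem \ref{Ts1.1} together with the interpolation inequalities of Section \ref{ss1.6} bounds every such mixed weighted derivative by the pure highest-order quantities on the right of \eqref{s1.7}, so that all the cross terms produced by Leibniz can be absorbed. Once this module property is checked, the rest is the standard patching argument sketched above.
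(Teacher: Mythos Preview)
Your proposal is correct and follows essentially the same approach as the paper: the paper's proof is omitted, but the sketch given there (``localisation and considering the functions $u(x,t)\eta(x)$\dots after corresponding change of the variables $v(x,t)=u(x,t)\eta(x)$ can be considered in a half-space\dots making use of the interpolation inequalities'') is exactly the partition-of-unity/boundary-flattening scheme you describe, reducing to Theorem~\ref{Ts1.1}, the results of Section~\ref{ss1.4}, and the interpolation inequalities of Section~\ref{ss1.6}. Your identification of Lemma~\ref{Ls1.s4.2}/Corollary~\ref{Cs1.s4.1} as the source of \eqref{s1.8.6} under the extra boundary hypothesis, and your anticipation that the Leibniz cross-terms are absorbed via the interpolation inequalities, are precisely the ingredients the paper alludes to.
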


The proof of this theorem follows from the results of sections
\ref{ss1}, \ref{ss1.4} and \ref{ss1.6} by localisation and
considering the functions $u(x,t)\eta (x)$, where $\eta(x)\in
C^{\infty}(\overline{\Omega})$ and has sufficiently small support
near $\partial\Omega$. After corresponding change of the variables
$v(x,t)=u(x,t)\eta(x)$ can be considered in a half-space $Q$. The
proof is pretty standard with the making use of the interpolation
inequalities and therefore we omit it.

\section{Spaces $\ C_{n,\omega\gamma,0}^{m+\gamma,\frac{m+\gamma}{m}%
}(\overline{\Omega}_{T})$.} \label{ss1.9}

 We denote by  $C_{n,\omega\gamma,0}^{m+\gamma,\frac{m+\gamma}{m}%
}(\overline{\Omega}_{T})$ the closed subspace of $C_{n,\omega\gamma}%
^{m+\gamma,\frac{m+\gamma}{m}}(\overline{\Omega}_{T})$ consisting of
functions $u(x,t)$ with the property $u(x,0)\equiv
u_{t}(x,0)\equiv0$ in $\overline{\Omega}$.

\begin{proposition}
\label{P0000001}

Let $u(x,t)\in C_{n,\omega\gamma,0}^{m+\gamma,\frac{m+\gamma}{m}%
}(\overline{\Omega}_{T})$, $T\leq1$. Then for $\ 1\leq j\leq n$,
$|\alpha|=m-j$ and with some $\delta>0$%

\begin{equation}
|d(x)^{n}D_{x}^{\alpha}u|_{\overline{\Omega}_{T}}^{(\gamma,\gamma/m)}\leq
CT^{\delta}\left\Vert u\right\Vert _{C_{n,\omega\gamma,0}^{m+\gamma
,\frac{m+\gamma}{m}}(\overline{\Omega}_{T})}. \label{0000001}
\end{equation}

And for $|\alpha|<m-n$

\begin{equation}
|D_{x}^{\alpha}u|_{\overline{\Omega}_{T}}^{(\gamma,\gamma/m)}\leq CT^{\delta
}\left\Vert u\right\Vert _{C_{n,\omega\gamma,0}^{m+\gamma,\frac{m+\gamma}{m}%
}(\overline{\Omega}_{T})}. \label{0000001.1}%
\end{equation}

\end{proposition}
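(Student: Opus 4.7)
The plan is to exploit the vanishing conditions $u(x,0) \equiv u_t(x,0) \equiv 0$ via an elementary integration-in-time argument, combined with the Hölder bounds established in Theorem~\ref{Ts1.8.1}. The key observation is that for any $w(x,t)$ obtained from $u$ by $x$-differentiation (possibly multiplied by functions of $x$ only), the vanishing conditions transfer: $w(x,0) = 0$ and $w_t(x,0) = 0$. This allows the representation
\begin{equation*}
w(x,t) = \int_0^t [w_\tau(x,\tau) - w_\tau(x,0)]\,d\tau,
\end{equation*}
which, together with the bound $|w_\tau(x,\tau)-w_\tau(x,0)| \leq \tau^{\gamma/m}\langle w_t\rangle_{t}^{(\gamma/m)}$, produces the pointwise estimate $|w(x,t)| \leq C t^{1+\gamma/m}\langle w_t\rangle_{t}^{(\gamma/m)}$, hence $|w|^{(0)}_{\overline{\Omega}_T} \leq CT^{1+\gamma/m}\|u\|$.

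To upgrade the $L^\infty$ smallness into smallness of the full Hölder norm $|\cdot|^{(\gamma,\gamma/m)}_{\overline{\Omega}_T}$, I would interpolate between the pointwise bound and a strictly higher Hölder regularity available for $w$. For the temporal seminorm, the higher regularity is furnished by the terms $\langle d^{n-j\omega}D_x^\alpha u\rangle_{t,\overline{Q}}^{((\gamma+j)/m)}$ appearing in the left-hand side of \eqref{s1.7} (and hence \eqref{s1.8.5}): for $j\geq 1$ the exponent $(\gamma+j)/m$ is strictly larger than $\gamma/m$, so standard interpolation of Hölder seminorms applied to $d^{-j\omega}w$ gives $\langle w\rangle_{t}^{(\gamma/m)} \leq CT^{\delta}\|u\|$ for some $\delta>0$. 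Alternatively, splitting $|w(x,t)-w(x,s)|$ via $\min\{(t-s)T^{\gamma/m},T^{1+\gamma/m}\}$ directly yields $\langle w\rangle_{t}^{(\gamma/m)} \leq CT\cdot\langle w_t\rangle_{t}^{(\gamma/m)}$. For the spatial seminorm I would use $w(x,t)-w(y,t) = \int_0^t [w_\tau(x,\tau)-w_\tau(y,\tau)]d\tau$, which gains a factor of $t \leq T$ provided the uniform-in-$\tau$ spatial Hölder regularity of $w_t$ is available.

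For the second estimate with $|\alpha|<m-n$, the argument is simpler since the relevant derivatives $D_x^\alpha u$ carry no weight: Theorem~\ref{Ts1.8.1} directly controls $|D_x^\alpha u|^{(\gamma)}_{\omega\gamma,\Omega_T}$, and the interpolation between the pointwise bound $|D_x^\alpha u|^{(0)} \leq CT^{1+\gamma/m}\|u\|$ (obtained identically to Step~1) and a marginally higher regularity (available because $|\alpha|+1\leq m-n$ still falls within the regularity of $u$) produces the $T^\delta$ factor.

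The main obstacle is establishing the needed spatial Hölder regularity of $w_t = d^n D_x^\alpha u_t$ uniformly in $t$ when $|\alpha|\geq 1$, since the norm of $u$ controls $D_tu$ only as a single quantity rather than its $x$-derivatives. I expect to circumvent this by invoking the higher-order terms in the left-hand side of \eqref{s1.7} that do involve higher-order time-Hölder regularity of weighted spatial derivatives, or by using interior parabolic estimates combined with the weight $d^n$ near $\partial\Omega$ (as in Section~\ref{ss1.4}) to reduce to the already-controlled quantities.
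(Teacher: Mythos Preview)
Your temporal-seminorm argument (using the higher exponent $(\gamma+j)/m$ from \eqref{s1.8.5} to gain $T^{j/m}$) is exactly what the paper does. The gap is in the $L^\infty$ bound and the spatial seminorm, and it is the obstacle you yourself flagged: for $w=d(x)^{n}D_{x}^{\alpha}u$ with $|\alpha|=m-j\geq 1$, the quantity $w_t=d(x)^{n}D_{x}^{\alpha}u_t$ is a mixed derivative of total parabolic order $|\alpha|+m>m$, and neither $\langle w_t\rangle_{t}^{(\gamma/m)}$ nor $\langle w_t\rangle_{\omega\gamma,x}^{(\gamma)}$ is controlled by $\|u\|_{C_{n,\omega\gamma}^{m+\gamma,(m+\gamma)/m}}$. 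Thus your representation $w(x,t)=\int_0^t[w_\tau(x,\tau)-w_\tau(x,0)]\,d\tau$ cannot be closed; the ``higher-order terms in \eqref{s1.7}'' you hope to invoke all concern $u$, not $u_t$, so they do not supply what you need.

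The paper bypasses this entirely. For the sup-norm it uses only $D_x^\alpha u(x,0)=0$ together with $\langle d^{n-j}D_x^\alpha u\rangle_t^{(\gamma/m)}\leq C\|u\|$ from \eqref{s1.8.5}, yielding $|d^n D_x^\alpha u|^{(0)}\leq CT^{\gamma/m}\|u\|$ (the condition $u_t(x,0)=0$ is in fact not used in the proof). For the spatial seminorm it exploits the crucial fact that $|\alpha|=m-j<m$, so $\nabla_x\bigl(d^n D_x^\alpha u\bigr)$ is bounded by $C\|u\|$ via \eqref{s1.8.5}. Then for $x,\bar x\in\overline{\Omega}$ one splits cases: if $|\bar x-x|^\gamma\geq t^{1/m}$, each term $d^{n-j\omega}D_x^\alpha u(\cdot,t)$ is bounded by $t^{(\gamma+j)/m}\langle d^{n-j\omega}D_x^\alpha u\rangle_t^{((\gamma+j)/m)}$, giving a factor $T^{j/m}$; if $|\bar x-x|^\gamma<t^{1/m}$, the mean-value theorem and the gradient bound give a factor $|\bar x-x|^{1-\gamma}\leq T^{(1-\gamma)/m}$. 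This case-split is elementary and avoids any appeal to regularity of $w_t$.
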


\begin{proof}

First of all, since $D_{x}^{\alpha}u(x,0)\equiv0$,

\begin{equation}
|d(x)^{n}D_{x}^{\alpha}u|_{\overline{\Omega}_{T}}^{(0)}\leq C\left\langle
d(x)^{n-j}D_{x}^{\alpha}u\right\rangle _{t,\overline{\Omega}_{T}}^{(\gamma
/m)}T^{\gamma/m}\leq CT^{\gamma/m}\left\Vert u\right\Vert _{C_{n,\omega
\gamma,0}^{m+\gamma,\frac{m+\gamma}{m}}(\overline{\Omega}_{T})}. \label{s1.9.2}%
\end{equation}

Further, let $t,\overline{t}\in\lbrack0,T]$. Then%

\[
\frac{|d(x)^{n}D_{x}^{\alpha}u(x,\overline{t})-d(x)^{n}D_{x}^{\alpha}%
u(x,t)|}{|\overline{t}-t|^{\gamma/m}}=
\]

\[
=d^{j\omega}(x)\frac{|d(x)^{n-j\omega
}D_{x}^{\alpha}u(x,\overline{t})-d(x)^{n-j\omega}D_{x}^{\alpha}u(x,t)|}%
{|\overline{t}-t|^{\frac{\gamma+j}{m}}}|\overline{t}-t|^{\frac{j}{m}}\leq
\]

\[
\leq CT^{j/m}\left\langle d(x)^{n-j\omega}D_{x}^{\alpha}u\right\rangle
_{t,\overline{\Omega}_{T}}^{(\frac{\gamma+j}{m})}\leq CT^{j/m}\left\Vert
u\right\Vert _{C_{n,\omega\gamma,0}^{m+\gamma,\frac{m+\gamma}{m}}%
(\overline{\Omega}_{T})}.
\]
This means

\begin{equation}
\left\langle d(x)^{n}D_{x}^{\alpha}u\right\rangle _{t,\overline{\Omega}_{T}%
}^{(\gamma/m)}\leq CT^{j/m}\left\Vert u\right\Vert _{C_{n,\omega\gamma
,0}^{m+\gamma,\frac{m+\gamma}{m}}(\overline{\Omega}_{T})}. \label{s1.9.3}%
\end{equation}
Consider now the smoothness with respect to $x$ - variables. Note
that the function $d(x)^{n}D_{x}^{\alpha}u(x,t)$ has bounded
gradient in $x$ - variables (since $|\alpha|=m-j<m$)

\[
\frac{\partial}{\partial x_{i}}d(x)^{n}D_{x}^{\alpha}u(x,t)=n\frac{\partial
d(x)}{\partial x_{i}}\left[  d(x)^{n-1}D_{x}^{\alpha}u(x,t)\right]
+d(x)^{n}D_{x_{i}}D_{x}^{\alpha}u(x,t),
\]
where both terms are bounded in $\overline{\Omega}_{T}$ by
$C\left\Vert
u\right\Vert _{C_{n,\omega\gamma,0}^{m+\gamma,\frac{m+\gamma}{m}}%
(\overline{\Omega}_{T})}$.

Let now $x,\overline{x}\in\overline{\Omega}$. Consider the difference%

\[
A\equiv d(x,\overline{x})^{\omega\gamma}\frac{|d(\overline{x})^{n}%
D_{x}^{\alpha}u(\overline{x},t)-d(x)^{n}D_{x}^{\alpha}u(x,t)|}{|\overline
{x}-x|^{\gamma}}.
\]
If $|\overline{x}-x|^{\gamma}\geq t^{1/m}$ then%

\[
A\leq C\frac{|d(\overline{x})^{n-j\omega}D_{x}^{\alpha}u(\overline
{x},t)|+|d(x)^{n-j\omega}D_{x}^{\alpha}u(x,t)|}{t^{\frac{\gamma+j}{m}}}%
t^{j/m}\leq
\]

\[
\leq C\left\langle d(x)^{n-j\omega}D_{x}^{\alpha}u\right\rangle _{t,\overline
{\Omega}_{T}}^{(\frac{\gamma+j}{m})}T^{j/m}\leq CT^{j/m}\left\Vert
u\right\Vert _{C_{n,\omega\gamma,0}^{m+\gamma,\frac{m+\gamma}{m}}%
(\overline{\Omega}_{T})}.
\]
Let now $|\overline{x}-x|^{\gamma}<t^{1/m}$. Then%

\[
A=\left(  \frac{|d(\overline{x})^{n}D_{x}^{\alpha}u(\overline{x}%
,t)-d(x)^{n}D_{x}^{\alpha}u(x,t)|}{|\overline{x}-x|}\right)  |\overline
{x}-x|^{1-\gamma}\leq
\]

\[
\leq C|\nabla_{x}\left[  d(x)^{n}D_{x}^{\alpha}u(x,t)\right]  |_{\overline
{\Omega}_{T}}^{(0)}T^{\frac{1-\gamma}{m}}\leq CT^{\frac{1-\gamma}{m}%
}\left\Vert u\right\Vert _{C_{n,\omega\gamma,0}^{m+\gamma,\frac{m+\gamma}{m}%
}(\overline{\Omega}_{T})}.
\]
Thus, we have proved that

\begin{equation}
\left\langle d(x)^{n}D_{x}^{\alpha}u\right\rangle _{\omega\gamma
,x,\overline{\Omega}_{T}}^{(\gamma)}\leq CT^{\delta}\left\Vert u\right\Vert
_{C_{n,\omega\gamma,0}^{m+\gamma,\frac{m+\gamma}{m}}(\overline{\Omega}_{T})},
\label{s1.9.4}
\end{equation}
where $\delta=(1-\gamma)/m$. The estimate \eqref{0000001} follows
from \eqref{s1.9.2}- \eqref{s1.9.4}.

Let now $|\alpha|<m-n$. If $n$ is an integer, then $D_{x}^{\alpha}u$
either has bounded derivatives in $x$ of order not grater then$
m-n$ or such derivatives has the H\"{o}lder property with
arbitrary exponent $\gamma^{\prime}\in(\gamma,0)$ (if
$D_{x_{N}}^{m-n}u$ is not bounded). In both cases the proof of
\eqref{0000001.1} is completely analogous to the proof of
\eqref{0000001}.

\end{proof}

Note that the above property for unweighted space is well known -
\cite{LSU}, \cite{Bizh}.

\end{document}